\documentclass[letterpaper]{article}%{amsart}

\voffset=-0.8in
\usepackage{amsfonts,amssymb,amsthm, amsmath}
\usepackage{titlefoot}

%%%%%%%%%%%%%%%%%%%%%%%%%%%%%%%%%%%%%%%%%%%%%%
%%% Settings for theorem-like environments %%%
%%%%%%%%%%%%%%%%%%%%%%%%%%%%%%%%%%%%%%%%%%%%%%
\theoremstyle{plain}
\newtheorem{theorem}{Theorem}%[subsection]
\newtheorem{proposition}[theorem]{Proposition}
\newtheorem{lemma}[theorem]{Lemma}
\newtheorem{corollary}[theorem]{Corollary}
\theoremstyle{definition}
\newtheorem{definition}[theorem]{Definition}
\theoremstyle{remark}
\newtheorem*{remark}{Remark}

%%%%%%%%%%%%%%%%%%%%%%%%%%%%%
%%% Indiviual page layout %%%
%%%%%%%%%%%%%%%%%%%%%%%%%%%%%
\frenchspacing
\parindent0cm

%%%%%%%%%%%%%%%%%%%%%%%%%%%%%%%%%%%%%%%
%%% Individual short cuts - general %%%
%%%%%%%%%%%%%%%%%%%%%%%%%%%%%%%%%%%%%%%
\newcommand{\N}{\mathbb{N}}
\newcommand{\R}{\mathbb{R}}
\newcommand{\C}{\mathbb{C}}
\newcommand{\de}{\mathrm{d}}
\newcommand{\dc}{{\mathrm{d}}^c}

%%%%%%%%%%%%%%%%%%%%%%%%%%%%%%%%%%%%%%%%%%%%%%%%
%%% Individual short cuts - article specific %%%
%%%%%%%%%%%%%%%%%%%%%%%%%%%%%%%%%%%%%%%%%%%%%%%%
\newcommand{\lie}[1]{\text{Lie}({#1})}

\newcommand{\gtube}{{G^\star}}
\newcommand{\modmod}{/\!/}
\newcommand{\timescat}[1]{\times^{\modmod{#1}}}
\newcommand{\lieder}{\mathcal{L}}
\newcommand{\derv}[1]{\frac{\partial}{\partial #1}}
\newcommand{\osheaf}{\mathcal{O}}
\newcommand{\Sym}{\operatorname{Sym}}
\newcommand{\Hess}{\operatorname{Hess}}
\newcommand{\Levi}{\operatorname{L}}
\newcommand{\symtensor}{{\bar\otimes}}

\title{Invariant K\"ahler
potentials and symplectic reduction}
\author{P. Heinzner \& B. Stratmann}
\date{}

\begin{document}
\maketitle
\unmarkedfntext{The research for this work was partially supported by the
SFB/Transregio 191 
``Symplektische Strukturen in Geometrie, Algebra und Dynamik''.} 
\begin{abstract}
For a proper Hamiltonian action of a Lie group $G$ on a K\"ahler manifold $(X,\omega)$ with momentum map $\mu$ we 
show that the symplectic reduction
$\mu^{-1}(0)/G$ is a normal complex space.
Every point in $\mu^{-1}(0)$
has a $G$-stable open neighborhood
on which $\omega$ and $\mu$ are given by a
$G$-invariant K\"ahler potential.
This is used to show that
$\mu^{-1}(0)/G$ is a K\"ahler space.
Furthermore we examine the existence of
potentials away from
$\mu^{-1}(0)$
with both positive and negative results.
\end{abstract}
%\section{Introduction}
Let $(X,\omega)$ be a K\"ahler manifold and $G$ a closed Lie subgroup of the
group $\operatorname{Iso}_\mathcal{O}(X)$ of holomorphic isometries of the K\"ahler structure. Since $G$ is closed
in $\operatorname{Iso}_\mathcal{O}(X)$, the $G$-action is proper. We will assume that the action of
$G$ on $X$ is Hamiltonian, meaning
that a momentum map
$\mu:X\to\lie{G}^*$ is given. This is a
$G$-equivariant map with
$\lie{G}^*$ endowed with the coadjoint action
such that
for each $\xi\in\lie{G}$ the corresonding 
component function
$\mu^\xi$ satisfies
$\de\mu^\xi=\imath_{\tilde\xi}\omega$. 
We will refer to $(X,G,\omega,\mu)$ as a
Hamiltonian K\"ahler manifold.
The symplectic reduction of $X$ is defined as the quotient $\mathcal{M}/G$ where $\mathcal{M}=\mu^{-1}(0)$.
On $\mathcal{M}$ we have the sheaf $\mathcal{O}_\mathcal{M}=\mathcal{O}_X\vert_\mathcal{M}$ where
$\mathcal{O}_X$ denotes the sheaf of holomorphic functions on $X$. Sections of $\mathcal{O}_\mathcal{M}$
are given by functions on $\mathcal{M}$
which are restrictions of holomorphic functions
and we call them
holomorphic functions on $\mathcal{M}$. 
Let $p:\mathcal{M}\to\mathcal{M}/G$ denote the quotient map. We have the direct image sheaf
$p_*\mathcal{O}_\mathcal{M}$
on $\mathcal{M}/G$ and the corresponding sheaf $p_*\mathcal{O}_\mathcal{M}^G=(p_*\mathcal{O}_\mathcal{M})^G$
of $G$-invariant sections,
denoted $\mathcal{O}_{\mathcal{M}/G}$.
In other words we associate to an open subset $Q\subset\mathcal{M}/G$ the algebra
$\mathcal{O}_\mathcal{M}(p^{-1}(Q))^G$ of invariant holomorphic functions
on $\mathcal{M}$, these are $G$-invariant continuous
functions on $p^{-1}(Q)$ that admit for 
each orbit $G\cdot x\subset p^{-1}(Q)$ a neighborhood in $X$
to which $f$ extends as a holomorphic function, in fact it extends as a $G$-invariant
holomorphic function on a $G$-stable neighborhood, as the proof will show.
Our first result is the following.
\begin{theorem}
$(\mathcal{M}/G,\mathcal{O}_{\mathcal{M}/G})
$ is a reduced normal complex space.
\label{theorem_quotientComplexStructure}
\end{theorem}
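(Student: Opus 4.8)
The plan is to realise $\mathcal{M}/G$, in a neighbourhood of each of its points, as an analytic Hilbert quotient of a complex manifold by the complexification of a \emph{compact} group, and then to deduce reducedness and normality from that description together with the smoothness of the total space. Since the action is proper, the isotropy group $H:=G_{x_0}$ of a point $x_0\in\mathcal{M}$ is compact. Using a slice theorem adapted to the Kähler structure one identifies a $G$-stable neighbourhood of $G\cdot x_0$, $G$-equivariantly, with a neighbourhood of the zero section in $G\times_H V$ for a Hermitian $H$-module $V$, so that $\omega$ and $\mu$ correspond to an invariant Kähler form and its momentum map $\nu$ on this model; by the local normal form for the momentum map, $\mathcal{M}$ then corresponds to $G\times_H\nu^{-1}(0)$. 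Hence a neighbourhood of $p(x_0)$ in $\mathcal{M}/G$ is homeomorphic to $\nu^{-1}(0)/H$, compatibly with the sheaves of invariant holomorphic functions, and it suffices to treat the case of $G$ compact acting on an open subset of a Hermitian $G$-module.

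In that situation I would complexify a neighbourhood of $\nu^{-1}(0)$. The complexification $G^{\C}$ acts linearly on $V$, and the momentum map controls the imaginary one-parameter groups: for $\xi\in\lie{G}$ the function $t\mapsto\nu^\xi(\exp(it\xi)\cdot v)$ is monotone, with derivative $\|\tilde\xi\|^2\ge 0$, which is the convexity behind the Kempf--Ness picture. Using it one produces a $G$-invariant, ``orbit-convex'' open neighbourhood $U$ of $\nu^{-1}(0)$, sets $X^{ss}:=G^{\C}\cdot U$, and obtains: $X^{ss}$ is open and $G^{\C}$-stable; every $G^{\C}$-orbit in $X^{ss}$ meets $\nu^{-1}(0)$ in a single $G$-orbit; and there is an analytic Hilbert quotient $\pi:X^{ss}\to X^{ss}\modmod G^{\C}$, i.e.\ a Stein surjection with structure sheaf $\pi_*\osheaf_{X^{ss}}^{G^{\C}}$, inducing a homeomorphism $\nu^{-1}(0)/H\cong X^{ss}\modmod G^{\C}$. (This is the reduction machinery for Kähler Hamiltonian $G^{\C}$-manifolds.)

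It then remains to match the structure sheaves and to read off the properties. A $G$-invariant holomorphic function on an open piece of $\nu^{-1}(0)$ extends first to a $G$-invariant holomorphic function on a neighbourhood in $V$, and then --- expanding in the $\exp(i\lie{G})$-directions, which together with $T\nu^{-1}(0)$ span $TX^{ss}$ along $\nu^{-1}(0)$ --- to a $G^{\C}$-invariant holomorphic function on a neighbourhood in $X^{ss}$; conversely, restriction to $\nu^{-1}(0)$ is injective on $G^{\C}$-invariants since $X^{ss}=G^{\C}\cdot\nu^{-1}(0)$. This identifies $\osheaf_{\mathcal{M}/G}$ with $\pi_*\osheaf_{X^{ss}}^{G^{\C}}$, so $(\mathcal{M}/G,\osheaf_{\mathcal{M}/G})$ is locally an analytic Hilbert quotient of a manifold, hence a complex space; it is reduced because $\osheaf_{\mathcal{M}/G}$ is by construction a sheaf of continuous functions, and normal because a weakly holomorphic function on the quotient pulls back to a $G^{\C}$-invariant weakly holomorphic function on the manifold $X^{ss}$, which is automatically holomorphic and therefore descends to the quotient.

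The step I expect to be the main obstacle is the preceding construction together with the extension assertion: producing the complexification with $\nu^{-1}(0)$ as a Kempf--Ness set and the quotient $\pi$, and proving that the algebra $\osheaf_\mathcal{M}(p^{-1}(Q))^G$ really is the algebra of an analytic Hilbert quotient --- equivalently, that every invariant holomorphic function on $\mathcal{M}$ extends to a $G$-invariant (indeed $G^{\C}$-invariant) holomorphic function near $\mathcal{M}$. A subsidiary difficulty is the Kähler version of the slice theorem and of the local normal form used at the outset, which I expect to be the place where a $G$-invariant Kähler potential near $\mathcal{M}$ is exploited.
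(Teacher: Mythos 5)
Your overall architecture --- localize near a $G$-orbit in $\mathcal{M}$, reduce to the compact isotropy group $H$ acting on a slice, and then invoke analytic Hilbert quotients and Kempf--Ness theory for the compact group --- is the same skeleton the paper uses, and the final deduction of reducedness and normality from the compact-group quotient is exactly how the paper concludes. The gap is that the two steps you delegate to ``a slice theorem adapted to the K\"ahler structure'' and ``the local normal form for the momentum map'' are where essentially all of the work lies, and as formulated the first of them fails. The Marle--Guillemin--Sternberg identification with a neighbourhood of the zero section in $G\times_H V$ is a $G$-equivariant \emph{symplectomorphism}, not a biholomorphism (and with $V$ a Hermitian slice complementary to the complexified orbit directions it even has real dimension $\dim_\R(G/H)+2\dim_\C V<2\dim_\C X$, so it cannot be a neighbourhood of $G\cdot x_0$ in $X$ at all). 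Since $\mathcal{O}_{\mathcal{M}}$ is by definition the sheaf of restrictions of holomorphic functions on $X$, no symplectic identification can be ``compatible with the sheaves of invariant holomorphic functions''. What is needed is a holomorphic slice theorem whose model contains the directions $J\,T_{x_0}(G\cdot x_0)$; the correct model is $\gtube\timescat{H}S$ where $\gtube$ is a Stein complexification of $G$ as a $G$-manifold (a ``$G$-tube''). Constructing $\gtube$, the topology of complexified $G$-orbits, and the holomorphic extension of the action map $G\times S\to X$ to $\gtube\times S$ --- all without assuming that any $G^\C$ acts, which the paper explicitly refuses to assume --- occupies Sections 1--3 of the paper and cannot be bypassed by the symplectic normal form.

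Even granting the holomorphic slice model $Y=\gtube\timescat{H}S$ and the identification $Y\modmod G\cong S\modmod H$, the assertion that $\mathcal{M}\cap Y$ meets each fiber of $\pi:Y\to Y\modmod G$ in exactly one $G$-orbit is not an instance of the compact Kempf--Ness theorem applied to $H$ acting on $S$: $\mathcal{M}$ is the zero level of the $G$-momentum map for the given $\omega$ on $Y$, not of an $H$-momentum map on $S$, and relating the two requires (i) producing a $G$-invariant potential $\rho$ with $\de\dc\rho=\omega$ and $\mu^v=-\imath_{\tilde v}\dc\rho$ on a $G$-stable neighbourhood of the orbit (the paper's Theorem~\ref{theorem_localPotentialAroundM}, proved by integrating an explicitly constructed closed $1$-form on $\gtube\times S$ and pushing down through the free $H$-action), (ii) modifying $\rho$ to a $G$-exhaustion without changing it near $\mathcal{M}$, and (iii) showing that such a $\rho$ is never critical on a non-closed complexified $G$-orbit and has a unique critical $G$-orbit on the closed one --- the paper's Theorem~\ref{theorem_homeomorphismInLocalSliceModel}, whose proof goes through the Hilbert lemma and the auxiliary model $\gtube\timescat{L}\Delta$. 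Your convexity remark about $t\mapsto\nu^\xi(\exp(it\xi)\cdot v)$ already presupposes such a potential and, for the non-compact group, does not by itself exclude fibers whose closed complexified orbit misses $\mathcal{M}$. So the proposal correctly identifies the target statement to prove locally, but the two load-bearing constructions (the holomorphic slice model without $G^\C$, and the potential plus critical-point analysis identifying $\mathcal{M}/G$ with the Hilbert quotient) are missing rather than merely difficult.
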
 %p_*\mathcal{O}_\mathcal{M}^G)
If $0\in\lie{G}^*$ is
a regular value of $\mu$, then the
theorem follows almost directly
from the construction of the symplectic reduction
$\mathcal{M}/G$. One just has to verify
that the natural almost complex structure
on the quotient is integrable.
In the case where $G$ is a semisimple
Lie group the result has been obtained in the
PhD~thesis of M.~Ammon \cite{Amm97}
and has been extended to Lie groups
which are contained in some complex
Lie group in the PhD~thesis of A.~Kurtdere~\cite{Kur09}.
In Theorem~\ref{theorem_quotientComplexStructure}
we allow $G$ to be any Lie group.
Furthermore we are able to realize
$\mathcal{M}/G$ locally more explicit as
a quotient $U\modmod G$
for some suitable $G$-stable
neighborhood $U$ in $X$. This quotient
$U\modmod G$ is an analytic Hilbert quotient, i.e. two points are identified if they cannot be
separated by invariant holomorphic functions.
The quotient is endowed with the structure
sheaf $\mathcal{O}_{U\modmod G}$
defined by $\mathcal{O}_{U\modmod G}(V)=\mathcal{O}_U^G(\pi^{-1}(V))$
for every open set $V\subset U\modmod G$
(see
Definition~\ref{definition_HilbertQuotient}).\\
The quotient $p:\mathcal{M}\to\mathcal{M}/G$ is universal in the sense that
any $G$-invariant holomorphic map
factorizes over $\mathcal{M}/G$.
\begin{theorem}
For every $x\in\mathcal{M}$ there exists a
$G$-stable open neighborhood $U$
such that the analytic Hilbert quotient
$\pi:U\to U\modmod G$ realizes 
$(U\modmod G,\mathcal{O}_{U\modmod G})$
as a reduced normal complex space
and the inclusion
$i_U:\mathcal{M}_U=\mathcal{M}\cap U\hookrightarrow U$
induces a biholomorphic map
$\varphi_U:\mathcal{M}_U/G\to U\modmod G$
such that the following diagram commutes.
\begin{eqnarray}
\mathcal{M}_U&\stackrel{i_U}{\hookrightarrow}& U\notag\\
\downarrow p&\quad&\downarrow\pi\label{eqn_localRealizationInTheorem}\\
\mathcal{M}_U/G&\stackrel{\varphi_U}{\longrightarrow}& U\modmod G\notag
\end{eqnarray}
\label{theorem_quotientLocallyGivenFromHilbertQuotient} 
\end{theorem}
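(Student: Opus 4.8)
The plan is to realise $\mathcal M_U/G$ locally as a quotient of GIT type: near a point of $\mathcal M$ one replaces the (possibly non-complexifiable) real group $G$ by the reductive complexification $H^{\C}$ of the compact isotropy group, and then Theorem~\ref{theorem_quotientLocallyGivenFromHilbertQuotient} follows from the theory of analytic Hilbert quotients for reductive group actions on Stein spaces (Snow; Heinzner) together with a Kempf--Ness type theorem for such actions. Fix $x\in\mathcal M$; properness forces $H:=G_x$ to be compact. First I would produce the infinitesimal model at $x$: by Bochner linearisation a neighbourhood of $x$ is $H$-equivariantly biholomorphic to a neighbourhood of $0$ in the complex $H$-module $T_xX$, and from $\de\mu^\xi=\imath_{\tilde\xi}\omega$ and $\mu(x)=0$ one checks that the orbit tangent $\lie{G}\!\cdot x=\{\tilde\xi(x):\xi\in\lie{G}\}$ and its image $J(\lie{G}\!\cdot x)$ under the complex structure meet only in $0$, so that $T_xX=\bigl(\lie{G}\!\cdot x\oplus J(\lie{G}\!\cdot x)\bigr)\oplus V$ with $V$ an $H$-invariant complex complement; the Kähler data restricted to $V$ carry an $H$-momentum map $\mu_V$ with $\mu_V(0)=0$. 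As $H$ is compact, $H^{\C}$ acts linearly on $V$; I then fix a small $H$-invariant ball $S$ about $0$ in $V$ and a $\pi_{H^{\C}}$-saturated Stein neighbourhood $Z$ of $0$ in $V$ with $S\subset Z$ and $\pi_{H^{\C}}(Z)=\pi_{H^{\C}}(S)$, so that $\mu_V^{-1}(0)\cap S$ meets every fibre of $\pi_{H^{\C}}\colon Z\to Z\modmod H^{\C}$ --- possible because for a linear reductive action the norm decreases along the gradient flow of $|\mu_V|^2$, whence the Kempf--Ness point of $\overline{H^{\C}\!\cdot v}$ stays inside $S$.

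The main step, and the principal obstacle, is to promote this infinitesimal picture to a $G$-equivariant, Kähler-compatible normal form along the whole orbit: a $G$-stable neighbourhood $U$ of $G\cdot x$ in which $S$ sits as an $H$-invariant complex submanifold through $x$, together with a $G$-invariant holomorphic map $\Psi\colon U\to Z\modmod H^{\C}$ that restricts on $S$ to $\pi_{H^{\C}}|_S$ and maps $U$ onto $Z\modmod H^{\C}$, such that $\omega$ and $\mu$ go over to the standard Kähler form and momentum map of the Marle--Guillemin--Sternberg model determined by the $H$-module $V$; in particular $\mathcal M\cap U$ then corresponds to $G\times_H\bigl(\mu_V^{-1}(0)\cap S\bigr)$, and $G\cdot S$ spans $U$ in the sense that $T_y(G\cdot S)+J\,T_y(G\cdot S)=T_yU$ for every $y\in G\cdot S$. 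This holomorphic normal form I would derive from the ordinary slice theorem for proper actions. The reason the argument survives for an \emph{arbitrary} Lie group $G$ is that only the compact isotropy $H$ is ever complexified; the orbit directions $\lie{G}/\lie{H}$, on which $G$ acts freely near $x$, are merely carried along as a twisted product, so the possible non-existence of a complexification of $G$ plays no role.

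It then remains to read off both quotients from the model. Restriction along the slice, $f\mapsto f|_S$, identifies $\mathcal O_U^G(U)$ with the $H$-invariant holomorphic functions on $S$: it is injective because $G\cdot S$ spans $U$, so a holomorphic function vanishing on it vanishes on the connected set $U$; it is surjective because, $H$ being compact, every $H$-invariant holomorphic function on $S$ is already $H^{\C}$-invariant, hence extends through $\pi_{H^{\C}}$ and pulls back along $\Psi$. Thus $\mathcal O_U^G(U)\cong\mathcal O^{H^{\C}}(Z)$, two points of $U$ fail to be separated by invariant holomorphic functions exactly when they have the same image under $\Psi$, and the equivalence relation defining $U\modmod G$ coincides with that of the analytic Hilbert quotient $Z\modmod H^{\C}$. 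Since $H^{\C}$ is reductive and $Z$ a normal Stein $H^{\C}$-space, $Z\modmod H^{\C}$ is a reduced normal complex space; hence so is $(U\modmod G,\mathcal O_{U\modmod G})$, and $\pi$ is its analytic Hilbert quotient (the same computation gives the extension assertion preceding Theorem~\ref{theorem_quotientComplexStructure}). Finally $\mathcal M_U/G=\bigl(\mu_V^{-1}(0)\cap S\bigr)/H$, and the Kempf--Ness theorem for $H^{\C}$ acting on $Z$ with compact form $H$ and momentum map $\mu_V$ yields a homeomorphism $\bigl(\mu_V^{-1}(0)\cap S\bigr)/H\xrightarrow{\sim}Z\modmod H^{\C}$ which is biholomorphic because in each case the structure sheaf is the sheaf of invariant holomorphic functions; composing with the identification above produces $\varphi_U\colon\mathcal M_U/G\xrightarrow{\sim}U\modmod G$, and commutativity of the square is immediate from the construction. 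Patching these local models over $\mathcal M$ would in addition recover Theorem~\ref{theorem_quotientComplexStructure}.
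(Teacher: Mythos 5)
Your overall strategy --- complexify only the compact isotropy group $H=G_x$, pass to a holomorphic slice $S$ through $x$, and reduce both quotients to the analytic Hilbert quotient $S\modmod H$ --- is in fact the paper's strategy: the local slice model $\gtube\timescat{K}S$ and Theorem~\ref{theorem_SmodmodKIsomorphicToYmodmodG} accomplish exactly the identification $\mathcal{O}^G_U\cong\mathcal{O}^H_S$ you describe, and for this purely holomorphic part your argument is essentially right (though your injectivity/surjectivity claims need the orbit correspondence of Proposition~\ref{proposition_bijectionOfOrbits} to be made precise). The genuine gap is in what you yourself call the main step: the claim that the ordinary slice theorem for proper actions yields a $G$-equivariant \emph{holomorphic} model in which ``$\omega$ and $\mu$ go over to the standard K\"ahler form and momentum map of the Marle--Guillemin--Sternberg model''. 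No such normal form is available: the MGS theorem is a symplectic statement, obtained by a Moser-type deformation that destroys the complex structure, while the holomorphic slice theorem (Theorem~\ref{theorem_slice}, itself nontrivial here since $G$ need not embed in any complex group) controls only the complex structure and says nothing about $\omega$. One cannot in general normalize both simultaneously. In particular the identification $\mathcal{M}\cap U\cong G\times_H\bigl(\mu_V^{-1}(0)\cap S\bigr)$, with $\mu_V$ the momentum map of the flat structure on $V$, is unjustified --- note also that membership in $\mathcal{M}$ imposes the vanishing of \emph{all} components of the $G$-momentum map on the slice, not merely of the $H$-momentum map --- and with it the gradient-flow/Kempf--Ness argument on $Z\subset V$, which uses the flat metric, collapses.

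Everything the paper does in its Sections on local potentials and on the complex structure exists to fill precisely this hole. Instead of a K\"ahler normal form it establishes only that the \emph{actual} $\omega$ and $\mu$, transported to the holomorphic slice model, admit a $G$-invariant potential $\rho$ (Theorem~\ref{theorem_localPotentialAroundM}), modifies $\rho$ to a $G$-exhaustion agreeing with $\rho$ near $\mathcal{M}$ (Lemma~\ref{lemma_bendingUpPotential}, Lemma~\ref{lemma_existenceOfExhaustivePotentialOnSliceModel}), and then proves a Kempf--Ness-type theorem valid for an \emph{arbitrary} $G$-exhaustive strictly plurisubharmonic potential on the slice model (Theorem~\ref{theorem_homeomorphismInLocalSliceModel}): each fibre of $\pi$ meets $\mathcal{M}$ in exactly one $G$-orbit, contained in the unique closed complexified $G$-orbit. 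That statement is the technical heart of the proof and rests on the analysis of critical orbits inside complexified $G$-orbits, the degeneration to the special model $\gtube\timescat{L}\Delta$ via the Hilbert Lemma, and the Hessian computation of Lemma~\ref{lemma_isolatedAndUniqueMinimumLocusInClosedCplxfdOrbitInSpecialLocalSliceModel}. None of this can be replaced by linear Kempf--Ness theory on $V$ unless you first prove the normal form you assumed; to repair the argument you must either establish that normal form (false in general) or supply a substitute for Theorem~\ref{theorem_homeomorphismInLocalSliceModel}.
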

For a $G$-invariant smooth strictly plurisubharmonic function $\rho$ one can define a Hamiltonian
K\"ahler structure by
$$\omega=\de\dc\rho\qquad\text{and}\qquad\mu^v=-\imath_{\tilde v}\dc\rho\quad\text{for each}\quad v\in\lie{G}$$
where $\tilde v$ is the induced vector field on $X$ and $\mu^v(x)=\mu(x)(v)$ with the real operator
$\dc=\frac{1}{2i}(\partial-\bar\partial)$.
In this case we will say that $\mu$ is given by a $G$-invariant potential $\rho$ (Definition~ \ref{definition_potential}).
In general there is no hope to obtain a global potential
for $\mu$. Our second result says that locally around each point in $\mathcal{M}$
this is the case.
\begin{theorem}
For every point $x\in \mathcal{M}$ there exists an open
$G$-stable neighborhood $Y$ of $x$ in $X$ and a $G$-invariant
potential $\rho:Y\to\R$ for 
which $\omega\vert_Y$ and $\mu\vert_Y$ are given by $\rho$.\label{theorem_localPotentialAroundM}
\end{theorem}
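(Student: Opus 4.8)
The plan is to reduce, by means of a K\"ahler slice at $x$, to the case of a compact group acting with a fixed point --- where averaging produces a potential --- and then to propagate that potential off the slice along the ``complexified orbit'' distribution $D:=\operatorname{span}\{\tilde\xi,\,J\tilde\xi:\xi\in\lie{G}\}$. First I would invoke the local structure around $x\in\mathcal M$ underlying Theorem~\ref{theorem_quotientLocallyGivenFromHilbertQuotient}: a \emph{K\"ahler slice}, a locally closed complex submanifold $S\ni x$, stable under the compact isotropy group $H:=G_x$, with $T_xS$ a $J$- and $H$-invariant complement of $\lie{G}\cdot x+J(\lie{G}\cdot x)$, which is a cross-section for $D$ near $x$ in the sense that $S$ meets every leaf of $D$ through a nearby point, the leaf space equals $S\modmod H\cong U\modmod G$, and $\mathcal M$ is cut out in $S$ by the $H$-momentum map $\mu_S$ (the composite of $\mu|_S$ with $\lie{G}^*\to\lie{H}^*$). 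Then $x$ is an $H$-fixed point of $\mu_S^{-1}(0)$, so $\tilde\eta(x)=0$ for $\eta\in\lie{H}$, and averaging over the compact group $H$ a local K\"ahler potential of $(S,\omega|_S)$ yields a smooth $H$-invariant $\rho_0\colon S\to\R$ with $\omega|_S=\de\dc\rho_0$; since the momentum map $-\imath_{\tilde\eta}\dc\rho_0$ it induces and $\mu_S$ both vanish at the $H$-fixed point $x$, they agree near $x$, so $\rho_0$ is a potential for the $H$-data on $S$.

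Next I would extend $\rho_0$ to a $G$-invariant potential on a $G$-stable neighbourhood of $x$ by solving, near $x$, the overdetermined first order system $\mathcal L_{\tilde\xi}\rho=0$ and $\mathcal L_{J\tilde\xi}\rho=2\mu^\xi$ for all $\xi\in\lie{G}$, with initial datum $\rho|_S=\rho_0$. Here $D$ is involutive (using $[\tilde\xi,\tilde\eta]=-\widetilde{[\xi,\eta]}$ and the integrability of $J$), the prescribed derivatives along $D$ are mutually compatible (a short computation from $\de\mu^\xi=\imath_{\tilde\xi}\omega$ and the equivariance of $\mu$), and the constraints that the system imposes on the cross-section $S$ amount precisely to ``$\rho_0$ is a potential for the $H$-data''; hence the system has a unique smooth solution $\rho$, which is $G^0$- and $H$-invariant by construction and, through the slice description $G\times_H S$ of a $G$-neighbourhood of $Gx$, gives a $G$-invariant $\rho\colon Y\to\R$ on a $G$-stable open $Y\ni x$.

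The verification that $\rho$ is a potential for $\omega|_Y$ and $\mu|_Y$ is then short. From $\dc\rho(\tilde\xi)=-\tfrac12\de\rho(J\tilde\xi)=-\tfrac12\mathcal L_{J\tilde\xi}\rho=-\mu^\xi$ one obtains $\mu^\xi=-\imath_{\tilde\xi}\dc\rho$. For $\omega=\de\dc\rho$, put $\beta:=\de\dc\rho-\omega$, a closed $(1,1)$-form; since $\tilde\xi$ --- and hence $J\tilde\xi$ --- is a real holomorphic vector field, $\mathcal L_{\tilde\xi}$ and $\mathcal L_{J\tilde\xi}$ commute with $\dc$, and the two equations give $\imath_{\tilde\xi}\de\dc\rho=\de\mu^\xi=\imath_{\tilde\xi}\omega$ and $\imath_{J\tilde\xi}\de\dc\rho=2\dc\mu^\xi=\imath_{J\tilde\xi}\omega$, so $\imath_{\tilde\xi}\beta=\imath_{J\tilde\xi}\beta=0$ for every $\xi$. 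Thus $\beta$ is basic for the foliation by the leaves of $D$ and descends to the reduced complex space $U\modmod G$; but its restriction to the complex submanifold $S$ is $\de\dc(\rho|_S)-\omega|_S=\de\dc\rho_0-\omega|_S=0$ (as $\de$ and $\dc$ commute with restriction to $S$), and $S$ maps onto $U\modmod G$, so $\beta=0$. Hence $\omega|_Y=\de\dc\rho$, and $\rho$ is automatically strictly plurisubharmonic because $\omega$ is positive, so $\rho$ is the required potential.

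I expect the main obstacle to be the first step --- constructing, for an \emph{arbitrary} proper Hamiltonian action, the K\"ahler slice $S$ with all the stated features, in particular the identification of the leaf space of $D$ with the analytic Hilbert quotient $U\modmod G$ and the description of $\mathcal M$ inside $S$ --- since here there is no ambient complexification $G^{\mathbb C}$ with which to organise the complexified orbits; this is exactly the local structure theory developed for Theorems~\ref{theorem_quotientComplexStructure} and~\ref{theorem_quotientLocallyGivenFromHilbertQuotient}, and I would lean on it. The remaining issues --- solvability of the (in general singular) Frobenius system of the second step, and upgrading $G^0$- and $G_x$-invariance on a neighbourhood of $G^0x$ to genuine $G$-invariance on a neighbourhood of $Gx$ --- are routine once that structure is available.
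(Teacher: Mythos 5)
Your overall strategy --- pass to a slice at $x$, build an $H$-invariant potential for the $H$-data on the slice using $\mu(x)=0$ to identify the two $H$-momentum maps, then extend by prescribing the derivatives of $\rho$ along $\tilde\xi$ and $J\tilde\xi$ --- is exactly the paper's. The genuine gap is in the step you dismiss as routine: solving the first-order system $\lieder_{\tilde\xi}\rho=0$, $\lieder_{J\tilde\xi}\rho=2\mu^\xi$ directly on $X$. The distribution $D=\operatorname{span}\{\tilde\xi,J\tilde\xi\}$ is singular near $x$ (its rank jumps with the stabilizer and with the failure of total reality of the orbit), and, worse, the prescribed covector is not even well defined pointwise: at a point $y$ with $\tilde\xi_y=0$ for some $\xi\neq 0$ the system forces $\mu^\xi(y)=0$, and at a point where $\tilde\xi_y=J\tilde\zeta_y\neq0$ it forces $\mu^\zeta(y)=0$. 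These identities hold on $\mathcal M$, but for nearby $y\notin\mathcal M$ they are consequences of the existence of the potential, so assuming them is close to circular; and smoothness of the solution across leaves of jumping dimension is a further unaddressed point.

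The paper's device for precisely this difficulty is to lift everything to the product $\gtube\times S$ of the local slice model (Theorem~\ref{theorem_slice}), where $G$ acts freely on the first factor with totally real orbits: there the analogous $1$-form $\eta$ (with $\eta(\tilde v)=0$, $\eta(J\tilde v)=\mu^v$, $\eta=0$ on the $S$-directions) is well defined, the foliation is the regular product foliation, and $\rho_G(\gamma,s)=\int_e^\gamma\varphi_s^*\eta$ is obtained by integrating a closed form over a $1$-connected set (Lemma~\ref{lem_productCase}); your compatibility computation is carried out there. The hypothesis $x\in\mathcal M$ then enters to show $\imath_{\tilde v}\dc\hat\rho\equiv0$ for $v$ in the diagonal $\lie{K}$-action, so that $\hat\rho$ is constant on the fibers of $\gtube\times S\to\gtube\timescat{K}S$ and descends --- the rigorous counterpart of your compatibility condition ``$\rho_0$ is a potential for the $H$-data'' on the cross-section. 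So the idea is right, but you need the lift to the free-action product (or an equivalent desingularization of $D$) to make the extension step go through; as stated it does not.
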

The manifold $X$ has a stratification
by $G$-orbit type. Points $x_1$ and $x_2$ belong to the same stratum
if their $G$-orbits are $G$-equivariantly diffeomorphic, i.e.
if their isotropy groups are conjugate in $G$.\\
The stratification on $X$ induces a stratification on $\mathcal{M}$.
The intersection of $\mathcal{M}$
with a stratum gives a local submanifold. This stratification pushes down to a stratification
of the quotient $\mathcal{M}/G$.
It is not difficult to see that the K\"ahler form $\omega$ pulled back to $\mathcal{M}$
induces a K\"ahler form $\bar\omega$ on each stratum in $\mathcal{M}/G$ (s.~Section~\ref{section_KaehlerStructure} or \cite{SjaLer91} for a different proof).\\
We will show that the smooth K\"ahler forms
on the strata extend uniquely to a K\"ahler structure on the complex space $\mathcal{M}/G$. By definition
a K\"ahler structure on a complex space $Q$
is given by an open covering $Q=\bigcup Q_\alpha$
and strictly plurisubharmonic
continuous functions $\rho_\alpha:Q_\alpha\to \R$
such that on $Q_\alpha\cap Q_\beta$
the difference $\rho_\alpha-\rho_\beta$ is locally the real part of a holomorphic function
(Definition~\ref{definition_KaehlerStructure}).\begin{corollary}
There is a unique K\"ahler structure $\bar\omega$ on $\mathcal{M}/G$ such that
$ p^*\bar\omega=i_\mathcal{M}^*\omega$ with $i_\mathcal{M}:\mathcal{M}\to X$ denoting the inclusion. The continuous strictly plurisubharmonic
functions $\bar\rho_\alpha$ defining $\bar\omega$
are smooth restricted to each stratum.
\label{corollary_KaehlerStructure}
\end{corollary}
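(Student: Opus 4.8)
The plan is to obtain the defining functions $\bar\rho_\alpha$ by pushing the local $G$-invariant potentials of Theorem~\ref{theorem_localPotentialAroundM} down to $\mathcal M/G$ and analysing them in the local analytic Hilbert quotient models furnished by Theorem~\ref{theorem_quotientLocallyGivenFromHilbertQuotient}. For each $x\in\mathcal M$ choose, by Theorem~\ref{theorem_localPotentialAroundM}, a $G$-stable neighbourhood with a $G$-invariant potential for $\omega$ and $\mu$; shrinking it, we may assume by Theorem~\ref{theorem_quotientLocallyGivenFromHilbertQuotient} that it is a set $U_x$ carrying an analytic Hilbert quotient $\pi_x\colon U_x\to U_x\modmod G$ together with a biholomorphism $\varphi_{U_x}\colon\mathcal M_{U_x}/G\to U_x\modmod G$, and we denote the potential by $\rho_x\colon U_x\to\R$. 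The sets $Q_x:=\mathcal M_{U_x}/G$ cover $\mathcal M/G$, and since $\rho_x$ is $G$-invariant and continuous and $p$ is a topological quotient, $\rho_x|_{\mathcal M_{U_x}}$ descends to a continuous function $\bar\rho_x\colon Q_x\to\R$ with $\bar\rho_x\circ p=\rho_x$ on $\mathcal M_{U_x}$. It then remains to prove: (i) $\bar\rho_x$ is strictly plurisubharmonic on the complex space $Q_x$ and smooth on each stratum; (ii) on overlaps $\bar\rho_x-\bar\rho_y$ is locally the real part of a holomorphic function; (iii) the resulting K\"ahler structure is the unique one with $p^*\bar\omega=i_{\mathcal M}^*\omega$.

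For (i) I would reduce to a slice. Properness of the action makes $H:=G_x$ compact, and by a holomorphic slice theorem (of the kind used for Theorem~\ref{theorem_quotientLocallyGivenFromHilbertQuotient}) one may take $U_x\cong G\times_H S$ with $S$ an $H$-stable locally closed complex submanifold through $x$ on which $\rho_x|_S$ is an $H$-invariant potential for $\omega|_S$ and the $H$-momentum map $\mu_S$. Then $U_x\modmod G\cong S\modmod H$, and $\varphi_{U_x}$ identifies $\bar\rho_x$ with the function on $S\modmod H$ induced by $\rho_x|_{\mu_S^{-1}(0)}$ via the classical identification $\mu_S^{-1}(0)/H\cong S\modmod H$. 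On the slice we are in the classical situation of a compact group $H$ with reductive complexification $H^\C$ acting on a complex manifold with an invariant potential, and the classical results on K\"ahlerian reduction for compact group actions — which one can also see concretely from the convexity of $\rho_x$ along the $H^\C$-orbit directions together with an explicit computation in the linear model $T_xS$, whose invariant polynomials embed $S\modmod H$ into some $\C^N$ — show that $\bar\rho_x$ is continuous and strictly plurisubharmonic on $Q_x$. The smoothness claim then follows because every orbit-type stratum of $Q_x$ is a complex submanifold on which $\rho_x$ descends smoothly, the restriction of a strictly plurisubharmonic function to a complex submanifold being again such; moreover $\de\dc$ of this restriction equals the reduced form of the preamble, since pulling back to $\mathcal M$ and pushing down commute with $\de\dc$ on the smooth strata. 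I expect (i) to be the main obstacle: it is the one place where the structure theory of the Hilbert quotient and the convexity of the potential are genuinely used, and where the real advance beyond the already known reductive-group case lies.

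For (ii), on a $G$-stable overlap the function $\phi:=\rho_x-\rho_y$ is $G$-invariant, satisfies $\de\dc\phi=0$ (hence is pluriharmonic), and obeys $\imath_{\tilde v}\dc\phi=0$ for every $v\in\lie{G}$, since on the overlap both $\rho_x$ and $\rho_y$ are potentials for $\omega$ and $\mu$. Passing to a common slice $S$ as above, $\phi|_S$ is an $H$-invariant pluriharmonic function with $\imath_{\tilde w}\dc(\phi|_S)=0$ for all $w\in\lie{H}$, hence is constant along the $H^\C$-orbits in $S$ and therefore on the fibres of $S\to S\modmod H$; since $\phi$ is $G$-invariant it is determined by $\phi|_S$, so it descends to a continuous function on $S\modmod H\cong Q_x$ which is pluriharmonic on the dense principal stratum. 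As $S\modmod H$ is normal, the local holomorphic primitives extend across the complement of the principal stratum and still have this function as their real part, so $\bar\rho_x-\bar\rho_y$ is locally the real part of a holomorphic function on $\mathcal M/G$. Hence the $\bar\rho_\alpha$ glue to a K\"ahler structure $\bar\omega$, and $p^*\bar\omega=i_{\mathcal M}^*\omega$ holds by construction (checked stratum by stratum as above).

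Finally, for (iii), let $\bar\omega'$ be any K\"ahler structure with $p^*\bar\omega'=i_{\mathcal M}^*\omega$. On the principal stratum of $\mathcal M/G$ — a complex manifold on which $p$ restricts to a submersion and ordinary symplectic reduction applies — $\bar\omega$ and $\bar\omega'$ induce the same reduced K\"ahler form, so their local defining potentials differ there by pluriharmonic functions. These differences are continuous on the ambient charts, and, by the normality argument of (ii), are pluriharmonic on all of $\mathcal M/G$; hence the two collections of potentials are equivalent and $\bar\omega'=\bar\omega$.
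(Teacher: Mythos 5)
Your overall architecture --- cover $\mathcal M$ by slice models carrying the invariant potentials of Theorem~\ref{theorem_localPotentialAroundM}, push $\rho_x\vert_{\mathcal M}$ down to continuous functions $\bar\rho_x$, then check gluing, strict plurisubharmonicity and uniqueness --- is the paper's, and your step (ii) is essentially the paper's gluing argument ($\rho_x-\rho_y$ is $G$-invariant, pluriharmonic and annihilated by $\imath_{\tilde v}\dc$, hence constant on the $\pi$-fibres together with its local holomorphic primitive). But step (i), which you yourself identify as the crux, contains a genuine error. You claim that under $\mathcal M_{U_x}/G\cong U_x\modmod G\cong S\modmod H$ the function $\bar\rho_x$ (the pushdown of $\rho_x\vert_{\mathcal M}$) is identified with the classical compact-group reduction of the slice datum, i.e.\ with the pushdown of $\rho_x\vert_{\mu_S^{-1}(0)}$. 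This is false in general. On a fibre $F=\pi^{-1}(q)$ the value of $\bar\rho_x$ is $\min_F\rho_x$ (the minimum is attained exactly on the $G$-orbit $\mathcal M\cap F$, by Theorem~\ref{theorem_homeomorphismInLocalSliceModel}), whereas the value of the slice reduction is $\min_{F\cap S}\rho_x$; these agree only if the fibrewise minimum happens to be attained on the slice. Already for $K=\{e\}$ and $Y=\gtube\times S$ with $\rho=\rho_G+\rho_S$ as in Lemma~\ref{lem_productCase} one has $\bar\rho(s)=\rho_S(s)+\min_{\sigma\in\Sigma}\rho_G(\sigma,s)$, and the second summand is strictly negative at every $s$ with $\mu(e,s)\neq 0$, i.e.\ at every slice point off $\mathcal M$; the discrepancy is not pluriharmonic, so the slice-reduced potentials would not even satisfy $p^*\bar\omega=i_{\mathcal M}^*\omega$. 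In other words, the compact case cannot simply be quoted on the slice: the whole difficulty is to control the marginal function $q\mapsto\min_{\pi^{-1}(q)}\rho_x$, and that is exactly where your argument breaks down.

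The paper's route avoids the compact reduction entirely: (a) on each orbit-type stratum $\bar\rho$ is smooth with $\de\dc\bar\rho=\bar\omega>0$, computed via local sections of $S_H\to S_H/G$; (b) plurisubharmonicity across the strata is checked by pulling back along holomorphic discs (Forn\ae ss--Narasimhan, Grauert--Remmert); and (c) \emph{strictness} --- in the perturbation sense of Definition~\ref{definition_KaehlerStructure}, which your argument never engages --- follows because for a $G$-invariant perturbation $\rho+t\,\pi^*h$ the momentum zero set does not move, so the perturbed pushdown is $\bar\rho+t h$ and is again plurisubharmonic by (a)--(b). Two smaller points: your appeal to normality in (ii) and (iii) to extend pluriharmonic functions, or their holomorphic primitives, across the lower-dimensional strata needs justification (the harmonic conjugate of a continuous pluriharmonic function on the complement of an analytic set need not be locally bounded); the paper instead produces the primitive upstairs as a holomorphic function constant on the $\pi$-fibres and descends it through $\mathcal{O}_{Y\modmod G}=\pi_*\mathcal{O}^G_Y$. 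And your use of the classical identification $\mu_S^{-1}(0)/H\cong S\modmod H$ tacitly requires $\rho_x\vert_S$ to be an exhaustion, which must be arranged by the modification Lemmas~\ref{lemma_bendingUpPotential} and \ref{lemma_existenceOfExhaustivePotentialOnSliceModel}.
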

More precisely, for every point $y\in\mathcal{M}/G$ there is an open $G$-stable
neighborhood $U$ of $ p^{-1}(y)$ in $X$ and a $G$-invariant K\"ahler
potential $\rho$ of $\omega$ on $U$ such that
the restriction $\rho\vert_{\mathcal{M}\cap U}$
induces a continuous function $\bar\rho: p(\mathcal{M}\cap U)\to\R$ which is
a K\"ahler potential
for $\bar\omega$ on each non-singular stratum of $\mathcal{M}/G$, i.e. $\bar\omega=\de\dc\bar\rho$.\\
Existence of local $G$-invariant potentials
can also be shown in $G$-stable neighborhoods
of totally real orbits under an additional condition. In Section~\ref{sect_localPotential} we show the following theorem.
Denote $G^0$ the connected component of the neutral element
of $G$.
\begin{theorem}
Let $(X,G,\omega,\mu)$ be a Hamiltonian manifold with $H^1_{dR}(G^0)=0$. Let
the orbit $G\cdot x$ be totally real and
$\omega$ restricted to this orbit be exact. Then there exists an invariant local
potential on some $G$-stable neighborhood of $x$.\label{theorem_localPotentialAwayMomentZero}
\end{theorem}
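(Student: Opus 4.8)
The plan is to build the potential first on the complexified orbit and then propagate it transversally, after first reducing the momentum condition to the potential condition. I would start with that reduction: if $\rho$ is $G$-invariant on a connected $G$-stable $Y$ with $\omega=\de\dc\rho$ there, then Cartan's formula gives $\imath_{\tilde v}\omega=\lieder_{\tilde v}(\dc\rho)-\de(\imath_{\tilde v}\dc\rho)$ for $v\in\lie{G}$, and since $G$ acts by biholomorphisms the flow of $\tilde v$ commutes with $\dc$, whence $\lieder_{\tilde v}(\dc\rho)=\dc(\tilde v\rho)=0$; comparing with $\de\mu^v=\imath_{\tilde v}\omega$ shows $\mu^v+\imath_{\tilde v}\dc\rho$ is locally constant, defining a $c\in\lie{G}^*$ which the equivariance of $\mu$ forces to be $\operatorname{Ad}^*(G^0)$-invariant. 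So it will suffice to produce a $G$-invariant potential $\rho$ of $\omega$ near $x$ with $\imath_{\tilde v}\dc\rho=-\mu^v$ at a single point of $S:=G\cdot x$ (strict plurisubharmonicity being automatic from $\omega=\de\dc\rho$), since then $c\equiv 0$ on all of $Y$.

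Next I would introduce the complexified orbit. As $G$ acts by holomorphic isometries, each $\tilde v$ is real-holomorphic, so $v+iw\mapsto\tilde v+J\tilde w$ is a homomorphism $\lie{G}\otimes\C\to$(holomorphic vector fields) and integrates to a local holomorphic action of a complexification $\gtube$ of $G$ on a neighbourhood of $S$; since $S$ is totally real the $\gtube$-orbit $Z$ of $x$ is a local complex submanifold of complex dimension $\dim_\R S$ that contains $S$ as a totally real submanifold of maximal dimension, and (shrinking) retracts onto $S$. On $Z$ I would consider the $1$-form $\sigma$ defined by $\sigma(\tilde v)=0$, $\sigma(J\tilde v)=2\mu^v$: a direct computation from $\omega$ being of type $(1,1)$ and the equivariance identity $\{\mu^v,\mu^w\}=\mu^{[v,w]}$ shows $\sigma$ is closed and $G$-invariant, and $\sigma$ pulls back to $0$ on $S$. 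This is the step where I expect the hypotheses to be indispensable: exactness of $\omega|_S$ is plainly necessary (any potential $\rho$ has $\dc\rho|_S$ a primitive of $\omega|_S$), and $\omega|_S$ exact together with $H^1_{dR}(G^0)=0$ should be exactly what is needed to conclude that the closed $G$-invariant $1$-form $\sigma$ is itself exact on $Z$, say $\sigma=\de\rho_Z$ with $\rho_Z$ a $G$-invariant function; a short computation then gives $\de\dc\rho_Z=\omega|_Z$ and $\imath_{\tilde v}\dc\rho_Z=-\mu^v$ on $Z$.

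For the transversal propagation I would use that $G_x$ is compact (properness). Pick, via the slice theorem, a $G_x$-invariant smooth extension $\rho_1$ of $\rho_Z$ to a neighbourhood of $x$; then $\omega-\de\dc\rho_1$ is a real closed $(1,1)$-form restricting to $0$ on the complex submanifold $Z$, so a relative $\partial\bar\partial$-Lemma (solving $\partial$ with prescribed data along $Z$) produces a real $\psi$, vanishing on $Z$, with $\de\dc\psi=\omega-\de\dc\rho_1$; averaging $\psi$ over the compact $G_x$ preserves $G_x$-invariance and the vanishing on $Z$. Then $\rho_1+\psi$ is $G_x$-invariant with $\de\dc(\rho_1+\psi)=\omega$ and restriction $\rho_Z$ on $Z$, and through the slice identification $Y\cong G\times_{G_x}(\text{slice})$ it extends to a $G$-invariant function $\rho$ on a $G$-stable neighbourhood $Y$ of $x$ with $\de\dc\rho=\omega$ and $\imath_{\tilde v}\dc\rho=-\mu^v$ along $Z$; the first paragraph then gives $\mu^v+\imath_{\tilde v}\dc\rho\equiv0$ on $Y$, so $\rho$ is the required invariant potential.

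The hard part will be the middle step — turning ``$\omega|_S$ is exact'' into a $G$-invariant potential on the complexified orbit $Z$ whose differential matches $\mu$; this is where both hypotheses are consumed, while the transversal extension and the final bookkeeping are comparatively soft. A subsidiary nuisance is that, the $G$-action being only smooth, the complexification $\gtube$ and the orbit $Z$ must be obtained by integrating vector fields rather than by real-analytic continuation, and one must keep track of the component group $G/G^0$ when passing between ``$G$-invariant'' and ``$G^0$-invariant''.
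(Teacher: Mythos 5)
Your architecture (build the potential in the orbit directions from a $1$-form encoding $\mu$, extend transversally, normalize using the hypotheses) is close in spirit to the paper's, but two steps hide genuine gaps, and the first one is exactly where the theorem's content lives. The $1$-form $\sigma$ on the complexified orbit $Z$, prescribed by $\sigma(\tilde v)=0$ and $\sigma(J\tilde v)=2\mu^v$, is in general not well defined: at any $y\in Z$ with $\tilde v_y=0$ for some $v\neq0$ — in particular at $x$ itself whenever $\dim G_x>0$ — consistency forces $\mu^v(y)=0$, which is not automatic and is essentially what must be proved. The paper's solvable example ($G=S^1\ltimes\R^2$ on $\C^2$, where $\mu_c^{\partial/\partial\varphi}$ equals $c\neq0$ at the fixed point of the rotation field) shows $\sigma$ simply cannot be defined there. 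Relatedly, your stated use of the hypotheses is misplaced: once $\sigma$ \emph{is} well defined it restricts to $0$ on $S$, and since $Z$ retracts onto $S$ a closed form vanishing on $S$ is exact with no hypotheses at all. The real role of ``$\omega|_{G\cdot x}$ exact'' and ``$H^1_{dR}(G^0)=0$'' is to kill the constants $\mu^v$, $v\in\lie{G}_x$. The paper evades the ill-definedness by lifting to the product $\gtube\times S$ of a $G$-tube with a slice, where the $G$-action is free, so the analogue $\eta$ of your $\sigma$ exists everywhere and integrates to $\hat\rho$; then $\imath_{\tilde v}\dc\hat\rho$ is constant for $v$ in the diagonal $\lie{K}$, and the two hypotheses are consumed in showing this constant vanishes, by writing $\dc\hat\rho=(\dc\hat\rho-\hat\beta)+\hat\beta$ with $\de\beta=\omega$ near the orbit (so $\hat\beta$ annihilates the fields tangent to the fibers of $\gtube\times S\to\gtube\timescat{K}S$) and $\dc\hat\rho-\hat\beta$ closed, hence exact with $K$-invariant primitive because each component of $\gtube\times S$ has first de Rham cohomology $H^1_{dR}(G^0)=0$.

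The transversal step also does not close as written. The $G$-invariant extension of a $G_x$-invariant function from a slice agrees with the given function only on the slice; its second derivatives in the orbit and $J$-orbit directions are dictated by invariance rather than by the data on the slice, so $\de\dc(\rho_1+\psi)=\omega$ near $x$ does not transfer to $\de\dc\rho=\omega$ on the $G$-stable neighbourhood. The paper proves exactly this propagation at the end of Lemma~\ref{lem_productCase}: the identity $\imath_\zeta\imath_\xi\de\dc\rho=\imath_\zeta\imath_\xi\omega$ for slice directions $\zeta,\xi$ is transported along the flows of $J\tilde v$ using $\lieder_{J\tilde v}$, the already-established identities $\imath_{\tilde v}\de\dc\rho=\imath_{\tilde v}\omega$, and the relation $J\tilde v(\rho)=\mu^v$ \emph{off} the complexified orbit, not only along it — which is why the potential must be built globally on the product in the first place. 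Your relative $\partial\bar\partial$-lemma is a reasonable device for producing \emph{some} local potential extending $\rho_Z$, but not, by itself, a $G$-invariant one on a $G$-stable set.
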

In the last section we give
examples where there is no local invariant potential away from the momentum zero level (Section~\ref{sect_noPotential}).

%%%%%%%%%%%%%%%%%%%%%%%%%%%%%%%%%%%%%%%%%%%%%
%
%   1   Complexifying the group and the orbit
%
%%%%%%%%%%%%%%%%%%%%%%%%%%%%%%%%%%%%%%%%%%%%%

\section{Complexified group and orbits}
Let $G$ be a Lie group and $X$ a complex $G$-manifold.
By this we mean that $G$ acts smoothly by holomorphic
transformations.\\
In many cases of interest the $G$-action on $X$ is given by restricting a holomorphic action
of the complexified group $G^\C$ on a complex
manifold $Z$ which contains $X$ as an open $G$-stable
subset. Provided $G$ is contained in $G^\C$
this is not always the case. If 
otherwise $G$ is not contained in $G^\C$
this is even never the case for an effective $G$-action, e.g.
consider the group $H=SL_2(\R)$ acting by left multiplication on $H^\C=SL_2(\C)$. This action is proper and free and therefore there is an open
$H$-stable neighborhood $X$ of $H$ in $H^\C$
which is $H$-equivariantly diffeomorphic to $H\times \Sigma$ where $\Sigma\subset \R^3$ denotes a ball
in $\R^3$. The
universal covering $G=\hat H$ of $H$
acts holomorphically on the universal
covering $\hat X$
of $X$ and this $G$-action
cannot be realized as a restriction of a holomorphic action of $G^\C=H^\C=SL_2(\C)$.
\subsection{The $G$-tube}\label{subsection_Gtube}
As a first step we complexify the group $G$.
As these complexications are desired to be shrinkable
we need them to fulfill the Runge property relative
to each other.
\begin{definition}
An open subset $Y$ of a complex manifold
$X$ is said to be {\em Runge}
if the restriction map
$\osheaf(X)\to\osheaf(Y)$ has dense image in
$\osheaf(Y)$. 
\label{definition_runge} 
\end{definition}
\begin{definition}
Let $G$ act on $X$.
A continuous function $\rho:X\to\R$
is called {\em $G$-ex\-haus\-tive} or a
{\em $G$-exhaustion} if it is $G$-invariant 
and the sets
\mbox{$\{x\in X\ \vert\ \rho(x)<c\}/G$}
are relatively compact in $X/G$
for all $c<\sup\rho$. We just write {\em ex\-haus\-tion/ex\-haus\-tive} if the trivial group acts.
\label{definition_exhaustion}
\end{definition}
Recall that classical results from Grauert
show that given a strictly plurisubharmonic
exhaustion $\rho:X\to\R^{\geq 0}$
the relatively compact sets
$X_c=\{x\in X\ \vert\ \rho(x)<c\}$ are Runge
(cf. e.g. \cite[Thm. 1.3 (v)]{HenLei98}).
The existence
of a strictly plurisubharmonic exhaustion on $X$
is equivalent for $X$ to be a Stein manifold.
If we drop the assumption of $\rho$ being an exhaustion
but keep that $X$ is a Stein manifold the Runge property is still satisfied as the following shows.
\begin{lemma}
Let $X$ be a Stein manifold and $\rho_Y:X\to\R^{\geq 0}$ a strictly plurisubharmonic
function. Then $Y=\{y\in Y\ \vert\ \rho_Y(y)<1\}$ is
Runge in $X$.\label{lemma_Runge}
\end{lemma}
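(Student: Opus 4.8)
The plan is to reduce to the result of Grauert quoted above (sublevel sets of strictly plurisubharmonic exhaustions are Runge) by exploiting that $X$, being Stein, carries a strictly plurisubharmonic exhaustion of its own. Fix a smooth strictly plurisubharmonic exhaustion $\psi:X\to\R^{\ge0}$ and, for $c>0$, set
$$\phi_c:=\max\bigl(\rho_Y,\tfrac1c\,\psi\bigr):X\longrightarrow\R^{\ge0}.$$
Then $\phi_c$ is continuous and strictly plurisubharmonic, and it is an exhaustion of $X$: each sublevel set $\{\phi_c<t\}$ is contained in the relatively compact set $\{\psi<ct\}$, and since $\rho_Y$ is finite-valued these sets exhaust $X$ as $t\to\infty$. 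Moreover
$$\{\phi_c<1\}=\{\rho_Y<1\}\cap\{\psi<c\}=Y\cap\{\psi<c\}=:Y_c ,$$
which is relatively compact in $X$, so by Grauert's theorem $Y_c$ is Runge in $X$. (To stay literally inside the \emph{smooth}-exhaustion version of that theorem one first applies Richberg's regularisation to replace $\phi_c$ by a nearby smooth strictly plurisubharmonic exhaustion and works with a sublevel set of it slightly shrunk so as to lie inside $Y_c$; alternatively one notes that Grauert's bump construction goes through for continuous strictly plurisubharmonic exhaustions directly.)

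Now $Y=\bigcup_{c>0}Y_c$, and more usefully every compact $L\subset Y$ is contained in some $Y_c$: on $L$ one has $\rho_Y\le 1-\delta$ for a suitable $\delta>0$ and $\psi\le M<\infty$, so $L\subset Y_c$ for all $c>M$. Hence, given $f\in\osheaf(Y)$, a compact $L\subset Y$ and $\varepsilon>0$, pick such a $c$; since $f|_{Y_c}\in\osheaf(Y_c)$ and $Y_c$ is Runge in $X$, there is $g\in\osheaf(X)$ with $\sup_L|f-g|<\varepsilon$. Thus $\osheaf(X)\to\osheaf(Y)$ has dense image and $Y$ is Runge in $X$.

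The only non-formal point is the passage from ``$Y_c=\{\phi_c<1\}$ with $\phi_c$ a \emph{continuous} strictly plurisubharmonic exhaustion of $X$'' to ``$Y_c$ Runge in $X$'', i.e.\ the smoothing remark above; everything else is bookkeeping. It is worth noting that the hypothesis that $\rho_Y$ be plurisubharmonic on all of $X$ is essential here: for an annulus in $\C$, which is not of this form, the conclusion fails. As an alternative route one can bypass exhaustions and argue via hulls: for compact $L\subset Y$ the holomorphically convex hull $\widehat L$ is compact because $X$ is Stein, and it is contained in $\{\rho_Y\le\sup_L\rho_Y\}\subset Y$ by the classical fact that on a Stein manifold the holomorphically convex hull lies in the plurisubharmonic hull; then $f\in\osheaf(Y)$ is holomorphic near the compact $\osheaf(X)$-convex set $\widehat L$ and the Oka--Weil theorem on $X$ yields the approximation. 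In that approach the Bremermann-type identification of hulls is the main obstacle.
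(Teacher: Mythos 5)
Your proof is correct and takes essentially the same route as the paper's: both form the maximum of $\rho_Y$ with a scaled strictly plurisubharmonic exhaustion of the Stein manifold $X$, observe that the resulting sublevel sets $Y_c=Y\cap\{\psi<c\}$ are Runge in $X$ by Grauert's theorem, and then approximate a given $f\in\mathcal{O}(Y)$ on an exhausting family of compact subsets of $Y$. If anything, you are more explicit than the paper about the fact that the maximum is only continuous (the paper simply calls it ``strictly plurisubharmonic in the sense of perturbation''), and your side remark via plurisubharmonic hulls and Oka--Weil is a legitimate alternative the paper does not pursue.
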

\begin{proof}
We fix some strictly plurisubharmonic exhaustion function $\rho:X\to\R^{\geq 0}$ and for $n\in\N$
we set
$\rho_n=\max\{\frac{1}{n}\rho,\rho_Y\}$. The function
$\rho_n$
a strictly plurisubharmonic in the sense of perturbation and an exhaustion. It follows that
$$Y_n=\{x\in X\ \vert\ \rho_n(x)<1\}$$
is an open Stein submanifold of $X$. Moreover
$Y_n$ is Runge in $X$.
For $n>1$ we set
$$K_n=\{x\in X\ \vert\ \rho(x)\leq n-\frac{1}{2}, \rho_Y(x)\leq 1-\frac{1}{n}\}\ .$$
These are compact subsets of $Y_n$
with $K_n\subset K_{n+1}$ and
$\bigcup_{n\geq 2} K_n=Y$.
For every
holomorphic function $f\in \mathcal{O}(Y)$
there is a function $f_n\in\mathcal{O}(X)$ such that \mbox{$\Vert f-f_n\Vert_{K_n}<\frac{1}{n}$} holds
in the sup-norm on $K_n$. This shows that
$f_n\vert_Y$ converges to $f$, i.e. $\mathcal{O}(X)$
has 
dense image in $\mathcal{O}(Y)$.
\end{proof}
\begin{definition}
A Stein $G$-manifold $\gtube$ is said to be a {\em $G$-tube} if
\begin{enumerate}
\item there is a $G$-equivariant embedding $\kappa:G\to\gtube$
where $G$ acts on $G$ by left multiplication,
\item every $G$-orbit in $\gtube$ is a totally real submanifold of $\gtube$,\label{condition_tubeSplitsIntoTotallyRealGorbits}
\item there is a real
submanifold $\Sigma\subset \gtube$
with $\kappa(e)\in\Sigma$ such that
\begin{enumerate}
\item the map $G\times \Sigma\to\gtube,
(g,\gamma)\mapsto g\cdot \gamma$ is a $G$-equivariant diffeomorphism,
\item $\Sigma$ is diffeomorphic to a convex bounded
open neighborhood of $0$ in $\R^k$ with $k=\dim G$,
\end{enumerate}
\item there is a strictly plurisubharmonic 
$G$-exhaustion
$\rho:\gtube\to \R$ which
becomes minimal on $\kappa(G)$.
\item the manifold $\gtube$ is ``shrinkable'', i.e.
every $G$-stable neighborhood $U$ of $\kappa(G)$
contains a Stein $G$-submanifold $T\supset\kappa(G)$
such that $\Sigma_T=\Sigma\cap T$ is convex,
$\rho\vert_{\Sigma_T}$ is an exhaustion
and $T$ is Runge in $\gtube$.
\end{enumerate}
\end{definition}
Condition~\eqref{condition_tubeSplitsIntoTotallyRealGorbits}
implies that the $G$-action on $\gtube$ is proper and free.
We will refer to $\Sigma$ as a ``slice''
for the $G$-action on $\gtube$
and will identify $\kappa(G)$ with $G$.\\
A smooth function $\rho$
on a complex manifold $X$ is strictly plurisubharmonic if and only if
$\omega=i\partial\bar\partial\rho=\de\dc\rho$
is a K\"ahler form. Note that
$\dc=\frac{1}{2i}(\partial-\bar\partial)$ is a real
operator.\\
Later we will use $G$-tubes to construct 
local slice models
of the $G$-action on a complex manifold $X$
around totally real $G$-orbits.
For this we have to fix a compact subgroup
$K$ of $G$ which will be given as an
isotropy group of the $G$-action later.
\begin{theorem}
\textbf{\em (Existence of a  $G$-tube)} %$K$-stable
For every Lie group $G$ with fixed compact subgroup
$K$ there exists a $G$-tube $\gtube$
such that the $(G\times K)$-action
on $G$ given by $(g,k)\cdot x=gxk^{-1}$
extends to $\gtube$ as an action by holomorphic transformations.\label{theorem_gtubeExistence}
\end{theorem}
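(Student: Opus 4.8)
The plan is to realize $\gtube$ as a small \emph{Grauert tube} over $G$: the $r$-disc bundle of $TG$ carrying the adapted complex structure of Guillemin--Stenzel and Lempert--Sz\H{o}ke for a conveniently chosen left-invariant metric. Since $K$ is compact, fix an $\operatorname{Ad}(K)$-invariant inner product on $\lif g=\lie G$ and let $g$ be the induced left-invariant real-analytic Riemannian metric on $G$; it is left-$G$-invariant and right-$K$-invariant, so the $(G\times K)$-action $(a,k)\cdot x=axk^{-1}$ is by isometries of $(G,g)$. Because $g$ is homogeneous, the adapted complex structure exists on the disc bundle $\gtube=\{v\in TG:\|v\|<r\}$ for some constant $r>0$.

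The structure data of a $G$-tube come out as follows. The zero section is a $G$-equivariant real-analytic embedding $\kappa\colon G\hookrightarrow\gtube$ for left translation, giving (1). Trivialising $TG\cong G\times\lif g$ by left-invariant vector fields identifies $\gtube$ with $G\times B_r$, $B_r\subset\lif g\cong\R^k$ the $r$-ball, with $G$ acting only on the first factor; take $\Sigma=\{e\}\times B_r$, a bounded convex neighbourhood of $0$ containing $\kappa(e)$, so that (3) holds. For (4) take the energy $\rho(v)=\tfrac12\,g(v,v)$: real-analytic, strictly plurisubharmonic for the adapted structure, left-$G$-invariant, nonnegative, and vanishing precisely on $\kappa(G)$; since $\{\rho<c\}/G\cong B_{\sqrt{2c}}\Subset B_r=\gtube/G$ for $c<\sup\rho=r^2/2$, it is a $G$-exhaustion minimal on $\kappa(G)$. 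Finally, each element of $G\times K$ acts on $(G,g)$ as an isometry $\phi$, and an isometry of a real-analytic Riemannian manifold lifts to the biholomorphism $d\phi$ of the adapted complex structure, functorially in $\phi$; hence the $(G\times K)$-action extends to $\gtube$ by holomorphic transformations and restricts to the given action on $\kappa(G)$.

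It remains to settle (2), properness/freeness of the action, and the requirements that $\gtube$ be Stein and shrinkable. In the left trivialisation the extended left $G$-action is $a\cdot(x,\xi)=(ax,\xi)$, so the $G$-orbits are exactly the submanifolds $G\times\{\xi\}$, $\|\xi\|<r$, each of real dimension $k=\dim_\C\gtube$. The zero-section orbit is totally real by construction; for $\xi\ne 0$ one checks that the splitting $T_v\gtube=T_v(G\cdot v)\oplus J_vT_v(G\cdot v)$ persists, reading it off the local normal form of the adapted structure along geodesics and shrinking $r$ if necessary. This gives (2) and forces the action to be proper and free. For Steinness: $\kappa(G)$ is a $G$-stable closed totally real real-analytic submanifold, and such submanifolds admit a $G$-stable Stein neighbourhood $V$ (using the product decomposition, $V\supset G\times B_r=T^rG$ for $r$ small, and $\rho$ is plurisubharmonic on $V$, so $T^rG=\{v\in V:\rho(v)<r^2/2\}$ is a sublevel set of a plurisubharmonic function on a Stein manifold, hence Stein); we take this $r$ as our tube radius. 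Then for $r'<r$ the sub-tube $T'=\{\rho<r'^2/2\}\cong G\times B_{r'}$ is Stein for the same reason and is Runge in $\gtube$ by Lemma~\ref{lemma_Runge}; any $G$-stable neighbourhood $U$ of $\kappa(G)$ contains such a $T'$ (its image in $\gtube/G\cong B_r$ is a neighbourhood of $0$), and there $\Sigma\cap T'=\{e\}\times B_{r'}$ is convex with $\rho|_{\Sigma\cap T'}$ an exhaustion of $B_{r'}$. This yields (5) and completes the construction.

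The two delicate points are both local-to-global statements about the adapted complex structure near the \emph{non-compact} totally real submanifold $\kappa(G)$: total reality of the nonzero orbits $G\times\{\xi\}$, and the existence of a constant-radius (equivalently, $G$-stable) Stein tube. For $G$ compact one simply shrinks $r$; in general one must combine the existence of $G$-stable Stein neighbourhoods of $G$-stable closed totally real real-analytic submanifolds (for proper actions) with the homogeneity in the $G$-direction, and this is where the hypotheses are really used.
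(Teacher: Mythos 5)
Your proposal is correct and follows essentially the same route as the paper: an adapted-complex-structure (Grauert tube) neighborhood of the zero section in $TG$ for a real-analytic $(G\times K)$-invariant metric, with $\rho=\Vert\cdot\Vert^2$ as the strictly plurisubharmonic $G$-exhaustion, Steinness via invariant Stein neighborhoods of the totally real zero section together with pseudoconvexity of the sublevel sets, and shrinkability via Lemma~\ref{lemma_Runge}. The only cosmetic difference is that the paper obtains Steinness by citing Winkelmann and Docquier--Grauert rather than your sublevel-set phrasing, and, like you, it leaves the total reality of the nonzero orbits (your point~(2)) to a continuity-and-shrinking argument over the relatively compact slice.
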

\begin{proof}
We view $G$ as a $(G\times K)$-manifold. Then
there is 
a complex $(G\times K)$-manifold $T$ which
is $(G\times K)$-equivariantly
diffeomorphic
to a $(G\times K)$-stable neighborhood
of the zero section of the tangent bundle $TG$ of $G$.
In order to see this
one has to choose a real analytic $(G\times K)$-invariant
Riemannian metric
on $G$ and apply the main results in \cite{LemSko91,GuiSte92}.
By a result of Winkelmann (\cite{Win93}) the manifold
$T$ which we identify with a subset of $TG$
can be chosen to be a Stein manifold (see \cite{HHK96}).
On $TG$ we have the function $\rho(v)=\Vert v\Vert^2$,
$v\in T_gG$,
where $\Vert\ .\ \Vert$ is defined by the 
$(G\times K)$-invariant metric on $G$.
The function is strictly convex on $T_gG$ and constant
on the zero section, a totally real
submanifold of maximal dimension.
Thus $\rho$ is strictly plurisubharmonic on a neighborhood of the zero section in $T$. As $\rho$ is $(G\times K)$-invariant this
neighborhood can be chosen $(G\times K)$-stable.
This shows that for some $c_0>0$ and
all $0 < c \leq c_0$
the $(G\times K)$-stable sets
$T_c=\{v\in T\ \vert\ \rho(v)<c\}$
intersect $T_eG$ in a convex bounded set
and with $\Sigma=T_{c_0}\cap T_eG$
the function
$\rho\vert_\Sigma$ is strictly convex.
The restriction
$\rho\vert_{T_c}$
is a $G$-exhaustion by construction
becoming minimal on $\kappa(G)$.
Since $T$ is a Stein manifold and the boundary
of $T_c\subset T$ is pseudoconvex,
the $(G\times K)$-stable neighborhoods of
the zero section $G\times \{0\}\subset T$ are open
Stein submanifolds of $T$ (\cite{DocGra60}).
Finally, for every $G$-stable neighborhood $U$ of
$\kappa(G)$ there is a $0<c_1<c_0$
such that $T_{c_1}\subset U$. Lemma~\ref{lemma_Runge}
shows that $T_{c_1}$ is Runge in $T_{c_0}$.
Thus $T_{c_1}$ fulfills all the desired properties.
\end{proof}
The next proposition shows that we can consider a $G$-tube the prototype
set to which the real analytic orbit map extends as a holomorphic map.
\begin{proposition}
Given a complex manifold $X$ and a Lie group $G$ acting by holomorphic transformations, let
$\varphi:G\to X, g\mapsto g\cdot x_0$ be the orbit map for some point $x_0\in X$. Then
$\varphi$ extends to some $G$-tube $\gtube$ as a holomorphic map $\varphi^c:\gtube\to X$.
\label{proposition_orbitMapExtendsToGtube}
\end{proposition}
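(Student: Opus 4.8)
The plan is to reduce the assertion to the classical extension theorem for real-analytic maps, then to spread the resulting local extension around by the $G$-action, and finally to invoke shrinkability to land on a genuine $G$-tube. Fix a $G$-tube $\gtube$ (which exists by Theorem~\ref{theorem_gtubeExistence}, applied with trivial $K$) and identify $G$ with $\kappa(G)\subset\gtube$. First I would note that the orbit map $\varphi$ is real analytic: for $\xi\in\lie{G}$ the induced vector field $\tilde\xi$ on $X$ has a flow consisting of biholomorphisms, so $\tilde\xi$ is the real part of a holomorphic vector field and in particular is real analytic; since $t\mapsto\exp(t\xi)\cdot x_0$ is the integral curve of $\tilde\xi$ through $x_0$, analytic dependence of solutions of analytic ordinary differential equations on parameters and initial data makes $\xi\mapsto\varphi(\exp\xi)$ real analytic near $0$, and then $\varphi$ is real analytic everywhere by equivariance. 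Because $\gtube\cong G\times\Sigma$ with $\dim\Sigma=\dim G$, the orbit $\kappa(G)$ is a totally real submanifold of real dimension $\dim_{\C}\gtube$, i.e.\ a (real-analytic) maximal totally real submanifold; a neighborhood of $\kappa(e)$ in $\gtube$ is therefore a complexification of $G$, and the classical extension theorem for real-analytic maps into a complex manifold yields a holomorphic map $\tilde\varphi_0\colon W_0\to X$ on a small, connected, relatively compact neighborhood $W_0$ of $\kappa(e)$ with $\tilde\varphi_0=\varphi$ on $\kappa(G)\cap W_0$.

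The key step is to upgrade $\tilde\varphi_0$ to a $G$-equivariant map. By properness and freeness of the $G$-action on $\gtube$ (Condition~\eqref{condition_tubeSplitsIntoTotallyRealGorbits}) the set $\{k\in G\mid k\overline{W_0}\cap\overline{W_0}\neq\emptyset\}$ is compact and shrinks to the stabilizer $\{e\}$ of $\kappa(e)$ as $W_0$ does; so after a further shrinking of $W_0$ I may assume that this set lies in a neighborhood of $e$ so small that $W_0\cap k^{-1}W_0$ is connected and contains $\kappa(e)$ whenever it is nonempty. For such $k$ the two holomorphic maps $w\mapsto\tilde\varphi_0(k\cdot w)$ and $w\mapsto k\cdot\tilde\varphi_0(w)$ are defined on $W_0\cap k^{-1}W_0$ and agree on $\kappa(G)\cap W_0\cap k^{-1}W_0$ by equivariance of the orbit map; as this is a nonempty open subset of the maximal totally real submanifold $\kappa(G)$, the identity principle forces $\tilde\varphi_0(k\cdot w)=k\cdot\tilde\varphi_0(w)$ throughout $W_0\cap k^{-1}W_0$. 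Hence $\varphi^c(z):=g\cdot\tilde\varphi_0(g^{-1}z)$, for any $g\in G$ with $g^{-1}z\in W_0$, is a well-defined $G$-equivariant map on the $G$-stable open neighborhood $GW_0$ of $\kappa(G)$ (coincidences between distinct connected components of $G$ do not occur once $W_0$ is small), it restricts to $\varphi$ on $\kappa(G)$, and near any point $g_0w_0$ with $w_0\in W_0$ the identity $\varphi^c(z)=g_0\cdot\tilde\varphi_0(g_0^{-1}z)$ exhibits $\varphi^c$ as a composite of biholomorphisms with $\tilde\varphi_0$, hence holomorphic.

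Finally, $GW_0$ is a $G$-stable open neighborhood of $\kappa(G)$, so by the shrinkability axiom it contains a Stein $G$-submanifold $T\supset\kappa(G)$ with $\Sigma\cap T$ convex, $\rho|_{\Sigma\cap T}$ an exhaustion and $T$ Runge in $\gtube$; one checks directly that $T$, together with the slice $\Sigma\cap T$ and the restriction of $\rho$, satisfies all five conditions in the definition of a $G$-tube, and $\varphi^c|_T\colon T\to X$ is the desired holomorphic (in fact $G$-equivariant) extension of $\varphi$. I expect the main obstacle to be the second step: the naive real-analytic extension lives only on a neighborhood of $\kappa(e)$ that need not be $G$-stable and may be pinched arbitrarily thin along $\kappa(G)$, so one cannot simply apply shrinkability to its domain; the remedy is to use properness and freeness of the action to confine every coincidence $kW_0\cap W_0\neq\emptyset$ to a neighborhood of $e$ on which the identity principle for the maximal totally real orbit $\kappa(G)$ applies and forces the equivariance relation, after which spreading $\varphi^c$ over all of $G$ and passing to a $G$-tube is routine.
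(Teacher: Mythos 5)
Your argument is correct and follows essentially the same route as the paper: extend the real-analytic orbit map holomorphically near $e$, spread the extension over $\kappa(G)$ by the group action using the identity principle along the maximal totally real orbit, and then shrink to a $G$-tube. The only real difference is how well-definedness of the spread-out map is secured --- the paper parametrizes the domain as $T=G\cdot\Omega_\Sigma\cong G\times\Omega_\Sigma$ via the slice, so every point has a unique representative $g\cdot x$ with $x\in\Omega_\Sigma$ and no coincidences need be checked, whereas you invoke properness and freeness to confine the coincidences $kW_0\cap W_0\neq\emptyset$ to $k$ near $e$; both work, though your version needs the minor extra care of arranging that $W_0\cap k^{-1}W_0$ is connected (or at least that each of its components meets $\kappa(G)$), e.g.\ by taking $W_0$ geodesically convex.
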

\begin{proof}
The map $\varphi$ being real analytic extends as a holomorphic map $\psi$ to some neighborhood
$\Omega\subset\gtube$ 
of $e\in G$. We may assume that $\Omega_\Sigma=\Omega\cap\Sigma$ is connected.
Define $T=G\cdot \Omega_\Sigma$ which is $G$-equivariantly diffeomorphic
to $G\times \Omega_\Sigma$.
So, we can define the $G$-equivariant map $\varphi^c:T\to X,
g\cdot x\mapsto g\cdot\psi(x)$. For each point 
$x\in\Omega_\Sigma$ there is a neighborhood $U(x)$ such that
the map $\psi$ is locally $G$-equivariant on $U(x)$. So the maps $\varphi^c$ and $\psi$
are locally $G$-equivariant and identical
on $\Omega_\Sigma$, thus they are locally identical. Hence
$\varphi^c$ is holomorphic in a neighborhood of $\Omega_\Sigma$ and globally by $G$-equivariance.
Restricted to $G\subset \gtube$ both maps $\varphi$ and $\varphi^c$ coincide by construction.
Hence $\varphi^c$ extends $\varphi$ as a holomorphic function on a $G$-stable neighborhood of $G$.
By definition the neighborhood $T$ can be shrunk to a $G$-tube.
\end{proof}
\subsection{Complexified orbits}\label{subsect_cplxfdOrbit}
We will now establish the notion
of a complexified $G$-orbit
for an action of a real 
group by holomorphic transformations. It is meant to be a substitute for
a $G^\C$-orbit in the case where $G^\C$ does not act.
\begin{definition}
For each point $x\in X$, the orbit map
$\varphi: G\to X, g\mapsto g\cdot x$ is real analytic and extends
to a holomorphic map $\varphi^c:\gtube\to X$ on some
$G$-tube.
We denote by $\tau_G$ the smallest topology containing
the sets $\varphi^c(\gtube)\subset X$ as open sets for all orbit maps $\varphi$ and
all $G$-tubes to which $\varphi$ extends.
\end{definition}
Observe that Proposition~\ref{proposition_orbitMapExtendsToGtube}
guarantees the existence
of the extensions $\varphi^c$
to some $G$-tube.
\begin{lemma}
The images
$\varphi^c(\gtube)\subset X$ used in the definition of the topology form a basis of the
topology $\tau_G$.\label{lemma_basisTopology}
\end{lemma}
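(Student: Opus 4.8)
The plan is to show that the collection $\mathcal{B}=\{\varphi^c(\gtube)\}$, ranging over all orbit maps $\varphi$ and all $G$-tubes to which they extend, is closed under the formation of neighborhoods inside intersections, which is exactly the condition for a collection of open sets to be a basis for the topology it generates. So the real content is: given two such images $U_1=\varphi_1^c(\gtube_1)$ and $U_2=\varphi_2^c(\gtube_2)$, coming from orbit maps through points $x_1$ and $x_2$, and given a point $x\in U_1\cap U_2$, I must produce a third image $\varphi_x^c(G^\star_x)\in\mathcal{B}$ with $x\in\varphi_x^c(G^\star_x)\subset U_1\cap U_2$.

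First I would take $\varphi_x$ to be the orbit map $g\mapsto g\cdot x$ through the point $x$ itself, and by Proposition~\ref{proposition_orbitMapExtendsToGtube} it extends holomorphically to some $G$-tube. The key point is that I get to \emph{shrink} this $G$-tube: condition~(5) in the definition of $G$-tube says that any $G$-stable neighborhood of $\kappa(G)$ contains a smaller Stein $G$-submanifold which is again (after the obvious check) a $G$-tube, and the extension of $\varphi_x$ restricts to it. So it suffices to find a single $G$-stable neighborhood $W$ of $G$ inside the domain of $\varphi_x^c$ with $\varphi_x^c(W)\subset U_1\cap U_2$, and then shrink. Since $x\in U_1$, there is a point $g_1\cdot\gamma_1\in \gtube_1$ with $\varphi_1^c(g_1\cdot\gamma_1)=x$; by $G$-equivariance of $\varphi_1^c$ I may replace $x$ by a point on its $G$-orbit and assume the preimage lies in the slice $\Sigma_1$, so $\varphi_1^c$ maps a neighborhood of that slice point onto a neighborhood of $x$. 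Now both $\varphi_x^c$ and $\varphi_1^c$ are holomorphic maps whose images contain a neighborhood of the orbit $G\cdot x$ and which agree with the orbit map on $G$; by the local $G$-equivariance / identity-theorem argument used in the proof of Proposition~\ref{proposition_orbitMapExtendsToGtube}, the images $\varphi_x^c(G^\star_x)$ and $\varphi_1^c(\gtube_1)$ coincide on a neighborhood of $G\cdot x$. Hence for a sufficiently small $G$-stable $W\ni G$ we get $\varphi_x^c(W)\subset U_1$, and symmetrically $\varphi_x^c(W')\subset U_2$ for some $G$-stable $W'$; intersecting and shrinking to a $G$-tube $G^\star_x\subset W\cap W'$ gives $x\in\varphi_x^c(G^\star_x)\subset U_1\cap U_2$, as required.

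The main obstacle is the comparison step: making precise that two different $G$-tube extensions of $G$-orbit maps through the \emph{same} point have images that agree near the orbit. This is not literally a statement about equality of maps — the domains $\gtube$ and $G^\star_x$ are different abstract Stein manifolds — but rather that the \emph{images} in $X$ agree locally, which follows because near $G$ both maps are submersions onto a complexification of the totally real submanifold $G\cdot x$, and any two such local complexifications of a real-analytic submanifold coincide (uniqueness of the complexification germ of a totally real submanifold, which is the same identity-theorem mechanism already invoked for Proposition~\ref{proposition_orbitMapExtendsToGtube}). Once this local agreement of images is in hand, everything else is the routine shrinkability bookkeeping using condition~(5) of the $G$-tube definition together with Lemma~\ref{lemma_Runge} to guarantee the shrunk neighborhood is again a genuine $G$-tube.
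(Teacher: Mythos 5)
Your reduction of the lemma to the statement ``for every $y\in\varphi_1^c(\gtube_1)\cap\varphi_2^c(\gtube_2)$ there is a basis element containing $y$ and contained in the intersection'' is exactly the paper's starting point, and your idea of extending the orbit map of $y$ itself and then shrinking via condition (5) of the $G$-tube definition is also the right skeleton. But the comparison step --- the part you yourself identify as ``the main obstacle'' --- contains a genuine gap. You assert that $\varphi_1^c$ ``maps a neighborhood of that slice point onto a neighborhood of $x$'' and that both $\varphi_x^c$ and $\varphi_1^c$ are ``submersions onto a complexification of the totally real submanifold $G\cdot x$.'' Neither holds in the generality of this lemma: the $G$-action on $X$ is only assumed to be by holomorphic transformations, so the orbit $G\cdot x$ need not be totally real in $X$, the orbit map need not be immersive, and --- most importantly --- the image $\varphi_i^c(\gtube_i)$ is typically \emph{not} open in $X$ and does not contain a neighborhood of $G\cdot x$. (Think of $S^1$ acting on $\C^2$ by $t\cdot(z_1,z_2)=(tz_1,t^{-1}z_2)$: the image of the extended orbit map of a generic point is a one-complex-dimensional $\C^*$-orbit. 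The whole point of $\tau_G$ is that these images are usually thin sets, which is why $\tau_G$ is strictly finer than the manifold topology.) So the ``uniqueness of the complexification germ of a totally real submanifold of $X$'' mechanism you invoke has nothing to bite on, and you cannot conclude that the images of $\varphi_x^c$ and $\varphi_1^c$ agree near $G\cdot x$ by an argument taking place in $X$.

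The fix, which is what the paper does, is to move the identity-theorem comparison from $X$ into the tube, where total reality is available by \emph{definition}. Given $y\in\varphi_1^c(\gtube_1)\cap\varphi_2^c(\gtube_2)$, pick preimages $\gamma_i\in\gtube_i$ of $y$ and apply Lemma~\ref{lemma_gstableOpenSetsInTubeAreTauGopen} inside each $\gtube_i$ to get tube-extensions of the orbit maps $g\mapsto g\cdot\gamma_i$ with image in $\gtube_i$; composing with $\varphi_i^c$ yields two holomorphic extensions $\tilde\varphi_1^c,\tilde\varphi_2^c$ of the \emph{same} real-analytic map $g\mapsto g\cdot y$ to $G$-tubes. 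Since $G\subset\gtube$ is totally real of maximal dimension in the tube, the identity theorem applies there and forces $\tilde\varphi_1^c=\tilde\varphi_2^c$ on a common smaller $G$-tube $T_0$, whence $\tilde\varphi_1^c(T_0)$ is a basis element containing $y$ and contained in $\varphi_1^c(\gtube_1)\cap\varphi_2^c(\gtube_2)$. Note also that your comparison of $\varphi_x^c$ with $\varphi_1^c$ directly is ill-posed even as stated, since these extend \emph{different} orbit maps ($g\mapsto g\cdot x$ versus $g\mapsto g\cdot x_1$) and so do not ``agree with the orbit map on $G$''; the detour through the preimage points $\gamma_i$ is what produces two extensions of one and the same map.
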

The proof of this lemma is given after Lemma~\ref{lemma_openSetsInTube}.
\begin{definition}
The {\em complexified $G$-orbit} of $x\in X$ is the connected component
of $x$ in the topology $\tau_G$ denoted by $B_X(x)$
or just by $B(x)$.\label{definition_cplxOrbit}
\end{definition}
In the following we collect some properties of the 
topology $\tau_G$ and provide the postponed proof of Lemma~\ref{lemma_basisTopology}.\\
In a first step we endow the $G$-tube $\gtube$ itself with the topology $\tau_G$ 
and use this later as a model.
\begin{lemma}
Given a $G$-stable open subset $U\subset \gtube$ and $\gamma \in U$
there is a $G$-tube $T\subset \gtube$
and a holomorphic extension
$\varphi^c:T\to\gtube$
of the orbit map 
$\varphi:G\to \gtube, g\mapsto g\cdot\gamma$ 
such that $\varphi^c(T)\subset U$.
\label{lemma_gstableOpenSetsInTubeAreTauGopen}
\end{lemma}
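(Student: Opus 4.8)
The plan is to produce $T$ in two moves: first extend the orbit map to \emph{some} $G$-tube via Proposition~\ref{proposition_orbitMapExtendsToGtube}, and then cut that $G$-tube down into $U$ using the shrinkability clause in the definition of a $G$-tube. Applying Proposition~\ref{proposition_orbitMapExtendsToGtube} with ambient manifold $\gtube$ and base point $x_0=\gamma$ yields a $G$-tube $\gtube'$, with structure embedding $\kappa':G\hookrightarrow\gtube'$, slice $\Sigma'$ and strictly plurisubharmonic $G$-exhaustion $\rho'$, together with a holomorphic extension $\psi:\gtube'\to\gtube$ of $\varphi$, so that $\psi\circ\kappa'=\varphi$. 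Since $\psi$ is $G$-equivariant and $U$ is $G$-stable, $W:=\psi^{-1}(U)$ is open and $G$-stable; and because $\psi(\kappa'(g))=g\cdot\gamma\in U$ for every $g\in G$ (using $\gamma\in U$ and $G$-stability of $U$), $W$ is a $G$-stable open neighborhood of $\kappa'(G)$. The shrinkability axiom of $\gtube'$, applied to the neighborhood $W$, then supplies a Stein $G$-submanifold $T\subset W$ with $\kappa'(G)\subset T$ such that $\Sigma_T:=\Sigma'\cap T$ is convex, $\rho'\vert_{\Sigma_T}$ is an exhaustion, and $T$ is Runge in $\gtube'$.

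Then I would verify that $T$ is again a $G$-tube and that $\varphi^c:=\psi\vert_T$ has the required properties. From the diffeomorphism $G\times\Sigma'\to\gtube'$, $(g,\sigma)\mapsto g\cdot\sigma$, and uniqueness of this representation, $G$-stability of $T$ forces $T=G\cdot\Sigma_T$; hence $T$ is open in $\gtube'$, the diffeomorphism restricts to $G\times\Sigma_T\to T$, every $G$-orbit in $T$ is totally real (being a $G$-orbit in $\gtube'$), and $\rho'\vert_T$ is a strictly plurisubharmonic $G$-exhaustion minimal on $\kappa'(G)$ (it is $G$-invariant, it is a $G$-exhaustion because $\rho'\vert_{\Sigma_T}$ is an exhaustion of the slice $\Sigma_T\cong T/G$, and minimality on $\kappa'(G)$ is inherited from $\gtube'$). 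Internal shrinkability of $T$ follows from that of $\gtube'$ together with transitivity of the Runge property: if $T''\subset T\subset\gtube'$ and $\osheaf(\gtube')$ has dense image in $\osheaf(T'')$, then so does $\osheaf(T)$. The restriction $\varphi^c=\psi\vert_T:T\to\gtube$ is then holomorphic, satisfies $\varphi^c\circ\kappa'\vert_G=\varphi$ so that it extends the orbit map, and $\varphi^c(T)=\psi(T)\subseteq\psi(W)\subseteq U$. To see $T$ sitting literally inside $\gtube$ as the statement suggests, one observes in addition that $\psi$ is a local biholomorphism along $\kappa'(G)$: its differential at $p\in\kappa'(G)$ is $\C$-linear and carries the totally real subspace $T_p(\kappa'(G))$, of maximal dimension, isomorphically onto the (also maximal) totally real subspace $T_{\psi(p)}(G\cdot\gamma)$, hence is bijective; by freeness and properness of the $G$-action, after one more shrinking $\psi$ becomes an open embedding and $\psi(T)\subset U$ is a sub-$G$-tube of $\gtube$ with the inclusion playing the role of $\varphi^c$.

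I expect the only genuine obstacle to be the bookkeeping in the second step: confirming that the submanifold handed over by the shrinkability axiom verifies all the defining clauses of a $G$-tube — in particular its own shrinkability, where transitivity of the Runge property enters — and, for the literal form of the statement, arranging $\psi$ to be an open embedding. The rest is a direct application of Proposition~\ref{proposition_orbitMapExtendsToGtube} and of the definitions.
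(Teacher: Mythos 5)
Your proposal is correct and follows essentially the same route as the paper: extend the orbit map to some $G$-tube via Proposition~\ref{proposition_orbitMapExtendsToGtube}, pull back $U$, invoke the shrinkability axiom to obtain a subtube $T$ with image in $U$, and observe that the extension is an injective immersion near $\kappa'(G)$ (hence, by equality of dimensions and properness/freeness of the action, a biholomorphism onto its image after one more shrinking). Your version merely spells out the bookkeeping — that the shrunk set satisfies all $G$-tube axioms and that the differential is bijective along the orbit — which the paper's proof leaves implicit.
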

\begin{proof}
The $G$-orbit $G\cdot\gamma$ is totally real by
definition of the $G$-tube. Proposition~\ref{proposition_orbitMapExtendsToGtube}
shows that $\varphi$ extends holomorphically to some
$G$-tube $T_0\subset\gtube$ as $\varphi^c:T_0\to \gtube$. Since $\varphi$ is injective and immersive,
the extension $\varphi^c$ is an injective immersion on some $G$-stable neighborhood of $G$. This set can be
shrunk to a $G$-tube $T$ by definition.
Since $\varphi^c$ is open for
dimension reasons, $\varphi^c$ maps $T$ biholomorphically to its image in $U$.
\end{proof}
Recall that the $G$-tube $\gtube$ is assumed to be $G$-equivariantly diffeomorphic
to $G\times \Sigma$. We consider $\Sigma$ as a subset of $\gtube$ via its identification with $\{e\}\times \Sigma$.
\begin{lemma}
A subset $U\subset\gtube$ is $\tau_G$-open if and only if it is open and
$G$-stable. Such a set is $\tau_G$-connected if and only if its intersection with $\Sigma$ is connected,
in particular $(\gtube,\tau_G)$ is locally connected and the $G$-tube itself is $\tau_G$-connected.
\label{lemma_openSetsInTube}
\end{lemma}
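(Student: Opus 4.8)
The plan is to exploit the product structure $\gtube \cong G\times\Sigma$ together with Lemma~\ref{lemma_gstableOpenSetsInTubeAreTauGopen} to pin down the $\tau_G$-open sets, and then to analyze connectedness by projecting to the slice $\Sigma$. First I would prove the equivalence ``$\tau_G$-open $\iff$ open and $G$-stable''. One direction is immediate: each basic set $\varphi^c(\gtube')$ is the image of a $G$-equivariant holomorphic map, hence $G$-stable, and it is open in $X$ because $\varphi^c$ is an open map (either by the dimension argument used in the proof of Lemma~\ref{lemma_gstableOpenSetsInTubeAreTauGopen}, or directly since $\varphi$ is an immersion and the complexified map is an open embedding on a suitable $G$-tube); arbitrary unions of such sets are again open and $G$-stable, so every $\tau_G$-open set has these two properties. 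For the converse, let $U\subset\gtube$ be open and $G$-stable and let $\gamma\in U$ be arbitrary; Lemma~\ref{lemma_gstableOpenSetsInTubeAreTauGopen} furnishes a $G$-tube $T$ and a holomorphic extension $\varphi^c\colon T\to\gtube$ of the orbit map through $\gamma$ with $\varphi^c(T)\subset U$ and $\gamma\in\varphi^c(T)$. Thus $U$ is a union of basic $\tau_G$-open sets, hence $\tau_G$-open. This already shows $(\gtube,\tau_G)$ is locally connected once the connectedness criterion is established, since the basic sets can be taken $\tau_G$-connected.

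Next I would handle the connectedness statement. Write $q\colon\gtube\cong G\times\Sigma\to\Sigma$ for the projection; note $q$ is continuous, open, and constant on $G$-orbits, so for any $G$-stable open $U$ we have $q(U)$ open in $\Sigma$ and $U = q^{-1}(q(U))$. Suppose $U\cap\Sigma$ (equivalently $q(U)$, under the identification $\Sigma\cong\{e\}\times\Sigma$) is connected. If $U = U_1\sqcup U_2$ were a decomposition into nonempty disjoint $\tau_G$-open sets, then by the first part each $U_i$ is $G$-stable and open, so $U_i = q^{-1}(q(U_i))$ and $q(U) = q(U_1)\sqcup q(U_2)$ is a decomposition of $q(U)$ into nonempty disjoint open subsets of $\Sigma$ --- here I use that $U_i$ is $G$-stable to conclude the images $q(U_i)$ are disjoint --- contradicting connectedness of $q(U)$. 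Hence $U$ is $\tau_G$-connected. Conversely, if $U\cap\Sigma$ is disconnected, say $U\cap\Sigma = A_1\sqcup A_2$ with $A_i$ nonempty open in $\Sigma$, then $q^{-1}(A_1)\cap U$ and $q^{-1}(A_2)\cap U$ are nonempty disjoint $G$-stable open sets covering $U$, hence $\tau_G$-open by the first part, so $U$ is $\tau_G$-disconnected.

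Applying the criterion with $U=\gtube$ and $U\cap\Sigma=\Sigma$, which is connected since $\Sigma$ is diffeomorphic to a convex open set in $\R^k$, shows $(\gtube,\tau_G)$ is $\tau_G$-connected; and since every point has a neighborhood basis of $\tau_G$-connected sets (take $q^{-1}(B)$ for $B$ a small ball in $\Sigma$, which has connected intersection with $\Sigma$), the space is locally connected. The main obstacle I anticipate is the openness of the maps $\varphi^c$ on $\gtube$: one must argue that the complexified orbit map, which a priori is only holomorphic on a sub-$G$-tube, is an open map onto a $G$-stable open subset; this is where the full strength of the $G$-tube axioms (the product structure $G\times\Sigma$, total reality of orbits forcing $\dim_\R(G\cdot\gamma) = \dim_\C$ of the complexified orbit locally, and shrinkability) is needed, essentially repeating the immersion-plus-dimension argument already carried out in the proof of Lemma~\ref{lemma_gstableOpenSetsInTubeAreTauGopen}. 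Everything else is point-set topology around the projection $q$.
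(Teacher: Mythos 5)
Your proof follows the same route as the paper's: Lemma~\ref{lemma_gstableOpenSetsInTubeAreTauGopen} for the characterization of $\tau_G$-open sets as the open $G$-stable ones, and the product structure $\gtube\cong G\times\Sigma$ (saturation of $G$-stable sets under the projection to $\Sigma$) for the connectedness criterion and the final assertions. You are in fact more explicit than the paper about the one delicate point --- the classical openness of an arbitrary basic set $\varphi^c(T)$ --- which, as you anticipate, follows from the immersion-plus-dimension argument at \emph{every} point of $T$, since every $G$-orbit in a $G$-tube is totally real of maximal dimension.
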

\begin{proof}
Lemma~\ref{lemma_gstableOpenSetsInTubeAreTauGopen} shows that any open, $G$-stable
subset of $\gtube$ is $\tau_G$-open and vice versa, that each point in a 
$\tau_G$-open set in $\gtube$ contains a $G$-stable, open subset, i.e.
each $\tau_G$-open set in $\gtube$ is $G$-stable and open.\\
Let $U\subset\gtube$ be a $\tau_G$-open set
and cover it by two disjoint
$\tau_G$-open sets. This induces a cover of $U\cap\Sigma$. Since $U$ is $G$-stable, $U$ is $\tau_G$-connected
if and only if $U\cap \Sigma$ is connected. The result follows from the fact, that $\Sigma$ is locally
connected and connected.
\end{proof}
\begin{proof}[Proof of Lemma~\ref{lemma_basisTopology}]
We want to show that the sets $\varphi^c(T)$ form a basis of the topology.
Given two points $x_1, x_2\in X$ and for $i=1,2$ some holomorphic extensions
$\varphi^c_i:T_i\to X$ of the corresponding
orbit maps $\varphi_i:G\to X, g\mapsto g\cdot x_i$ to $G$-tubes $T_1, T_2$
respectively.
Suppose there is a point $y\in \varphi^c_1(T_1)\cap \varphi^c_2(T_2)$.
Lemma~\ref{lemma_gstableOpenSetsInTubeAreTauGopen} provides
extensions $\tilde\varphi^c_i:\tilde T_i\to X$ of the orbit map of $y$ 
with $\tilde\varphi^c_i(\tilde T_i)\subset \varphi^c_i(U_i)$ for $i=1,2$. Since both 
holomorphic maps $\tilde\varphi^c_i$
extend the same map, they coincide on some possibly smaller
$G$-tube $T_0\subset\gtube$ which can
even be chosen to be contained
in $\tilde T_1\cap\tilde T_2$. The latter implies that $\tilde\varphi^c_1(T_0)$
is contained in $\varphi^c_1(T_1)\cap \varphi^c_2(T_2)$.
\end{proof}
The following lemma shows that the equivariant holomorphic maps behave in a
natural way compatible with the $\tau_G$-topology.
\begin{lemma}
Let $G$ act on $X$ and $Y$ by holomorphic transformations and let 
$\psi:X\to Y$ be a $G$-equivariant holomorphic map. Then
$\psi:(X,\tau_G)\to (Y,\tau_G)$ is continuous and open.\label{lemma_topologyCompatibleEquivariantHolomorphism}
\end{lemma}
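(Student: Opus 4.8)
The plan is to reduce the statement to the case where the base is a $G$-tube, where the topology $\tau_G$ admits the very explicit description given in Lemma~\ref{lemma_openSetsInTube}. Concretely, I would first recall that by Proposition~\ref{proposition_orbitMapExtendsToGtube} every orbit map $G\to X$, $g\mapsto g\cdot x$, extends holomorphically to some $G$-tube $\varphi^c_X:T_X\to X$, and likewise the orbit map of the image point $\psi(x)\in Y$ extends to $\varphi^c_Y:T_Y\to Y$. The key observation is that $\psi\circ\varphi^c_X$ and $\varphi^c_Y$ are both $G$-equivariant holomorphic maps from a (shrunk common) $G$-tube into $Y$ which agree on the totally real submanifold $\kappa(G)$; since a holomorphic map is determined by its values on a totally real submanifold of maximal dimension, they agree on a common $G$-tube $T\subset T_X\cap T_Y$. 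Hence $\psi\bigl(\varphi^c_X(T)\bigr)=\varphi^c_Y(T)$, which is a $\tau_G$-basic open set in $Y$.

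From this identity continuity is immediate on the basis: given a $\tau_G$-basic open set $V=\varphi^c_Y(\gtube_Y)\subset Y$ and a point $x\in\psi^{-1}(V)$, the argument above (applied to $x$ and using that $\psi(x)\in V$, so its orbit-map extension lands inside $V$ after shrinking, by Lemma~\ref{lemma_gstableOpenSetsInTubeAreTauGopen} transported through the biholomorphism $\varphi^c_Y$) produces a $G$-tube $T$ with $x\in\varphi^c_X(T)\subset\psi^{-1}(V)$. Since the $\varphi^c_X(T)$ form a basis for $\tau_G$ on $X$ (Lemma~\ref{lemma_basisTopology}), $\psi^{-1}(V)$ is $\tau_G$-open, so $\psi$ is continuous. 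For openness, I take a $\tau_G$-basic open set $\varphi^c_X(T)\subset X$; by the identity $\psi\bigl(\varphi^c_X(T)\bigr)=\varphi^c_Y(T)$ (after passing to a common $G$-tube), the image is a $\tau_G$-basic open set in $Y$, hence $\psi$ maps basic open sets to open sets and is therefore an open map.

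The main obstacle is the bookkeeping of "shrinking to a common $G$-tube": I need to make sure that when I intersect the domains $T_X$ and $T_Y$ of the two extensions and shrink to get a genuine $G$-tube $T_0$ on which $\psi\circ\varphi^c_X=\varphi^c_Y$, I stay within the setup of Lemma~\ref{lemma_gstableOpenSetsInTubeAreTauGopen} — i.e. that $T_0$ can be chosen Runge and with convex slice. This is exactly the shrinkability axiom in the definition of a $G$-tube, so it goes through, but it must be invoked carefully at each step. The identity-theorem step also deserves a word: it relies on condition~\eqref{condition_tubeSplitsIntoTotallyRealGorbits}, that $\kappa(G)$ is totally real of maximal real dimension in the $G$-tube, so that two holomorphic maps agreeing on $\kappa(G)$ agree on a neighborhood, and then globally on a $\tau_G$-connected shrunk tube by Lemma~\ref{lemma_openSetsInTube}. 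Once these two points are in place the proof is a routine assembly of the earlier lemmas, with no further calculation needed.
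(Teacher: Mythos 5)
Your proposal is correct in substance and rests on the same mechanism as the paper's proof, namely the observation that for a $G$-equivariant $\psi$ the composition $\psi\circ\varphi^c_X$ restricts on $\kappa(G)$ to the orbit map of $y=\psi(x)$. For continuity your argument is essentially the paper's: given a $\tau_G$-open $V\subset Y$ containing $y$, one produces an extension of the orbit map of $y$ with image in $V$, matches it with $\psi\circ\varphi^c_X$ near $\kappa(G)$ by uniqueness of holomorphic extensions off the maximal totally real submanifold $\kappa(G)$, and shrinks to a genuine sub-tube using the shrinkability axiom; the paper packages the same idea as a sequence/contradiction argument, but the content is identical. (One small correction: $\varphi^c_Y$ is in general not a biholomorphism onto its image --- the orbit map $g\mapsto g\cdot y_0$ need not be injective or immersive when the isotropy is nontrivial --- so ``transported through the biholomorphism $\varphi^c_Y$'' should instead invoke the argument in the proof of Lemma~\ref{lemma_basisTopology}, i.e.\ Lemma~\ref{lemma_gstableOpenSetsInTubeAreTauGopen} applied upstairs in the tube $\gtube_Y$ followed by composition with $\varphi^c_Y$.)

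The one place where your write-up does not quite close is the openness step. Your identity $\psi\circ\varphi^c_X=\varphi^c_Y$ is only established on the shrunk common tube $T_0\subset T$, so what you have shown is that $\psi\bigl(\varphi^c_X(T_0)\bigr)=\varphi^c_Y(T_0)$ is basic open --- not that the image of the \emph{original} basic set $\varphi^c_X(T)$ is open. As written, ``$\psi$ maps basic open sets to open sets'' does not follow. The gap is fixable within your framework (cover $\varphi^c_X(T)$ by basic sets through each of its points, using Lemma~\ref{lemma_basisTopology}, and push each forward), but the cleaner route --- and the one the paper takes --- makes the detour through $\varphi^c_Y$ and the identity theorem unnecessary for openness: $\psi\circ\varphi^c_X:T\to Y$ is \emph{itself} a holomorphic extension of the orbit map of $y$ to the full $G$-tube $T$, so its image $\psi\bigl(\varphi^c_X(T)\bigr)$ is by definition one of the sets declared open in $\tau_G$ on $Y$, with no shrinking at all. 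I recommend restructuring the openness half around that one-line observation and reserving the identity-theorem/common-tube argument for the continuity half, where it is genuinely needed.
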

\begin{proof}
Given a $\tau_G$-open set $U_X\subset X$, set $V_Y=\psi(U_X)$. For a given point $y\in V_Y$
choose $x\in U_X$ in the $\psi$-fiber of $y$. By definition of $\tau_G$, there is
a $G$-stable, open set $U\subset \gtube$ such that
$\varphi^c:U\to U_X$ is a holomorphic extension of the orbit map $g\mapsto g\cdot x$.
Due to $G$-equivariance of $\psi$, the map
$\psi\circ\varphi^c$ is in fact a holomorphic extension of the orbit map
$g\mapsto g\cdot y$ with
$\psi\circ\varphi^c(U)\subset V_Y$. Thus $V_Y$ is $\tau_G$-open and therefore $\psi$ is $\tau_G$-$\tau_G$-open.\\
For a given $\tau_G$-open set $V_Y\subset Y$ set $U_X=\psi^{-1}(V_Y)$ and fix $x\in U_X$.
We choose a holomorphic extension $\varphi^c:U\to X$ of the orbit map $g\mapsto g\cdot x$.
If $(\psi\circ\varphi^c)^{-1}(V_Y)$ admits an open neighborhood of $G\subset\gtube$, we may restrict $\varphi^c$ to
this set and are done, since $U_X$ contains a $\tau_G$-open neighborhood of $x$. So suppose the contrary.
There are sequences $t_n\in\gtube$ and $g_n\in G$ such that $t_n'=g_n\cdot t_n$ 
converges to $e$ and $\psi\circ\varphi^c(t_n)\not\in V_Y$.
The set $V_Y$ being $G$-stable and $\varphi^c$ and $\psi$ being $G$-equivariant, 
we conclude $\psi\circ\varphi^c(t_n')\not\in V_Y$.
But $\psi\circ\varphi^c$
is a holomorphic extension of the orbit map $g\mapsto g\cdot y$ and hence extends
to some neighborhood of $G\subset U$ with image in $V_Y$, since $V_Y$ is assumed $\tau_G$-open.
So, $\psi$ is $\tau_G$-$\tau_G$-continuous.
\end{proof}
\begin{lemma}
The space $(X,\tau_G)$ is locally connected. In particular, the
connected components, namely the complexified $G$-orbits, are $\tau_G$-closed and
$\tau_G$-open.\label{lemma_topologyLocallyConnected}
\end{lemma}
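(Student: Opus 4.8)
The plan is to reduce the assertion to the model case of a $G$-tube, which was already treated in Lemma~\ref{lemma_openSetsInTube}. By Lemma~\ref{lemma_basisTopology} the sets $\varphi^c(\gtube)$, images of holomorphic extensions of orbit maps to $G$-tubes, form a basis for $\tau_G$. So let $x \in X$ be arbitrary and let $\varphi^c:\gtube \to X$ be such an extension with $x = \varphi^c(e)$, chosen (after shrinking the tube, using Proposition~\ref{proposition_orbitMapExtendsToGtube} and the argument in Lemma~\ref{lemma_gstableOpenSetsInTubeAreTauGopen}) so that $\varphi^c$ is an injective immersion, hence a biholomorphism onto its open image. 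The map $\varphi^c$ is $G$-equivariant and holomorphic, so by Lemma~\ref{lemma_topologyCompatibleEquivariantHolomorphism} it is continuous and open as a map $(\gtube,\tau_G) \to (X,\tau_G)$. Since $\varphi^c$ is also injective, it is a $\tau_G$-homeomorphism onto $\varphi^c(\gtube)$ equipped with the subspace $\tau_G$-topology; one only has to check that the subspace topology induced from $(X,\tau_G)$ agrees with the $\tau_G$-topology of the manifold $\varphi^c(\gtube) \cong \gtube$, which again follows from Lemma~\ref{lemma_basisTopology} since the basis elements inside $\varphi^c(\gtube)$ are exactly the images of orbit-map extensions there.

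Now I invoke Lemma~\ref{lemma_openSetsInTube}, which states that $(\gtube,\tau_G)$ is locally connected. Transporting this through the $\tau_G$-homeomorphism $\varphi^c$, the point $x$ has a neighborhood basis of $\tau_G$-connected $\tau_G$-open sets. As $x$ was arbitrary, $(X,\tau_G)$ is locally connected.

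For the ``in particular'' clause: in any locally connected space the connected components of open sets are open, and in particular the connected components of the whole space are open. The complexified $G$-orbits $B(x)$ are by Definition~\ref{definition_cplxOrbit} precisely these connected components of $(X,\tau_G)$, hence $\tau_G$-open. They are also $\tau_G$-closed, being complements of unions of the remaining (open) components. I expect the only genuinely delicate point to be the verification that the subspace $\tau_G$-topology on $\varphi^c(\gtube)\subset X$ coincides with the intrinsic $\tau_G$-topology of the tube $\gtube$; this is where one must use that $\varphi^c$ is a biholomorphism onto an open set together with the basis statement of Lemma~\ref{lemma_basisTopology}, rather than merely continuity and openness from Lemma~\ref{lemma_topologyCompatibleEquivariantHolomorphism}. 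Everything else is a routine transport of structure.
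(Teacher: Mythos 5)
Your overall strategy---reduce to the model case of the tube, where Lemma~\ref{lemma_openSetsInTube} gives connectedness, and transport the conclusion to $X$ via Lemma~\ref{lemma_topologyCompatibleEquivariantHolomorphism}---is exactly the paper's, and your treatment of the ``in particular'' clause is correct. But the way you carry out the transport contains a genuine error: you claim that, after shrinking the tube, $\varphi^c$ can be arranged to be an injective immersion and hence a biholomorphism (and then a $\tau_G$-homeomorphism) onto its open image. This is false for a general point $x\in X$. The orbit map $g\mapsto g\cdot x$ is injective and immersive only when the isotropy group is trivial and the orbit is of dimension $\dim G$; at a point with nontrivial isotropy (e.g.\ a fixed point), or at a point whose orbit is not totally real, no amount of shrinking makes $\varphi^c$ injective or immersive, and $\varphi^c(\gtube)$ need not be a manifold of dimension $\dim_\C\gtube$ at all. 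The shrinking argument you cite from Lemma~\ref{lemma_gstableOpenSetsInTubeAreTauGopen} is proved there only for orbits inside a $G$-tube, where the action is free and all orbits are totally real by definition. Consequently the ``delicate point'' you flag (identifying the subspace $\tau_G$-topology on $\varphi^c(\gtube)$ with the intrinsic one) never arises in a form you can resolve, because the homeomorphism it is meant to support does not exist in general.

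The repair is to ask for less. For local connectedness you do not need a neighborhood basis of sets homeomorphic to tubes; you only need a neighborhood basis of $\tau_G$-connected $\tau_G$-open sets. The basis sets $\varphi^c(T)$ of Lemma~\ref{lemma_basisTopology} are $\tau_G$-open, and each is the image of the $\tau_G$-connected space $T$ (Lemma~\ref{lemma_openSetsInTube}) under the $\tau_G$-$\tau_G$-continuous map $\varphi^c$ (Lemma~\ref{lemma_topologyCompatibleEquivariantHolomorphism}); the continuous image of a connected space is connected, with no injectivity required. This is precisely the paper's proof, and it sidesteps both the false injectivity claim and the subspace-topology issue entirely.
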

\begin{proof}
For topological spaces in general connected components are closed and for
locally connected topological spaces they are also open.
So, we are left to show that $(X,\tau_G)$ is locally connected.
Given a point $x\in X$ and a holomorphic extension $\varphi^c:T\to X$ of the orbit map of $x$
to a $G$-tube $T$. Then $T$ is $\tau_G$-connected as shown in Lemma~\ref{lemma_openSetsInTube}.
Since $\varphi^c$ is $\tau_G$-$\tau_G$-continuous and $\tau_G$-$\tau_G$-open
(Lemma~\ref{lemma_topologyCompatibleEquivariantHolomorphism}),
the image $\varphi^c(T)$
is $\tau_G$-connected and $\tau_G$-open as well,
hence a $\tau_G$-neighborhood of $x$. Therefore $(X,\tau_G)$ is locally connected.
\end{proof}
\begin{lemma}
Assume that the action of $G$ is proper. Then the quotient topology of $\tau_G$ on $X/G$ is Hausdorff.
\end{lemma}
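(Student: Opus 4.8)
The plan is to play off two facts against each other: properness of the $G$-action makes the ordinary orbit space $X/G$ Hausdorff, while shrinkability of $G$-tubes lets one fit small $\tau_G$-open sets inside prescribed saturated neighbourhoods. I would begin with the formal observation that every $\tau_G$-open set is automatically $G$-stable: by Lemma~\ref{lemma_basisTopology} the images $\varphi^c(\gtube)$ form a basis of $\tau_G$, and each of them is $G$-stable, so a $\tau_G$-open set is a union of $G$-stable sets. Hence the quotient map $\pi:(X,\tau_G)\to X/G$ is open, and to prove Hausdorffness it suffices, given $Gx\neq Gx'$, to find disjoint $\tau_G$-open subsets $U\ni x$ and $U'\ni x'$ of $X$ (necessarily $G$-stable), for then $\pi(U)\cap\pi(U')=\emptyset$.

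The heart of the argument is a refinement lemma: \emph{every $G$-stable neighbourhood $W$ of an orbit $Gx$ that is open in the manifold topology contains a $\tau_G$-open neighbourhood of $x$}. To see this, use Proposition~\ref{proposition_orbitMapExtendsToGtube} to extend the orbit map $g\mapsto g\cdot x$ to a holomorphic map $\varphi^c:\gtube\to X$ on a $G$-tube. Since $\varphi^c$ is continuous and $G$-equivariant, $(\varphi^c)^{-1}(W)$ is a $G$-stable neighbourhood of $\kappa(G)$ in $\gtube$ that is open in the manifold topology; by the shrinkability condition in the definition of a $G$-tube it contains a $G$-tube $T$ with $\kappa(G)\subseteq T\subseteq(\varphi^c)^{-1}(W)$. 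Then $\varphi^c|_T$ is a holomorphic extension of the orbit map of $x$ to the $G$-tube $T$, so $\varphi^c(T)$ is a basic $\tau_G$-open set; it contains $x$ and lies in $W$.

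Now assemble the pieces. If $Gx\neq Gx'$, then because the $G$-action is proper the orbit equivalence relation is closed in $X\times X$ (manifold topology) and the manifold-topology quotient map $X\to X/G$ is open, so $X/G$ is Hausdorff for the manifold quotient topology; pulling back disjoint open neighbourhoods of the classes of $x$ and $x'$ gives open $G$-stable sets $W\supseteq Gx$ and $W'\supseteq Gx'$ in $X$ with $W\cap W'=\emptyset$. The refinement lemma applied to $W$ and $W'$ produces $\tau_G$-open sets $U\ni x$, $U'\ni x'$ with $U\subseteq W$ and $U'\subseteq W'$; then $U\cap U'=\emptyset$, and $\pi(U),\pi(U')$ are the required disjoint neighbourhoods in the $\tau_G$-quotient topology.

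I expect the refinement lemma to be the main obstacle: $\tau_G$ is in general neither finer nor coarser than the manifold topology (a point with positive-dimensional isotropy can have a $\tau_G$-neighbourhood much smaller than any manifold-open set, while a non-$G$-stable manifold-open set is never $\tau_G$-open), so Hausdorffness cannot be inherited along a comparison of topologies and one really needs continuity of $\varphi^c$ together with shrinkability to manufacture arbitrarily small saturated $\tau_G$-neighbourhoods. A minor point to verify along the way is that the submanifold supplied by the shrinkability condition is again a $G$-tube, with slice $\Sigma\cap T$ and exhaustion $\rho|_T$, which is the intended reading of that condition.
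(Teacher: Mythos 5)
Your proof is correct and follows essentially the same route as the paper, which simply observes that the classical quotient topology on $X/G$ is Hausdorff by properness and compares it with the $\tau_G$-quotient topology. Your ``refinement lemma'' (every $G$-stable classically open set is $\tau_G$-open, via extending the orbit map and shrinking the tube, as in Lemma~\ref{lemma_gstableOpenSetsInTubeAreTauGopen}) is exactly the content hidden in the paper's one-line comparison of the two quotient topologies, so you have just made the same argument explicit.
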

\begin{proof}
The quotient topology of the classical topology is finer than that of $\tau_G$. But for a proper action
the quotient $X/G$ is Hausdorff
for the first one already.
\end{proof}
We will now show that complexified $G$-orbits coincide with $G^\C$-orbits
in the case where the $G$-action is given as a restriction of a
holomorphic $G^\C$-action.
\begin{lemma}
In the case where $G^\C$ acts holomorphically on $X$ the complexified $G$-orbits are
exactly the $G^\C$-orbits.\label{lemma_GCorbitsAreComplexifiedGorbits}
\end{lemma}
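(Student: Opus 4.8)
The plan is to establish the two inclusions $B(x)\subseteq G^{\C}\cdot x$ and $G^{\C}\cdot x\subseteq B(x)$ separately, where $B(x)$ is the connected component of $x$ in $(X,\tau_G)$ (Definition~\ref{definition_cplxOrbit}). Throughout, write $\Phi_z:G^{\C}\to X$, $g\mapsto g\cdot z$, for the $G^{\C}$-orbit map through a point $z$; it is holomorphic and $G$-equivariant. The basic tool is a map comparing $G$-tubes with $G^{\C}$: applying Proposition~\ref{proposition_orbitMapExtendsToGtube} to $G$ acting on the complex manifold $G^{\C}$ by left translation, with base point $e$, the inclusion $G\hookrightarrow G^{\C}$ --- which is exactly the orbit map of $e$ --- extends to a holomorphic $G$-equivariant map $j:S\to G^{\C}$ on some $G$-tube $S$ with $j|_G$ the inclusion. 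Since $j|_G$ is an injective immersion and $\dim_{\C}S=\dim_{\R}G=\dim_{\C}G^{\C}$, the argument of Lemma~\ref{lemma_gstableOpenSetsInTubeAreTauGopen} shows that, after shrinking $S$ to a smaller $G$-tube, $j$ is a biholomorphism onto an open $G$-stable neighbourhood $W$ of $G$ in $G^{\C}$.

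To prove $B(x)\subseteq G^{\C}\cdot x$ I first show that every $G^{\C}$-orbit is $\tau_G$-open. Let $z\in X$ and let $\varphi^c:T\to X$ be a holomorphic extension of the $G$-orbit map of $z$ to a $G$-tube $T$ (such images form a basis of $\tau_G$ by Lemma~\ref{lemma_basisTopology}). By shrinkability, pass to a $G$-tube $T_1\subseteq T\cap S$; then $\varphi^c$ and $\Phi_z\circ j$ are holomorphic on $T_1$ and restrict on $G$ to the same orbit map $g\mapsto g\cdot z$, so by the uniqueness of holomorphic extensions across the totally real maximal-dimensional submanifold $G$ --- as in the proof of Lemma~\ref{lemma_basisTopology} --- they agree on a smaller $G$-tube $T'$. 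Hence $\varphi^c(T')=\Phi_z(j(T'))\subseteq\Phi_z(G^{\C})=G^{\C}\cdot z$, so $\varphi^c(T')$ is a $\tau_G$-neighbourhood of $z$ inside $G^{\C}\cdot z$. As $z$ was arbitrary, each $G^{\C}$-orbit is $\tau_G$-open; its complement, a union of $G^{\C}$-orbits, is then $\tau_G$-open as well, so $G^{\C}\cdot x$ is $\tau_G$-clopen. Since $B(x)$ is $\tau_G$-connected and contains the point $x$ of the $\tau_G$-clopen set $G^{\C}\cdot x$, we get $B(x)\subseteq G^{\C}\cdot x$.

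For the reverse inclusion it suffices to show that $(G^{\C},\tau_G)$ is $\tau_G$-connected for the left-translation action: then $G^{\C}\cdot x=\Phi_x(G^{\C})$ is $\tau_G$-connected by continuity of $\Phi_x$ (Lemma~\ref{lemma_topologyCompatibleEquivariantHolomorphism}) and contains $x$, whence $G^{\C}\cdot x\subseteq B(x)$. On $G^{\C}$ the topology $\tau_G$ is coarser than the classical one: the basic sets $\varphi^c(T)$ with $T$ small, which still form a basis, are classically open, for $\varphi^c$ extends the orbit map $h\mapsto hg_0$ of a point $g_0$ --- i.e. the restriction of the holomorphic right translation $R_{g_0}$ of $G^{\C}$ --- hence, as above, agrees with $R_{g_0}\circ j$ on a $G$-tube, and both $R_{g_0}$ and (after shrinking) $j$ are biholomorphisms onto open sets. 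Consequently every $\tau_G$-clopen subset of $G^{\C}$ is classically clopen. Now the $\tau_G$-connected component $B_{G^{\C}}(e)$ of $e$ contains $W=j(S)$: indeed $S$ is $\tau_G$-connected by Lemma~\ref{lemma_openSetsInTube}, $j$ is $\tau_G$-continuous by Lemma~\ref{lemma_topologyCompatibleEquivariantHolomorphism}, and $e\in j(S)$. Thus $B_{G^{\C}}(e)$ is classically clopen and contains the neighbourhood $W$ of $G$; since $G$, and hence $W$, meets every connected component of $G^{\C}$ (recall $\pi_0(G)\to\pi_0(G^{\C})$ is surjective), $B_{G^{\C}}(e)=G^{\C}$.

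The main point to get right is the comparison used above: near an orbit, any holomorphic extension of a $G$-orbit map coincides with $j$ followed by the genuine $G^{\C}$-orbit map. This follows from Proposition~\ref{proposition_orbitMapExtendsToGtube} together with the uniqueness of holomorphic extensions across totally real submanifolds of maximal dimension and the shrinkability of $G$-tubes, all already available. The remaining subtlety is that $\tau_G$ is in general not comparable with the classical topology on $X$ (lower-dimensional orbit images can be $\tau_G$-open without being classically open), which forces the connectedness argument to be carried out inside the homogeneous space $G^{\C}$ --- equivalently inside $G^{\C}\cdot x$ --- where equality of dimensions makes the basic sets classically open; the disconnected case is then absorbed by the standard surjectivity $\pi_0(G)\twoheadrightarrow\pi_0(G^{\C})$.
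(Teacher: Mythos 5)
Your proof is correct and follows essentially the same route as the paper: extend the inclusion $G\hookrightarrow G^\C$ holomorphically to a $G$-tube, compose with the $G^\C$-orbit map to see that each $G^\C$-orbit is $\tau_G$-open (hence clopen), and then transport the $\tau_G$-connectedness of $G^\C$ through the $\tau_G$-continuous orbit map of Lemma~\ref{lemma_topologyCompatibleEquivariantHolomorphism}. The only cosmetic difference is in how the $\tau_G$-connectedness of $G^\C$ is verified — you compare $\tau_G$ with the classical topology and invoke $\pi_0(G)\twoheadrightarrow\pi_0(G^\C)$, while the paper identifies the $\tau_G$-open subsets of $G^\C$ with the $G$-stable open subsets as in Lemma~\ref{lemma_openSetsInTube} and uses connectedness of $G^\C/G$ (the same fact in disguise).
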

\begin{proof}
Let $B$ be the $G^\C$-orbit of $x\in X$. 
We have to show that $B$ is $\tau_G$-open, $\tau_G$-closed and $\tau_G$-connected.
The holomorphic action map $\alpha:G^\C\times X\to X$
induces a holomorphic orbit map $\alpha_x:G^\C\to X, g\mapsto g\cdot x$
and the $G$-equivariant orbit map
$\varphi:G\to B, g\mapsto g\cdot x$
factorizes
via $i:G\to G^\C$ over $\alpha_x$
i.e. $\varphi=\alpha_x\circ i$.
The map $i$ extends as $G$-equivariant a holomorphic map $i^c$ to some $G$-tube $\gtube$
as $i^c:\gtube\to G^\C$. So there is an extension $\varphi^c=\alpha_x\circ i^c:\gtube\to B$.
The set $\varphi^c(\gtube)$ is by definition a $\tau_G$-open neighborhood of
$x\in B$. Thus $B$ is $\tau_G$-open. 
This implies that every $G^\C$-orbit in $X$
is $\tau_G$-open and since the complement
of a $G^\C$-orbit is a union of $G^\C$-orbits
every $G^\C$-orbit is $\tau_G$-open and $\tau_G$-closed.\\
We are left to show that $B$ is $\tau_G$-connected. Endow $G^\C$ with the topology $\tau_G$.
By the same argument as in Lemma~\ref{lemma_openSetsInTube}
it can be seen that the $\tau_G$-open subsets of $G^\C$ are the $G$-stable open subsets
and that a $\tau_G$-open subset $U\subset G^\C$ is
$\tau_G$-connected if and only if
$U/G\subset G^\C/G$ is connected. Therefore $G^\C$ is $\tau_G$-connected. The map
$\varphi^c:G^\C\to B$
being $\tau_G$-$\tau_G$-continuous (see Lemma~\ref{lemma_topologyCompatibleEquivariantHolomorphism})
the image $B=\varphi^c(G^\C)$ is $\tau_G$-connected, as well.
\end{proof}

%%%%%%%%%%%%%%%%%%%%%%%%%%%%%%%%%%%%%%%%%%%%%
%
%   2   The compact case
%
%%%%%%%%%%%%%%%%%%%%%%%%%%%%%%%%%%%%%%%%%%%%%

\section{The compact case}\label{subsect_CptCase}
In this section we will reformulate the results of \cite{Hei91} in the terminology of $K$-tubes and complexified $K$-orbits
of Sections~\ref{subsection_Gtube} and \ref{subsect_cplxfdOrbit}
in the case of a compact group $K$ acting, which is well-understood. This shall illustrate the geometry, furthermore we
will make decisive use of these known results in the sequel.
\subsection{Key results on quotients of Stein manifolds}
The results of this section are shown in \cite{Hei91}.
\begin{definition}
For a $G$-manifold $Y$ we define the
equivalence relation $y_1\sim y_2$
if $f(y_1)=f(y_2)$ for all $f\in\osheaf^G(Y)$, the algebra of $G$-invariant holomorphic functions.
We call the quotient {\em formal Hilbert quotient}
denoted $\pi:Y\to Y\modmod G$.
We define the sheaf $\osheaf_{Y\modmod G}=\pi_*\osheaf^G_Y$ by associating to
every open subset $V\subset Y\modmod G$ the algebra
$\osheaf^G_Y(\pi^{-1}(V))$. We call
$(Y//G,\osheaf_{Y\modmod G})$ the
{\em analytic Hilbert quotient} if it is a
(reduced) complex space.
\label{definition_HilbertQuotient}
\end{definition}
\begin{theorem}
Let the compact Lie group $K$ act on the Stein manifold $X$
by holomorphic transformations. Then
\begin{enumerate}
\item the quotient $X\modmod K$ endowed with the sheaf $\mathcal{O}_{X\modmod K}$ of $K$-invariant
holomorphic functions on $X$ is
an analytic Hilbert quotient,
\item $(X\modmod K, \mathcal{O}_{X\modmod K})$
is a Stein space, and 
\item any $K$-invariant holomorphic map
$\psi:X\to Y$ from $X$ into a complex space $Y$
factorizes over $X\modmod K$.
\end{enumerate}\label{theorem_quotientStein}
\end{theorem}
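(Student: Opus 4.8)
The plan is to deduce all three parts from the corresponding statements for a \emph{reductive complex} group acting holomorphically on a Stein manifold — which are classical and are contained in \cite{Hei91} — by complexifying both $K$ and $X$. Concretely, one wants a Stein $K^\C$-manifold $X^*$ together with a $K$-equivariant open holomorphic embedding $X\hookrightarrow X^*$ with $K^\C\cdot X=X^*$ and, crucially, such that restriction induces an isomorphism $\osheaf_{X^*}^{K^\C}(U)\cong\osheaf_X^K(U\cap X)$ for every $K^\C$-stable open $U\subseteq X^*$ (which holds automatically for $K^\C$-stable $U$ once $K^\C\cdot X=X^*$). From this one reads off that invariant holomorphic functions on $X$ are exactly the restrictions of $K^\C$-invariant holomorphic functions on $X^*$ — which will match up the structure sheaves — and, since any complex space is locally cut out by holomorphic equations in some $\C^m$, that every $K$-invariant holomorphic map $X\to Y$ extends to a $K^\C$-invariant holomorphic map $X^*\to Y$.

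First I would build $X^*$. Because $K$ is compact, the slice theorem together with Cartan linearization presents a $K$-stable neighbourhood of each orbit $K\cdot x$ as $K\times_{K_x}B$, with $B$ a $K_x$-invariant ball in the complex vector space $T_xX$; replacing $B$ by its open $K_x^\C$-saturation $K_x^\C\cdot B\subset T_xX$ exhibits this piece as an open $K^\C$-stable subset of the Stein $K^\C$-manifold $K^\C\times_{K_x^\C}T_xX$, a local complexification. The real work is to glue these local models over $X$ into a single Stein $K^\C$-manifold. This is precisely the gluing carried out in Section~\ref{subsection_Gtube} to construct the $G$-tube (the totally-real-orbit analogue), and I would run the same argument: shrink the models so that they sit Runge in one another (Lemma~\ref{lemma_Runge}) and assemble them by a successive-approximation procedure as in the proof of that lemma, which preserves Steinness and forces invariant holomorphic functions to extend across the gluing. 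The surjectivity of $\osheaf_{X^*}^{K^\C}(U)\to\osheaf_X^K(U\cap X)$ reduces, locally, to the observation that a $K_x$-invariant holomorphic function on the ball $B$ has a Taylor expansion into $K_x^\C$-invariant polynomials (unitarian trick), and hence extends $K_x^\C$-invariantly to $K_x^\C\cdot B$. This step is the technical heart of the theorem.

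Granting $X^*$, parts (1) and (2) are short. Since $K^\C$ is a reductive complex group acting holomorphically on the Stein manifold $X^*$, the analytic Hilbert quotient $\pi\colon X^*\to X^*\modmod K^\C$ exists: $X^*\modmod K^\C$ is a reduced Stein space, $\osheaf_{X^*\modmod K^\C}=\pi_*\osheaf_{X^*}^{K^\C}$, each fibre of $\pi$ contains a unique closed $K^\C$-orbit, and every $K^\C$-invariant holomorphic map out of $X^*$ factorizes through $\pi$ (the theorem of Snow; see \cite{Hei91}). Consider now $\pi\vert_X\colon X\to X^*\modmod K^\C$. Since $X$ meets every $K^\C$-orbit this map is surjective, and by the sheaf isomorphism of the first paragraph two points of $X$ have the same image if and only if they cannot be separated by $\osheaf_X^K$, i.e.\ if and only if they are identified in the formal Hilbert quotient $X\modmod K$ of Definition~\ref{definition_HilbertQuotient}; the same isomorphism identifies the structure sheaves $\pi_*\osheaf_X^K$ and $\pi_*\osheaf_{X^*}^{K^\C}$. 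Hence $(X\modmod K,\osheaf_{X\modmod K})$ is, as a ringed space, the reduced Stein space $X^*\modmod K^\C$, which gives (1) and (2).

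For the universal property (3), let $\psi\colon X\to Y$ be a $K$-invariant holomorphic map into a complex space $Y$, regarded as a $K^\C$-space with the trivial action. By the extension property above, $\psi$ extends to a $K^\C$-invariant holomorphic map $\psi^c\colon X^*\to Y$; by the reductive case $\psi^c=\bar\psi\circ\pi$ for a holomorphic $\bar\psi\colon X^*\modmod K^\C\to Y$; restricting to $X$ and using the identification $X\modmod K\cong X^*\modmod K^\C$ yields the required holomorphic factorization $\psi=\bar\psi\circ(\pi\vert_X)$, unique because $\pi\vert_X$ is surjective. Thus the only genuinely delicate point of the whole argument is the construction of the complexification $X^*$ together with its sheaf-extension property; once the quotient theory for reductive complex groups acting on Stein manifolds is available, everything else is formal.
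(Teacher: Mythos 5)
The paper does not prove Theorem~\ref{theorem_quotientStein} at all --- it is quoted verbatim from \cite{Hei91} --- so there is no internal proof to compare you against. Your reduction of all three parts to the existence of an equivariant complexification $X^*$ with $K^\C\cdot X=X^*$ \emph{together with} the restriction isomorphism $\osheaf^{K^\C}_{X^*}(U)\cong\osheaf^K_X(U\cap X)$ is correct and is in the spirit of \cite{Hei91}: granting that package, (1)--(3) do follow formally from the reductive theory on $X^*$, essentially as you say. The gap is that you have not supplied the package, and the two places where you declare it easy are exactly where it is not.

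First, the sheaf isomorphism is not ``automatic'' once $K^\C\cdot X=X^*$. Injectivity of restriction is (identity principle along $K^\C$-orbits), but surjectivity --- extending a $K$-invariant holomorphic $f$ on $U\cap X$ to a $K^\C$-invariant function on $U$ --- requires $f$ to be constant on $K^\C\cdot x\cap X$ for every $x$, and this set can a priori be disconnected modulo $K$. What is needed is the \emph{orbit convexity} of $X$ in $X^*$ (for each $x\in X$ and $\xi\in\lif{k}$ the set $\{t\in\R\ \vert\ \exp(it\xi)\cdot x\in X\}$ is an interval); this is precisely the property from \cite[sect.~3.4]{Hei91} that the present paper invokes in Proposition~\ref{proposition_complexifiedOrbitIsIntersectionWithKCorbitInComplexifiedSpace}, and it has to be built into the construction of $X^*$, not observed afterwards. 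Your local Taylor-expansion argument only extends $f$ to the local saturations $K_x^\C\cdot B$; without orbit convexity there is no reason these local extensions agree where two saturations overlap, so no global $K^\C$-invariant extension results. Second, the gluing of the local models $K^\C\times_{K_x^\C}T_xX$ into one Hausdorff Stein $K^\C$-manifold is not ``precisely the gluing carried out in Section~\ref{subsection_Gtube}'': that section complexifies the \emph{group} via an adapted complex structure on $TG$ (Lempert--Sz{\H{o}}ke, Guillemin--Stenzel, Winkelmann) and glues nothing, and Lemma~\ref{lemma_Runge} is an approximation statement about functions, not a gluing device. Hausdorffness of the glued space amounts to separating distinct closed orbits by invariant holomorphic functions, which is essentially the existence of the quotient you are trying to establish; in \cite{Hei91} the quotient theory is in place before the global complexification is constructed. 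As written, your argument therefore either runs in a circle or leaves the entire technical content of the theorem (orbit convexity, Hausdorff gluing, Steinness of $X^*$) unproved --- you correctly identify this as the heart of the matter, but then do not carry it out.
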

\begin{theorem}
Let the compact Lie group $K$ act on a Stein manifold $X$ by holomorphic transformations. Then $X$
can be realized as an open $K$-invariant subset
of a Stein manifold $X^\C$ such that
 \begin{enumerate}
  \item the complexified Lie group $K^\C$ acts holomorphically on $X^\C$
  \item $X^\C=K^\C\cdot X$
  \item $X$ is Runge in $X^\C$.
 \end{enumerate}\label{theorem_complexificationOfAction}
\end{theorem}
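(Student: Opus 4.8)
The plan is to obtain $X^\C$ as the universal globalization of the $K$-action and then to extract its Steinness, the identity $X^\C=K^\C\cdot X$, and the Runge property from the Steinness of $X$ together with the convexity of a norm function along the imaginary directions of the complexified action. First I differentiate the holomorphic $K$-action: this assigns to each $\xi\in\lie{K}$ a vector field $\tilde\xi$ on $X$ whose flow is by biholomorphisms, and since the complex structure is integrable $J\tilde\xi$ is again such a vector field, so $\xi\mapsto\tilde\xi$, $i\xi\mapsto J\tilde\xi$ extends to a homomorphism of $\lie{K}^\C$ into the Lie algebra of holomorphic vector fields on $X$. By Palais' integrability theorem this $\lie{K}^\C$-action is locally integrable, and the standard globalization (carried out in \cite{Hei91}) produces a complex manifold $X^\C$ with a holomorphic $K^\C$-action, a $K$-equivariant open holomorphic embedding $X\hookrightarrow X^\C$ satisfying $K^\C\cdot X=X^\C$, and the universal property that every $K$-equivariant holomorphic map from $X$ into a complex $K^\C$-manifold factors through a unique $K^\C$-equivariant holomorphic map on $X^\C$. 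For $K$ compact and $X$ Stein one checks this globalization is Hausdorff, so only two points remain: that $X^\C$ is Stein and that $X$ is Runge in it.

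Because $K$ is compact and $X$ Stein, there is a finite-dimensional unitary $K$-module $V$ and a $K$-equivariant holomorphic embedding $\iota\colon X\hookrightarrow V$: one takes any holomorphic embedding of $X$ into some $\C^m$, expands its components in the isotypic decomposition of $\osheaf(X)$ under $K$ (the $K$-finite vectors are dense by Peter--Weyl), and keeps finitely many isotypic components which still separate points and tangents. The unitary action on $V$ is the restriction of a linear holomorphic $K^\C$-action, so $\iota$ extends by the universal property to a $K^\C$-equivariant holomorphic map $\Phi\colon X^\C\to V$; put $\hat\rho=\Phi^{*}\Vert\,\cdot\,\Vert^{2}$. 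Along $X$ the function $\hat\rho$ is strictly plurisubharmonic because $\iota$ is an immersion and $\Vert\,\cdot\,\Vert^{2}$ is strictly plurisubharmonic on $V$; along the $\exp(i\lie{K})$-directions one uses that $t\mapsto\Vert\exp(it\xi)v\Vert^{2}$ is convex for the linear action, strictly so transverse to the stabilizer, and — since the $K^\C$-orbit directions and the $X$-directions span $TX^\C$ and $\hat\rho$ is, up to the linear momentum map of $V$, built from the K\"ahler data already present on $X$ — these two contributions combine into a positive Levi form on all of $X^\C$.

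The crucial, and in my view hardest, point is that $\hat\rho$ is an \emph{exhaustion} of $X^\C$: one must show that the globalization has not ``overshot'', i.e.\ that a sequence in $X^\C$ eventually leaving every compact set has $\hat\rho\to\infty$. This is where the Steinness of $X$ enters decisively; the argument uses the convexity of the norm-square along imaginary one-parameter flows to propagate the properness of a Stein exhaustion of $X$ outward along exactly the flows that were incomplete on $X$ and got completed in $X^\C$, and it is the analytic heart of \cite{Hei91}. Granting it, $\hat\rho$ is a strictly plurisubharmonic exhaustion, so $X^\C$ is Stein; equivalently $\Phi$ is proper and realizes $X^\C$ as the closed submanifold $K^\C\cdot\iota(X)$ of $V$.

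For the Runge property, after rescaling $\iota$ (and enlarging $V$ if necessary) one may assume $\Vert\iota(\,\cdot\,)\Vert<1$ on $X$ and $X=\{x\in X^\C:\hat\rho(x)<1\}$; then $X$ is a sublevel set of the strictly plurisubharmonic function $\hat\rho$ on the Stein manifold $X^\C$, and Lemma~\ref{lemma_Runge}, applied with $X^\C$ in place of the ambient Stein manifold, gives that $X$ is Runge in $X^\C$. If one prefers not to put $X$ on the nose as a sublevel set, one instead exhausts $X$ by $K$-invariant Stein Runge subdomains, notes that their $\osheaf(X^\C)$-hulls are compact and, by the bound on $\hat\rho|_X$, contained in $X$, and then repeats the interlacing approximation from the proof of Lemma~\ref{lemma_Runge} with $\osheaf(X^\C)$ replacing $\osheaf(X)$ to conclude that $\osheaf(X^\C)$ is dense in $\osheaf(X)$.
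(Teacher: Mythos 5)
The paper does not actually prove Theorem~\ref{theorem_complexificationOfAction}: it is stated as a known result with the blanket attribution ``The results of this section are shown in \cite{Hei91}'', so there is no in-paper argument to compare yours with. Your sketch follows a recognizable route to that result (universal globalization of the integrated $\lie{K}^\C$-action, $K$-equivariant embedding into a unitary $K$-module $V$, the pulled-back norm-square as a candidate strictly plurisubharmonic exhaustion), but as a proof it has genuine gaps, and the most important one is exactly the step you yourself flag and then ``grant'': that $\hat\rho=\Phi^{*}\Vert\cdot\Vert^{2}$ is an exhaustion of the globalization, i.e.\ that $X^\C$ is Stein. Deferring that to \cite{Hei91} means your argument ultimately rests on the very reference the paper cites for the whole theorem, so nothing new is established. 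Two further steps are asserted with insufficient justification: Hausdorffness of the universal globalization, and the existence of a $K$-equivariant holomorphic embedding of a \emph{noncompact} Stein $X$ into a finite-dimensional module by ``keeping finitely many isotypic components'' --- separating all pairs of points of a noncompact manifold is not a finite condition, and the equivariant embedding theorem needs a genuine interlacing/exhaustion argument. (By contrast, the strict plurisubharmonicity of $\hat\rho$ is easier than you make it: $K^\C$-equivariance forces $\Phi$ to be an immersion everywhere on $K^\C\cdot X$ once $\iota$ is one, and the pullback of $\Vert\cdot\Vert^{2}$ under a holomorphic immersion is automatically strictly plurisubharmonic; no discussion of momentum maps is needed there.)

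The Runge paragraph contains an internal contradiction. For $\hat\rho$ to be an exhaustion of $X^\C$ the map $\Phi$, hence $\iota=\Phi\vert_X$, must be proper; but then $\Vert\iota\Vert$ is unbounded on a noncompact $X$ and no rescaling achieves $\Vert\iota\Vert<1$. Moreover the identity $X=\{x\in X^\C\ \vert\ \hat\rho(x)<1\}$ is not justified by anything in the sketch: there is no a priori reason for $X$ to be a sublevel set of a globally defined plurisubharmonic function on $X^\C$, and this claim is essentially equivalent to the orbit-convexity of $X$ in $X^\C$, which is the real content behind the Runge assertion. Your fallback --- that the $\osheaf(X^\C)$-hulls of compact subsets of $X$ stay inside $X$ --- presupposes that same orbit-convexity. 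So the third assertion of the theorem is not established, and Lemma~\ref{lemma_Runge} cannot be invoked as you propose. A self-contained proof would have to carry out the convexity analysis of $t\mapsto\Vert\exp(it\xi)\cdot v\Vert^{2}$ in detail, since that single argument is what simultaneously yields the exhaustion property (Steinness of $X^\C$) and orbit-convexity (hence the Runge property); short of that, it is more honest to cite \cite{Hei91} outright, as the paper does.
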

\begin{remark}
This result fails in general for a non-compact group $G$ acting.
\end{remark}
\subsection{Complexified orbits and orbits of the complexified\\ group}
In a Stein manifold with a holomorphic $K^\C$-action the complexified $K$-orbits
are exactly the $K^\C$-orbits (Lemma~\ref{lemma_GCorbitsAreComplexifiedGorbits}).
In the context in which a complexification of the space exists as in Theorem~\ref{theorem_complexificationOfAction}
there is another support that the complexified $K$-orbit is the suitable generalization of $K^\C$-orbits.
\begin{proposition}
Let the compact Lie group $K$ act on a Stein manifold $X$ by holomorphic transformations and $X^\C$ be its complexification
in the sense of Theorem~\ref{theorem_complexificationOfAction}. We regard $X$ as a $K$-stable subset of $X^\C$.
Then for any point $x\in X$
\begin{equation*}
K^\C\cdot x\cap X=B_X(x)\label{equation_KCIntersection}
\end{equation*}
and in particular
\begin{equation*}
\overline{K^\C\cdot x}\cap X=\overline{B_X(x)}\label{equation_closureKCorbitsEqualClosureComplexifiedOrbits}
\end{equation*}
The closures are meant in the ambient spaces respectively, i.e. $K^\C\cdot x \subset X^\C$ and $B_X(x)\subset X$.
\label{proposition_complexifiedOrbitIsIntersectionWithKCorbitInComplexifiedSpace}
\end{proposition}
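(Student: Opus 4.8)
The statement has two halves, and the half about closures will be a formal consequence of the first once $X$ is recalled to be open in $X^\C$, so I concentrate on $K^\C\cdot x\cap X=B_X(x)$ and prove the two inclusions separately.

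\emph{The inclusion $B_X(x)\subseteq K^\C\cdot x\cap X$.} The inclusion $j\colon X\hookrightarrow X^\C$ is $K$-equivariant and holomorphic, hence $\tau_K$-continuous by Lemma~\ref{lemma_topologyCompatibleEquivariantHolomorphism}. Thus $j(B_X(x))$ is a $\tau_K$-connected subset of $X^\C$ containing $x$, so it lies in the $\tau_K$-connected component $B_{X^\C}(x)$, which equals $K^\C\cdot x$ by Lemma~\ref{lemma_GCorbitsAreComplexifiedGorbits} applied to the holomorphic $K^\C$-action on $X^\C$. Since also $B_X(x)\subseteq X$, this gives the inclusion.

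\emph{The inclusion $K^\C\cdot x\cap X\subseteq B_X(x)$.} Fix $y\in K^\C\cdot x\cap X$. Using the polar decomposition $K^\C=K\exp(i\mathfrak{k})$ of the reductive group $K^\C$ (with $\mathfrak{k}=\mathrm{Lie}\,K$), write $y=k\exp(i\xi)\cdot x$ with $k\in K$, $\xi\in\mathfrak{k}$. Extending the $K$-orbit map of $y$ to a $K$-tube (Proposition~\ref{proposition_orbitMapExtendsToGtube}) produces one $\tau_K$-open neighbourhood of $y$ that is $\tau_K$-connected (a $K$-tube is $\tau_K$-connected by Lemma~\ref{lemma_openSetsInTube}, and its holomorphic image is $\tau_K$-connected by Lemma~\ref{lemma_topologyCompatibleEquivariantHolomorphism}) and contains all of $K\cdot y\ni k^{-1}\cdot y$; hence $B_X(y)=B_X(k^{-1}\cdot y)$ and it suffices to connect $x$ to $\exp(i\xi)\cdot x$ inside $(X,\tau_K)$. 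The tool I would use is the following local fact: \emph{for every $z\in X$ and $\eta\in\mathfrak{k}$ there is $\varepsilon>0$ with $\exp(is\eta)\cdot z\in B_X(z)$ for $|s|<\varepsilon$.} Indeed, extend the $K$-orbit map $g\mapsto g\cdot z$ to a $K$-tube as $\varphi^c\colon T\to X$, and extend the inclusion $K\hookrightarrow K^\C$ to $\iota^c\colon T\to K^\C$; after shrinking $T$ to a common tube, $\iota^c$ is an open embedding onto a neighbourhood of $e$ in $K^\C$, and since $\varphi^c$ and $\alpha_z\circ\iota^c$ (with $\alpha_z\colon K^\C\to X^\C$ the $K^\C$-orbit map of $z$) agree on $K$ they agree on $T$. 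Hence $\varphi^c(T)=\alpha_z(\iota^c(T))$ contains $\exp(is\eta)\cdot z$ for small $s$; as $\varphi^c(T)$ is a $\tau_K$-connected $\tau_K$-neighbourhood of $z$ lying in $X$, it is contained in $B_X(z)$.

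To finish I would globalize along the real-analytic arc $c\colon[0,1]\to X^\C$, $c(t)=\exp(it\xi)\cdot x$, whose endpoints $c(0)=x$ and $c(1)=k^{-1}\cdot y$ lie in $X$. The one non-formal ingredient — and the step I expect to be the main obstacle — is that the Runge complexification of \cite{Hei91} is \emph{orbit-convex}: this forces the whole arc $c([0,1])$ to lie in $X$ (without it the arc could leave $X$ on a subinterval and reenter, and the argument would break). Granting this, the local fact shows that $t\mapsto B_X(c(t))$ is locally constant on the connected interval $[0,1]$, hence constant, and comparing $t=0$ with $t=1$ gives $\exp(i\xi)\cdot x\in B_X(x)$, so $y\in B_X(x)$. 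This proves $K^\C\cdot x\cap X=B_X(x)$; and since $X$ is open in $X^\C$, intersecting closures with $X$ gives $\overline{K^\C\cdot x}\cap X=\overline{K^\C\cdot x\cap X}=\overline{B_X(x)}$, the last two closures taken in $X$. So the real work is to extract orbit-convexity of $X$ in $X^\C$ from the Runge property of Theorem~\ref{theorem_complexificationOfAction}, which is supplied by the construction in \cite{Hei91}.
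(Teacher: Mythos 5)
Your proof is correct, and it rests on exactly the same essential external input as the paper's: orbit-convexity of $X$ in $X^\C$, taken from \cite[Theorem, sect.~3.4]{Hei91}. The packaging differs in two places worth recording. For the equality $K^\C\cdot x\cap X=B_X(x)$ the paper argues more globally: it observes that $L=K^\C\cdot x\cap X$ is a union of $\tau_K$-connected components of $X$ (each open in $L$), and then uses orbit-convexity to see that $L/K$ is path-connected, so there is only one component. Your polar decomposition $y=k\exp(i\xi)\cdot x$, the local fact $\exp(is\eta)\cdot z\in B_X(z)$ for small $s$, and the locally-constant argument along the arc $t\mapsto\exp(it\xi)\cdot x$ are an explicit unwinding of the same idea; your local fact is precisely what makes the $\tau_K$-components open in $L$, and the arc staying in $X$ is exactly what orbit-convexity supplies, as you correctly flag. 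For the closure statement your route is genuinely simpler: the identity $\overline{A}\cap X=\overline{A\cap X}$ (closure on the right taken in $X$) for $X$ open in $X^\C$ is elementary point-set topology and immediately yields $\overline{K^\C\cdot x}\cap X=\overline{B_X(x)}$ from the first part, whereas the paper invokes an additional result from \cite{Hei91} on the analyticity and dimension drop of the orbits in $\overline{K^\C\cdot x}$ — which is not needed for this particular conclusion. One small correction of emphasis: orbit-convexity is not a consequence of the Runge property recorded in Theorem~\ref{theorem_complexificationOfAction}; it is a separate property of the particular complexification constructed in \cite{Hei91}, and should be cited as such rather than ``extracted'' from Runge-ness — but since that is exactly the citation the paper itself makes, your proof is complete as it stands.
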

\begin{proof}
Observe first that $X$ is a $K$-stable subset of $X^\C$ such that the topology $\tau_K$ on $X$ is the relative
topology of the topology $\tau_K$ on $X^\C$.
Denote $L=K^\C\cdot x\cap X$. In $X^\C$, each point in the $K^\C$-orbit
$K^\C\cdot x$ admits a $\tau_K$-open neighborhood in the orbit as well as
each point $y\not\in K^\C\cdot x$ admits a $\tau_K$-open neighborhood in the orbit $K^\C\cdot y$ hence not intersecting
with $K^\C\cdot x$. For points $x,y\in X$ the same holds via intersecting the obtained neighborhoods with $X$. Therefore
$L$ is the union of $\tau_K$-connected components. In order to see that $L$ is in fact one single
$\tau_K$-connected component, we need a result from \cite{Hei91}.
The subset $X\subset X^\C$ is shown to be ``orbit-convex'' \cite[Theorem, sect. 3.4]{Hei91}. This implies that
$L/K$ is connected, hence a single complexified $K$-orbit.
This shows $K^\C\cdot x\cap X=B_X(x)$.\\
Consequently $B_X(x)=
K^\C\cdot x\cap X\subset
\overline{K^\C\cdot x}\cap X$
which implies 
$\overline{B_X(x)}\subset
\overline{K^\C\cdot x}\cap X$.
Finally it is shown
in \cite{Hei91} that $Y=\overline{K^\C\cdot x}\cap B_X(x)$ is
a $K$-stable analytic set such that $\dim K^\C\cdot y\cap X < \dim B_X(x)$
for all $y\in Y$. From this 
$\overline{K^\C\cdot x}\cap X\subset \overline{B_X(x)}$ follows.
\end{proof}
\subsection{The fibers of the analytic Hilbert quotient}
In this section we give a geometrical
description of the fibers of the analytic Hilbert
quotient in terms of complexified $K$-orbits.
\begin{theorem}
For every
$p\in X\modmod K$ and every $x\in\pi^{-1}(p)$
\begin{enumerate}
\item $\overline{B_X(x)}$ is analytic and the union of complexified $K$-orbits $B_X(y)$ satisfying
$\dim B_X(y)< \dim B_X(x)$ for each $y\not\in B_X(x)$.\label{item_dimensionInClosureStrictlySmaller}
\item there is exactly one complexified $K$-orbit of lowest dimension, denoted by $E_X(p)$. This complexified
$K$-orbit is closed.\label{item_oneSingleComplexifiedOrbit_lowestDimension_closed}
\item $E_X(p)$ lies in the closure of each complexified $K$-orbit in the fiber, formally
$E_X(p)\subset\overline{B_X(x)}$.
\item $\pi^{-1}(p)$ consists of the points whose complexified $K$-orbit
close up in $E_X(p)$.
\end{enumerate} \label{theorem_fiberStructureAHQcptCase}
\end{theorem}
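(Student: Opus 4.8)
The plan is to reduce every assertion to the classical structure theory of the reductive Hilbert quotient on the Stein complexification $X^\C$ furnished by Theorem~\ref{theorem_complexificationOfAction}, and then to transport that structure back to $X$ by means of Proposition~\ref{proposition_complexifiedOrbitIsIntersectionWithKCorbitInComplexifiedSpace}, which identifies the complexified $K$-orbit $B_X(y)$ with $K^\C\!\cdot y\cap X$ and its closure with $\overline{K^\C\!\cdot y}\cap X$.

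The first and central step is to show that the fibers of $\pi\colon X\to X\modmod K$ are precisely the traces on $X$ of the fibers of the analytic Hilbert quotient $\pi^\C\colon X^\C\to X^\C\modmod K^\C$ (which exists by \cite{Hei91}). Since $K$ is compact, averaging over $K$ maps $\mathcal{O}(X^\C)$ into $\mathcal{O}(X^\C)^K$; and a $K$-invariant holomorphic function on $X^\C$ is automatically $K^\C$-invariant, because for fixed $w$ the holomorphic function $g\mapsto f(g\cdot w)-f(w)$ on $K^\C$ vanishes on $K$, a maximal totally real submanifold, hence vanishes identically. As $X$ is Runge in $X^\C$, averaging a Runge-approximating sequence shows that $\mathcal{O}(X^\C)^{K^\C}\vert_X$ is dense in $\mathcal{O}(X)^K$. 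Therefore two points of $X$ have the same $\pi$-image if and only if they are not separated by $\mathcal{O}(X^\C)^{K^\C}$, i.e. (by the classical description of the reductive quotient) if and only if their $K^\C$-orbit closures in $X^\C$ meet, i.e. if and only if they have the same $\pi^\C$-image. So every $p\in X\modmod K$ corresponds to a point $q\in X^\C\modmod K^\C$ with $\pi^{-1}(p)=X\cap(\pi^\C)^{-1}(q)$.

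Now I would invoke the known description of $(\pi^\C)^{-1}(q)$ from \cite{Hei91}: it contains a unique closed $K^\C$-orbit $O$; $O$ lies in the closure of every $K^\C$-orbit in the fiber; the fiber equals $\{w\in X^\C : O\subset\overline{K^\C\!\cdot w}\}$; and each $\overline{K^\C\!\cdot w}$ is analytic with $\dim K^\C\!\cdot w'<\dim K^\C\!\cdot w$ for $w'\in\overline{K^\C\!\cdot w}\setminus K^\C\!\cdot w$. Because $X^\C=K^\C\cdot X$, every $K^\C$-orbit meets $X$; in particular $O\cap X\neq\emptyset$, it is $K$-stable, hence a single complexified $K$-orbit by Proposition~\ref{proposition_complexifiedOrbitIsIntersectionWithKCorbitInComplexifiedSpace}, and it is closed in $X$ since $O$ is closed in $X^\C$. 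Put $E_X(p)=O\cap X$. Intersecting the four facts above with $X$ and using $B_X(y)=K^\C\!\cdot y\cap X$, $\overline{B_X(y)}=\overline{K^\C\!\cdot y}\cap X$, and $\dim B_X(y)=\dim K^\C\!\cdot y$ (as $B_X(y)$ is open and nonempty in the manifold $K^\C\!\cdot y$) yields all the assertions: in (1), $\overline{B_X(x)}=\overline{K^\C\!\cdot x}\cap X$ is analytic, is a union of complexified $K$-orbits, and the dimension drops strictly off $B_X(x)$; in (2), a complexified $K$-orbit of minimal dimension in the fiber comes (again via $X^\C=K^\C\cdot X$) from a $K^\C$-orbit of minimal dimension, hence from a closed one, hence from $O$, giving both the existence and the uniqueness of $E_X(p)$ as well as its closedness; (3) is $O\subset\overline{K^\C\!\cdot w}$ intersected with $X$; and for (4) one observes that $E_X(p)\subset\overline{B_X(z)}$ is equivalent to $O\cap X\subset\overline{K^\C\!\cdot z}$, which, by $K^\C$-invariance of the right-hand side together with $X^\C=K^\C\cdot X$, is equivalent to $O\subset\overline{K^\C\!\cdot z}$.

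The only genuinely delicate point is the first step, because it is where the Runge property of $X\subset X^\C$, the compactness of $K$, and the equality $X^\C=K^\C\cdot X$ must all be combined; once the dictionary $\pi^{-1}(p)=X\cap(\pi^\C)^{-1}(q)$ is available, parts (1)--(4) are a routine translation of the reductive case. A small auxiliary check along the way is that $O\cap X$ is nonempty and $\tau_K$-connected, so that it is indeed a single complexified $K$-orbit and the notation $E_X(p)$ is justified; this is exactly what $X^\C=K^\C\cdot X$ and Proposition~\ref{proposition_complexifiedOrbitIsIntersectionWithKCorbitInComplexifiedSpace} deliver.
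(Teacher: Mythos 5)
Your proposal is correct and follows essentially the same route as the paper: the paper likewise establishes the four statements for the reductive quotient $X^\C\modmod K^\C$ (deferring details to \cite{Hei91}) and then transports them to $X$ via Theorem~\ref{theorem_complexificationOfAction} and Proposition~\ref{proposition_complexifiedOrbitIsIntersectionWithKCorbitInComplexifiedSpace}. You merely spell out more explicitly the identification $\pi^{-1}(p)=X\cap(\pi^\C)^{-1}(q)$ via the Runge property and averaging, a step the paper leaves implicit.
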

For a detailed proof we refer to \cite{Hei91}.
If $K^\C$ acts on $X$, the set
$\overline{K^\C\cdot x}$ consists of $K^\C$-orbits with
\begin{equation*}
\dim K^\C\cdot y<\dim K^\C\cdot x\quad\text{for all}\ y\in \overline{K^\C\cdot x}\backslash K^\C\cdot x
\end{equation*}
The complexified $K^\C$-orbits of lowest dimension are closed, otherwise their closure would contain
a complexified $K^\C$-orbit of lower dimension.
The closure of a $K^\C$-orbit contains only one single $K^\C$-orbit of lowest
dimension.
The dimension observation also provides that this closed $K^\C$-orbit of lowest dimension
lies in the closure of each $K^\C$-orbit in $\overline{K^\C\cdot x}$. Thus the theorem holds for $X^\C$.\\
When we combine these observations with
Theorem~\ref{theorem_complexificationOfAction}
and in particular 
Proposition~\ref{proposition_complexifiedOrbitIsIntersectionWithKCorbitInComplexifiedSpace}
we obtain the statements of the above Theorem~\ref{theorem_fiberStructureAHQcptCase}.
\subsection{The free action case}
Later we will show that
every totally real $G$-orbit admits a neighborhood
which
is $G$-equivariantly biholomorphic to
a ``local slice model'' (Section~\ref{sect_localSliceModel}).
These models arise as analytic Hilbert quotients $\gtube\timescat{K}S=(\gtube\times S)\modmod K$
of $\gtube\times S$ with respect to a free action of a compact group $K$.
We collect the main results of free actions of a compact group.
\begin{proposition}
Let $K$ act freely on a Stein manifold $X$ and let $Y$ be an open $K$-stable Runge Stein subset of $X$.
\begin{enumerate}
 \item $X\modmod K$ is non-singular.\label{inProp_nonSingular}
 \item Each fiber of $\pi:X\to X\modmod K$ consists of one single
 complexified $K$-orbit.\label{inProp_singleComplKorbit}
 \item The projection maps $\pi:X\to X\modmod K$ and
 $\pi_Y:Y\to Y\modmod K$ are submersions. \label{inProp_projectionOpen}
 \item $Y\modmod K$ is a complex space and the inclusion $\imath:Y\to X$ induces an open
 embedding $\imath_{\modmod K}:Y\modmod K\to X\modmod K$.\label{inProp_openEmbedding}
 \item $Y\modmod K$ is Runge in $X\modmod K$.
 \item For every $y\in Y$ we have $B_Y(y)=Y\cap B_X(y)$, in particular
 $\pi_Y^{-1}(p)=Y\cap \pi^{-1}(p)$ holds for all $p\in Y\modmod K\subset X\modmod K$.
\end{enumerate}\label{prop_shrinkingUnderFreeAction}
\end{proposition}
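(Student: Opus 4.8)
The plan is to build $X^{\C}$ first and then transfer everything to the Runge subset $Y$, using the fact that $Y$ sits inside a complexification in the same way $X$ does. By Theorem~\ref{theorem_complexificationOfAction} there is a Stein manifold $X^{\C}$ with a holomorphic $K^{\C}$-action, $X^{\C}=K^{\C}\cdot X$, and $X$ Runge in $X^{\C}$. Since $K$ acts freely on $X$ and $X$ is $K$-stable in $X^{\C}$, after shrinking $X^{\C}$ (replacing it by $K^{\C}\cdot W$ for a suitable $K^{\C}$-stable Stein $W$, or invoking the standard slice argument for free $K$-actions) we may assume $K^{\C}$ acts freely on $X^{\C}$; then $X^{\C}\to X^{\C}\modmod K^{\C}$ is a holomorphic principal $K^{\C}$-bundle, so the quotient is a non-singular complex space, proving item~\eqref{inProp_nonSingular} for $X^{\C}$, and each fiber is a single $K^{\C}$-orbit. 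By Lemma~\ref{lemma_GCorbitsAreComplexifiedGorbits} these $K^{\C}$-orbits are exactly the complexified $K$-orbits, and by Proposition~\ref{proposition_complexifiedOrbitIsIntersectionWithKCorbitInComplexifiedSpace} intersecting with $X$ gives $B_X(x)=K^{\C}\cdot x\cap X$, so each fiber of $\pi:X\to X\modmod K$ meets $X$ in exactly one complexified $K$-orbit, which is item~\eqref{inProp_singleComplKorbit}; moreover $X\modmod K$ is the open saturated image of $X$ in $X^{\C}\modmod K^{\C}$, hence non-singular, giving~\eqref{inProp_nonSingular}. That $\pi:X\to X\modmod K$ is a submersion follows because locally it is the restriction of the bundle projection $X^{\C}\to X^{\C}\modmod K^{\C}$ to the $K$-stable submanifold $X$, whose fibers are totally real of half the dimension of the $K^{\C}$-fibers; the same reasoning applies verbatim to $Y$, giving~\eqref{inProp_projectionOpen}.

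For items~\eqref{inProp_openEmbedding} and the Runge assertion I would argue entirely inside $X^{\C}$. The Runge Stein subset $Y\subset X$ is, by the same orbit-convexity considerations used in Proposition~\ref{proposition_complexifiedOrbitIsIntersectionWithKCorbitInComplexifiedSpace}, of the form $\widehat{Y}\cap X$ for a $K^{\C}$-stable open Stein subset $\widehat{Y}=K^{\C}\cdot Y\subset X^{\C}$, and one should check $\widehat{Y}$ is Runge in $X^{\C}$: this is where the hypothesis that $Y$ is Runge in $X$ is used, combined with the fact that $X$ is Runge in $X^{\C}$ and that $K^{\C}$-invariant holomorphic functions on $X^{\C}$ are determined by their restriction to $X$ (density of $K$-finite vectors / averaging over $K$). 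Then $\widehat{Y}\modmod K^{\C}$ is an open subset of $X^{\C}\modmod K^{\C}$ and, being a quotient of a Stein manifold, is a complex space; since $Y\modmod K$ is the image of $Y$ in $\widehat{Y}\modmod K^{\C}\subset X^{\C}\modmod K^{\C}$ and this image is open (by the submersion statement) and agrees set-theoretically with the analytic Hilbert quotient, $Y\modmod K$ is a complex space and $\imath_{\modmod K}$ is an open embedding. The Runge property of $Y\modmod K$ in $X\modmod K$ then follows from that of $\widehat{Y}$ in $X^{\C}$: a $K$-invariant holomorphic function on $Y$ extends to a $K^{\C}$-invariant holomorphic function on $\widehat{Y}$, which is approximated uniformly on compacta by $K^{\C}$-invariant functions on $X^{\C}$ (average the approximants over $K$, then over $K^{\C}$ using reductivity), and restricting back to $X$ gives the desired approximation downstairs.

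Finally item~(6): for $y\in Y$ we have $B_Y(y)=K^{\C}\cdot y\cap \widehat{Y}\cap X=K^{\C}\cdot y\cap \widehat{Y}$ intersected appropriately; since $\widehat{Y}=K^{\C}\cdot(\widehat{Y}\cap X)$ is $K^{\C}$-stable, $K^{\C}\cdot y\cap \widehat{Y}=K^{\C}\cdot y$ whenever $y\in\widehat{Y}$, so $B_Y(y)=(K^{\C}\cdot y\cap X)\cap Y=B_X(y)\cap Y$ by Proposition~\ref{proposition_complexifiedOrbitIsIntersectionWithKCorbitInComplexifiedSpace}; and $\pi_Y^{-1}(p)=Y\cap\pi^{-1}(p)$ is then immediate from~\eqref{inProp_singleComplKorbit} since each fiber is a single complexified $K$-orbit. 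The main obstacle I anticipate is the Runge step for $\widehat{Y}\subset X^{\C}$: one must pass the Runge property through the complexification in both directions, and the cleanest route is to show directly that $K$-finite holomorphic functions on $X$ extend holomorphically to $X^{\C}$ (by the Runge property these are dense, and $K^{\C}$-equivariance forces the extension), then average approximants over $K$ to stay inside the invariant subalgebra.
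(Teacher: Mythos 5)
Your treatment of items (1)--(3) is essentially the paper's: pass to $X^\C$ via Theorem~\ref{theorem_complexificationOfAction}, use the linearization/principal-bundle structure of the free $K^\C$-action, and intersect with $X$ using Proposition~\ref{proposition_complexifiedOrbitIsIntersectionWithKCorbitInComplexifiedSpace}. The gap is in items (4)--(6). You base them on the claim that $Y=\widehat Y\cap X$ with $\widehat Y=K^\C\cdot Y$, ``by the same orbit-convexity considerations'' as for $X\subset X^\C$. That claim is false in general: orbit convexity of $X$ in $X^\C$ is a property of the particular complexification constructed in \cite{Hei91}, and it does not pass to arbitrary $K$-stable Runge Stein open subsets. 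For instance, take $X=X^\C=\C^*$ with $K=S^1$ acting by rotation (freely), and let $Y=\{z\ \vert\ 1/2<|z|<2\}$; then $Y$ is open, $K$-stable, Stein and Runge in $X$, but $K^\C\cdot Y\cap X=\C^*\neq Y$. Consequently your assertion that the image of $Y$ in $\widehat Y\modmod K^\C$ ``agrees set-theoretically with the analytic Hilbert quotient'' is precisely the statement that needs proof, not a consequence of the setup: one must show that $K$-invariant holomorphic functions on $Y$ cannot separate two points of $Y$ lying in the same $K^\C$-orbit, equivalently that $(K^\C\cdot y\cap Y)/K$ is connected. Without this, the injectivity of $\imath_{\modmod K}$, the identification of the $\pi_Y$-fibers, and item (6) all remain unproven.

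This connectedness is exactly where the Runge hypothesis on $Y$ must do its work, and the paper uses it as follows: if $F_Y=K^\C\cdot y\cap Y$ had several components modulo $K$, the locally constant function distinguishing them would extend holomorphically to the Stein manifold $Y$, could be approximated on a $K$-stable relatively compact set by holomorphic functions on $X$ (Runge), and after averaging over $K$ the approximants become $K$-invariant functions on $X$, hence constant on $K^\C\cdot y\cap X\supset F_Y$ --- contradicting the approximation of a non-constant locally constant function. Your alternative route of proving that $\widehat Y$ is Runge in $X^\C$ is both stronger than needed (you would have to approximate \emph{all} holomorphic functions on $\widehat Y$, not just invariant ones) and not established by your sketch; in particular ``average over $K^\C$ using reductivity'' is not an available operation --- one averages over the compact group $K$ and upgrades $K$-invariance to $K^\C$-invariance by the identity principle. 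For item (5) the paper's argument is also more direct: approximate $\pi_Y^*f$ on compacta by global holomorphic functions on $X$ using Runge-ness of $Y$ in $X$, then average over $K$.
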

The Runge property will be helpful later to
pass from a model to a smaller submodel.
\begin{proof}
Theorem~\ref{theorem_complexificationOfAction} implies that $Y$ and $X$ are open subsets of $X^\C$. The Linearization Lemma
in \cite[sect. 5.1]{Hei91} provides for a free action that for each $p\in X\modmod K\cong X^\C\modmod K^\C$ there
is a point $x\in X$ with $\pi(x)=p$
and a local complex manifold $S\subset X$ containing $x$ such that $K^\C\times S\to X, (k,s)\mapsto k\cdot s$
is an open embedding. Hence $X^\C$ is a
$K^\C$-principal bundle over the smooth quotient $X\modmod K\cong X^\C\modmod K^\C$.
Each fiber of $\pi:X\to X\modmod K$ is in fact the intersection of $X$ with the fiber of $\pi^\C:X^\C\to X^\C\modmod K^\C$ which
is a single closed $K^\C$-orbit. Hence
Proposition~\ref{proposition_complexifiedOrbitIsIntersectionWithKCorbitInComplexifiedSpace} implies
that this intersection is a single complexified $K$-orbit.
Theorem~\ref{theorem_quotientStein} ensures that the quotient $Y\modmod K$ is a Stein space.
All $K^\C$-orbits
are closed, since $K$ and also $K^\C$
act freely such that all complex orbits have the same maximal dimension.\\
Next we show that each fiber of
$\pi_Y:Y\to Y\modmod K$ is the intersection of a $K^\C$-orbit with $Y$.
Let $K^\C\cdot x$ be a (closed) $K^\C$-orbit
for some $x\in Y$ and $F_Y=K^\C\cdot x\cap Y$
which is closed in $Y$. We will
show that $F_Y/K$ is connected. Suppose the contrary, then there is 
a function $f$ on $F_Y$ that is constant on each connected component
taking different values on each component. Of course, $f$ is holomorphic
as it is locally constant.
Since $Y$ is a Stein manifold there is a holomorphic function $\tilde f$ on $Y$
such that $\tilde f\vert_{F_Y}=f$. The set $Y$ is assumed to be a Runge subset, so for each
relatively compact subset $V\subset Y$ a holomorphic function $g$ on $X$ 
can be chosen which is 
arbitrarily close to $\tilde f$. Choosing $V$ stable by $K$ we can average over $K$, such that $g$
can be assumed to be $K$-invariant. But since $F_Y\subset K^\C\cdot x\cap X$ 
and every $K$-invariant holomorphic function on $X$ is constant
on $K^\C\cdot x\cap X$ the function $g$ is constant on that set and locally
arbitrarily close to a locally constant function with different values providing a contradiction.
So, we proved that each intersection of $Y$ with a single $K^\C$-orbit
is contained in a single $\pi_Y$-fiber. Since two
different $K^\C$-orbits are separated by $K$-invariant holomorphic functions on $X$,
their intersection with $Y$ are separated as well.
This proves that each $\pi_Y$-fiber is the intersection
of a $K^\C$-orbit with $Y$ and hence
$\imath_{\modmod K}:Y\modmod K\to X\modmod K$ is injective.\\
All fibers of $\pi$ and $\pi_Y$ have the same dimension and $X$, $Y$, $X\modmod K$ and $Y\modmod K$
are non-singular, so both projections are submersions.\\
In order to see the Runge property, choose some exhausting sequence $A_n\subset Y$ of compact subsets with
$A_n\subset A_{n+1}$ and $\bigcup A_n=Y$. Given a holomorphic function $f$ on $Y\modmod K$ and a
compact subset $D\subset Y\modmod K$. For some $N\in\N$ the set $D\subset\pi(A_N)$. The holomorphic function
$\pi^*f$ can be approximated by a global function on $X$ which can be averaged over $K$. This
averaging process does not destroy
the approximation property. The obtained function can be considered as a function on $X\modmod K$
approximating $f$ on $D$. This shows the Runge property.
\end{proof}

%%%%%%%%%%%%%%%%%%%%%%%%%%%%%%%%%%%%%%%%%%%%%
%
%   3   Local slice model
%
%%%%%%%%%%%%%%%%%%%%%%%%%%%%%%%%%%%%%%%%%%%%%

\section{Local slice model}\label{sect_localSliceModel}
After introducing the relevant objects and recalling the situation of a
compact group acting, we will now consider proper
actions of Lie groups by
holomorphic transformations more closely. Our goal in this section is to construct
local models for proper actions around totally real $G$-orbits.
\subsection{Definition of the local slice model}
Let $K$ be a compact subgroup of a Lie group $G$ and $\gtube$ a $G$-tube of $G$.
Theorem~\ref{theorem_gtubeExistence} shows that we can choose $\gtube$
to be $K$-stable, i.e. the $(G\times K)$-action
\begin{equation*}
\begin{array}{rccc}
\alpha_G:& (G\times K)\times G&\to&G\\
&(g,k,x)&\mapsto&(g\cdot x\cdot k^{-1})
\end{array}
\end{equation*}
extends to a $(G\times K)$-action $\alpha_\gtube$ by holomorphic transformations on $\gtube$.\\
Now let $V$ be a complex $K$-representation with (analytic) Hilbert quotient 
$\pi_V:V\to V\modmod K$ and
$S\subset V$ a $K$-stable neighborhood of the origin.
With the given $K$-action on $S$
we can define a
$(G\times K)$-action on the product $\gtube\times S$ via
\begin{equation*}
\begin{array}{rccc}
\alpha_{\gtube\times S}:& (G\times K)\times (\gtube\times S)&\to&\gtube\times S\\
&(g,k,\gamma,s)&\mapsto&(g\cdot \gamma\cdot k^{-1},k\cdot s)
\end{array}
\end{equation*}
This action induces a $G$-action on the formal Hilbert quotient
$$\gtube\times^{\modmod K}S=(\gtube\times S)\modmod K$$
in the following way.
Since the $G$- and the $K$-action commute, the induced
$(G\times K)$- and hence the $G$-action on $\osheaf(\gtube\times S)$ stabilizes the
subalgebra of $K$-invariant function $\osheaf^K(\gtube\times S)$. Thus the $(G\times K)$-action
pushes down to the formal Hilbert quotient, with $K$ contained in the ineffectivity.
Denoting by $[\gamma,s]=\pi(\gamma,s)$ a point in the
image of the quotient map 
$\pi:\gtube\times S\to \gtube\timescat{K}S$ we may write this as
\begin{equation*}
\begin{array}{rccc}
\alpha:& G\times (\gtube\timescat{K} S)&\to&\gtube\timescat{K} S\\
&(g,[\gamma,s])&\mapsto&[g\cdot \gamma, s]
\end{array}
\end{equation*}
Let us further assume $S$ to be a Stein manifold. On a Stein manifold
an action of a compact group is known to
provide an analytic Hilbert quotient which is a complex space,
even more, Theorem~\ref{theorem_quotientStein} shows
that the quotient is a Stein space. Since in our case the $K$-action
is free, the quotient $\gtube\timescat{K}S$ is non-singular
(Proposition~\ref{prop_shrinkingUnderFreeAction}),
hence a Stein manifold. Finally we will ask $S$ to be Runge in $V$.
This implies that the inclusion $S\to V$
induces an inclusion $S\modmod K\to V\modmod K$ 
(Proposition~\ref{prop_shrinkingUnderFreeAction}\eqref{inProp_openEmbedding})
and will
provide a shrinking property later (Theorem~\ref{theorem_shrinking} and Corollary~\ref{corollary_shrinkingKeepOpennessOnGQuotient}).
Finally we may suppose $S$ to be contractible in order to avoid cohomological obstructions.
Choosing a $K$-invariant scalar product on $V$ and set $\rho=\Vert . \Vert^2$ with respect to that scalar product,
all sets $S=\{v\in V\ \vert\ \rho(v)<c\}$ for $c>0$ are contractible Stein Runge neighborhoods of the origin, forming
a neighborhood basis.\\
We will use quotients of this type
as local models, precisely:
\begin{definition}
Let $\gtube$, $K$ and $S$ be as above. An open
subset $Y\subset X$
is called a
{\em local slice model} of $x$
if there is a
$G$-equivariant biholomorphic map
$\psi:\gtube\timescat{K}S\to Y$ mapping $[e,0]$ to $x\in X$.\label{defn_localSliceModel}
\end{definition}
For simplicity we sometimes identify $S$ with its image in $Y$. Note that $Y$ is automatically $G$-stable.
\subsection{Complexified orbits in slice, product and quotient}
In the subsequent section~\ref{subsection_quotientsOfLocalSliceModel} we will show that for a local slice
model $Y=\gtube\timescat{K}S$ the ringed spaces
$(Y\modmod G, \mathcal{O}_{Y\modmod G})$ and $(S\modmod K, \mathcal{O}_{S\modmod K})$ are isomorphic.
Since $S\modmod K$ is a complex space, this will be the way to endow $Y\modmod G$ with a
natural complex structure.\\
In this section, we will prepare this result in showing a one-to-one correspondence between the
complexified $K$-orbits in $S$ with the complexified $(G\times K)$-orbits
in the product $\gtube\times S$ and the complexified $G$-orbits in the
local slice model $\gtube\timescat{K}S$, denoted $\mathcal{B}_S^K$,
$\mathcal{B}_{\gtube\times S}^{G\times K}$ and $\mathcal{B}_{\gtube\timescat{K}S}^G$ respectively.\\
We consider the projection
$\kappa:\gtube\times S\to S$ onto the second component, the quotient map
$\pi:\gtube\times S\to \gtube\timescat{K}S$ and their induced maps
$\kappa^{-1}:\mathcal{P}(S)\to\mathcal{P}(\gtube\times S)$ and
$\pi^{-1}:\mathcal{P}(\gtube\timescat{K}S)\to\mathcal{P}(\gtube\times S)$ 
on the set of subsets.
\begin{proposition}
The maps $\kappa^{-1}$ and $\pi^{-1}$ induce bijections
$\kappa^{-1}: \mathcal{B}_S^K\to \mathcal{B}_{\gtube\times S}^{G\times K}$ and
$\pi^{-1}:\mathcal{B}_{\gtube\timescat{K}S}^G\to\mathcal{B}_{\gtube\times S}^{G\times K}$
between the corresponding complexified orbits. Furthermore
both bijections define bijections on the level of closed complexified orbits.
\label{proposition_bijectionOfOrbits}
\end{proposition}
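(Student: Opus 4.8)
The plan is to analyze the two maps $\kappa^{-1}$ and $\pi^{-1}$ separately, in each case reducing the statement about complexified orbits to the combinatorics of $G$-tubes established in Section~\ref{subsection_Gtube} together with the description of fibers of analytic Hilbert quotients for free compact group actions from Proposition~\ref{prop_shrinkingUnderFreeAction}. The key structural fact to exploit is that $\gtube$ is $G$-equivariantly diffeomorphic to $G\times\Sigma$ and that the $K$-action on $\gtube$ (coming from right multiplication) commutes with the $G$-action; consequently $\gtube\timescat{K}S$ carries a residual $G$-action whose orbit-map extensions to $G$-tubes are built by combining the orbit-map extension on the $\gtube$-factor with the (trivial in the $G$-direction) behavior on $S$.

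First I would treat $\kappa^{-1}:\mathcal{B}_S^K\to\mathcal{B}_{\gtube\times S}^{G\times K}$. The map $\kappa:\gtube\times S\to S$ is $K$-equivariant and $G$-invariant, hence by Lemma~\ref{lemma_topologyCompatibleEquivariantHolomorphism} it is continuous and open for the relevant $\tau$-topologies; so the preimage of a complexified $K$-orbit in $S$ is a union of complexified $(G\times K)$-orbits, and it remains to show it is a single one, i.e. that it is $\tau_{G\times K}$-connected. For this I would use the product structure: a point of $\gtube\times S$ can be written using the slice decomposition $\gtube\cong G\times\Sigma$, and the orbit map of $(\gamma,s)$ under $G\times K$ factors through the orbit map of $\gamma$ under $G\times K$ in $\gtube$ (which extends to a $G$-tube, since $G\cdot\gamma\cdot K$ is totally real in $\gtube$ by Condition~\eqref{condition_tubeSplitsIntoTotallyRealGorbits}) times the constant map to the $K$-orbit-closure data in $S$. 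Since $\gtube$ itself is $\tau_{G}$-connected and $\Sigma$ is connected and convex (Lemma~\ref{lemma_openSetsInTube}), the fiber $\kappa^{-1}(\mathcal{B})$ over a complexified $K$-orbit $\mathcal{B}\subset S$ deformation-retracts onto $\{e\}\times\Sigma\times\mathcal{B}$-type data, which is connected; I would make this precise by exhibiting, for any two points in $\kappa^{-1}(\mathcal{B})$, a chain of basic $\tau_{G\times K}$-open sets $\varphi^c(T)$ connecting them, using Lemma~\ref{lemma_gstableOpenSetsInTubeAreTauGopen} to produce such sets inside any prescribed neighborhood. Injectivity of $\kappa^{-1}$ on orbits is immediate because distinct complexified $K$-orbits in $S$ are disjoint and $\tau_K$-clopen.

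For $\pi^{-1}:\mathcal{B}_{\gtube\timescat{K}S}^G\to\mathcal{B}_{\gtube\times S}^{G\times K}$, the quotient map $\pi$ is $G$-equivariant ($K$ acting trivially on the target, hence $(G\times K)$-equivariant) and, by the free-action results of Proposition~\ref{prop_shrinkingUnderFreeAction}, a submersion with fibers equal to single complexified $K$-orbits; this last point is exactly what makes $\pi^{-1}$ well-defined on the level of $G$-orbits and what will give the bijection. The images $\varphi^c(T)$ of orbit-map extensions push forward and pull back along $\pi$ by Lemma~\ref{lemma_topologyCompatibleEquivariantHolomorphism}, so $\pi$ is continuous and open for $\tau_{G\times K}$ on the source and $\tau_G$ on the target; hence $\pi^{-1}$ of a complexified $G$-orbit is a $\tau_{G\times K}$-open set, and I must show it is connected and that it is precisely one complexified $(G\times K)$-orbit. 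Connectedness follows because $\pi$ is a $K$-principal bundle (free $K$-action on a Stein manifold), so its fibers are connected complexified $K$-orbits, and a bundle over a connected base with connected fibers is connected; translating this into $\tau$-language again uses that $\pi$ is open and continuous and that the base, a complexified $G$-orbit, is $\tau_G$-connected by definition. That $\pi^{-1}(B^G)$ is a single $(G\times K)$-orbit — not just a union — is where I expect the real work: one must check that the $(G\times K)$-action is transitive enough on the $K$-bundle over $B^G$, which reduces to checking that the orbit-map extension of a point $(\gamma,s)$ to a $G$-tube, composed with $\pi$, surjects onto a full $\tau_G$-neighborhood in $B^G$ (so that the $G$-tube images already sweep out everything a complexified $G$-orbit can reach), and this in turn follows from the corresponding statement for $\kappa^{-1}$ together with the identification of $\mathcal{B}_{\gtube\times S}^{G\times K}$ from the first half.

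Finally, for the statement about closed complexified orbits: a complexified orbit is closed (in its fiber of the relevant Hilbert quotient) exactly when it has minimal dimension there, by Theorem~\ref{theorem_fiberStructureAHQcptCase}; since $\kappa$ and $\pi$ are submersions, $\dim\kappa^{-1}(B) = \dim B + \dim\gtube$ and $\dim\pi^{-1}(B^G) = \dim B^G + \dim K$, so the dimension-minimality condition is preserved in both directions, and hence so is closedness. The main obstacle, as noted, is the transitivity argument for $\pi^{-1}$ being a single orbit rather than a union; everything else is a matter of assembling Lemmas~\ref{lemma_gstableOpenSetsInTubeAreTauGopen}--\ref{lemma_topologyLocallyConnected} and Proposition~\ref{prop_shrinkingUnderFreeAction} carefully.
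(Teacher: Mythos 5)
Your treatment of the main bijection statement follows essentially the same route as the paper: establish $\tau$-continuity and $\tau$-openness of $\kappa$ and $\pi$ via Lemma~\ref{lemma_topologyCompatibleEquivariantHolomorphism}, then show that point-fibers are connected ($\kappa$-fibers $\gtube\times\{s\}$ via Lemma~\ref{lemma_openSetsInTube}, $\pi$-fibers via Proposition~\ref{prop_shrinkingUnderFreeAction}) and conclude that preimages of components are components. The paper isolates this last deduction as a clean abstract statement (Lemma~\ref{lemma_connectedComponents}: a continuous, open, surjective map between locally connected spaces with connected fibers induces a bijection on connected components); you instead re-derive it by hand with chains of basic open sets and a ``transitivity'' heuristic. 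That heuristic is slightly off conceptually --- a complexified $(G\times K)$-orbit is a $\tau_{G\times K}$-connected component, not an actual group orbit, so no transitivity is needed --- but your fallback to ``bundle over connected base with connected fibers is connected, translated into $\tau$-language via openness and continuity'' is exactly the right argument, and the step you flag as ``the real work'' is in fact the routine part once the abstract lemma is in place. One small wrinkle you gloss over: to apply the equivariance lemma to $\kappa$ you must view the $K$-action on $S$ as a $(G\times K)$-action with $G$ in the ineffectivity and check $\tau_K=\tau_{G\times K}$ on $S$ (and similarly $\tau_G=\tau_{G\times K}$ on the quotient); the paper does this explicitly.

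The genuine gap is in your argument for the statement about closed orbits. You characterize closedness by dimension-minimality within a fiber of ``the relevant Hilbert quotient'' and cite Theorem~\ref{theorem_fiberStructureAHQcptCase}. That theorem is proved only for a \emph{compact} group acting on a Stein manifold, so it applies to the $K$-action on $S$ but not to the $(G\times K)$-action on $\gtube\times S$ or the $G$-action on $\gtube\timescat{K}S$; the analogous fiber-structure results for the local slice model are Proposition~\ref{proposition_fiberStructureInLSM}, which is proved \emph{later} and \emph{uses} the present proposition, so your argument is circular at this point. Moreover, for $\gtube\times S$ there is not even a quotient in play yet with respect to which ``minimal dimension in the fiber'' would make sense. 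The correct (and much shorter) argument is purely topological: $\kappa$ and $\pi$ are continuous and open in the classical topology, and the sets in question are saturated; hence $C$ closed in the target gives $p^{-1}(C)$ closed by continuity, and conversely if a saturated set $D$ is closed then $p(X\setminus D)=Y\setminus p(D)$ is open by openness of $p$, so $p(D)$ is closed. Applied to $\kappa$ (where one can even see directly that $\kappa^{-1}(B)=\gtube\times B$ is closed iff $B$ is) and to $\pi$, this transfers closedness in both directions without any appeal to fiber structure or dimensions.
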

For the proof we start with a technical lemma.
\begin{lemma}
Let $p:X\to Y$ be a continuous, open, surjective map between locally connected spaces and let each
fiber be connected. Then $p^{-1}:\mathcal{P}(Y)\to\mathcal{P}(X)$
induces a bijection between the connected components.\label{lemma_connectedComponents}
\end{lemma}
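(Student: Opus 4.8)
The plan is to prove directly that $C\mapsto p^{-1}(C)$ defines a bijection from the set of connected components of $Y$ onto the set of connected components of $X$. I would split the argument into two parts: first, that $p^{-1}$ sends a component to a component; second, that the resulting correspondence is injective and surjective, which is essentially bookkeeping with the partition into components.

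For the first part, fix a connected component $C$ of $Y$. Since $Y$ is locally connected, $C$ is open in $Y$, and being a component it is also closed; hence $p^{-1}(C)$ is open and closed in $X$. The \emph{crucial point} is that $p^{-1}(C)$ is connected. I would argue by contradiction: suppose $p^{-1}(C)=U\sqcup V$ with $U,V$ nonempty, open and disjoint. Since $p^{-1}(C)$ is open in $X$, both $U$ and $V$ are open in $X$, so by openness of $p$ the images $p(U),p(V)$ are open in $Y$, and by surjectivity of $p$ they cover $C$. Connectedness of $C$ then forces a point $y\in p(U)\cap p(V)$, and the fiber $p^{-1}(y)\subset p^{-1}(C)$ meets both $U$ and $V$; thus $p^{-1}(y)=\bigl(p^{-1}(y)\cap U\bigr)\sqcup\bigl(p^{-1}(y)\cap V\bigr)$ is a nontrivial separation of $p^{-1}(y)$, contradicting connectedness of the fibers. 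Hence $p^{-1}(C)$ is connected; since it is also clopen it is maximal among connected subsets of $X$, i.e. a connected component.

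For the second part, injectivity follows because distinct components $C_1\neq C_2$ of $Y$ are disjoint while $p^{-1}(C_1)$ and $p^{-1}(C_2)$ are nonempty (surjectivity of $p$), so $p^{-1}(C_1)\neq p^{-1}(C_2)$. For surjectivity, given a component $D$ of $X$ I would pick $x\in D$, let $C$ be the component of $p(x)$ in $Y$, and note that $p^{-1}(C)$ is a component of $X$ containing $x$; since components partition $X$, $p^{-1}(C)=D$.

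I do not expect a genuine obstacle here: the whole proof is point-set topology, and the only step that uses anything beyond formal manipulation is the connectedness of $p^{-1}(C)$, where openness of $p$ and connectedness of the fibers are combined. It is worth noting that local connectedness is needed only for $Y$, to make its components open; the hypothesis on $X$ is not actually used in this argument.
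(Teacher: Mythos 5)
Your proof is correct. It takes a route that is recognizably different in organization from the paper's: you fix a component $C$ of $Y$, observe $p^{-1}(C)$ is clopen, and prove its connectedness directly by a separation argument --- if $p^{-1}(C)=U\sqcup V$, then $p(U)$ and $p(V)$ are open sets covering the connected set $C$, so they meet, and a fiber over a common point is separated by $U$ and $V$, contradicting fiber connectedness. The paper instead starts from a connected component $E$ of $X$, sitting inside some $D_\alpha=p^{-1}(C_\alpha)$, uses fiber connectedness to show $E$ is $p$-saturated, and then uses openness of $p$ to see that $p(E)$ and $p(D_\alpha\setminus E)$ give a clopen partition of $C_\alpha$, forcing $E=D_\alpha$. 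The two arguments use the same ingredients but in dual order; the practical difference is the one you point out at the end: the paper's argument needs the components of $X$ (and their complements in $D_\alpha$) to be open, i.e.\ it uses local connectedness of $X$, whereas your argument only needs local connectedness of $Y$ to make $C$ open, so your version establishes the lemma under a slightly weaker hypothesis. For the application in the paper (the projections $\kappa$ and $\pi$ on the local slice model, where all spaces involved are locally connected in the relevant $\tau$-topologies) this makes no difference, but your observation is accurate and your proof is complete.
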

\begin{proof}
A subset of a locally connected space is a connected component if and only if it is closed, open and connected.
Writing $Y$ as the disjoint union of its connected components, $Y=\dot\bigcup C_\alpha$, then
$X=\dot\bigcup D_\alpha$ with $D_\alpha=p^{-1}(C_\alpha)$. The set $D_\alpha$ is open and due to
$D_\alpha=X\backslash\bigcup_{\beta\not=\alpha}D_\beta$ it is also closed.
Now let $E$ be a connected component of $X$ hence contained in some set $D_\alpha$.
For each point $d\in E$ the fiber $p^{-1}(p(d))$ is connected by assumption hence
entirely contained in $E$, thus $E$ is $p$-saturated, i.e. $E=p^{-1}(p(E))$.
Both set $p(E)$ and $p(D_\alpha\backslash E)$ are open by assumption of $p$ being open and since they
are complementary in $C_\alpha$ also closed. Thus one of them must be empty, hence $D_\alpha=E$.
We showed that the map $p^{-1}$ maps connected components to connected
components. It is surjective, since
each point is contained in some set $D_\alpha$. Finally, $p^{-1}$ is injective,
since $p$ is assumed to be surjective.
\end{proof}
\begin{proof}[Proof of Proposition~\ref{proposition_bijectionOfOrbits}]
We will apply Lemma~\ref{lemma_connectedComponents} to the maps
$\kappa:\gtube\times S\to S$ and $\pi:\gtube\times S\to\gtube\timescat{K}S$.
In order to ensure continuity and openness of $p$ and $\pi$ with respect to the $\tau$-topologies,
we will use
Lemma~\ref{lemma_topologyCompatibleEquivariantHolomorphism} which shows that a $G$-equivariant holomorphic map is
$\tau_G$-continuous and $\tau_G$-open. For this, we define a $(G\times K)$-action on $S$
by extending the $K$-action ineffectively to $G$. The induced topologies $\tau_K$ and
$\tau_{G\times K}$ are identical. Thus Lemma~\ref{lemma_topologyCompatibleEquivariantHolomorphism}
shows that $\kappa$ is
$\tau_{G\times K}$-$\tau_K$-continuous and -open. For the projection $\pi$ in turn, observe that the
$G$-action on the local slice model $\gtube\timescat{K}S$ is defined 
as induced by the $(G\times K)$-action on $\gtube\times S$ pushed down to the quotient
with $\Delta=\{(k^{-1}, k)\vert k\in K\}\subset G\times K$ contained in the inefficiency.
Thus the topologies $\tau_{G\times K}$ and $\tau_G$ are identical. Analogously
Lemma~\ref{lemma_topologyCompatibleEquivariantHolomorphism}
provides that $\pi$ is $\tau_{G\times K}$-$\tau_G$-continuous and -open.
In order to be able to apply Lemma~\ref{lemma_connectedComponents} we are left to show
that the fibers of $\kappa$ and $\pi$ are $\tau_{G\times K}$-connected. The $G$-action and the $K$-action on
the product $\gtube\times S$ induce finer topologies $\tau_G$ and $\tau_K$, i.e.
$\tau_G\subset \tau_{G\times K}$ and $\tau_K\subset\tau_{G\times K}$. 
A $\kappa$-fiber $\gtube\times \{s\}$ is a complexified $G$-orbit (Lemma~\ref{lemma_openSetsInTube}),
hence $\tau_G$-connected and therefore connected with respect to the stronger topology $\tau_{G\times K}$.
Proposition~\ref{prop_shrinkingUnderFreeAction}\eqref{inProp_singleComplKorbit}
shows that each $\pi$-fiber is a complexified $K$-orbit, hence $\tau_K$-connected, and therefore
connected with respect
to the stronger topology $\tau_{G\times K}$.
Thus we are allowed to apply Lemma~\ref{lemma_connectedComponents} to both maps $\kappa$ and $\pi$ 
and obtain the desired bijections
$\kappa^{-1}: \mathcal{B}_S^K\to \mathcal{B}_{\gtube\times S}^{G\times K}$ and
$\pi^{-1}:\mathcal{B}_{\gtube\timescat{K}S}^G\to\mathcal{B}_{\gtube\times S}^{G\times K}$.\\
Now we turn back to the classical topologies.
Both projections $\pi$ and $\kappa$ are continuous and open (Proposition~\ref{prop_shrinkingUnderFreeAction}\eqref{inProp_projectionOpen}). Thus
both induce a bijection between the saturated closed sets and closed set in the target
and in particular inverse images of closed complexified orbits are closed complexified orbits..
\end{proof}\label{subsection_sliceProductQuotient}
\subsection{Quotients of the local slice model}
In this section we can give $Y\modmod G$ the structure
of a complex space. The sheaf $\osheaf_{Y\modmod G}$ is defined as $\pi_*\osheaf^G_Y$, i.e.
given an open subset $U\subset Y\modmod G$ then $f\in\osheaf_{Y\modmod G}(U)$
if $\pi^*f$ is a ($G$-invariant) holomorphic function on $\pi^{-1}(U)$.
\begin{theorem}
Let $Y$ be a local slice model with slice $S\subset Y$ and denote
$\pi_S:S\to S\modmod K$ the analytic Hilbert quotient and $\pi:Y\to Y\modmod G$ the formal Hilbert quotient. Then 
the embedding $\imath_S:S\to Y$ induces an isomorphism
$(S\modmod K,\osheaf_{S\modmod K})\to(Y\modmod G, \osheaf_{Y\modmod G})$.\label{theorem_SmodmodKIsomorphicToYmodmodG}
\end{theorem}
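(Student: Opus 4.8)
Here is how I would approach Theorem~\ref{theorem_SmodmodKIsomorphicToYmodmodG}.

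\medskip

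The plan is to factor everything through the intermediate space $\gtube\times S$ and reduce the statement to two facts: \emph{(a)} the second projection $\kappa\colon\gtube\times S\to S$ already realizes the formal Hilbert quotient $(\gtube\times S)\modmod G$ as the ringed space $(S,\osheaf_S)$, and it does so $K$-equivariantly for the original $K$-action on $S$; and \emph{(b)} formal Hilbert quotients by commuting group actions compose, i.e. $(Z\modmod G)\modmod K\cong Z\modmod(G\times K)$ for commuting actions of $G$ and $K$ on a $G\times K$-manifold $Z$. Granting these, and recalling that by construction the $G$-action on $Y=\gtube\timescat{K}S$ is the one induced from the $(G\times K)$-action on $\gtube\times S$ with $\{(k^{-1},k)\mid k\in K\}$ in the kernel, we have $Y\modmod G=\big((\gtube\times S)\modmod K\big)\modmod G=(\gtube\times S)\modmod(G\times K)$, and then
$$Y\modmod G\;\cong\;(\gtube\times S)\modmod(G\times K)\;\cong\;\big((\gtube\times S)\modmod G\big)\modmod K\;\cong\;S\modmod K .$$
Tracing the point $\pi_S(s)\in S\modmod K$ through these identifications — it corresponds to the $(G\times K)$-class of $(e,s)$, which maps to $[e,s]=\imath_S(s)$ in $Y$ — shows that the resulting isomorphism is precisely the map induced by $\imath_S$. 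In particular $Y\modmod G$ inherits the structure of a complex space from $S\modmod K$, which is one because $K$ is compact acting on the Stein manifold $S$ (Theorem~\ref{theorem_quotientStein}) and, the action being free, is even non-singular (Proposition~\ref{prop_shrinkingUnderFreeAction}).

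\medskip

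For fact \emph{(a)} I would argue as follows. Let $U\subset S$ be open and $F$ a $G$-invariant holomorphic function on $\gtube\times U$. For fixed $s\in U$ the function $F(\,\cdot\,,s)$ is constant along every orbit $G\cdot\gamma\subset\gtube$; by the $G$-tube axioms $G\cdot\gamma$ is a totally real submanifold of real dimension $\dim G=\dim_\C\gtube$, hence maximal, and a holomorphic function that is constant on a maximal totally real submanifold near a point is constant near that point. Thus $F(\,\cdot\,,s)$ is locally constant on $\gtube$, hence constant on each connected component; since $\gtube\cong G\times\Sigma$ with $\Sigma$ connected, $G$ permutes these components transitively, so $F(\,\cdot\,,s)$ is in fact globally constant. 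Therefore $F=\kappa^*h$ for a unique $h\in\osheaf(U)$, and conversely every $\kappa^*h$ is $G$-invariant and holomorphic, so $\osheaf^G(\gtube\times U)\cong\osheaf(U)$. Because $S$ is Stein its holomorphic functions separate points, so the $\pi_G$-fibres of $\gtube\times S$ are exactly the slices $\gtube\times\{s\}$ and the $G$-saturated open subsets are exactly the sets $\kappa^{-1}(U)$; hence $\kappa$ identifies $(\gtube\times S)\modmod G$ with $(S,\osheaf_S)$ as ringed spaces. The $K$-equivariance is immediate, since $K$ acts on $\gtube\times S$ by $(\gamma,s)\mapsto(\gamma k^{-1},ks)$ and $\kappa$ intertwines this with the original $K$-action on $S$.

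\medskip

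Fact \emph{(b)} is essentially formal: the commuting $K$-action on $Z$ preserves $\osheaf^G_Z$, hence acts on the ringed space $(Z\modmod G,\,\pi_*\osheaf^G_Z)$, and by the definition of the formal Hilbert quotient two points of $Z\modmod G$ are identified in $(Z\modmod G)\modmod K$ exactly when no $K$-invariant section of $\pi_*\osheaf^G_Z$ — equivalently, no $(G\times K)$-invariant holomorphic function on $Z$ — separates them. So the composite $Z\to(Z\modmod G)\modmod K$ has the same fibres as $Z\to Z\modmod(G\times K)$, its structure sheaf is the push-forward of $\big(\osheaf^G_Z\big)^K=\osheaf^{G\times K}_Z$, and both sides carry the quotient topology, so the induced bijection is an isomorphism of ringed spaces. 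I expect the only real work to lie in fact \emph{(a)}, and within it in the bookkeeping identifying the $G$-saturated open subsets of $\gtube\times S$: local constancy in the $\gtube$-direction is an immediate consequence of total realness, but upgrading it to the global ringed-space statement uses both the Steinness of $S$ and the transitivity of $G$ on $\pi_0(\gtube)$. (Should one wish to avoid the abstract composition step, the same two function-theoretic inputs allow one to build the two maps $S\modmod K\to Y\modmod G$ and $Y\modmod G\to S\modmod K$ directly — from $\imath_S$ and from $\kappa$, using the universal property in Theorem~\ref{theorem_quotientStein} — and to verify that the composites are the identity by a diagram chase, with openness and local connectedness supplied by Proposition~\ref{prop_shrinkingUnderFreeAction} and Proposition~\ref{proposition_bijectionOfOrbits}.)
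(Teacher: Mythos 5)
Your proposal is correct, but it reaches the theorem by a genuinely different route than the paper. The paper splits the statement in two: Proposition~\ref{proposition_fiberStructureInLSM} first produces the homeomorphism $\varphi_S$ by matching fibre structures (via Proposition~\ref{proposition_bijectionOfOrbits} and Theorem~\ref{theorem_fiberStructureAHQcptCase}) and then proving properness of the resulting bijection with a sequence argument that exploits the Steinness of $S\modmod K$; afterwards the sheaf isomorphism is built by hand, restricting $G$-invariant functions to the saturated slice in one direction and extending $K$-invariant functions constantly in the $\gtube$-direction in the other. You instead concentrate all the analysis in the single statement $\osheaf^G(\gtube\times U)=\kappa^*\osheaf(U)$ --- proved via the identity principle on the maximal totally real orbits $G\cdot\gamma$ and the transitivity of $G$ on the components of $\gtube\cong G\times\Sigma$ --- and let the formal composition of Hilbert quotients do the rest. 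This buys two things: the homeomorphism comes for free because quotient topologies compose ($Y\modmod G$ and $S\modmod K$ are quotients of $\gtube\times S$ by the same equivalence relation with matching saturated open sets), so the properness/sequence argument is avoided; and the mutual inverseness of the two sheaf maps, which the paper asserts without comment, is exactly your fact \emph{(a)} made explicit. Two small points to record: your fact \emph{(b)} is stated for quotients of ringed spaces, while Definition~\ref{definition_HilbertQuotient} is only given for $G$-manifolds --- harmless here, since in both of your applications the intermediate quotient is a complex manifold ($Y$ itself, respectively $S$ via fact \emph{(a)}); and the dimension count $\dim_\R(G\cdot\gamma)=\dim G=\dim_\C\gtube$ that makes the orbits \emph{maximal} totally real uses the freeness of the $G$-action on $\gtube$, which the paper derives from condition~\eqref{condition_tubeSplitsIntoTotallyRealGorbits} of the $G$-tube definition, so it is available.
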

Besides the result on the correspondence of complexified $K$-, $(G\times K)$- and $G$-orbits in
$S$, $\gtube\times S$ and $Y$ from Section~\ref{subsection_sliceProductQuotient}
above we need some results
stating similar 
geometrical properties for the fibers of $\pi:Y\to Y\modmod G$
analogous as described in Theorem~\ref{theorem_fiberStructureAHQcptCase}.
\begin{proposition}
Let $Y$ be a local slice model with slice $S\subset Y$ and denote
$\pi_S:S\to S\modmod K$ the analytic Hilbert quotient and $\pi:Y\to Y\modmod G$ the formal Hilbert quotient. Then each
complexified $G$-orbit in $Y$ corresponds to a complexified $K$-orbit in $S$. Furthermore, for every $p\in Y\modmod G$
and every $x\in\pi^{-1}(p)$
\begin{enumerate}
\item the closure of $\overline{B_Y(x)}$ is the union of complexified $G$-orbits $B_Y(y)$ with
$\dim B_Y(y)< \dim B_Y(x)$ for each $y\not\in B_Y(x)$.\label{item_localSliceModel_dimensionInClosureStrictlySmaller}
\item there is exactly one complexified $G$-orbit of lowest dimension, denoted by $E_Y(p)$. This complexified
$G$-orbit is closed.\label{item_localSliceModel_oneSingleComplexifiedOrbit_lowestDimension_closed}
\item $E_Y(p)$ lies in the closure of each complexified $G$-orbit in the fiber, formally
$E_Y(p)\subset\overline{B_Y(x)}$.
\end{enumerate}
The embedding $\imath_S:S\to Y$ induces a homeomorphism
$\varphi_S:S \modmod K\to Y\modmod G$
in the sense that the following diagram commutes.
\begin{eqnarray*}
S\quad \ &\stackrel{\imath_S}{\hookrightarrow}& Y\notag\\
\downarrow \pi_S&\quad&\downarrow\pi\\
S\modmod K&\stackrel{\varphi_S}{\longrightarrow}& Y\modmod G\notag
\end{eqnarray*}
\label{proposition_fiberStructureInLSM}
\end{proposition}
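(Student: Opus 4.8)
Proof proposal.

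The plan is to transfer the whole structure from the compact group case, where everything is already known. The quotient $\pi_S\colon S\to S\modmod K$ is the analytic Hilbert quotient of a Stein manifold by a compact group, so Theorem~\ref{theorem_quotientStein} and Theorem~\ref{theorem_fiberStructureAHQcptCase} apply to it; I transport this to $\pi\colon Y\to Y\modmod G$ along the second projection $\kappa\colon\gtube\times S\to S$ and the quotient map $\pi_Y\colon\gtube\times S\to Y=(\gtube\times S)\modmod K$ for the (free, since $K$ acts freely on $\gtube$) $K$-action. Proposition~\ref{proposition_bijectionOfOrbits} provides the bijection $\beta:=\pi_Y\circ\kappa^{-1}\colon\mathcal{B}_S^K\to\mathcal{B}_Y^G$ carrying closed complexified orbits to closed complexified orbits; since $\kappa^{-1}$ replaces a complexified $K$-orbit $B$ by $\gtube\times B$ and $\pi_Y$ is a submersion whose fibres are the $\dim_\R K$-dimensional complexified $K$-orbits (Proposition~\ref{prop_shrinkingUnderFreeAction}), $\beta$ shifts complex dimension by the fixed constant $\dim_\C\gtube-\dim_\R K$. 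This already gives the first assertion of the proposition.

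The homeomorphism comes from identifying the two quotient spaces. The key algebraic fact is that every $G$-invariant holomorphic function on $\gtube$ is constant: it is constant on the orbit $\kappa(G)$, and $\kappa(G)$ is a totally real submanifold of $\gtube$ of maximal dimension, so the identity principle (applied component by component) forces it to be constant on $\gtube$. Hence every $(G\times K)$-invariant holomorphic function on $\gtube\times S$ is pulled back from $S$, i.e. $\osheaf^G(Y)=\osheaf^{G\times K}(\gtube\times S)=\kappa^*\osheaf^K(S)$. By Theorem~\ref{theorem_quotientStein} the $K$-invariant holomorphic map $\pi_S\circ\kappa$ therefore factors through $\pi_Y$ as a holomorphic, $G$-invariant, surjective map $q\colon Y\to S\modmod K$, and $q(y_1)=q(y_2)$ holds if and only if $y_1,y_2$ are not separated by $\osheaf^G(Y)$ (holomorphic functions separating points of $S\modmod K$). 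So $q$ descends to a continuous bijection $\bar q\colon Y\modmod G\to S\modmod K$. Because $q\circ\imath_S=\pi_S$, the inverse $\varphi_S=\bar q^{-1}$ is exactly the map induced by $\imath_S$, and it is continuous since $\varphi_S\circ\pi_S=\pi\circ\imath_S$ and $\pi_S$ is a quotient map; thus $\varphi_S$ is a homeomorphism and the diagram commutes.

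For the fibre structure, first note $\pi\circ\pi_Y=\varphi_S\circ\pi_S\circ\kappa$, whence $\pi^{-1}(p)=\pi_Y\bigl(\gtube\times\pi_S^{-1}(p')\bigr)$ with $p'=\varphi_S^{-1}(p)$, so $\beta$ restricts to a dimension-shifting, closed-to-closed bijection between the complexified $K$-orbits in $\pi_S^{-1}(p')$ and the complexified $G$-orbits in $\pi^{-1}(p)$. Writing $x=\pi_Y(\gamma,s)$ with $s\in\pi_S^{-1}(p')$, I would verify $\overline{B_Y(x)}=\pi_Y\bigl(\gtube\times\overline{B_S(s)}\bigr)$: the inclusion ``$\subseteq$'' follows from continuity of $\pi_Y$ together with $\overline{\gtube\times B_S(s)}=\gtube\times\overline{B_S(s)}$ and $\pi_Y^{-1}(B_Y(x))=\gtube\times B_S(s)$; ``$\supseteq$'' holds because $\overline{B_S(s)}$ is a union of complexified $K$-orbits (Theorem~\ref{theorem_fiberStructureAHQcptCase}), so $\gtube\times\overline{B_S(s)}$ is $\pi_Y$-saturated (as $\kappa$ maps complexified $K$-orbits into complexified $K$-orbits, Lemma~\ref{lemma_topologyCompatibleEquivariantHolomorphism}) and closed, and its $\pi_Y$-image is then closed and contains $B_Y(x)$. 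Feeding Theorem~\ref{theorem_fiberStructureAHQcptCase}(1)--(3) through $\beta$ now yields: (a) from the description of $\overline{B_S(s)}$ and the constant dimension shift; (b) with $E_Y(p):=\beta(E_S(p'))$, which is closed because $\overline{E_Y(p)}=\pi_Y(\gtube\times\overline{E_S(p')})=\pi_Y(\gtube\times E_S(p'))=E_Y(p)$; and (c) since $E_S(p')\subseteq\overline{B_S(s)}$ pushes forward to $E_Y(p)\subseteq\overline{B_Y(x)}$.

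The main obstacle is the identity $\osheaf^G(Y)=\kappa^*\osheaf^K(S)$, equivalently the absence of non-constant $G$-invariant holomorphic functions on the $G$-tube: once this is secured, both the homeomorphism and the formula $\pi^{-1}(p)=\pi_Y(\gtube\times\pi_S^{-1}(p'))$ are formal consequences, and the rest is bookkeeping with the $\tau$-topologies, saturation and closures on top of Proposition~\ref{proposition_bijectionOfOrbits} and Theorem~\ref{theorem_fiberStructureAHQcptCase}. A smaller point requiring care is that $\beta$ shifts complex dimension by a fixed constant, which relies on the freeness of the $K$-action on $\gtube$, so that all fibres of $\pi_Y$ have the same dimension $\dim_\R K$.
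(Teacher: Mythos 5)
Your proposal is correct and follows the paper's overall strategy (set up the orbit correspondence, identify the two quotients, and push Theorem~\ref{theorem_fiberStructureAHQcptCase} through that correspondence), but the crucial step --- that the continuous bijection $\varphi_S:S\modmod K\to Y\modmod G$ is a homeomorphism --- is handled by a genuinely different argument. The paper keeps only the forward map and proves it is proper by a sequence argument: if $\pi([e,s_n])$ converges, then $h(\pi([e,s_n]))$ converges for every invariant holomorphic $h$, hence $\pi_S(s_n)$ converges because the Stein space $S\modmod K$ is holomorphically separable and its holomorphic functions detect convergence. You instead build the inverse explicitly: you observe that a $G$-invariant holomorphic function on $\gtube$ is constant (identity principle along the maximal totally real submanifold $\kappa(G)$), deduce $\osheaf^G(Y)=\kappa^*\osheaf^K(S)$, and then obtain $\bar q:Y\modmod G\to S\modmod K$ from the universal property of the analytic Hilbert quotient $(\gtube\times S)\modmod K$ (Theorem~\ref{theorem_quotientStein}); continuity of both $\bar q$ and $\varphi_S$ is then formal because both source spaces carry quotient topologies. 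Your route is more structural and isolates the genuinely useful identity $\osheaf^G(Y)=\kappa^*\osheaf^K(S)$, which the paper never states explicitly even though it underlies its own separation argument; the paper's route is shorter at that point but leans on the Stein property of $S\modmod K$. Your treatment of the fiber structure via $\overline{B_Y(x)}=\pi_Y\bigl(\gtube\times\overline{B_S(s)}\bigr)$ is also slightly more detailed than the paper's one-line ``the results transform''; the saturation argument (each $\gtube\times B_S(s')$ is a union of complexified $K$-orbits of the diagonal action, by $\tau_K$-continuity of $\kappa$) is exactly the point that needs checking and you supply it. Two cosmetic remarks: your labels ``$\subseteq$''/``$\supseteq$'' for the two inclusions are interchanged relative to the arguments you give for them (continuity of $\pi_Y$ yields $\pi_Y(\gtube\times\overline{B_S(s)})\subseteq\overline{B_Y(x)}$, while saturation plus closedness yields the reverse), and the fibres of $\pi_Y$ have \emph{complex} dimension $\dim_\R K$; neither affects the argument.
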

\begin{proof}
 For $x_0=[\gamma_0,s_0]\in Y$ the complexified $G$-orbit $B_Y(x_0)$ is the
 image of $i_{s_0}:\gtube\to Y, \gamma\mapsto [\gamma, s_0]$.
 For each $s_0\in S$ the complexified $K$-orbit $B_S(s_0)$ is
 the intersection $B_Y([e,s_0])\cap S$. This realizes the one-to-one corresponence between
 complexified $K$-orbits in $S$ and complexified
 $G$-orbits in $Y$. $B_S(s_0)$ is closed if and
 only if $B_Y([e,s_0])$ is closed. Since each $\pi_S$-fiber contains a unique
 complexified $K$-orbit (Theorem~\ref{theorem_fiberStructureAHQcptCase}), the same is true for $\pi$-fibers. 
 This implies that the results of Theorem~\ref{theorem_fiberStructureAHQcptCase} applied to $S$ transform
 to the corresponding results on $Y$. Consequently, the map $S\modmod K\to Y\modmod G$
 represents the identification of these closed complexified orbits,
 imposing a bijection which is a continuous map by construction. In order to establish that this identification is a homeomorphism we will show that it is a proper map. For this, let $s_n\in S$ be a sequence.
The induced sequence $\pi_S(s_n)\in S\modmod K$
is mapped via the considered identification to
$\pi([e,s_n])\in Y\modmod G$. Assume that this sequence converges and we aim to show that then
$\pi_S(s_n)$ converges. Every $G$-invariant holomorphic function $f$ on $Y$ induces a continuous function $h$ on $Y\modmod G$. As $h(\pi([e,s_n]))$ converges for every such function $h$, the sequence $f([e,s_n])=h(\pi([e,s_n]))$ converges for every $G$-invariant
holomorphic function $f$ on $Y$. As any $K$-invariant holomorphic map $f_S$ on $S$ can be interpreted as a $G$-invariant
holomorphic map $f$ on $Y$, we deduce that
$f_S(s_n)$ converges for every $K$-invariant holomorphic function. As any holomorphic function on the quotient $S\modmod K$ can be seen as a $K$-invariant
holomorphic function on $S$, the image of the sequence
$\pi_S(s_n)$ under any holomorphic function on $S\modmod K$ converges. Finally, since $S\modmod K$ is a Stein space, this imposes that $\pi_S(s_n)$ itself converges.\\
This shows that $S\modmod K\to Y\modmod G$
is a homeomorphism.
\end{proof}
\begin{proof}[Proof of Theorem~\ref{theorem_SmodmodKIsomorphicToYmodmodG}]
Induced from the inclusion
$\imath_S:S\hookrightarrow Y$ there is a homeomorphism 
$\varphi_S:S\modmod K\to Y\modmod G$
(Proposition~\ref{proposition_fiberStructureInLSM}).
Thus for an open subset $U\subset Y\modmod G$
the intersection $W_S$ of $W=\pi^{-1}(U)$
with $S$ equals $\pi_S^{-1}(\varphi_S^{-1}(U))$.
Hence $V_S$ is $\pi_S$-saturated.
Given a holomorphic function
$f:U\to\C$ the pullback
$\pi^*f$ is $G$-invariant and as $V_S$ is $K$-stable the function's restriction
$\pi^*f\vert_{V_S}$ is a $K$-invariant
holomorphic function
$g:\varphi_S^{-1}(U)\to\C$.
This defines a map
$\mathcal{O}_{Y\modmod G}(U)\to\mathcal{O}_{S\modmod K}(\varphi_S^{-1}(U))$.\\
Now we want to define
a map $\mathcal{O}_{S\modmod K}(V)\to
\mathcal{O}_{Y\modmod G}(\varphi_S(V))$
for any open subset $V\subset S\modmod K$.
On the level of (open) sets the identifications
in Proposition~\ref{proposition_bijectionOfOrbits}
show that
$\gtube\timescat{K}\pi_S^{-1}(V)
=\pi^{-1}(\varphi_S(V))$.
A holomorphic function $g:V\to\C$
arises by definition from a $K$-invariant 
holomorphic 
function $\pi_S^*g:\pi_S^{-1}(V)\to\C$.
We define
$\hat h:\gtube\times\pi_S^{-1}(V)\to\C$ by
$\hat h(\gamma, s)=\pi_S^*g(s)$. It is a
$(G\times K)$-invariant holomorphic
function which pushes down to the
$G$-invariant holomorphic function
$h:\gtube\timescat{K}\pi_S^{-1}(V)\to\C$.
This identification defines the desired 
inverse.
\end{proof}
We showed that for every local slice model
$Y$ the quotient $\pi:Y\to Y\modmod G$
is an analytic Hilbert quotient.
\label{subsection_quotientsOfLocalSliceModel}
\subsection{Shrinking of the local slice model}
In our proof of
the existence of local slice models $\gtube\times^{\modmod K}S$ around
special points (Section~\ref{subsect_existenceLocalSliceModel}) and
later
for the existence of invariant
local potentials around these points (Section~\ref{sect_localPotential})
we will need to shrink $\gtube\times^{\modmod K}S$ to an arbitrarily small local slice model $T\times^{\modmod K}S_0$.
Herein $T\subset \gtube$ is a smaller $G$-tube as provided by definition of $\gtube$
and $S_0\subset S$ is a smaller slice with $S_0$ Runge in $S$ and $S_0$ Stein and $K$-stable.
\begin{theorem}\textbf{\em(Shrinking Theorem of Local Slice Model)}\label{theorem_shrinking}
Let the set $Y=\gtube\times^{\modmod K}S\subset X$ be a local slice model around $x$ and $U\subset Y$ a
$G$-stable neighborhood of $x$.
Then there is a $G$-subtube $T\subset \gtube$
and an open $K$-stable Stein neighborhood $S_0$ of the origin, Runge in $S$, such that
$Y_0=T\times^{\modmod K}S_0$ is a local
slice model around $x$.
\end{theorem}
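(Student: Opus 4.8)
The plan is to transport the problem to the product $\gtube\times S$, to fit inside $\pi^{-1}(U)$ a product set $T\times S_0$ with $T$ a smaller, still $K$-stable $G$-tube and $S_0$ a ball about $0$ in $S$, and then to read off from Proposition~\ref{prop_shrinkingUnderFreeAction} that $T\timescat{K}S_0$ is an open piece of $Y$, contained in $U$, of the required form. Write $\psi:\gtube\timescat{K}S\xrightarrow{\sim}Y$ for the biholomorphism defining the local slice model, with $\psi([e,0])=x$, and $\pi:\gtube\times S\to\gtube\timescat{K}S$ for the $K$-quotient. Then $\hat U:=\pi^{-1}(\psi^{-1}(U))$ is a $(G\times K)$-stable open neighbourhood of the orbit $(G\times K)\cdot(e,0)=\kappa(G)\times\{0\}$ in $\gtube\times S$, and it suffices to find $T$ and $S_0$ with $T\times S_0\subset\hat U$, since then $\pi(T\times S_0)\subset\psi^{-1}(U)$.

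For the shrinking data I would not invoke the abstract shrinkability axiom of a $G$-tube, which only produces a $G$-stable subtube, but the concrete structure from Theorem~\ref{theorem_gtubeExistence}: we may take the strictly plurisubharmonic $G$-exhaustion $\rho\ge 0$ defining $\gtube$ to be $(G\times K)$-invariant, so each sublevel set $T_c:=\{\rho<c\}$ is a $(G\times K)$-stable $G$-subtube, Runge in $\gtube$ (Lemma~\ref{lemma_Runge}); likewise, fixing a $K$-invariant norm on $V$, the balls $S_c:=\{v\in S:\|v\|^2<c\}$ are, for small $c$, $K$-stable, Stein, Runge in both $S$ and $V$, contractible, and form a neighbourhood basis of $0$. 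Using the slice decomposition $\gtube\cong G\times\Sigma$ one has $\hat U=G\cdot\bigl(\hat U\cap(\Sigma\times S)\bigr)$, with $G$ acting through the first factor, and similarly $T_c\times S_c=G\cdot(\Sigma_{T_c}\times S_c)$ for $\Sigma_{T_c}:=\Sigma\cap T_c$; hence $T_c\times S_c\subset\hat U$ is equivalent to $\Sigma_{T_c}\times S_c\subset W:=\hat U\cap(\Sigma\times S)$, which is an open neighbourhood of $(\kappa(e),0)$ in $\Sigma\times S$. Since $\rho$ is a $G$-exhaustion, $\rho|_\Sigma$ is an exhaustion of $\Sigma$ with unique minimum at $\kappa(e)$, so each $\overline{\Sigma_{T_c}}$ is compact and $\{\overline{\Sigma_{T_c}}\times\overline{S_c}\}_c$ is a decreasing family of compacta with intersection $\{(\kappa(e),0)\}\subset W$; a finite-intersection argument then produces a small $c>0$ with $\Sigma_{T_c}\times S_c\subset W$. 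Put $T:=T_c$ and $S_0:=S_c$.

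It remains to identify $T\timescat{K}S_0$ with an open subset of $Y$. The set $T\times S_0=\{(\gamma,s)\in\gtube\times S:\rho(\gamma)<c,\ \|s\|^2<c\}$ is an open $K$-stable, pseudoconvex, hence Stein, subset of the Stein manifold $\gtube\times S$, and it is Runge there by Lemma~\ref{lemma_Runge} applied to the strictly plurisubharmonic function $(\gamma,s)\mapsto c^{-1}\max\{\rho(\gamma),\|s\|^2\}$ (after the usual regularization of the maximum). As $K$ acts freely on $\gtube\times S$, Proposition~\ref{prop_shrinkingUnderFreeAction} shows that $T\timescat{K}S_0=(T\times S_0)\modmod K$ is a non-singular complex space and that the inclusion induces an open embedding $j:T\timescat{K}S_0\hookrightarrow\gtube\timescat{K}S$ with image $\pi(T\times S_0)\subset\psi^{-1}(U)$. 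Hence $\psi\circ j$ is a $G$-equivariant biholomorphism of $T\timescat{K}S_0$ onto the open set $Y_0:=\psi(\pi(T\times S_0))\subset U$ carrying $[e,0]$ to $x$; since $T$ is a $G$-tube, $K\subset G$ a compact subgroup, and $S_0$ a $K$-stable Stein Runge contractible neighbourhood of $0$ in $V$, Definition~\ref{defn_localSliceModel} exhibits $Y_0$ as a local slice model of $x$, and $S_0$ is Runge in $S$, as required.

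The main obstacle is the second step: one has to fit a genuinely $K$-stable subtube together with a ball inside the possibly very irregular set $\pi^{-1}(U)$, and the resolution is to exploit the $G$-invariance of $\hat U$ to reduce that inclusion to a statement about the slices $\overline{\Sigma_{T_c}}$ alone, which are compact precisely because $\rho$ is a $G$-exhaustion. Everything afterwards is bookkeeping resting on Lemma~\ref{lemma_Runge} and Proposition~\ref{prop_shrinkingUnderFreeAction}; the only other routine point is to check that $T_c$ satisfies the $G$-tube axioms (e.g. $\Sigma\cap T_c$ is convex because $\rho|_\Sigma$ is), which is immediate from the construction in Theorem~\ref{theorem_gtubeExistence}.
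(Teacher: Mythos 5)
Your proof is correct and follows essentially the same route as the paper: pass to the product $\gtube\times S$, fit a product $T\times S_0$ (a Runge $G$-subtube times a Runge $K$-stable ball) inside the preimage of $U$, and invoke Proposition~\ref{prop_shrinkingUnderFreeAction} to descend to an open embedding of $T\timescat{K}S_0$ into $Y$. The only difference is cosmetic: where the paper simply picks a product neighborhood $A\times B$ of $(e,0)$ in $\hat U$ and shrinks (implicitly saturating $A$ under $G$), you make the saturation explicit via the compactness of the slices $\overline{\Sigma_{T_c}}$, which is a slightly more careful rendering of the same step.
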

\begin{corollary}
The open embedding $\imath_0:Y_0\hookrightarrow Y$
of the shrunk local slice model in Theorem~\ref{theorem_shrinking} induces an
open embedding $\bar\imath_0:Y_0\modmod G\hookrightarrow Y\modmod G$.
\label{corollary_shrinkingKeepOpennessOnGQuotient}
\end{corollary}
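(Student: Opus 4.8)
The plan is to transport the statement, via the isomorphisms of Theorem~\ref{theorem_SmodmodKIsomorphicToYmodmodG}, to the slices $S_0\subset S$ sitting inside the linear $K$-representation $V$, where the acting group $K$ is compact and the open-embedding behaviour of Hilbert quotients under passage to Runge Stein subsets is already at hand. Write $Y=\gtube\timescat{K}S$ and $Y_0=T\timescat{K}S_0$ for the two local slice models, with slice inclusions $S\hookrightarrow Y$ and $S_0\hookrightarrow Y_0$ both given by $s\mapsto[e,s]$. Applying Theorem~\ref{theorem_SmodmodKIsomorphicToYmodmodG} once to $Y$ and once to $Y_0$ yields isomorphisms of complex spaces $\varphi_S:S\modmod K\to Y\modmod G$ and $\varphi_{S_0}:S_0\modmod K\to Y_0\modmod G$, each fitting into the commuting square of Proposition~\ref{proposition_fiberStructureInLSM} with the respective quotient maps $\pi_S,\pi_{S_0}$ and $\pi,\pi_0$; in particular $Y\modmod G$ and $Y_0\modmod G$ are complex spaces.

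The map $\imath_0:Y_0\hookrightarrow Y$ is a $G$-equivariant holomorphic open embedding, so it induces a morphism $\bar\imath_0:Y_0\modmod G\to Y\modmod G$ of these analytic Hilbert quotients (an invariant holomorphic function on $Y$ restricts to one on $Y_0$, which makes $\bar\imath_0$ well defined on points and a morphism of ringed spaces). Since, as maps $S_0\to Y$, the composite slice inclusion $S_0\hookrightarrow Y_0\hookrightarrow Y$ agrees with $S_0\hookrightarrow S\hookrightarrow Y$ (both are $s\mapsto[e,s]$), chasing this identity around the two squares of Proposition~\ref{proposition_fiberStructureInLSM} — using the defining property of induced maps on formal Hilbert quotients and the surjectivity of $\pi_{S_0}$ — gives $\bar\imath_0\circ\varphi_{S_0}=\varphi_S\circ\lambda$, where $\lambda:S_0\modmod K\to S\modmod K$ is the morphism induced by the inclusion $S_0\hookrightarrow S$. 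Hence $\bar\imath_0=\varphi_S\circ\lambda\circ\varphi_{S_0}^{-1}$, and it suffices to show that $\lambda$ is an open embedding. Verifying this naturality square — i.e.\ that the identifications of Theorem~\ref{theorem_SmodmodKIsomorphicToYmodmodG} are compatible with shrinking — is the part that needs the most care, but it only amounts to unwinding the definitions in Section~\ref{sect_localSliceModel}.

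It remains to treat $\lambda$. By the hypotheses of the Shrinking Theorem~\ref{theorem_shrinking}, $S_0$ is a $K$-stable Stein subset of $S$ that is Runge in $S$; as $S$ is in turn a $K$-stable Stein subset of $V$ that is Runge in $V$, and the Runge property is transitive, both $S$ and $S_0$ are $K$-stable Stein Runge subsets of $V$. Consequently, exactly as in the setup of the local slice model, the inclusions induce open embeddings $S_0\modmod K\hookrightarrow V\modmod K$ and $S\modmod K\hookrightarrow V\modmod K$ (Proposition~\ref{prop_shrinkingUnderFreeAction}\eqref{inProp_openEmbedding}). Since all three maps $S_0\modmod K\to S\modmod K\to V\modmod K$ are induced by the inclusions $S_0\subset S\subset V$, the composition of the first two is the open embedding $S_0\modmod K\hookrightarrow V\modmod K$; it follows that $\lambda$ is injective and identifies $S_0\modmod K$ with the preimage in $S\modmod K$ of the (open) image of $S_0\modmod K$ in $V\modmod K$, which is open in $S\modmod K$, so $\lambda$ is an open embedding. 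Therefore $\bar\imath_0=\varphi_S\circ\lambda\circ\varphi_{S_0}^{-1}$ is a composition of an open embedding with isomorphisms of complex spaces, hence an open embedding, which is the assertion of the corollary.
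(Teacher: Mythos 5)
Your proof is correct and takes essentially the same route as the paper: both arguments transfer the statement to the slices via the identifications $S\modmod K\cong Y\modmod G$ and $S_0\modmod K\cong Y_0\modmod G$ of Theorem~\ref{theorem_SmodmodKIsomorphicToYmodmodG} and then obtain the open embedding on the slice quotients from Proposition~\ref{prop_shrinkingUnderFreeAction}\eqref{inProp_openEmbedding} together with the Runge property of $S_0$. The only differences are cosmetic --- the paper applies that proposition directly to the inclusion $S_0\subset S$ where you detour through $V\modmod K$ using transitivity of the Runge property, and you spell out the naturality square $\bar\imath_0\circ\varphi_{S_0}=\varphi_S\circ\lambda$ that the paper leaves implicit; note that in either version the cited proposition is being invoked for a non-free $K$-action (on $S$, respectively on $V$), which is exactly the usage the paper itself adopts in the setup of the local slice model.
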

\begin{proof}[Proof of Theorem~\ref{theorem_shrinking}
and Corollary~\ref{corollary_shrinkingKeepOpennessOnGQuotient}]
Let $p:\gtube\times S\to \gtube\times^{\modmod K}S$ denote the projection
of the analytic Hilbert quotient 
and set $\hat U=p^{-1}(U)$. There are open 
subsets $A\subset\gtube$ and $B\subset S$ such that $(e,0)\in A\times B\subset \hat U$.
The definition of $G$-tubes
provides
a $G$-tube $T\subset A$ that is Runge in $\gtube$.
The manifold
$S$ can be realized as a $K$-stable open neighborhood of the origin in a unitary $K$-representation $V$. 
Let $\rho_V$ be the squared norm function of the
$K$-invariant norm on $V$. For a
sufficiently small $\delta>0$, the set
$S_0=\{x\in V\ \vert\ \rho_V(x)<\delta\}\subset S$ is a Stein Runge subset of $S$.
Since the diagonal $K$-action on $\gtube\times S$ is free and $T\times S_0$ is an open Runge Stein subset
of $\gtube\times S$, Proposition~\ref{prop_shrinkingUnderFreeAction} implies
that $Y_0=T\times^{\modmod K}S_0$ can be seen as an open Runge Stein subset of $Y=\gtube\times^{\modmod K}S$.
Part~\eqref{inProp_openEmbedding} of this proposition
shows that the inclusion $S_0\hookrightarrow S$
induces an open embedding
$S_0\modmod K\hookrightarrow S\modmod K$.
From
Theorem~\ref{theorem_SmodmodKIsomorphicToYmodmodG} we obtain canonical biholomorphisms
$S_0\modmod K\cong Y_0\modmod G$ and
$S\modmod K\cong Y\modmod G$. Combining these facts
we see that
the inclusion $Y_0\hookrightarrow Y$
induces a holomorphic
open embedding
$Y_0\modmod G\to Y\modmod G$.
\end{proof}
\subsection{Existence of the local slice model}\label{subsect_existenceLocalSliceModel}
As a next step we construct a local slice at a
complexified $G$-orbit through
$x\in X$ if the orbit $G\cdot x$ is a totally
real submanifold of $X$. The construction
will give a local slice model of $x$
for the $G$-action on $X$.
\begin{theorem}\textbf{\em (Local Slice Theorem)}
Let $X$ be a Stein manifold and $G$ a Lie group acting properly by holomorphic transformations.
If $G\cdot x$ is a totally real submanifold of $X$, there is
a local slice model $Y$ at $x$.\label{theorem_slice}
\end{theorem}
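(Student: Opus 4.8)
The strategy is to manufacture the model map $\psi\colon\gtube\timescat{K}S\to X$ as the equivariant holomorphic extension of $(g,s)\mapsto g\cdot\sigma(s)$ for a suitable holomorphic $K$-slice $\sigma$, to check that its differential at $[e,0]$ is an isomorphism, and then to shrink to a sub-tube-model on which $\psi$ is injective. Since the action is proper the isotropy group $K:=G_x$ is compact, so one may apply the classical linearization theorem for holomorphic actions of compact groups (Bochner): there is a $K$-stable neighbourhood of $x$ that is $K$-equivariantly biholomorphic to a $K$-stable neighbourhood of the origin in the isotropy representation $T_xX$. Inside $T_xX$ the subspace $T_x(G\cdot x)$ is $K$-invariant and, by hypothesis, totally real, so $U:=T_x(G\cdot x)\oplus i\,T_x(G\cdot x)$ is a complex $K$-subspace of complex dimension $m:=\dim_\R(G\cdot x)$. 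Choosing a $K$-invariant complex complement $W$ of $U$ in $T_xX$, putting $V:=W$ and letting $S\subset V$ be a small $K$-stable ball about $0$ (hence Stein, Runge and contractible), the linearization restricts to a $K$-equivariant holomorphic embedding $\sigma\colon S\to X$ with $\sigma(0)=x$ and $d\sigma_0$ an isomorphism onto $W\subset T_xX$; this is the holomorphic slice. Finally, by Theorem~\ref{theorem_gtubeExistence}, fix a $G$-tube $\gtube$ for the pair $(G,K)$ together with its holomorphic $(G\times K)$-action $(g,k)\cdot\gamma=g\gamma k^{-1}$.

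Next I would produce the model map. Consider the real analytic map $\Phi\colon G\times S\to X$, $(g,s)\mapsto g\cdot\sigma(s)$; it is $G$-equivariant in the first variable and invariant under the diagonal $K$-action $k\cdot(\gamma,s)=(\gamma k^{-1},ks)$ because $\sigma$ is $K$-equivariant. Arguing exactly as in the proof of Proposition~\ref{proposition_orbitMapExtendsToGtube} — extend $\Phi$ holomorphically near $\{e\}\times S$, then spread the extension out by $G$-equivariance — one obtains, after shrinking $\gtube$ and $S$, a holomorphic map $\Psi\colon\gtube\times S\to X$ which is still $G$-equivariant and diagonal-$K$-invariant and restricts to $\Phi$ on $G\times S$. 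Since $\gtube\times S$ is Stein and $K$ is compact, the universal property of the analytic Hilbert quotient (Theorem~\ref{theorem_quotientStein}) makes $\Psi$ factor through a holomorphic $G$-equivariant map $\psi\colon\gtube\timescat{K}S\to X$ with $\psi([e,0])=x$.

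It then remains to identify $\psi$, on a small sub-model, with a biholomorphism onto an open $G$-stable set, and the crux of that is the differential at $[e,0]$. Here $G\subset\gtube$ is a maximal totally real orbit, so $T_e\gtube=\mathfrak{g}\oplus i\mathfrak{g}$; the quotient map $\pi\colon\gtube\times S\to\gtube\timescat{K}S$ is a submersion whose fibre through $(e,0)$ is a single complexified $K$-orbit (Proposition~\ref{prop_shrinkingUnderFreeAction}), i.e. a complex subspace of dimension $\dim K$ containing the real orbit direction $\mathfrak{k}\times\{0\}$ and hence equal to $(\mathfrak{k}\oplus i\mathfrak{k})\times\{0\}$. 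Therefore $T_{[e,0]}(\gtube\timescat{K}S)\cong(\mathfrak{g}/\mathfrak{k})\oplus i(\mathfrak{g}/\mathfrak{k})\oplus W$, and $d\psi_{[e,0]}$ carries $\mathfrak{g}/\mathfrak{k}$ onto $T_x(G\cdot x)$ (the orbit map differential), $i(\mathfrak{g}/\mathfrak{k})$ onto $i\,T_x(G\cdot x)$ (by complex linearity of $d\psi$), and $W$ onto $W\subset T_xX$ (via $d\sigma_0$). Its image is thus $U+W=T_xX$ and, the dimensions matching, $d\psi_{[e,0]}$ is an isomorphism; by $G$-equivariance $\psi$ is a local biholomorphism along the whole orbit $G\cdot[e,0]$.

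The last step — the one I expect to be the main obstacle — is upgrading ``local biholomorphism near the central orbit'' to genuine injectivity on a sub-tube-model $T\timescat{K}S_0$. Properness of the $G$-actions on $\gtube\timescat{K}S$ and on $X$ is essential here: since $G$ acts properly the orbit map $G/K\to X$ is a proper embedding, so $\{g\in G:\ g\cdot x\ \text{lies near}\ x\}$ is contained in a small neighbourhood of $K$; combining this with $G$-equivariance and the local injectivity just established, one shrinks $\Sigma\cap T$ and $S$ enough to rule out ``distant'' collisions and concludes that $\psi$ is injective on $T\timescat{K}S_0$, where $T\subset\gtube$ is a $G$-subtube and $S_0\subset S$ a Stein Runge $K$-stable ball as furnished by the definition of a $G$-tube. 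Then $\psi|_{T\timescat{K}S_0}$ is a $G$-equivariant biholomorphism onto an open $G$-stable set $Y\subset X$ sending $[e,0]$ to $x$, i.e. a local slice model at $x$ in the sense of Definition~\ref{defn_localSliceModel}.
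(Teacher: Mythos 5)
Your proposal is correct and follows essentially the same route as the paper: linearize the compact isotropy action at $x$, take a $K$-invariant complex complement of the complex span of $T_x(G\cdot x)$ as the slice $S$, extend $(g,s)\mapsto g\cdot s$ holomorphically and $G$-equivariantly to $\gtube\times S$, push down to $\gtube\timescat{K}S$, verify the map is a local biholomorphism at $[e,0]$ (the paper does this by a dimension count where you compute the image of the differential directly), and use properness to upgrade to injectivity near the central orbit — which is precisely the content of the paper's Lemma~\ref{lem_diffeoAroundOrbit} — before shrinking to a sub-model via Theorem~\ref{theorem_shrinking}.
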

Let $K$ denote the compact isotropy group at $x$. The slice $S\subset Y$ can be chosen $K$-stable such that $S$ is $K$-equivariantly biholomorphic to an open neighborhood
of the origin of a $K$-stable complex linear subspace in $T_xX$. This subspace is in fact complementary to
the complex subspace generated by
$T_x(G\cdot x)\subset T_xX$.
Note that the complex subspace generated by
$T_x(G\cdot x)$ is the right candidate for the
tangent space of the complexified $G$-orbit
through $x$.\\
We first show the following technical lemma.
\begin{lemma}
Let $G$ act properly on $X_2$ and 
let $\varphi:X_1\to X_2$ be a $G$-equivariant local diffeomorphism with $\varphi(x_1)=x_2$ and suppose that
$\varphi\vert_{G\cdot x_1}$ is injective.
Then there are $G$-stable
neighborhoods $U_i$ around $x_i$ such that $\varphi\vert_{U_1}:U_1\to U_2$ is a
diffeomorphism.\label{lem_diffeoAroundOrbit}
\end{lemma}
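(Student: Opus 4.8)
The plan is to use the slice theorem for proper actions together with the hypothesis that $\varphi$ is injective on the orbit $G\cdot x_1$. First I would record that since $G$ acts properly on $X_2$, the isotropy group $K = G_{x_2}$ is compact; because $\varphi$ is a local diffeomorphism and $G$-equivariant, it sends $G_{x_1}$ injectively into $G_{x_2}$, and in fact $G_{x_1} = G_{x_2} = K$ (an element fixing $x_1$ fixes $x_2$ by equivariance, and conversely if $g \cdot x_2 = x_2$ then $g \cdot x_1$ and $x_1$ lie in the same $\varphi$-fiber over $x_2$; since $\varphi$ is locally injective and $g$ can be connected to $e$ through the compact group $K$—or argued via injectivity of $\varphi|_{G\cdot x_1}$ directly—we get $g \cdot x_1 = x_1$). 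Thus the $G$-action near $x_1$ is also proper with compact isotropy $K$, and both orbits $G\cdot x_1$, $G\cdot x_2$ are embedded submanifolds $G$-equivariantly diffeomorphic to $G/K$, with $\varphi$ restricting to this diffeomorphism by the injectivity hypothesis.

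Next I would invoke the differentiable slice theorem (Palais) on the target: there is a $K$-invariant disk $D_2 \subset X_2$ transverse to $G\cdot x_2$ at $x_2$ such that the twisted product $G \times_K D_2 \to X_2$, $[g,d] \mapsto g\cdot d$, is a $G$-equivariant diffeomorphism onto a $G$-stable neighborhood $V_2$ of $G\cdot x_2$. On the source, since $d\varphi_{x_1}$ is an isomorphism, I can pull back $D_2$ to a $K$-invariant slice $D_1 \subset X_1$ at $x_1$ (shrinking so $\varphi$ is a diffeomorphism $D_1 \to D_2$); applying the slice theorem on $X_1$ gives a $G$-equivariant diffeomorphism $G \times_K D_1 \to V_1$ onto a $G$-stable neighborhood of $G\cdot x_1$. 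Under these two identifications, $\varphi$ becomes the map $[g,d] \mapsto [g, \varphi(d)]$, which is manifestly a $G$-equivariant diffeomorphism $G\times_K D_1 \to G\times_K D_2$. The only subtlety is that $\varphi$ need not a priori be compatible with these charts on the nose—it maps $G\cdot x_1$ correctly, but away from the orbit one must check $\varphi(g\cdot d) = g\cdot\varphi(d)$ only holds for $d$ in the slice, which is immediate from $G$-equivariance of $\varphi$. So I would set $U_1 = G\cdot D_1$ and $U_2 = \varphi(U_1) = G\cdot \varphi(D_1)$ and conclude.

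The main obstacle I anticipate is the globalization: $\varphi|_{D_1}$ is a diffeomorphism onto $D_2$, and $\varphi$ is $G$-equivariant, so $\varphi|_{U_1}$ is surjective onto $U_2$ and a local diffeomorphism everywhere; injectivity on $U_1$ is the delicate point, since two points $g_1\cdot d_1$ and $g_2\cdot d_2$ with the same image force $\varphi(d_1)$ and $g_1^{-1}g_2 \cdot \varphi(d_2)$ to be $G$-translates lying in the slice $D_2$, hence (after shrinking $D_2$ so that $G\cdot d \cap D_2$ is a single $K$-orbit for all $d\in D_2$, which the slice theorem guarantees) $g_1^{-1}g_2 \in K$ and $d_1, d_2$ are in the same $K$-orbit, whence $g_1\cdot d_1 = g_2\cdot d_2$ in $U_1$. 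So after this shrinking, injectivity is forced and $\varphi|_{U_1}$ is a bijective local diffeomorphism, hence a diffeomorphism. The injectivity of $\varphi|_{G\cdot x_1}$ is used precisely to ensure $G_{x_1} = G_{x_2}$, so that the slice disks can be matched.
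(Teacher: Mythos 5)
Your proof is correct, but it takes a genuinely different route from the paper. The paper argues by contradiction with a bare properness/sequential-compactness argument: if injectivity failed on every $G$-stable neighborhood of $G\cdot x_1$, one extracts sequences $u_n\neq v_n$ with $\varphi(u_n)=\varphi(v_n)$ converging (after translating by suitable $g_n\in G$) to points of the orbit; properness yields a convergent subsequence $g_n\to g$, equivariance and injectivity of $\varphi$ on the orbit force $u_n$ and $v_n$ to converge to the same point $u$, and local injectivity of $\varphi$ at $u$ gives the contradiction. This is more elementary -- it uses nothing beyond properness and the hypotheses -- and produces the neighborhood non-constructively. Your argument instead invokes the Palais slice theorem on the target and transports the slice, which buys an explicit tube description $U_1=G\cdot D_1$, $U_2=G\cdot\varphi(D_1)$. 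One caution: your middle paragraph asserts that ``applying the slice theorem on $X_1$'' gives a tube $G\times_K D_1\to V_1$ for the \emph{pulled-back} transversal $D_1=\varphi^{-1}(D_2)$; the slice theorem produces \emph{some} slice, and the statement that an arbitrary $K$-invariant transversal becomes a slice after shrinking is essentially equivalent to the lemma you are proving, so citing it there would be circular. Fortunately your last paragraph does not need that claim: the injectivity of $\varphi$ on $G\cdot D_1$ follows directly from the slice property of $D_2$ in $X_2$ (if $g\cdot d, d\in D_2$ then $g\in K$) together with the $K$-invariance of $D_1$ and injectivity of $\varphi|_{D_1}$, and then $\varphi|_{U_1}$ is a bijective local diffeomorphism onto the open set $U_2$, hence a diffeomorphism. (You should also record, as a minor point, that $U_1=G\cdot D_1$ is open, e.g.\ because $G\times D_1\to X_1$ is a submersion after shrinking $D_1$.) With the last paragraph read as the actual injectivity argument, the proof stands.
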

\begin{proof}
First we recall that the action
on $X_1$ is proper as well.
Suppose that in each neighborhood of the orbit $G\cdot x_1$ there is a pair of points $u\not=v$ mapping to the same point. So we
get sequences $u_n, v_n$ with $\varphi(u_n)=v_n$ and $u_n\not= v_n$. Using the action, we may assume that $u_n$
converges to $u\in G\cdot x_1$. There is a sequence $g_n\in G$ such that $\tilde v_n=g_n\cdot v_n$ converges to a 
point in $G\cdot x_1$. This can even be arranged such that $\tilde v_n$ converges to $u$ likewise.
Thus we get two sequences both converging to $f(u)$, namely $f(\tilde v_n)$ and
$g_n^{-1}f(\tilde v_n)=f(v_n)=f(u_n)$. Properness of the action implies the existence of
a convergent subsequence of $g_n$, which for simplicity we also denote $g_n$.
The sequence $f(\tilde v_n)$ converges 
therefore
to $f(u)$ as well as to $gf(u)=f(gu)$, hence $f(u)=f(gu)$. Injectivity on the orbit $G\cdot x_1$
provides $u=gu$. So, $u$ is the limit of both $u_n$ and $v_n$. Around $u$, the map $f$ is a local diffeomeorphism, so $u_n=v_n$
for large $n$, contradicting the assumption.
\end{proof}
\begin{proof}[Proof of Theorem~\ref{theorem_slice}]
We have to prove the existence of a local slice
model at $x$. Let $K=G_x$ denote the isotropy group at $x$ which is compact since $G$ acts properly
on $X$. On a complex manifold an action of a compact
group is linearizable in a neighborhood of a
fixed point. This implies that $x$ has an open $K$-stable neighborhood $U$ of $x$
which is $K$-equivariantly biholomorphic
to an open neighborhood $W$ of $0$ in $T_xX$
where $K$ acts on $T_xX$ by the isotropy 
representation of $K$ at $x$.
The complex span of $T_x(G\cdot x)\subset T_xX$
is a $K$-invariant subrepresentation
of $T_xX$. We choose a $K$-invariant 
complex complement $V$ and set
$S_W=V\cap W$.\\
We use the linearization map to
identify $S_W$ with a $K$-stable 
local complex submanifold $S$ of $X$ with $x\in S$.
The restriction 
$\alpha:G\times S\to X, (g,y)\mapsto g\cdot y$
of the action map is real analytic and for 
fixed $g\in G$ the map
$\alpha\vert_{\{g\}\times S}:\{g\}\times S\to X$ is holomorphic.
It follows that $\alpha$ extends to a holomorphic map $\alpha^c:\Omega\to X$ where $\Omega\subset \gtube\times S$ is an open neighborhood
of $G\times\{x\}$ in $\gtube\times S$.
The map $\alpha^c$ is locally $G$-equivariant
and $\gtube$ is $G$-equivariantly diffeomorphic to
$G\times \Sigma$ by definition. This implies that
$\alpha^c$ extends $G$-equivariantly and holomorphically to a $G$-stable
open neighborhood, i.e. after shrinking of $\gtube$
and $S$ we may assume $\Omega=\gtube\times S$.
The map $\alpha^c:\gtube\times S\to X$
is $K$-invariant where $K$
acts on $\gtube \times S$ by $k\cdot(\gamma,y)=
(\gamma\cdot k^{-1},k\cdot y)$ and after shrinking
we may assume that it is a submersion onto its
open image.\\
Since $\alpha^c$ is $K$-invariant, it pushes down to a holomorphic
map
\begin{align*}
 \varphi:\gtube\timescat{K} S&\to X
\end{align*}
submersive
to its image. Denote $B=\alpha^c(\gtube\times\{0\})
=\varphi(\gtube\timescat{K}\{0\})$.
Since the $G$-orbit is totally real, 
\begin{equation*}
\dim \gtube\modmod K=\dim_\R G/K=\dim B \ .
\end{equation*}
Thus a calculation of the dimension
\begin{equation*}
 \dim \left(\gtube\timescat{K} S\right)=\dim \left(\gtube\modmod K\right)+\dim S=\dim B+\dim S=\dim X
\end{equation*}
shows that $\varphi$ is in fact a $G$-equivariant local diffeomorphism.
Lemma~\ref{lem_diffeoAroundOrbit} in turn shows that this becomes a diffeomeorphism to the image after restricting $\varphi$
to some $G$-stable neighborhood of $[e,0]$. In a last step, using the Shrinking Theorem of Local Slice Models
(Theorem~\ref{theorem_shrinking}), this can be
restricted to a neighborhood of the form
$\gtube\timescat{K} S$ by sufficient shrinking of $\gtube$ and $S$.
\end{proof}

%%%%%%%%%%%%%%%%%%%%%%%%%%%%%%%%%%%%%%%%%%%%%
%
%   4   Local potentials
%
%%%%%%%%%%%%%%%%%%%%%%%%%%%%%%%%%%%%%%%%%%%%%

\section{Local potentials}\label{sect_localPotential}
The construction of both the complex structure (Section~\ref{section_cplxStructure}) and in particular the K\"ahler structure
(Section~\ref{section_KaehlerStructure}) make
use of local potentials.\\
The main purpose of this section is to prove Theorem~\ref{theorem_localPotentialAroundM} stating the existence of a local potential
around each point in the momentum zero
level $\mathcal{M}$.\\
Additionally, the construction of a local potential gives rise to the question whether away
from the momentum zero level local potentials do always exist as well.
For this, we have to impose further natural conditions, namely the $G$-orbit of
the considered point has to be totally real and the K\"ahler form restricted to the orbit has to be exact.\\
The $G$-orbits in the momentum zero level are isotropic and therefore the orbits are perforce
totally real and $\omega$ restricted to the orbit is exact.
So, on $\mathcal{M}=\mu^{-1}(0)$ these conditions are fulfilled automatically.
\subsection{Definition of a local potential}
\begin{definition}\label{definition_potential}\mbox{ }
\begin{enumerate}
\item  Given a Hamiltonian K\"ahler  manifold $(X,G,\omega,\mu)$ a {\em potential} is a
smooth $G$-in\-var\-iant strictly plurisubharmonic function $\rho$ such that
$\omega=\de\dc\rho$ and for each $v\in\lie{G}$ the component of the momentum map satisfies $\mu^v=-\imath_{\tilde v}\dc\rho$.
Here $\tilde v$ is the vector field of $v$ induced by the $G$-action.
\item A function is called a {\em local potential} around $x\in X$ if it is
a potential on some $G$-stable neighborhood of $x$.\label{defn_localPotential}
\end{enumerate}
\end{definition}
Observe that this makes sense as we obtain the momentum map condition
\begin{equation*}
\de\mu^v=-\de\imath_{\tilde v}\dc\rho=\imath_{\tilde v}\de\dc\rho=\imath_{\tilde v}\omega
\end{equation*}
from $G$-invariance of $\rho$ and $\dc\rho$ since invariance reads in formula
\begin{equation*}
0=\lieder_{\tilde v}\dc\rho=  \imath_{\tilde v}\de\dc\rho + \de\imath_{\tilde v}\dc\rho\ .
\end{equation*}
Vice versa, given a smooth $G$-in\-var\-iant strictly plurisubharmonic function $\rho$, it defines a Hamiltonian structure, i.e.
an invariant K\"ahler form $\omega=\de\dc\rho$
and an equivariant momentum map defined by
$\mu^v=-\imath_{\tilde v}\dc\rho$.\\
We obtain strong relations between the momentum zero level $\mathcal{M}$ and the
analytic Hilbert quotient if
we impose our exhaustion property of the potential
(Definition~\ref{definition_exhaustion}).
\begin{remark}
On Stein manifolds an exhaustive potential $\rho$ always exists \cite{Gra58}.
By averaging over a compact group $K$
it can be made $K$-invariant.
\end{remark}
The moment zero level $\mathcal{M}$ is $G$-stable by construction. Therefore the inclusion
$i:\mathcal{M}\to X$ induces a map $i_G:\mathcal{M}/G\to X\modmod G$ to the formal Hilbert quotient.\\
In the situation of a compact group we have the following theorem.
\begin{theorem} Let the compact Lie group $K$ act on the Stein
manifold $X$ by holomorphic transformations and let a $K$-exhaustive potential $\rho$
induce a Hamiltonian structure on $X$. Then the map $i_K:\mathcal{M}/K\to X\modmod K$
 is a homeomorphism. In particular, for all $p\in X\modmod K$ the set $\mathcal{M}_p=\mathcal{M}\cap \pi^{-1}(p)$ is not empty and
$\mathcal{M}_p\subset E(p)$.
\end{theorem}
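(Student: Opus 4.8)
The plan is to run the Kempf--Ness argument: analyse the restriction of the potential $\rho$ to the complexified $K$-orbits and exploit its convexity along the ``Cartan geodesics''. By Theorem~\ref{theorem_complexificationOfAction} we may regard $X$ as a $K$-stable Runge open subset of a Stein manifold $X^\C$ carrying a holomorphic $K^\C$-action, and by Lemma~\ref{lemma_GCorbitsAreComplexifiedGorbits} and Proposition~\ref{proposition_complexifiedOrbitIsIntersectionWithKCorbitInComplexifiedSpace} the complexified $K$-orbit through $x\in X$ is $B_X(x)=K^\C\cdot x\cap X$. Fix $x\in X$ and $\xi\in\lie{K}$ and set $\gamma(t)=\exp(it\xi)\cdot x$; by orbit-convexity of $X$ in $X^\C$ (\cite{Hei91}, cf.\ the proof of Proposition~\ref{proposition_complexifiedOrbitIsIntersectionWithKCorbitInComplexifiedSpace}) this curve stays in $X$ on every interval whose endpoints lie in $X$. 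Using $\omega=\de\dc\rho$, $\mu^v=-\imath_{\tilde v}\dc\rho$ and $\de\mu^v=\imath_{\tilde v}\omega$, the standard computation shows that $t\mapsto\rho(\gamma(t))$ is convex, with
\[
\tfrac{\de}{\de t}\rho(\gamma(t))=c\,\mu^\xi(\gamma(t)),\qquad \tfrac{\de^2}{\de t^2}\rho(\gamma(t))=c\,\lVert\tilde\xi\rVert^2_{\gamma(t)}\ \geq 0\qquad(c>0),
\]
the second derivative vanishing exactly where the induced vector field $\tilde\xi$ vanishes, i.e.\ in isotropy directions. Hence $\rho\vert_{B_X(x)}$ descends, via the Cartan decomposition $K^\C=K\exp(i\lie{K})$, to a function on the nonpositively curved symmetric space $K\backslash K^\C$ (modulo the compact isotropy) that is convex along every geodesic and strictly convex transverse to the isotropy directions.

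Two consequences follow. First, since $\de\rho_x$ automatically annihilates $T_x(K\cdot x)$ by $K$-invariance, $x\in\mathcal{M}$ if and only if $\de\rho_x$ annihilates all of $T_x(K^\C\cdot x)$, i.e.\ if and only if $x$ is a critical point of $\rho\vert_{B_X(x)}$; by convexity such a critical point is a global minimum, and by strict convexity transverse to the isotropy its minimum set is a single $K$-orbit. Second, if $B_X(x)$ is closed in $X$ then $\rho\vert_{B_X(x)}$ is $K$-exhaustive (a restriction of the $K$-exhaustive $\rho$ to a closed $K$-stable subset), so it descends to a continuous function on the Hausdorff quotient $B_X(x)/K$ with relatively compact sublevel sets; hence it is bounded below and attains its infimum. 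Applying the second point to $E(p)$, which is closed by Theorem~\ref{theorem_fiberStructureAHQcptCase}, we obtain a critical point, hence a point of $\mathcal{M}$, inside $E(p)$. Thus $\mathcal{M}_p=\mathcal{M}\cap\pi^{-1}(p)\neq\emptyset$ for all $p$, and $i_K$ is surjective.

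Next, $\mathcal{M}_p\subset E(p)$. Let $x\in\mathcal{M}\cap\pi^{-1}(p)$; by the first point $x$ is a global minimum of $\rho\vert_{B_X(x)}$. If $B_X(x)$ were not closed, Theorem~\ref{theorem_fiberStructureAHQcptCase} provides a complexified $K$-orbit of strictly smaller dimension in $\overline{B_X(x)}$, and the Hilbert--Mumford-type analysis of such degenerations in \cite{Hei91} (orbit-convexity and the associated limit one-parameter subgroup) yields $\xi\in\lie{K}$ with $\exp(it\xi)\cdot x$ remaining in $X$ and converging, as $t\to+\infty$, to a point of a strictly lower-dimensional complexified orbit. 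Then $t\mapsto\rho(\exp(it\xi)\cdot x)$ is convex and bounded on $[0,\infty)$, hence non-increasing, so $\mu^\xi(x)\leq 0$; but $x\in\mathcal{M}$ forces $\mu^\xi(x)=0$, so this function is constant, $\tilde\xi$ vanishes along the curve, and $\exp(it\xi)\cdot x\equiv x$ — contradicting that the limit lies in a smaller orbit. Hence $B_X(x)$ is closed, and by uniqueness of the closed orbit in a fiber (Theorem~\ref{theorem_fiberStructureAHQcptCase}) $B_X(x)=E(p)$; so $\mathcal{M}_p\subset E(p)$, and since $\mathcal{M}\cap E(p)$ is a single $K$-orbit, $i_K$ is injective. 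Finally, $\pi\colon X\to X\modmod K$ is proper (as the quotient map of an analytic Hilbert quotient), hence so is $\pi\vert_\mathcal{M}$ ($\mathcal{M}$ closed in $X$); composing with the proper surjection $\mathcal{M}\to\mathcal{M}/K$ ($K$ compact) shows $i_K\colon\mathcal{M}/K\to X\modmod K$ is a proper continuous bijection of locally compact Hausdorff spaces, hence closed, hence a homeomorphism.

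The main obstacle is the inclusion $\mathcal{M}_p\subset E(p)$: ruling out zeros of the momentum map on non-closed complexified orbits is exactly the ``a zero of $\mu$ forces the orbit to be closed'' phenomenon, and it rests on the destabilizing one-parameter subgroup / orbit-convexity machinery of \cite{Hei91}; it is precisely the exhaustion hypothesis on $\rho$ that guarantees a minimum on $E(p)$ exists, and hence gives surjectivity of $i_K$.
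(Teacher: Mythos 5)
Your proposal is the classical Kempf--Ness argument, and for this theorem that is essentially the only route available: the paper does not prove the statement itself but refers to \cite{HeiKut95}, adding only the normalization that an exhaustive potential can be composed with a strictly convex increasing diffeomorphism so as to become proper and unbounded from above without changing $\mathcal{M}$ (compare Lemma~\ref{lemma_bendingUpPotential}). Your version dispenses with that normalization by using the exhaustion property directly to produce minima on the closed complexified orbits $E(p)$, and the core steps --- convexity of $t\mapsto\rho(\exp(it\xi)\cdot x)$ with first derivative proportional to $\mu^\xi$, the identification of $\mathcal{M}$ with the critical set of $\rho$ restricted to complexified orbits, uniqueness of the minimizing $K$-orbit, and the destabilization argument showing that a zero of $\mu$ forces the complexified orbit to be closed --- are sound and consistent with the machinery the paper develops elsewhere (Lemma~\ref{lemma_positiveHessianTransversalToCriticalOrbitInComplexifiedGorbit}, Corollary~\ref{corollary_uniqueCriticalOrbitInComplexifiedGorbitIfExhaustive}, Theorem~\ref{theorem_fiberStructureAHQcptCase}). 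The one step you cite rather than prove --- the existence of $\xi\in\lie{K}$ whose ray $\exp(it\xi)\cdot x$ stays in $X$ and converges to a point of a lower-dimensional orbit --- genuinely needs the orbit-convexity results of \cite{Hei91} (in particular that the limit point lies in $X$); deferring this is acceptable given that the paper defers the entire theorem.

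There is, however, one concrete error at the end: the claim that $\pi:X\to X\modmod K$ is proper ``as the quotient map of an analytic Hilbert quotient'' is false. Already for $S^1$ acting on $\C^2$ by $(t,z_1,z_2)\mapsto(tz_1,t^{-1}z_2)$ --- the paper's own example in Section~\ref{section_KaehlerStructure} --- the quotient is $\C$ via $z_1z_2$ and the fiber over $0$ is the non-compact set $\{z_1z_2=0\}$. Properness of $i_K$ is true, but it comes from the exhaustion hypothesis, not from $\pi$: a compact set $C\subset X\modmod K$ is contained in $\pi(U_c)=\pi(U_c\cap\mathcal{M})$ for some $c<\sup\rho$, where $U_c=\{\rho<c\}$ and these images are open and exhaust the quotient (cf.\ Lemma~\ref{lemma_imageOfSublevelsAreOpenInLocalSliceModel}); since each $\mathcal{M}_p$ is a single $K$-orbit and $U_c$ is $K$-stable, $\mathcal{M}\cap\pi^{-1}(C)\subset\overline{U_c}$, which is compact because $U_c/K$ is relatively compact in $X/K$ and $X\to X/K$ is proper for compact $K$. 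With this repair your continuous bijection is proper, hence a homeomorphism, exactly as in the final paragraph of the paper's proof of Theorem~\ref{theorem_homeomorphismInLocalSliceModel}.
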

For the proof see \cite{HeiKut95}.
There it applies only
for 
proper potentials unbounded from above. An exhaustive
potential $\rho$ in the sense of this paper can be made unbounded from above keeping properness
by choosing a
suitable strictly convex diffeomorphism $\chi\nolinebreak :\nolinebreak (-\infty,\sup\rho)\to\R$ and using $\tilde\rho=\chi\circ\rho$ as
exhaustive potential. This
does not change the momentum zero level $\mathcal{M}$ (compare Lemma~\ref{lemma_bendingUpPotential}).
\begin{corollary}
Let $X$ be a Stein $(G\times K)$-manifold, $K$
acts freely and $\rho$ is a strictly
plurisubharmonic $(G\times K)$-exhaustive
potential inducing
a K\"ahler form and a momentum
map $\mu_K:X\to \lie{K}^*$. Then $\mathcal{M}=\mu_K^{-1}(0)$,
$\mathcal{M}/K$ and $X\modmod K$ are 
manifolds with induced $G$-action,
the $G$-equivariant
inclusion $\mathcal{M}\hookrightarrow X$ induces a $G$-equivariant homeomorphism
and $\rho\vert_\mathcal{M}$ pushes down to
a strictly plurisubharmonic $G$-exhaustion on $X\modmod K$.
\label{corollary_GexhaustionPushesDowntoMmodKforFreeKaction}
\end{corollary}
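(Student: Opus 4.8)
The plan is to reduce everything to the compact-group theorem stated just above together with Proposition~\ref{prop_shrinkingUnderFreeAction}, while tracking the residual $G$-action throughout. First I would observe that $\mathcal{M}$ is $G$-stable: since $\rho$ is $(G\times K)$-invariant, $\dc\rho$ is $G$-invariant, and for $v\in\lie{K}$ the induced vector field $\tilde v$ is $G$-invariant because the $G$- and $K$-actions commute, so each component $\mu_K^v=-\imath_{\tilde v}\dc\rho$ is $G$-invariant and $\mathcal{M}=\mu_K^{-1}(0)$ is $G$-stable. Because $K$ acts freely, $\de_x\mu_K$ is surjective for every $x$ (its image being the annihilator of the Lie algebra of the $K$-stabilizer, which is trivial), so $0$ is a regular value and $\mathcal{M}$ is a $(G\times K)$-stable submanifold on which $K$ acts freely; hence $\mathcal{M}/K$ is a smooth manifold with induced (proper) $G$-action, the Marsden--Weinstein quotient. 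By Proposition~\ref{prop_shrinkingUnderFreeAction}\eqref{inProp_nonSingular} the analytic Hilbert quotient $X\modmod K$ is non-singular, i.e.\ a complex manifold, and it carries a holomorphic $G$-action since $G$ commutes with $K$ and therefore stabilizes $\osheaf^K_X$. Applying the compact-group theorem to $K$, the inclusion $\mathcal{M}\hookrightarrow X$ induces a homeomorphism $i_K:\mathcal{M}/K\to X\modmod K$; being induced by the $G$-equivariant inclusion and the two $G$-equivariant quotient maps, $i_K$ is $G$-equivariant, hence a $G$-equivariant homeomorphism. Finally $\rho\vert_\mathcal{M}$, which is $K$-invariant, descends along $i_K$ to a continuous $G$-invariant function $\bar\rho$ on $X\modmod K$.

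For the exhaustion property I would use that each fibre of $\pi:X\to X\modmod K$ meets $X$ in a single closed complexified $K$-orbit containing a point of $\mathcal{M}$ (Proposition~\ref{prop_shrinkingUnderFreeAction}\eqref{inProp_singleComplKorbit} and the compact-group theorem), and that $\bar\rho$ on that fibre equals the minimum of $\rho$ over it; consequently $\{\bar\rho<c\}=\pi(\{\rho<c\})$ for all $c$, and $\sup\bar\rho\le\sup\rho$. The composite $X\to X\modmod K\to(X\modmod K)/G$ is $(G\times K)$-invariant, so it factors through a continuous surjection $X/(G\times K)\to(X\modmod K)/G$, under which the image of $\{\rho<c\}/(G\times K)$ is exactly $\{\bar\rho<c\}/G$. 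For $c<\sup\bar\rho\le\sup\rho$ the set $\{\rho<c\}/(G\times K)$ is relatively compact by hypothesis, hence its image $\{\bar\rho<c\}/G$ is relatively compact in $(X\modmod K)/G$, a Hausdorff space since it is homeomorphic to $\mathcal{M}/(G\times K)$ and $G\times K$ acts properly. Thus $\bar\rho$ is a $G$-exhaustion.

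It remains to show $\bar\rho$ is strictly plurisubharmonic, which is the real point. Here I would pass to the complexification of Theorem~\ref{theorem_complexificationOfAction}: $X$ sits as a Runge open subset of a Stein manifold $X^\C$ with a free holomorphic $K^\C$-action and $X^\C=K^\C\cdot X$, and by the Linearization Lemma used in the proof of Proposition~\ref{prop_shrinkingUnderFreeAction} this exhibits $X^\C$ as a $K^\C$-principal bundle over $X^\C\modmod K^\C\cong X\modmod K$. Over a point $p=\pi(m)$ with $m\in\mathcal{M}$, choose a complex slice $S\ni m$ so that the action map $K^\C\times S\to X^\C$ is a biholomorphism onto an open set and $\pi\vert_S$ is a biholomorphism onto a neighbourhood of $p$; then $\bar\rho(\pi(s))=\inf\{\rho(g\cdot s)\mid g\in K^\C,\ g\cdot s\in X\}$. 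The function $\Phi(g,s)=\rho(g\cdot s)$ is strictly plurisubharmonic on its (open) domain, being the pull-back of the strictly plurisubharmonic $\rho$ by a local biholomorphism, and is left $K$-invariant, hence descends to $(K\backslash K^\C)\times S$. Along the geodesics $t\mapsto g\exp(itv)$ of the symmetric space $K\backslash K^\C$ it is strictly convex and attains a unique minimum, which lies on the $K$-orbit of points of $\mathcal{M}$ in the orbit: this is where freeness enters, since for $v\ne 0$ the vector $\tilde v$ does not vanish and strict plurisubharmonicity of $\rho$ forces the second $t$-derivative of $\rho(\exp(itv)\cdot y)$ to be strictly positive. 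The minimizer $g(s)$ then depends smoothly on $s$ by the implicit function theorem, and differentiating $\bar\rho(\pi(s))=\Phi(g(s),s)$ twice, using that the fibrewise differential of $\Phi$ vanishes at $g(s)$, expresses the complex Hessian of $s\mapsto\bar\rho(\pi(s))$ as the complex Hessian of $\Phi$ in the $s$-directions corrected by a term built from the positive-definite second derivative of $\Phi$ along the fibre; a short computation using the joint strict plurisubharmonicity of $\Phi$ shows this is positive. I expect this last step --- essentially the classical statement that a strictly plurisubharmonic potential descends to the K\"ahler (symplectic) quotient of a free action --- to be the main obstacle, and it is exactly here that freeness (nondegeneracy of the minimum) and strict plurisubharmonicity of $\rho$ are genuinely used; the remaining assertions are formal consequences of the compact-group theorem and Proposition~\ref{prop_shrinkingUnderFreeAction}.
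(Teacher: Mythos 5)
Your skeleton (manifold structure of $\mathcal{M}$, $\mathcal{M}/K$ and $X\modmod K$, the induced $G$-actions, and the reduction of the homeomorphism to the compact-group situation) matches the paper's, but there is a genuine gap at the point where you ``apply the compact-group theorem to $K$'' to get the homeomorphism $i_K:\mathcal{M}/K\to X\modmod K$, and again where you use it to assert that every $\pi$-fibre contains a point of $\mathcal{M}$. That theorem requires the potential to be \emph{$K$-exhaustive}, i.e.\ $\{\rho<c\}/K$ relatively compact in $X/K$; the corollary only provides $(G\times K)$-exhaustiveness, which is strictly weaker when $G$ is noncompact (a $(G\times K)$-invariant sublevel set can have noncompact image in $X/K$). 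The paper closes exactly this gap by arguing fibrewise: each fibre $F=\pi^{-1}(y)$ of $X\to X\modmod K$ is a closed Stein submanifold stable under $K$ and under the compact $G$-isotropy at $y$, so $\rho\vert_F$ \emph{is} a $K$-exhaustion, and the result of \cite{HeiKut95} applied to $F$ shows that $\mathcal{M}\cap F$ is a single $K$-orbit; the resulting $G$-equivariant bijection $\mathcal{M}/K\to X\modmod K$ is then upgraded to a homeomorphism using properness of the $G$-actions on both quotients. Without this fibrewise step your homeomorphism, and hence the identity $\{\bar\rho<c\}=\pi(\{\rho<c\})$ on which your exhaustion argument rests, is unsupported.

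For the strict plurisubharmonicity you take a genuinely different route from the paper, and you leave its crux unproved. The paper stays on $\mathcal{M}$: it takes a local section $\sigma$ of the $K$-principal bundle $p:\mathcal{M}\to\mathcal{M}/K$, notes that the pushed-down function is $\sigma^*\rho$ independently of $\sigma$, and uses that both $\dc\rho\vert_{T_x\mathcal{M}}$ and $\omega\vert_{T_x\mathcal{M}}$ have $\ker(p_*)_x=T_x(K\cdot x)$ in their kernel to get $\sigma^*\dc\rho=\dc\sigma^*\rho$ and $\de\dc\sigma^*\rho=\sigma^*\omega$, a K\"ahler form --- a short argument needing no complexification. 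Your alternative via the $K^\C$-principal bundle on $X^\C$, fibrewise minimization over $K\backslash K^\C$, smooth dependence of the minimizer, and a Hessian/Schur-complement computation is a legitimate minimum-principle strategy, but the positivity of the resulting complex Hessian is precisely the content of the claim and you defer it to ``a short computation''; as written this part is also incomplete. Both issues are repairable, but neither is repaired in the proposal.
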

\begin{proof}
Proposition~\ref{prop_shrinkingUnderFreeAction}\eqref{inProp_nonSingular} shows
that $X\modmod K$ is a manifold.
As the $K$-action is free the momentum map
$\mu_K$ is submersive thus $\mathcal{M}$ is a manifold. Since $K$ acts freely on $\mathcal{M}$
the quotient $\mathcal{M}/K$ is a manifold as well
and $\mathcal{M}\to\mathcal{M}/K$ is a $K$-principle
bundle. Since the $G$- and the $K$-action
commute, $\mathcal{M}$ is $G$-stable and 
both quotients $\mathcal{M}/K$ and $X\modmod K$
inherit a proper $G$-action such that the
quotient maps $p:\mathcal{M}\to\mathcal{M}/K$
and $\pi:X\to X\modmod K$ are $G$-equivariant.
For $y\in X\modmod K$ the $\pi$-fiber
$F=\pi^{-1}(y)$ is a Stein manifold.
Since the isotropy of the $G$-action at $y\in X\modmod K$ is compact, the restriction $\rho\vert_F$
is a $K$-exhaustion. Thus \cite{HeiKut95}
shows that $\mathcal{M}$ intersects $F$ exactly in a single $K$-orbit. Hence the $G$-equivariant inclusion
$i:\mathcal{M}\to X$ induces a $G$-equivariant bijection
$\varphi:\mathcal{M}/K\to X\modmod K$.
By the same arguments as in \cite{HeiKut95} the map
$\varphi_{/G}:(\mathcal{M}/K)/G\to (X\modmod K)/G$
is a homeomorphism and since the $G$-action on 
$\mathcal{M}/K$ and $X\modmod K$
is proper the bijection $\varphi$ is a
homeomorphism as well.\\
The restricted function $\rho\vert_{\mathcal{M}}$
is a $K$-invariant $G$-exhaustion
and induces therefore a $G$-exhaustion
on the quotient $\mathcal{M}/K$.
For every point $y\in\mathcal{M}/K$
the principle bundle
$p:\mathcal{M}\to\mathcal{M}/K$ possesses a local section $\sigma$. Locally the pushed down function can be written as $\sigma^*\rho$ independently of the choice of $\sigma$. For the point $x=\sigma(y)$ the covector
$\dc\rho\vert_{T_x\mathcal{M}}$ has $T_x(G\cdot x)=\ker (p_*)_x$
contained in its kernel. Thus
$\sigma^*\dc\rho=\dc\sigma^*\rho$.
Further $T_x(G\cdot x)$ is the kernel of
$\omega\vert_{T_x\mathcal{M}}$. Hence
$\de\dc\sigma^*\rho=\sigma^*\omega$ is
a K\"ahler form and $\sigma^*\rho$ is strictly plurisubharmonic.
\end{proof}
\subsection{Potential on complexified $G$-orbit}
In this section we will take a look at the
$G$-invariant potential on a complexified $G$-orbit.
They turn out to be minimal and critical on ``isolated'' $G$-orbit.
\begin{definition}
Given a smooth $G$-invariant function $\rho$.
We say that $\rho$ has an {\em local isolated minimum locus}
at $p$ if there is a $G$-stable neighborhood $U$
of $p$
such that $\rho(x)>\rho(p)$ for all $x\in U\backslash (G\cdot p)$.\label{definition_isolatedMinimum}
\end{definition}
We get the following local version on
a complexified $G$-orbit.
\begin{lemma}
Let $B$ be a complexified $G$-orbit, $\rho$ a $G$-invariant strictly plurisubharmonic function and $p_0\in B$ a critical point of $\rho$. If
$\Sigma_B$ is a local submanifold transversal to the $G$-orbit $G\cdot p_0$ at $p_0$ of maximal dimension, then the restriction $\rho\vert_{\Sigma_B}$ has positive Hessian at $p_0$ and the $G$-orbit is a local isolated minimum locus
of $\rho$.\label{lemma_positiveHessianTransversalToCriticalOrbitInComplexifiedGorbit}
\end{lemma}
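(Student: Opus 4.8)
The plan is to reduce the assertion to a fact about the real Hessian of $\rho$ at $p_0$; the two essential ingredients are the $G$-invariance of $\rho$ and the fact that in a complexified $G$-orbit the orbit $G\cdot p_0$ is a totally real submanifold of maximal real dimension (this is part of the $G$-tube description of complexified orbits, cf.\ Proposition~\ref{proposition_orbitMapExtendsToGtube} and Section~\ref{subsect_cplxfdOrbit}). First I would replace $\rho$ by its restriction to $B$: since the restriction of a strictly plurisubharmonic function to a complex submanifold is again strictly plurisubharmonic, all hypotheses persist, and henceforth $\rho$ is a function on $B$.

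Write $J$ for the complex structure of $B$ and set $V:=T_{p_0}(G\cdot p_0)\subset T_{p_0}B$; every element of $V$ has the form $\tilde v(p_0)$ for some $v\in\lie{G}$, and total realness of maximal dimension gives the real direct sum $T_{p_0}B=V\oplus JV$. As $p_0$ is critical, $H:=\Hess_{p_0}\rho$ is a well-defined symmetric bilinear form on $T_{p_0}B$, with $H(X_{p_0},Y_{p_0})=X_{p_0}(Y\rho)=Y_{p_0}(X\rho)$ for any extensions to vector fields. The decisive step is that $V$ lies in the radical of $H$: by $G$-invariance $\tilde v\rho\equiv 0$, hence $H(X,\tilde v(p_0))=X_{p_0}(\tilde v\rho)=0$ for every $X$, so $H(\,\cdot\,,V)=0$ and $H$ descends to a symmetric form $\bar H$ on $T_{p_0}B/V$. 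On the other hand, strict plurisubharmonicity of $\rho$ says that at the critical point $p_0$ the quadratic form $X\mapsto H(X,X)+H(JX,JX)$ is positive definite; evaluated at $0\neq w\in V$, together with $H(w,w)=0$, this gives $H(Jw,Jw)>0$, so $H|_{JV}$ is positive definite. Since $JV$ is a complement of $V$, it follows that $\bar H$ is positive definite on $T_{p_0}B/V$.

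Now let $\Sigma_B$ be a local submanifold transversal to $G\cdot p_0$ at $p_0$ of maximal dimension, so $W:=T_{p_0}\Sigma_B$ is a complement of $V$ and the projection $W\to T_{p_0}B/V$ is an isomorphism carrying $H|_W$ to $\bar H$; hence $H|_W$ is positive definite. Since $p_0$ is also a critical point of $\rho|_{\Sigma_B}$, its Hessian at $p_0$ equals $H|_W$, so $\rho|_{\Sigma_B}$ has positive Hessian at $p_0$. For the remaining assertion I would pass to a $G$-tube model $\varphi^c(\gtube)$ of a neighbourhood of $G\cdot p_0$ in $B$, with $\gtube\cong G\times\Sigma$ and $p_0$ corresponding to $(\kappa(e),\sigma_0)$: by $G$-invariance the pullback of $\rho$ has the form $\bar\rho\circ\mathrm{pr}_\Sigma$ for a function $\bar\rho$ on $\Sigma$, and applying the Hessian statement to the transversal $\{\kappa(e)\}\times\Sigma$ shows that $\bar\rho$ has positive definite Hessian, hence a strict local minimum, at $\sigma_0$. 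Shrinking $\Sigma$ to a neighbourhood $\Sigma_0$ of $\sigma_0$ with $\bar\rho>\bar\rho(\sigma_0)$ on $\Sigma_0\setminus\{\sigma_0\}$ and carrying $\kappa(G)\times\Sigma_0$ back into $B$ yields a $G$-stable neighbourhood of $p_0$ on which $\rho>\rho(p_0)$ off $G\cdot p_0$, that is, $G\cdot p_0$ is a local isolated minimum locus of $\rho$.

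The step I expect to be the crux is the one just highlighted: that $G$-invariance forces $V$ into the \emph{radical} of $H$, and not merely $H|_V=0$; without this the restriction of $H$ to some complement of $V$ could fail to be positive. A secondary point to be handled with care is the $G$-tube identification of a neighbourhood of $G\cdot p_0$ in $B$ and, with it, the fact that in a complexified $G$-orbit the real orbit is totally real of maximal dimension — this is what yields the splitting $T_{p_0}B=V\oplus JV$ and makes $JV$ a genuine complement of $V$.
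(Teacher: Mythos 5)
Your first two paragraphs are correct and take essentially the paper's route: the identity $H(\tilde v_{p_0},\tilde v_{p_0})=0$ versus $H(J\tilde v_{p_0},J\tilde v_{p_0})=\omega_{p_0}(\tilde v,J\tilde v)>0$ is exactly the paper's displayed computation, and your explicit observation that $G$-invariance places all of $V=T_{p_0}(G\cdot p_0)$ in the \emph{radical} of $H$ is a welcome sharpening --- the paper only checks positivity on the particular transversal $v\mapsto\varphi_1^{J\tilde v}(p_0)$, while the lemma claims it for every transversal, and your radical argument is what makes the conclusion transversal-independent. One misattribution: total realness of $G\cdot p_0$ is \emph{not} part of the definition of a complexified orbit (only the orbits inside the abstract $G$-tube $\gtube$ are totally real; the image $\varphi^c(\gtube)\subset X$ need not inherit this, e.g.\ for a transitive action). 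The paper \emph{derives} $V\cap JV=0$ from the same positivity computation, and so can you: a nonzero $u\in V\cap JV$ would satisfy both $H(u,u)=0$ and $H(u,u)>0$.

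The last paragraph, however, has a genuine gap whenever the isotropy group $G_{p_0}$ has positive dimension --- a case the paper needs (in Lemma~\ref{lemma_isolatedAndUniqueMinimumLocusInClosedCplxfdOrbitInSpecialLocalSliceModel} the isotropy is one-dimensional). The $G$-tube slice satisfies $\dim\Sigma=\dim G$, whereas a transversal of maximal dimension in $B$ has dimension $\dim G\cdot p_0=\dim G-\dim G_{p_0}$; the map $\varphi^c$ restricted to $\{\kappa(e)\}\times\Sigma$ is then not an immersion (its differential kills, roughly, the $J\lie{G_{p_0}}$-directions), so this set is not a transversal to which your Hessian statement applies, and $\Hess_{\sigma_0}\bar\rho$ is only positive \emph{semi}-definite with a kernel of dimension $\dim G_{p_0}$. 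Concretely, let $G=S^1\times S^1$ act on $\C^*$ through the first factor and take $\rho(z)=|z|^2-\log|z|^2$, which is strictly plurisubharmonic, $G$-invariant and critical along the orbit $|z|=1$: on the two-dimensional slice one gets $\bar\rho(s,t)=e^{2s}-2s$, constant in $t$, so no shrinking of $\Sigma$ achieves $\bar\rho>\bar\rho(\sigma_0)$ off $\sigma_0$. (The final conclusion survives here only because the degenerate directions are exactly those collapsed by $\varphi^c$ into $G\cdot p_0$ --- but that is precisely what has to be proved.) The repair is what the paper does: choose $W\subset\lie{G}$ mapping bijectively onto $V$ and use $v\mapsto\varphi_1^{J\tilde v}(p_0)$ to produce a transversal $\Sigma_B\subset B$ of the correct dimension with $T_{p_0}\Sigma_B=JV$; your first part gives $\rho\vert_{\Sigma_B}$ a strict local minimum at $p_0$, and $G$-invariance plus the slice theorem for the proper action ($G\cdot\Sigma_B'$ is a neighborhood of $p_0$ and $\Sigma_B'\cap G\cdot p_0=\{p_0\}$ for $\Sigma_B'$ small) then yields the local isolated minimum locus.
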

\begin{proof}
Denote $\tilde v$ the fundamental vector field of $v\in\lie{G}$. Fix a
linear subspace $W\subset \lie{G}$ such that the linear map $W\to T_{p_0}(G\cdot p_0), v\mapsto \tilde v_{p_0}$ is a bijection. For each $v\in W$, $v\not=0$, we calculate
\begin{eqnarray*}
 J\tilde v(J\tilde v(\rho))_{p_0}&=&
 -(\imath_{J\tilde v}\de\imath_{\tilde v}\dc\rho)_{p_0}\\
 &=&(\imath_{J\tilde v}\imath_{\tilde v}\de\dc\rho)_{p_0}\\
 &=&\omega_{p_0}(\tilde v, J\tilde v)>0\\
\end{eqnarray*}
since $\lieder_{\tilde v}\dc\rho=\dc\lieder_{\tilde v}\rho=0$ and $\omega_{p_0}(\tilde v, J\tilde v)>0$ for all $\tilde v_{p_0}\not =0$.
From this we conclude that $T_{p_0}(G\cdot p_0)$
and $JT_{p_0}(G\cdot p_0)$ intersect trivially, hence the orbit is totally real and it is of maximal dimension since
$T_{p_0}(G\cdot p_0)+JT_{p_0}(G\cdot p_0)$ span
the tangent space of the complexified orbit $B$ by definition.
Denote $\tilde v$ the induced vector field of $v\in \lie{G}$ and for a vector field $\xi$ the flow
for time $t$ by $\varphi^\xi_t(x)$ starting at point $x\in X$.
Then there is an open neighborhood $U$ of the origin in $W$
such that
\mbox{$U\to B, v\mapsto\varphi_1^{J\tilde v}(p_0)$} is
an immersion and its image $\Sigma$
is a local submanifold transversal to the $G$-orbit and of maximal dimension for which the above calculations shows
that the Hessian of $\rho$ is positive with respect
to local linear coordinates coming from the identification
$U\to \Sigma$. As $\rho$ is supposed to be critical a $p_0$, $\rho\vert_\Sigma$ has a local isolated minimum at $p_0$. 
\end{proof}
From the exhaustion property of a function $\rho$
we get a result on the existence of critical points.
\begin{lemma}
Let $G$ act properly on the manifold $X$, 
assume $X/G$ to be connected and let
$\rho:X\to\R$ be a smooth $G$-exhaustive
potential. If $\rho$ possesses two different
$G$-orbits $G\cdot p_1$ and $G\cdot p_2$ which
are local isolated minimum loci, then there is a critical
point $q\not\in G\cdot p_1\cup G\cdot p_2$
such that $q$ is not a minimum locus.
\label{lemma_saddlePoint}
\end{lemma}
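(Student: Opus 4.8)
The plan is to use the $G$-exhaustive potential $\rho$ as a Morse-theoretic function on the quotient $X/G$ and locate the critical point $q$ as a mountain-pass point on a path joining the two given minimum loci. Since the statement concerns the existence of a saddle point, a natural route is a minimax argument adapted to the quotient, which is the proper setting because $\rho$ is $G$-invariant and the exhaustion property is formulated on $X/G$.

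\textbf{Step 1: Pass to the quotient.} Since $G$ acts properly, $X/G$ is a (locally compact, Hausdorff) topological space, and $\rho$ descends to a continuous exhaustion $\bar\rho:X/G\to\R$ whose sublevel sets $\{\bar\rho<c\}$ are relatively compact. The orbits $G\cdot p_1$ and $G\cdot p_2$ give points $\bar p_1\neq\bar p_2$ in $X/G$ that are strict local minima of $\bar\rho$. Because $X/G$ is connected and locally path-connected (orbit spaces of proper actions on manifolds are locally contractible), there is a continuous path $c_0:[0,1]\to X/G$ from $\bar p_1$ to $\bar p_2$.

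\textbf{Step 2: The minimax value.} Let $\mathcal P$ be the set of continuous paths in $X/G$ from $\bar p_1$ to $\bar p_2$ and set
\begin{equation*}
\beta=\inf_{c\in\mathcal P}\ \max_{t\in[0,1]}\ \bar\rho(c(t)).
\end{equation*}
Since $\bar p_1,\bar p_2$ are \emph{strict} local minima, every path from $\bar p_1$ to $\bar p_2$ must rise above $\max\{\bar\rho(\bar p_1),\bar\rho(\bar p_2)\}$, so $\beta>\max\{\bar\rho(\bar p_1),\bar\rho(\bar p_2)\}$. The exhaustion property guarantees $\beta<\infty$ and, more importantly, that the relevant sublevel sets are relatively compact, which is what makes the minimax value attained in the limit and prevents the critical level from escaping to infinity.

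\textbf{Step 3: Produce the critical point.} The claim is that there is a point $\bar q\in X/G$ with $\bar\rho(\bar q)=\beta$ such that any preimage $q\in X$ is a critical point of $\rho$ which is not a local minimum locus. I would argue this by contradiction using the deformation/gradient-flow idea: if the level set $\{\bar\rho=\beta\}$ contained no critical orbit of $\rho$ (equivalently, lifting to $X$, $\de\rho$ vanishes on no orbit at that level), then one could use the (negative) gradient flow of $\rho$ with respect to a $G$-invariant Riemannian metric — which descends to a flow on $X/G$ since $\rho$ is $G$-invariant — together with compactness of the relevant region to push any near-optimal path down strictly below $\beta$, contradicting the definition of $\beta$. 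This yields a critical orbit $G\cdot q$ at level $\beta$. Since $\beta$ is strictly above the levels at $G\cdot p_1$ and $G\cdot p_2$, we have $q\notin G\cdot p_1\cup G\cdot p_2$. Finally, $G\cdot q$ cannot be a local isolated minimum locus: a minimax critical point sits on a mountain pass, so arbitrarily close to $\bar q$ there are points on optimizing paths with $\bar\rho$-value $\leq\beta$ on both "sides"; together with strict plurisubharmonicity of $\rho$ (which, via Lemma~\ref{lemma_positiveHessianTransversalToCriticalOrbitInComplexifiedGorbit}, controls the behaviour of $\rho$ transversally to orbits along complexified directions) this forces $\rho$ to take values strictly below $\rho(q)$ in every $G$-neighborhood of $q$, so $G\cdot q$ is not a minimum locus.

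\textbf{Main obstacle.} The delicate point is the deformation lemma on the quotient $X/G$, which is not a manifold but a stratified space. One must be careful that the $G$-invariant gradient flow of $\rho$ is complete on the relevant sublevel region (using the exhaustion property and relative compactness in $X/G$) and that it moves paths in a way that strictly decreases the max along a non-critical level — the usual Palais–Smale-type compactness has to be extracted from the properness of the $G$-action plus the $G$-exhaustion rather than from a Hilbert-space structure. An alternative that sidesteps the stratification issue is to work upstairs on $X$ with $G$-equivariant paths (or sections over paths in $X/G$) and a $G$-invariant metric throughout, only projecting to $X/G$ to read off compactness of sublevel sets; I expect the cleanest write-up combines both, doing the flow on $X$ and the compactness bookkeeping on $X/G$.
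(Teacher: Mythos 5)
Your overall strategy --- a mountain-pass/minimax argument on $X/G$ --- is a legitimate alternative to the paper's argument, which instead tracks the connected components of the sublevel sets $Z_c=\{\tilde\rho<c\}$ in the quotient, defines $\kappa$ as the supremum of the levels $c$ at which $\tilde p_1$ and $\tilde p_2$ are still separated, and uses an equivariant negative-gradient deformation retract to show that $\partial X_\kappa$ must contain a critical point. Both proofs rest on the same three ingredients (exhaustion, connectedness of the quotient, equivariant gradient flow), and your Steps 1 and 2 are essentially sound, modulo the small point that the inequality $\beta>\max\{\bar\rho(\bar p_1),\bar\rho(\bar p_2)\}$ needs a \emph{uniform} lower bound over all paths; this is obtained by noting that every path must cross the compact boundary of a small relatively compact neighborhood of $\bar p_1$, on which $\bar\rho$ has a minimum strictly above $\bar\rho(\bar p_1)$.

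The genuine gap is in Step 3, in the claim that the critical point produced at the minimax level $\beta$ is not a local minimum. The basic deformation argument only yields \emph{some} critical orbit at level $\beta$; a priori the critical set $K_\beta$ at that level could consist entirely of local minima, and ruling this out is exactly the content of the Pucci--Serrin refinement of the mountain pass theorem. That refinement needs a second, stronger deformation: push near-optimal paths into $\{\bar\rho\le\beta-\epsilon\}\cup N$ for a small neighborhood $N$ of $K_\beta$, and observe that if every point of the compact set $K_\beta$ were a local minimum then $\bar\rho\ge\beta$ on $N$, so $N$ and $\{\bar\rho\le\beta-\epsilon\}$ are disjoint and the deformed (connected) path would lie entirely in $\{\bar\rho\le\beta-\epsilon\}$, contradicting the definition of $\beta$. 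Your proposed substitute for this step does not work: you invoke strict plurisubharmonicity via Lemma~\ref{lemma_positiveHessianTransversalToCriticalOrbitInComplexifiedGorbit} to force $\rho$ \emph{below} $\rho(q)$ near $q$, but that lemma asserts the opposite --- at a critical point the Hessian transversal to the orbit inside the complexified orbit is positive, i.e. the orbit is a local \emph{minimum} locus there --- and in any case the present lemma is a purely topological statement whose proof in the paper makes no use of the complex structure. The paper sidesteps the whole issue because its critical point is located on $\partial X_\kappa=\partial\{\rho<\kappa\}$, so every neighborhood of it meets $\{\rho<\kappa\}$ and it is automatically not a local minimum. If you either adopt that device or carry out the refined deformation just described, your proof goes through.
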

\begin{proof}
First let us pass to the quotient, i.e. introduce
the quotient map $\beta:X\to Z=X/G$, set $\tilde p_i=\beta(p_i)$ and define $\tilde\rho$ by
$\tilde\rho\circ\beta=\rho$. Denote for each
$b\in\R$ the sets $X_b=\{x\in X\ \vert\ \rho(x)< b\}$ and $Z_b=\{z\in Z\ \vert\ \tilde\rho(z)<b\}$.
The function $\tilde\rho$
is exhaustive and $Z$ is connected, so there is a $C\in\R$ such that $\tilde p_1, \tilde p_2$ lie
in the same connected component of $Z_C$.\\
We may assume that
$\tilde\rho(\tilde p_1)\geq\tilde\rho(\tilde p_2)$
and claim that there is a 
$c>\tilde\rho(\tilde p_1)$ such that the points
$\tilde p_1$ and $\tilde p_2$ lie in different connected components of $Z_c$.
By assumption $\tilde p_1$ is a local isolated minimum point of $\tilde \rho$, i.e. there is an open neighborhood 
$V$ of $\tilde p_1$ such that
\begin{equation*}
\{v\in V\ \vert \ \tilde\rho(v)>\tilde\rho(\tilde p_1)\}=V\backslash\{\tilde p_1\}\ .
\end{equation*}
So there is a $c>\tilde\rho(\tilde p_1)$ such that
$U_c=\{v\in V\ \vert\ \tilde\rho(v)< c\}=Z_c\cap V$ is relatively compact in $V$. Thus $Z_c$ is
the disjoint union of $U_c$ and $Z_c\backslash V$, the sets are disconnected and $\tilde p_1\in U_c$
and $\tilde p_2\in Z_c\backslash V$. Let $\kappa$
be the supremum of all $c>\tilde\rho(\tilde p_1)$
such that the points
$\tilde p_1$ and $\tilde p_2$ lie in different connected components of $Z_c$.\\
Assume $\de\rho(q)\not =0$ for all $q\in\partial X_\kappa$.
As $\partial Z_\kappa$ is compact, there is a
$\hat\kappa>\kappa$ such that
$X_\kappa$ is a deformation retract of $X_{\hat\kappa}$, e.g. just by choosing the negative gradient
flow of $\rho$.
Additionally this deformation can be chosen
to be $G$-equivariant,
so that $Z_\kappa$ is a deformation retract of $Z_{\hat\kappa}$. Hence for each $i=1,2$ the connected component of $\tilde p_i$ in $Z_{\hat\kappa}$ is retracted to the connected component of $\tilde p_i$ in $Z_\kappa$. Thus the points the points
$\tilde p_1$ and $\tilde p_2$ still lie in different connected components of $Z_{\hat\kappa}$. This contradicts
the choice of $\kappa$ as a supremum with this property.
So, we may conclude that there is a point $q\in\partial X_\kappa$
with $\de\rho(q)=0$. As each neighborhood of $q$ intersects $X_\kappa$ the function $\rho$ does not become locally minimal at $q$.
\end{proof}
Combining the results of the previous 
two lemmata we obtain the following
global statement on complexified $G$-orbits
as a corollary.
\begin{corollary}
Let $B$ be a complexified $G$-orbit, $\rho:B\to\R$ a 
$G$-exhaustive strictly plurisubharmonic function and $p\in B$ a critical point of $\rho$. Then
the $G$-orbit through $p$ contains the only points
on which $\rho$ becomes critical or minimal.\label{corollary_uniqueCriticalOrbitInComplexifiedGorbitIfExhaustive}
\end{corollary}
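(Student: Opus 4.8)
The plan is to combine the two preceding lemmata directly. First I would observe that by Lemma~\ref{lemma_positiveHessianTransversalToCriticalOrbitInComplexifiedGorbit}, since $p$ is a critical point of the $G$-invariant strictly plurisubharmonic function $\rho$ on $B$, the orbit $G\cdot p$ is totally real of maximal dimension inside $B$, and moreover $G\cdot p$ is a local isolated minimum locus of $\rho$. In particular $\rho$ restricted to a transversal slice $\Sigma_B$ has positive Hessian at $p$, so $p$ (hence the whole orbit $G\cdot p$) is a nondegenerate local minimum locus.

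Next I would argue by contradiction: suppose there is a point $q\in B$ with $q\notin G\cdot p$ at which $\rho$ is either critical or merely locally minimal. If $\rho$ is critical at $q$, then again by Lemma~\ref{lemma_positiveHessianTransversalToCriticalOrbitInComplexifiedGorbit} the orbit $G\cdot q$ is likewise a local isolated minimum locus of $\rho$, distinct from $G\cdot p$. If instead $\rho$ is only locally minimal at $q$ without being critical there --- but on a smooth manifold a local minimum is automatically a critical point --- so this case does not genuinely arise, and either way we are handed two distinct $G$-orbits, $G\cdot p$ and $G\cdot q$, each of which is a local isolated minimum locus of the $G$-exhaustive potential $\rho:B\to\R$. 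Here I would note that $B$, being a complexified $G$-orbit, is $\tau_G$-connected, hence $B/G$ is connected, so the hypotheses of Lemma~\ref{lemma_saddlePoint} are met with $X$ replaced by $B$.

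Then Lemma~\ref{lemma_saddlePoint} produces a critical point $q'\in B$ with $q'\notin G\cdot p\cup G\cdot q$ such that $q'$ is \emph{not} a minimum locus. But applying Lemma~\ref{lemma_positiveHessianTransversalToCriticalOrbitInComplexifiedGorbit} to the critical point $q'$ forces $G\cdot q'$ to be a local isolated minimum locus of $\rho$ --- a contradiction. Hence no such $q$ exists, and the only points of $B$ at which $\rho$ is critical or minimal lie on the single orbit $G\cdot p$. The main subtlety to be careful about is bookkeeping the hypotheses of Lemma~\ref{lemma_saddlePoint} (properness of the $G$-action on $B$ and connectedness of $B/G$), both of which follow from the structure theory of complexified $G$-orbits established in Section~\ref{subsect_cplxfdOrbit}; the geometric content is entirely carried by the interaction of strict plurisubharmonicity with criticality in Lemma~\ref{lemma_positiveHessianTransversalToCriticalOrbitInComplexifiedGorbit}, which rules out saddle points and thereby prevents the Morse-theoretic mountain pass of Lemma~\ref{lemma_saddlePoint} from ever occurring unless there is a unique critical orbit.
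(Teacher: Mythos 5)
Your proposal is correct and follows essentially the same route as the paper: apply Lemma~\ref{lemma_positiveHessianTransversalToCriticalOrbitInComplexifiedGorbit} to see that every critical orbit is a local isolated minimum locus, then invoke Lemma~\ref{lemma_saddlePoint} to manufacture a non-minimal critical point from two such orbits, contradicting the first lemma again. Your extra bookkeeping (reducing ``minimal'' to ``critical'' and checking connectedness of $B/G$) only makes explicit what the paper leaves implicit.
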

\begin{proof}
Since $\rho$ is $G$-exhaustive, there is at least
one $G$-orbit on which $\rho$ becomes minimal and therefore critical. 
Assume there is a second $G$-orbit
on which $\rho$ becomes critical, then by Lemma~\ref{lemma_positiveHessianTransversalToCriticalOrbitInComplexifiedGorbit} both orbits are local
isolated minimum loci and
Lemma~\ref{lemma_saddlePoint} 
shows that there is a third point $q$ which is critical but not a minimum. This in turn contradicts
Lemma~\ref{lemma_positiveHessianTransversalToCriticalOrbitInComplexifiedGorbit}.
\end{proof}
\subsection{Existence of a local potential}
Before proving Theorem~\ref{theorem_localPotentialAroundM}, namely the existence of a local potential,
we will treat
first the simpler product case.
For this purpose we have to consider
a real, closed $(1,1)$-form $\omega$ which might not be a K\"ahler form
but admit a map $\mu:X\to\lie{G}^*$ satisfying $\de\mu^v=\imath_{\tilde v}\omega$ which we 
still call a momentum map. In this context, we call an
invariant plurisubharmonic function $\rho$
which might not be strictly plurisubharmonic
but induces
$\omega$ and $\mu$ still a potential.
\begin{lemma}
Let $\gtube$ be a $G$-tube associated to a
Lie group $G$.
Furthermore let $S$ be a 1-connected Stein manifold and let $G$ act on $X=\gtube\times S$
on the first factor. Let a $G$-invariant real closed $(1,1)$-form $\omega$ be given 
on $X$ with a momentum
map $\mu$. Then there is a potential $\rho$ on $X$. 
\label{lem_productCase}
\end{lemma}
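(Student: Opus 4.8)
The plan is to produce the potential in three steps: first note that $X=\gtube\times S$ is a Stein manifold; then produce \emph{some} smooth real function $\rho_0$ on $X$ with $\de\dc\rho_0=\omega$; and finally correct $\rho_0$ by a pluriharmonic function so as to make it $G$-invariant and to induce the given momentum map. The first step is immediate, since $\gtube$ is Stein by the definition of a $G$-tube and $S$ is Stein by hypothesis, so their product is Stein.

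For the second step, recall that on a Stein manifold a $\de$-closed real $(1,1)$-form equals $\de\dc\rho_0$ for a smooth real $\rho_0$ as soon as its de Rham class vanishes (write $\omega=\de\alpha$, split $\alpha$ into bidegrees, and apply the $\bar\partial$-Poincar\'e lemma on the Stein manifold $X$). So I must check $[\omega]=0$ in $H^2_{dR}(X)$. Now $\gtube$ is $G$-equivariantly diffeomorphic to $G\times\Sigma$ with $\Sigma$ a ball, hence has the homotopy type of $G$, and $S$ is $1$-connected (and in the applications contractible, so $H^{\le 2}_{dR}(S)=0$); by the K\"unneth formula the restriction $H^2_{dR}(X)\to H^2_{dR}(G\cdot x_0)$ to a single orbit $G\cdot x_0\cong G$ is then injective. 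But the restriction of $\omega$ to an orbit of a Hamiltonian action is always exact — this is classical and follows from $\imath_{\tilde\xi}\omega=\de\mu^\xi$: pulling back along the orbit map $\sigma_{x_0}:G\to X$ one obtains $\sigma_{x_0}^*\omega=\de\beta$ with $\beta$ built from $\mu(x_0)$ and the Maurer--Cartan form. Hence $[\omega]=0$ and such a $\rho_0$ exists. (Alternatively, since every complexified $G$-orbit in $X$ is $\gtube\times\{s\}$ and $X\modmod G\cong S$, one can carry out steps two and three fibrewise over the Stein base $S$, on each Stein orbit $\gtube\times\{s\}$, using Cartan's Theorem~B to get holomorphic dependence on $s$; this bypasses any statement about $H^2_{dR}(S)$.)

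The third step is the core of the argument. Since $\de\dc\rho_0=\omega$ is $G$-invariant, for each $g\in G$ the function $\delta_g:=g^*\rho_0-\rho_0$ is pluriharmonic and $g\mapsto\delta_g$ is a cocycle for the $G$-action on the space of pluriharmonic functions on $X$; making $\rho_0$ invariant means writing $\delta_g=g^*\psi-\psi$ with $\psi$ pluriharmonic and replacing $\rho_0$ by $\rho_0-\psi$. The obstruction, modulo real parts of holomorphic functions, lies in $H^1_{dR}(X)=H^1_{dR}(G)$, and here the momentum map does the essential work: $\dc\delta_g=g^*\dc\rho_0-\dc\rho_0$ is $\de$-closed, and for a loop $\gamma$ a Stokes argument gives $\int_\gamma\dc\delta_g=\int_\Gamma\omega$ over the homotopy $\Gamma$ from $\gamma$ to $g\cdot\gamma$ swept out along a path from $e$ to $g$; at each time $t$ the inner integrand of $\int_\Gamma\omega$ is a total derivative along $\gamma$, namely $\partial_s(\mu^{\xi_t}\circ\Gamma)$, so its integral over $S^1$ vanishes and $\int_\Gamma\omega=0$. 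Thus $\dc\delta_g$ is exact, the obstruction vanishes, and $\delta_g=2\operatorname{Re}F_g$ for a global holomorphic $F_g$; a routine argument on the connected Stein manifold $X$ then upgrades $\{\delta_g\}$ to a coboundary, producing $\psi$. Finally one arranges in addition that $\imath_{\tilde v}\dc\psi=\mu^v+\imath_{\tilde v}\dc\rho_0$ for every $v\in\lie{G}$ — the compatibility $\dc(\lieder_{\tilde v}\rho_0)=\de(\mu^v+\imath_{\tilde v}\dc\rho_0)$ needed for this is again immediate from $\imath_{\tilde v}\omega=\de\mu^v$ — so that $\rho:=\rho_0-\psi$ is $G$-invariant, satisfies $\de\dc\rho=\omega$, and has $-\imath_{\tilde v}\dc\rho=\mu^v$; if $\omega$ is semi-positive then $\rho$ is automatically plurisubharmonic, as demanded by the relaxed notion of potential in force here.

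The expected main obstacle is this third step, and within it the fact that the cocycle $\{\delta_g\}$ is a coboundary: the decisive point is the vanishing of the orbit-type cohomology class of $\dc\delta_g$, which is precisely where the hypothesis that $\omega$ carries a momentum map (rather than being a mere closed $(1,1)$-form) is used; the remaining passage from ``$\delta_g$ a real part'' and ``$\dc\delta_g$ exact for every $g$'' to a single pluriharmonic $\psi$ solving the cocycle equation and the momentum condition at once is bookkeeping that leans on $X$ being Stein, on $X$ and $G$ being connected, and on the product and slice structure $\gtube\cong G\times\Sigma$ of the $G$-tube.
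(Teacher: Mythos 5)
Your first two steps are essentially sound (with the caveat that $1$-connectedness of $S$ does not give $H^2_{dR}(S)=0$, which your K\"unneth argument needs; the paper's own proof makes the same tacit assumption, and in the applications $S$ is contractible), and your Stokes computation showing $\int_\gamma\dc\delta_g=\int_\Gamma\omega=0$ is a correct and genuinely relevant use of the momentum map hypothesis. The gap is in your third step, and it is not small: it contains essentially the whole content of the lemma. Knowing that each individual $\delta_g=g^*\rho_0-\rho_0$ is the real part of a holomorphic function does not trivialize the cocycle $g\mapsto\delta_g$; you need one pluriharmonic $\psi$ with $g^*\psi-\psi=\delta_g$ for all $g$ simultaneously, i.e.\ the vanishing of a class in $H^1(G,\operatorname{Re}\mathcal{O}(X))$, and for non-compact $G$ there is no averaging and no general vanishing theorem to invoke. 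The obvious candidate --- transport $\rho_0$ from the slice $\{e\}\times\Sigma\times S$ using $\gtube\cong G\times\Sigma$ and set $\psi=\rho_0-\rho$ --- fails, because that decomposition is only a diffeomorphism, not a biholomorphism, so the transported $\rho$ has no reason to satisfy $\de\dc\rho=\omega$ and $\psi$ no reason to be pluriharmonic. Moreover you must solve at the same time for the momentum condition $\imath_{\tilde v}\dc\psi=\mu^v+\imath_{\tilde v}\dc\rho_0$, which together with $\tilde v(\psi)=\tilde v(\rho_0)$ prescribes all of $\de\psi$ on each complexified orbit $\gtube\times\{s\}$; the integrability of that prescription is exactly the closedness of the $1$-form $\eta$ with $\eta(\tilde v)=0$, $\eta(J\tilde v)=\mu^v$, $\eta\vert_{TS}=0$ on each orbit, which is the paper's central computation (using $[J\tilde v,J\tilde w]=-[\tilde v,\tilde w]$, the $(1,1)$-type and invariance of $\omega$, and the infinitesimal equivariance $\imath_{\tilde v}\de\mu^w=\mu^{[v,w]}$). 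None of this is in your sketch, and calling it routine begs the question: the existence of your $\psi$ is equivalent to the lemma itself.

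For comparison, the paper bypasses both your step 2 and the cocycle formalism: it builds the potential directly as $\rho=\rho_G+\rho_S$, where $\rho_S$ comes from $i_S^*\omega$ on the Stein factor $S$ and $\rho_G(\gamma,s)=\int_e^\gamma\varphi_s^*\eta$ on a $1$-connected chart $\Omega_G\times\Sigma$, then extended $G$-invariantly --- which also covers disconnected $G$, whereas your homotopy from $e$ to $g$ only sees the identity component. The second essential verification, likewise absent from your proposal, is that $\de\dc\rho=\omega$ holds in the pure $S$-directions away from the slice; the paper propagates this identity from $S$ along the flows of the fields $J\tilde v$ by a Lie-derivative computation. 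If you want to keep the correction-by-$\psi$ framing you would have to reproduce both of these arguments in substance; neither is bookkeeping.
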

\begin{proof}
We will treat the two factors $\gtube$ and $S$ separately, construct two functions $\rho_G$ and $\rho_S$ 
associated to the directions and finally add them to the desired function $\rho=\rho_G+\rho_S$.\\
For the factor $S$ define the embedding
\begin{align*}
 i_S:S&\to X\\
s&\mapsto (e,s)
\end{align*}
with $e$ denoting the neutral element in $G\subset \gtube$. Since $S$ is 1-connected, there is a 1-form $\alpha$
on $S$ such that $\imath_S^*\omega=\de\alpha$ and since $S$ is a
Stein manifold, it follows that there is a
a smooth
function $\rho_S$ on $S$ fulfilling
\begin{align*}
\de\dc\rho_S=i_S^*\omega
\end{align*}
We extend $\rho_S$ 
constantly to the $\gtube$-factor.\\
Now let us turn to the $\gtube$-direction.
We will show that there is a unique smooth function $\rho_G$ on
some $G$-stable neighborhood of $S\subset X$ satisfying for all $v\in\lie{G}$
\begin{align}
\tilde v(\rho_G)&= 0\label{eq_condInv}\\
J\tilde v(\rho_G)&=\mu^v\label{eq_condMoment}
\end{align}
and normalized on $S$ to
\begin{align}
\rho_G\vert_S&= 0\label{eq_condNormalization}
\end{align}
In order to establish this we define a 1-form $\eta$ by
\begin{align}
\eta(\tilde v)&=0\quad\text{for all}\ v\in\lie{G}\notag\\
\eta(J\tilde v)&=\mu^v\quad\text{for all}\  v\in\lie{G}\notag\\
\eta(\xi)&=0\quad\text{for all}\ \xi\in T_{(\gamma,s)}S\notag
\end{align}
where $T_{(\gamma, s)}S$ denotes the tangent vectors on $\gtube\times S$ tangential to the $S$-direction. 
We claim that pulled back to each $\gtube$-direction, i.e. pulled back via
$\varphi_s:\gtube\to \gtube\times S, \gamma\mapsto (\gamma, s)$,
the form is closed, i.e.
$\de\varphi_s^*\eta=0$ for all $s\in S$.
For this we use $[\lieder_\zeta,\imath_\xi]=0$. Applied to the 1-form $\eta$ gives
\begin{equation}
\de\eta(\zeta,\xi)=\zeta(\eta(\xi))
-\xi(\eta(\zeta))-\eta([\zeta,\xi])\label{equation_de_eta_zeta_xi_for1form}
\end{equation}
for arbitrary vector fields $\zeta,\xi$.
For $v,w\in\lie{G}$ we obtain
\begin{equation}
\de\eta(\tilde v,\tilde w)=0\ ,\label{equation_etaClosedvw}
\end{equation}
since $\eta(\tilde v)=\eta(\tilde w)=\eta([\tilde v,\tilde w])=0$.
For the next step 
we observe that $J\tilde v(\mu^w)=\omega(\tilde w, J\tilde v)$,
$\omega(\zeta,\xi)=\omega(J\zeta,J\xi)$ and $[J\tilde v,J\tilde w]=-[\tilde v,\tilde w]$. Thus
\begin{eqnarray}
\de\eta(J\tilde v,J\tilde w)&= &\omega(\tilde w, J\tilde v)-\omega(\tilde v,J\tilde w)-\eta([J\tilde v,J\tilde w])\notag\\
&=&\omega(\tilde w, J\tilde v)+\omega(J\tilde v,\tilde w)=0\label{equation_etaClosedJvJw}
\end{eqnarray}
And finally using $[\tilde v,J\tilde w]=J[\tilde v,\tilde w]$ and $[\tilde v,\tilde w]=-\widetilde{[v,w]}$
we obtain
\begin{eqnarray}
\de\eta(\tilde v,J\tilde w)&= &\omega(\tilde w, J\tilde v)-J\tilde w(\eta(\tilde v)-\eta(J[\tilde v,\tilde w])\notag\\
&=&\omega(\tilde v,\tilde w)-\mu^{-[v,w]}=0\label{equation_etaClosedvJw}
\end{eqnarray}
where the last equality is the infinitesimimal version of the $G$-equivariance of $\mu$. Since
$\tilde v$ and $J\tilde v$ combined for all
$v\in\lie{G}$ span each tangent space $T_\gamma\gtube$ the equations \eqref{equation_etaClosedvw},
\eqref{equation_etaClosedJvJw} and
\eqref{equation_etaClosedvJw} show that the
form $\varphi_s^*\eta$ is closed for all $s$.\\
We choose some 1-connected open set $\Omega_G\subset G$ containing $e$. The set 
\begin{equation*}
\Omega=\Omega_G\times\Sigma\subset G\times\Sigma\cong\gtube
\end{equation*}
is 1-connected as well and
for all $\gamma \in \Omega$
we can define
the smooth function $\rho_G$ by
\begin{equation*}
\rho_G(\gamma,s)=\int_e^\gamma\varphi_s^*\eta
\end{equation*}
which satisfies equations~\eqref{eq_condInv},\eqref{eq_condMoment} and 
\eqref{eq_condNormalization}. By construction $\rho_G$ is locally $G$-invariant and can therefore be extended
$G$-invariantly to $\gtube\times S$.\\
The desired function will be $\rho=\rho_G+\rho_S$.
The last step is to see that
$\de\dc\rho=\omega$ holds.
We will establish this by applying both sides to all possible vector fields. From local $G$-invariance of $\rho$, namely
\begin{align*}
0&=\lieder_{\tilde v}\dc\rho=\de\imath_{\tilde v}\dc\rho+\imath_{\tilde v}\de\dc\rho\\
\end{align*}
we deduce for all $v\in\lie{G}$
\begin{align*}
\imath_{\tilde v}\de\dc\rho&=- \de\imath_{\tilde v}\dc\rho=\de\mu^v=\imath_{\tilde v}\omega
\end{align*}
Thus for any vector field $\xi$
\begin{align*}
\imath_\xi\imath_{\tilde v}\de\dc\rho&=\imath_\xi\imath_{\tilde v}\omega
\end{align*}
Similarly we obtain
for any vector field $\xi$
\begin{align*}
\imath_\xi\imath_{J\tilde v}\de\dc\rho&=-\imath_ {J\xi}\imath_{\tilde v}\de\dc\rho\\
&=-\imath_{J\xi}\imath_{\tilde v}\omega=\imath_{\xi}\imath_{J\tilde v}\omega
\end{align*}
So we are left to the case of two vector fields $\zeta,\xi\in \Gamma(TS)$ seen as
vector fields on $X$ independent of the $\gtube$-direction. On $S\subset X$ 
\begin{align}
\imath_\zeta\imath_\xi\de\dc\rho&=\imath_\zeta\imath_\xi\omega\label{eq_omegaEqualsTau}
\end{align}
holds by construction. $G$-invariance of $\rho$ and $\omega$ extends 
this equation to $G\times S$. Finally, the equation holds on $X$ entirely, if for all $v\in\lie{G}$
\begin{align*}
\lieder_{J\tilde v}(\imath_\zeta\imath_\xi\de\dc\rho)&=\lieder_{J\tilde v}(\imath_\zeta\imath_\xi\omega)
\end{align*}
holds.
It is sufficient to establish equation~\eqref{eq_omegaEqualsTau} for commuting vector fields $\zeta$ and $\xi$, so
for simplicity we may assume $[\zeta,\xi]=0$ in the sequel. We will use twice
the following formula for 
general vector fields $X$ and $Y$ and a general 1-form $\eta$
\begin{equation*}
\imath_X\imath_Y\de\eta=\imath_X\de\imath_Y\eta-\imath_Y\de\imath_X\eta-\imath_{[X,Y]}\eta
\end{equation*}
as already mentioned above as equation~\eqref{equation_de_eta_zeta_xi_for1form}
as well as $\omega(JX,JY)=\omega(X,Y)$ for the real $(1,1)$-forms $\de\dc\rho$ and $\omega$
as well as the fact that $\zeta$ and $\xi$ commute with $J\tilde v$.
\begin{align*}
\lieder_{J\tilde v}(\imath_\zeta\imath_\xi\de\dc\rho)&=\imath_\zeta\imath_\xi(\lieder_{J\tilde v}\de\dc\rho)\\
&=\imath_\zeta\imath_\xi\de\imath_{J\tilde v}\de\dc\rho\\
&=\imath_\zeta\de\imath_\xi\imath_{J\tilde v}\de\dc\rho-\imath_\xi\de\imath_\zeta\imath_{J\tilde v}\de\dc\rho\\
&=-\imath_\zeta\de\imath_{J\xi}\imath_{\tilde v}\de\dc\rho+\imath_\xi\de\imath_{J\zeta}\imath_{\tilde v}\de\dc\rho\\
&=-\imath_\zeta\de\imath_{J\xi}\de\mu^v+\imath_\xi\de\imath_{J\zeta}\de\mu^v\\
&=-\imath_\zeta\de\imath_{J\xi}\imath_{\tilde v}\omega+\imath_\xi\de\imath_{J\zeta}\imath_{\tilde v}\omega\\
&=\imath_\zeta\de\imath_\xi\imath_{J\tilde v}\omega-\imath_\xi\de\imath_\zeta\imath_{J\tilde v}\omega\\
&=\imath_\zeta\imath_\xi\de\imath_{J\tilde v}\omega\\
&=\imath_\zeta\imath_\xi\lieder_{J\tilde v}\omega\\
&=\lieder_{J\tilde v}(\imath_\zeta\imath_\xi\omega)
\end{align*}
Thus $\rho$ is a $G$-invariant function fulfilling both conditions
\begin{eqnarray*}
\omega&=&\de\dc\rho\\
\mu^v&=&-\imath_{\tilde v}\dc\rho\ .
\end{eqnarray*}
\end{proof}
Now we consider the general situation.
We aim to prove the existence of a $G$-invariant
potential on some $G$-stable neighborhood of each point $x\in\mathcal{M}$.
\begin{proof}[Proof of Theorem~\ref{theorem_localPotentialAroundM}]	
Theorem~\ref{theorem_slice} shows that each point in $\mathcal{M}$ admits a neighborhood
isomorphic to a local slice model, therefore we are left
to the slice situation.
For simplicity we may assume
\begin{equation*}
X=\gtube\timescat{K} S
\end{equation*}
Hereby $S$ as well as $\gtube$ can be shrunk if necessary provided the latter set stays $G$-stable.
The point $x$ corresponds to 
$[e,s]\in\gtube\timescat{K} S$.
Define the projection
\begin{equation*}
\pi:\gtube\times S\to \gtube\timescat{K} S
\end{equation*}
and lift the objects to the product $\gtube\times S$, namely
$\hat\omega=\pi^*\omega$ and $\hat\mu=\pi^*\mu$
to some $G$-stable neighborhood
of $(e,s)$. Lemma~\ref{lem_productCase} provides a $G$-invariant
plurisubharmonic potential $\hat \rho$ on some $G$-stable
neighborhood of $(e,s)$ which can be chosen $K$-stable.\\
Our aim is to push down $\hat\rho$ to $X$. Therefore we have to show that $\hat\rho$ is $K$-invariant and locally $K^\C$-invariant with respect
to the diagonal $K$-action.\\
In a first step, 
$\hat\rho$ can be made $K$-invariant by averaging.
So for any induced vector field $v\in\lie{K}$ of the diagonal action, $\lieder_{\tilde v}\hat\rho=0$ and consequently
$\lieder_{\tilde v}\dc\hat\rho=0$. We conclude that
\begin{align}
 \imath_{\tilde v}\dc\hat\rho\ \text{is constant}\label{eq_vdcrhoConst}
\end{align}
since
\begin{align*}
\de\imath_{\tilde v}\dc\hat\rho=\lieder_{\tilde v}
\dc\hat\rho-\imath_{\tilde v}\de\dc\hat\rho=
-\imath_{\tilde v}\hat\omega=0
\end{align*}
By assumption $\hat\mu(e,s)=0$, so we get
\begin{align*}
 \imath_{\tilde v}\dc\hat\rho_{(e,s)}&=-\hat\mu(e,s)=0
\end{align*}
and with observation~\eqref{eq_vdcrhoConst} as desired
\begin{align*}
 \imath_{\tilde v}\dc\hat\rho=0
\end{align*}
This shows
\begin{align*}
J\tilde v(\hat\rho)&=0\quad\text{and}\\
\tilde v(\hat\rho)&=0
\end{align*}
the latter due to invariance. Thus $\hat\rho$ vanishes on the fibers of $\pi$ since
each fiber is a
single complexified $K$-orbit according to
Proposition~\ref{prop_shrinkingUnderFreeAction}\eqref{inProp_singleComplKorbit},
and therefore pushes down to the
quotient $X=\gtube\timescat{K} S $
fulfilling the desired properties.
\end{proof}
Theorem~\ref{theorem_localPotentialAwayMomentZero} states the existence of local potentials
away from $\mathcal{M}$, if the orbit 
$G\cdot x$ is totally real, the form $\omega$ pulled back to $G\cdot x$ is exact and
the first
de Rham cohomology group $H^1_{dR}(G^0)$ of the identity connected component of $G$ is zero.
\begin{proof}[Proof of Theorem~\ref{theorem_localPotentialAwayMomentZero}]
The $G$-orbit $G\cdot x$ being totally real, we can apply again
the local slice theorem, lift $\omega$ and $\mu$ to the product
$\gtube\times S$ and construct a plurisubharmonic function $\hat\rho$ by Lemma~\ref{lem_productCase} as above, again $K$-invariant
with respect to the diagonal action. The same argument provides that
\begin{align*}
 \imath_{\tilde v}\dc\hat\rho\ \text{is constant.}
\end{align*}
The exactness of $\omega$ on the considered $G$-orbit implies exactness on some neighborhood, so after further shrinking of $X$ we have
$\omega=\de\beta$. For $\hat\beta=\pi^*\beta$ and
any induced vector field $v\in\lie{K}$ of the diagonal action we have
\begin{align*}
\imath_{\tilde v}\hat\beta=0
\end{align*}
Thus the form $\eta=\dc\hat\rho-\hat\beta$ is closed
and therefore exact since $H^1_{dR}(X_0)=H^1_{dR}(G^\circ)=0$
for each connected component
$X_0$ of $\gtube\times S$.
This shows that $\eta=\de f$ for some $K$-invariant
function $f$, this implies
\begin{align*}
\imath_{\tilde v}\dc\hat\rho=
\imath_{\tilde v}\dc\hat\rho-\imath_{\tilde v}\hat\beta
=\imath_{\tilde v}\eta=\tilde v(f)=0
\end{align*}
and we obtain
\begin{align*}
J\tilde v(\hat\rho)&=0\quad\text{and}\\
\tilde v(\hat\rho)&=0\ .
\end{align*}
Therefore the potential $\hat\rho$ pushes down to the 
quotient \mbox{$X=\gtube\timescat{K} S$}
fulfilling the desired properties. 
\end{proof}
\subsection{Modifying potentials}
We start by showing that
a $G$-ex\-haus\-tive strictly plurisubharmonic
function can be modified
to be greater than any given $G$-invariant
continuous function.
\begin{lemma}
Let $\nu$ be a $G$-ex\-haus\-tive strictly
plurisubharmonic function on $X$ with connected image
$I=\nu(X)\subset \R$. Given a continuous
$G$-invariant function $\lambda:X\to \R$ , then there is a function
$\chi:I\to\R$ such that 
$\nu_\chi=\chi\circ\nu$ is strictly plurisubharmonic,
$G$-exhaustive with
$\nu_\chi>\lambda$ and $\sup_X\nu_\chi=\infty$.\\
If furthermore there is a $G$-stable open subset $U\subset X$ with $\lambda(x)\leq\nu(x)$ for
all $x\in U$, the function
$\nu_\chi$ can be chosen such that
$\nu_\chi(x)=\nu(x)$ for all $x\in U$.
\label{lemma_bendingUpPotential}
\end{lemma}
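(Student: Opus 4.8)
The plan is to take $\nu_\chi=\chi\circ\nu$ with $\chi\colon I\to\R$ smooth, strictly increasing and convex, so that most of the assertion becomes elementary one-variable bookkeeping. Since $\nu$ is $G$-invariant, so is $\chi\circ\nu$; since $\chi$ is a homeomorphism of $I$ onto $\chi(I)$ one has $\{\nu_\chi<c\}=\{\nu<\chi^{-1}(c)\}$, so arranging $\chi(t)\to\infty$ as $t\to\sup I$ guarantees both $\sup_X\nu_\chi=\infty$ and, since then $\chi^{-1}(c)<\sup I=\sup_X\nu$ for every finite $c$, that the $G$-exhaustiveness of $\nu$ is inherited by $\nu_\chi$. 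Strict plurisubharmonicity follows from the identity
\[
\de\dc(\chi\circ\nu)=\chi'(\nu)\,\de\dc\nu+\chi''(\nu)\,\de\nu\wedge\dc\nu\ ,
\]
whose right-hand side dominates $\chi'(\nu)\,\de\dc\nu$ and is therefore strictly positive once $\chi'>0$ and $\chi''\ge 0$, because $\de\nu\wedge\dc\nu$ is a non-negative $(1,1)$-form and $\nu$ is strictly plurisubharmonic. Everything thus hinges on choosing $\chi$ with, in addition, $\chi\circ\nu>\lambda$ on $X$.

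I would encode the last requirement as a single inequality on $I$: put $\Lambda(t)=\sup\{\,\lambda(x)\ \vert\ \nu(x)\le t\,\}$, so that $\chi>\Lambda$ on $I$ is equivalent to $\chi\circ\nu>\lambda$ on $X$. The crucial point — and the place where the hypotheses are really used — is that $\Lambda(t)<\infty$ for every $t<\sup I$: the $G$-invariant continuous functions $\lambda,\nu$ descend to $X/G$, and for $t<t'<\sup\nu$ the set $\{\nu\le t\}/G$ lies in the relatively compact $\{\nu<t'\}/G$, so $\lambda$ is bounded above on $\{\nu\le t\}$. Replacing $\Lambda$ by a continuous non-decreasing majorant, there remains the purely one-dimensional task of finding a smooth, strictly increasing, convex $\chi\colon I\to\R$ with $\chi>\Lambda+1$ and $\chi(t)\to\infty$ as $t\to\sup I$; this is standard, e.g.\ by writing $\chi$ as a constant plus the integral of a smooth positive non-decreasing density chosen to grow quickly enough near $\sup I$ (superlinearly if $\sup I=\infty$, with a suitable blow-up if $\sup I<\infty$) and then fixing the constant to beat $\Lambda+1$ on the remaining compact part of $I$.

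For the refinement one wants, additionally, $\chi=\mathrm{id}$ on $\nu(U)\subset I$ — which is exactly equivalent to $\nu_\chi=\nu$ on $U$ — while keeping $\chi>\Lambda$; on $U$ the inequality $\nu_\chi>\lambda$ then just reduces to the hypothesis $\nu\ge\lambda$. The idea is to leave $\chi$ equal to the identity up to a threshold and bend it up only beyond that threshold, matching value and first derivative at the threshold and then climbing fast enough to dominate $\Lambda+1$ and to tend to $\infty$; this goes through without friction exactly when $\nu(U)$ is an initial subinterval of $I$, so that the bending region avoids the values attained on $U$ — which is the case in which the lemma will be applied, with $U$ a sublevel set $\{\nu<c_0\}$. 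I expect this reconciliation of the pinning $\chi=\mathrm{id}$ on $\nu(U)$ with the domination $\chi>\Lambda$ to be the only genuinely delicate point, together with the finiteness of $\Lambda$ above; the remaining verifications — smoothness of $\chi\circ\nu$, including at points where $\nu$ attains boundary values of $I$, and the displayed $\de\dc$-computation — are routine.
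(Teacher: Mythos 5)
Your proof is correct and follows essentially the same route as the paper: compose $\nu$ with a smooth, increasing, convex $\chi$ dominating the supremum of $\lambda$ over sublevel sets, using the identity $\de\dc(\chi\circ\nu)=\chi'(\nu)\,\de\dc\nu+\chi''(\nu)\,\de\nu\wedge\dc\nu$ together with the $G$-exhaustiveness of $\nu$ and the $G$-invariance of $\lambda$ to make that supremum finite. Your caveat about the second part (reconciling $\chi=\mathrm{id}$ on $\nu(U)$ with convexity and $\chi>\Lambda$) is well taken: the paper's own one-line justification, namely ``$\alpha(t)\leq t$ for $t\in\nu(U)$,'' silently uses more than the stated hypothesis $\lambda\leq\nu$ on $U$, and in the paper's actual applications $\nu(U)$ is indeed bounded away from $\sup I$ so that the bending can be done above it, exactly as you anticipate.
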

\begin{proof}
We calculate
\begin{equation}
\begin{array}{lclll}
\de\dc\nu_\chi&=&\de(\chi'\circ\nu\cdot\dc\nu)\\
&=&\chi''\circ\nu\cdot\de\nu\wedge\dc\nu&+&\chi'\circ\nu\cdot\de\dc\nu
\end{array}
\end{equation}
Since $\de\dc\nu$ is a K\"ahler form and
$\de\nu\wedge\dc\nu$ is a non-negative $(1,1)$-form
$\nu_\chi$ is strictly plurisubharmonic whenever
$\chi'$ and $\chi''$ are positive.
We define the function
\begin{equation}
\begin{array}{rccl}
\alpha: &I&\to & \R\\
&t&\mapsto&\sup\{\lambda(x)\ \vert\ \nu(x)=t\}
\end{array}
\end{equation}
which is continuous. Demanding $\chi>\alpha$ implies
$\nu\chi>\lambda$. But there is in fact a smooth function
$\chi$ on $I$ with $\chi>\alpha$, $\chi'>0$, $\chi''>0$ and $\sup\chi=\infty$.
Furthermore for all $t\in\nu(U)$ we may
choose $\chi(t)=t$, since $\alpha(t)\leq t$.
\end{proof}
This next lemma will be used to construct
$G$-exhaustive strictly plurisubharmonic functions
on a local slice model which coincides on a 
sufficiently large set with a given strictly plurisubharmonic function.
\begin{lemma}
Let $Y=\gtube\timescat{K}S$ be a local slice model. Then
\begin{enumerate}
\item there is a $G$-exhaustive strictly
plurisubharmonic function $\nu:Y\to\R$ which can be chosen to be greater than a given $G$-invariant continuous function $\lambda$ and\label{lemma_part_exhaustionOnLocalSliceModel}
\item given a $G$-invariant strictly plurisubharmonic function
$\rho:Y\to\R$, a $G$-stable
open subset $U\subset Y$ with
$U/G\subset\subset Y/G$ 
there is a $G$-exhaustive strictly
plurisubharmonic function $\nu_{U,\rho}:Y\to\R$ such
that $\nu_{U,\rho}\vert_U = \rho\vert_U$.\label{lemma_part_potentialModifiableToExhaustionOnLocalSliceModel}
\end{enumerate}\label{lemma_existenceOfExhaustivePotentialOnSliceModel}
\end{lemma}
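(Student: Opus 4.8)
The plan is to deduce part~\eqref{lemma_part_exhaustionOnLocalSliceModel} from Corollary~\ref{corollary_GexhaustionPushesDowntoMmodKforFreeKaction} together with Lemma~\ref{lemma_bendingUpPotential}, and then to obtain part~\eqref{lemma_part_potentialModifiableToExhaustionOnLocalSliceModel} from part~\eqref{lemma_part_exhaustionOnLocalSliceModel} by a maximum construction. Throughout write $Y=\gtube\timescat{K}S$ and work on the Stein $(G\times K)$-manifold $P=\gtube\times S$; the diagonal $K$-action on $P$ is free — it is already free on the $\gtube$-factor, where it is right multiplication — and by definition $Y=P\modmod K$ is its analytic Hilbert quotient.

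For part~\eqref{lemma_part_exhaustionOnLocalSliceModel} I would first produce on $P$ a $(G\times K)$-invariant, strictly plurisubharmonic, $(G\times K)$-exhaustive function. Take $\rho_{\gtube}$ to be the strictly plurisubharmonic $G$-exhaustion on $\gtube$ furnished by Theorem~\ref{theorem_gtubeExistence}; by its construction as the squared norm of a $(G\times K)$-invariant metric it is $(G\times K)$-invariant and minimal on $\kappa(G)$, so after subtracting a constant $\rho_{\gtube}\geq 0$. On the Stein manifold $S$ choose a $K$-invariant strictly plurisubharmonic exhaustion $\sigma_S\geq 0$ (the squared norm of a $K$-invariant hermitian form when $S$ is a ball, in general by Grauert and averaging over $K$). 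Composing $\rho_{\gtube}$ and $\sigma_S$ with smooth convex strictly increasing reparametrizations we may moreover assume both are unbounded above; this keeps them strictly plurisubharmonic and $(G\times K)$-invariant, resp.\ $K$-invariant, keeps $\rho_{\gtube}$ $G$-exhaustive and keeps $\sigma_S$ an exhaustion of $S$. Then $\hat\rho:=\rho_{\gtube}+\sigma_S$, pulled back to $P$, is $(G\times K)$-invariant and strictly plurisubharmonic, and it is $(G\times K)$-exhaustive: $\{\hat\rho<c\}\subset\{\rho_{\gtube}<c\}\times\{\sigma_S<c\}$, and under the identification $P/(G\times K)\cong\bigl((\gtube/G)\times S\bigr)/K$ the image of this set is relatively compact because $\{\rho_{\gtube}<c\}/G\subset\subset\gtube/G$ (as $\rho_{\gtube}$ is $G$-exhaustive and unbounded) and $\{\sigma_S<c\}\subset\subset S$. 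Hence $\hat\rho$ induces a K\"ahler form $\de\dc\hat\rho$ and a momentum map $\mu_K:P\to\lif{k}^*$, and Corollary~\ref{corollary_GexhaustionPushesDowntoMmodKforFreeKaction}, applied with $X=P$, shows that $\hat\rho\vert_{\mu_K^{-1}(0)}$ pushes down to a strictly plurisubharmonic $G$-exhaustion $\nu_0$ on $P\modmod K=Y$. Since $\lambda$ is $G$-invariant and continuous and the image of $\nu_0$ is connected ($G$ acts transitively on the components of $Y$), Lemma~\ref{lemma_bendingUpPotential} replaces $\nu_0$ by $\nu:=\chi\circ\nu_0$, still strictly plurisubharmonic and $G$-exhaustive, now with $\sup_Y\nu=\infty$ and $\nu>\lambda$; this is the function claimed in~\eqref{lemma_part_exhaustionOnLocalSliceModel}.

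For part~\eqref{lemma_part_potentialModifiableToExhaustionOnLocalSliceModel} I would start from such a $\nu$ with $\sup_Y\nu=\infty$. Choose a $G$-stable open $V$ with $U/G\subset\subset V/G\subset\subset Y/G$. Since $\rho$ and $\nu$ are $G$-invariant and $\overline{V/G}$ is compact, both are bounded on the $G$-stable closure $\overline V$ of $V$; say $\rho\geq m$ and $\nu\leq M$ there, and fix a constant $C>M-m$, so that $\rho\geq\nu-C+\delta$ with $\delta:=m-M+C>0$ on $\overline V$, in particular on $U$. Put $\nu_{U,\rho}:=\max(\rho,\nu-C)$ (or its $\varepsilon$-regularization with $0<\varepsilon<\delta$, if a smooth function is wanted). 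It is $G$-invariant and strictly plurisubharmonic; on $\overline V$, hence on $U$, it equals $\rho$; and since $\nu_{U,\rho}\geq\nu-C$ everywhere while $\sup_Y(\nu-C)=\infty$, every sublevel set of $\nu_{U,\rho}$ is contained in a sublevel set of $\nu-C$ and is therefore relatively compact modulo $G$, so $\nu_{U,\rho}$ is $G$-exhaustive. This proves~\eqref{lemma_part_potentialModifiableToExhaustionOnLocalSliceModel}.

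The individual steps are routine; the point that needs care is to set things up so that Corollary~\ref{corollary_GexhaustionPushesDowntoMmodKforFreeKaction} genuinely applies — freeness of the $K$-action on $P$ and $(G\times K)$-invariance of $\rho_{\gtube}$ come from the way the $G$-tube was built in Theorem~\ref{theorem_gtubeExistence}, and the one genuinely delicate bit of bookkeeping is to make $\rho_{\gtube}$ and $\sigma_S$ unbounded above before forming $\hat\rho$, so that $\hat\rho$ stays $(G\times K)$-exhaustive after passing to the quotient by $K$. Part~\eqref{lemma_part_potentialModifiableToExhaustionOnLocalSliceModel} is then the standard gluing by a maximum.
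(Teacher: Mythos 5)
Your part~(1) is essentially the paper's own argument: the authors likewise form $\nu_P(\gamma,s)=\nu_G(\gamma)+\nu_S(s)$ on $\gtube\times S$ (making both summands unbounded above via Lemma~\ref{lemma_bendingUpPotential} and $K$-invariant by averaging rather than by appeal to the construction in Theorem~\ref{theorem_gtubeExistence}, but this is immaterial), push the restriction to $\mu_K^{-1}(0)$ down through Corollary~\ref{corollary_GexhaustionPushesDowntoMmodKforFreeKaction}, and then bend the result above $\lambda$ with Lemma~\ref{lemma_bendingUpPotential}. Your part~(2), however, takes a genuinely different route. The paper does not use a maximum: it picks a smooth $G$-invariant cutoff $\chi$ with $\chi\vert_U=1$ and support relatively compact modulo $G$, chooses $t_0>0$ so small that $\rho-t_0\chi\nu$ stays strictly plurisubharmonic, produces via Lemma~\ref{lemma_bendingUpPotential} an exhaustion $\nu_0$ with $\nu_0\vert_U=\nu\vert_U$ and $\nu_0>\nu+\vert\rho/t_0\vert$ off the support of $\chi$, and sets $\nu_{U,\rho}=\rho-t_0\chi\nu+t_0\nu_0$; this is manifestly smooth and strictly plurisubharmonic as a sum, equals $\rho$ on $U$, and dominates $t_0\nu$ outside the support of $\chi$. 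Your $\max(\rho,\nu-C)$ with $C$ chosen from bounds on a compact shell $\overline V/G$ is simpler and also correct, but the regularization you mention parenthetically is not optional: the downstream uses of this lemma (Theorem~\ref{theorem_homeomorphismInLocalSliceModel}, Lemma~\ref{lemma_liftingGeneralSliceModelToSpecialLocalSliceModelGeneralSliceModelToSpecialLocalSliceModel} as cited in the proofs, and the momentum maps $\mu^v=-\imath_{\tilde v}\dc\rho$) require a smooth function with positive Hessian transversal to orbits, so you must take a Demailly-type regularized maximum with parameter $\varepsilon<\delta$, note that it is still $G$-invariant (being a symmetric function of the two $G$-invariant arguments) and still equal to $\rho$ on $U$ because of the uniform gap $\delta$ there. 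With that made explicit, your argument is a valid and somewhat more elementary alternative to the paper's cutoff-and-sum construction, at the price of importing the regularized-max machinery that the paper avoids.
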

\begin{proof}
By construction there is a $G$-exhaustive
strictly plurisubharmonic function $\nu_G$ on $\gtube$
and an
exhaustive strictly plurisubharmonic function $\nu_S$ on $S$. Lemma~\ref{lemma_bendingUpPotential}
shows that we may assume after modification the supremum of both functions to be infinity.
We may assume $\nu_S$ to be $K$-invariant and $\nu_G$ invariant with respect to the $K$-action from the right, both can be obtained by averaging. On the product, we may define
\begin{equation}
\begin{array}{rccl}
\nu_P:&\gtube\times S&\to&\R\\
&(\gamma,s)&\mapsto&\nu_G(\gamma)+\nu_S(s)
\end{array}
\end{equation}
which is $K$-invariant with respect to the diagonal
$K$-action. It induces a K\"ahler form $\omega_P$ and a
momentum map $\mu_P$ with respect to the $K$-action.
As the $K$-action is free the zero level
$\mathcal{M}_P$
is a (real) submanifold which is both $K$- and $G$-stable
as $\nu_P$ is $(G\times K)$-invariant. By Corollary~\ref{corollary_GexhaustionPushesDowntoMmodKforFreeKaction}
the restriction $\nu_P\vert_{\mathcal{M}_P}$
pushes down
to $\mathcal{M}_P/K$ 
and by the identification
$\mathcal{M}_P/K\cong \gtube\timescat{K}S$
to a strictly plurisubharmonic $G$-exhaustion
$\nu:\gtube\timescat{K}S\to\R$.
This proves the first assertion in part~\eqref{lemma_part_exhaustionOnLocalSliceModel} while the second assertion follows from Lemma~\ref{lemma_bendingUpPotential}.\\
Given a $G$-invariant strictly plurisubharmonic
function $\rho$, a $G$-stable open
subset $U\subset Y$
with $U/G\subset\subset Y/G$
and a $G$-exhaustive strictly plurisubharmonic function $\nu$ established from part~\eqref{lemma_part_exhaustionOnLocalSliceModel}.
Choose a smooth $G$-invariant function 
$\chi:Y\to [0,1]$ such that with
$V=\{y\in Y\ \vert\ \chi(y)\not =0\}$ the set
$V/G$ is relatively compact in $Y/G$ and $\chi\vert_U= 1$.
Since $\rho$ is strictly plurisubharmonic,
there is a $t_0>0$ such that
$\rho-t_0\cdot\chi\cdot\nu$ is strictly plurisubharmonic.
Lemma~\ref{lemma_bendingUpPotential}
provides a $G$-exhaustive strictly
plurisubharmonic function $\nu_0$ such that
$\nu_0\vert_U=\nu\vert_U$ and
$\nu_0>\nu+\left\vert\frac{\rho}{t_0}\right\vert$ on
$Y\backslash \bar V$. The function
$\nu_{U,\rho}=\rho-t_0\cdot\chi\cdot\nu+t_0\cdot\nu_0$ 
equals $\rho$ on $U$ and is greater than the $G$-exhaustion $\nu$ outside $\bar V$ and therefore a $G$-exhaustion.
As a sum of two strictly plurisubharmonic
functions it is strictly plurisubharmonic.
This proves part~\eqref{lemma_part_potentialModifiableToExhaustionOnLocalSliceModel}.
\end{proof}

%%%%%%%%%%%%%%%%%%%%%%%%%%%%%%%%%%%%%%%%%%%%%
%
%   5   Complex structure on symplectic reduction
%
%%%%%%%%%%%%%%%%%%%%%%%%%%%%%%%%%%%%%%%%%%%%%

\section{Complex structure on symplectic reduction}\label{section_cplxStructure}
The purpose of this section is to give the proofs
for the main Theorem~\ref{theorem_quotientLocallyGivenFromHilbertQuotient}
and therewith Theorem~\ref{theorem_quotientComplexStructure}. Recall that
Theorem~\ref{theorem_quotientLocallyGivenFromHilbertQuotient} states that
for every point $x\in\mathcal{M}$ there is a $G$-stable neighborhood
$Y\subset X$
such that $Y\modmod G$ is a (reduced) normal complex space and a $G$-stable open neighborhood
$U\subset Y$
with $U/G\subset\subset Y/G$ and
$\pi(U)=U\modmod G$
defining $\mathcal{M}_U=\mathcal{M}\cap U$
the diagram
\begin{eqnarray}
\mathcal{M}_U&\stackrel{i_U}{\hookrightarrow}& U\subset Y\notag\\
\downarrow p&\quad&\downarrow\pi\label{eqn_recall_localRealizationInTheorem}\\
\mathcal{M}_U/G&\stackrel{\varphi_U}{\longrightarrow}& U\modmod G
\subset Y\modmod G\notag
\end{eqnarray}
commutes such that $\varphi_U:\mathcal{M}_U/G\to
U\modmod G$
is a homeomorphism
and biholomorphic in the sense that it induces
an isomorphism $(\varphi_U)^*$ between the
sheaves $\mathcal{O}_{Y\modmod G}$ and $\mathcal{O}_{\mathcal{M}/G}$ whenever the map is defined.\\
In the following Section~\ref{subsection_localHomeomorphism}
we establish first that the induced map $\varphi_U$ in diagram~\eqref{eqn_recall_localRealizationInTheorem}
is a homeomorphism. In section~\ref{subsection_localBiholomorphism}
we will show the main result that $\varphi_U$ defines a biholomorphism, i.e. 
local correspondences
providing the complex structure.
\subsection{Local homeomorphism to local analytic Hilbert quotient}
A key ingredient for the proof of the topological part of
Theorem~\ref{theorem_quotientLocallyGivenFromHilbertQuotient} is the existence of a $G$-invariant
strictly plurisubharmonic potential on a local slice model and its behaviour on closed and on non-closed
complexified $G$-orbits. 
The existence of such a function has already been
established, so in the sequel we will analyse
its behaviour.
\begin{theorem}
Let $Y=\gtube\timescat{K}S$ be a local slice model and $\rho$ a $G$-exhaustive
strictly plurisubharmonic function
inducing a K\"ahler form $\omega$, a momentum map $\mu$
and the zero level $\mathcal{M}=\mu^{-1}(0)$. Then
for each fiber $F$ of $\pi:Y\to Y\modmod G$
the function $\rho$ becomes critical where it attains a
local minimum. This is exactly
on a single $G$-orbit contained in the unique closed complexified $G$-orbit in $F$ and this $G$-orbit
is the set $F\cap\mathcal{M}$. The map
$\varphi:\mathcal{M}/G\to Y\modmod G$ induced from the inclusion $\mathcal{M}\hookrightarrow Y$ is a
homeomorphism.
\label{theorem_homeomorphismInLocalSliceModel}
\end{theorem}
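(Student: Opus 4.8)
The plan is to recover $\mathcal{M}$ fibrewise from the behaviour of $\rho$ along complexified $G$-orbits. First I would record the elementary translation. Since $\rho$ is $G$-invariant we have $\tilde v(\rho)\equiv 0$ for every $v\in\lie{G}$, and combining $\mu^v=-\imath_{\tilde v}\dc\rho$ with the standard identity $\dc\rho(\tilde v)=-\tfrac12\,J\tilde v(\rho)$ shows that $\mu^v$ is a nonzero constant multiple of the derivative $J\tilde v(\rho)$. Since the tangent space of a complexified $G$-orbit $B=B_Y(z)$ at a point is spanned by the vectors $\tilde v$ and $J\tilde v$, $v\in\lie{G}$ (the underlying $G$-orbit being totally real), this gives: a point $z\in B$ lies in $\mathcal{M}$ if and only if $\de\rho$ vanishes on $T_zB$, that is, if and only if $z$ is a critical point of $\rho\vert_B$. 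Moreover, by Lemma~\ref{lemma_positiveHessianTransversalToCriticalOrbitInComplexifiedGorbit} such a $z$ is a Morse--Bott local minimum of $\rho\vert_B$ along its $G$-orbit, so $G\cdot z$ is a local isolated minimum locus of $\rho\vert_B$.

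Next I would treat the unique closed complexified $G$-orbit $E=E_Y(p)$ in a fibre $F=\pi^{-1}(p)$, which exists by Proposition~\ref{proposition_fiberStructureInLSM}. As $E$ is closed and $G$-stable in $Y$ and the $G$-action on $Y$ is proper, $\rho\vert_E$ is a $G$-exhaustive strictly plurisubharmonic function on the complexified orbit $E$; being $G$-exhaustive it attains a minimum on some $G$-orbit, which is therefore critical, so Corollary~\ref{corollary_uniqueCriticalOrbitInComplexifiedGorbitIfExhaustive} applies and produces a single $G$-orbit of $E$ carrying all the points at which $\rho\vert_E$ is critical or has a local minimum. By the first step this $G$-orbit is precisely $\mathcal{M}\cap E$; I call it $\mathcal{O}_p$.

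The heart of the argument is to show that $\mathcal{M}$ meets no other complexified $G$-orbit in the fibre, i.e. $\mathcal{M}\cap B=\varnothing$ whenever $B\subseteq F$ and $B\neq E$; granting this, $\mathcal{M}\cap F=\mathcal{O}_p$ is one $G$-orbit inside $E_Y(p)$, and by the first step the critical locus of $\rho$ on $F$ coincides with its local-minimum locus, both equal to $\mathcal{O}_p$. I would argue by induction on $\dim B$, working with the closed $G$-stable analytic set $\overline B\subseteq F$ on which $\rho$ is again $G$-exhaustive. By Proposition~\ref{proposition_fiberStructureInLSM}, $\overline B\setminus B$ is a closed analytic $G$-stable union of complexified orbits of strictly smaller dimension, all lying in $F$ and hence all having associated closed orbit $E$; so the inductive hypothesis, combined with the previous paragraph, gives $\mathcal{M}\cap(\overline B\setminus B)=\mathcal{O}_p$ with $\rho$ attaining its minimum over $\overline B\setminus B$ exactly along $\mathcal{O}_p$. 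If $\mathcal{M}\cap B\neq\varnothing$, the first step yields a critical $G$-orbit in $B$ which, $B$ being open and dense in $\overline B$, is a local isolated minimum locus of $\rho\vert_{\overline B}$ distinct from $\mathcal{O}_p$. One then checks that $\mathcal{O}_p$ is itself a local isolated minimum locus of $\rho\vert_{\overline B}$---along $\mathcal{O}_p$ the differential $\de\rho$ vanishes on the Zariski tangent space of $\overline B$ because $\mathcal{O}_p\subset\mathcal{M}$, and strict plurisubharmonicity then forces $\rho$ to increase in the directions of $\overline B$ transverse to $E$---and runs the saddle-point argument of Lemma~\ref{lemma_saddlePoint} on $\overline B$, passing to the Hausdorff quotient $\overline B/G$ to handle the singularities, to obtain a critical point of $\rho$ along some complexified orbit that is not a local minimum, contradicting the first step. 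I expect this to be the main obstacle: one must compare $\rho$ on the open top stratum $B$ with $\rho$ on the lower strata across the singular locus of $\overline B$, and must run a mountain-pass argument on a space that is only a complex space rather than a manifold.

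Finally, the map $\varphi\colon\mathcal{M}/G\to Y\modmod G$ induced by the inclusion $\mathcal{M}\hookrightarrow Y$ is, by the above, a continuous bijection of locally compact Hausdorff spaces: $G$ acts properly on the closed set $\mathcal{M}$, and $Y\modmod G$ is a complex space. To see that it is a homeomorphism I would prove that it is proper, by a convergence argument parallel to the one in the proof of Proposition~\ref{proposition_fiberStructureInLSM}: if $z_n\in\mathcal{M}$ with $\pi(z_n)\to q$ in $Y\modmod G$, then $f(z_n)$ converges for every $G$-invariant holomorphic function $f$ on $Y$, and, since $\rho$ is $G$-invariant and by the previous paragraphs constant on each $\mathcal{M}\cap\pi^{-1}(p)=\mathcal{O}_p$, it descends through $\varphi$ to an upper semicontinuous function on $Y\modmod G$ controlling the $G$-cocompact sublevel sets of $\rho$; this keeps $(z_n)$ inside a set cocompact modulo $G$, so a subsequence of $p(z_n)$ converges to the point of $\mathcal{M}/G$ over $q$. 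Hence $\varphi^{-1}$ is continuous and $\varphi$ is a homeomorphism.
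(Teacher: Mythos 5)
Your overall architecture is sound and the first, second and last steps do match the paper: the identification of $\mathcal{M}\cap B$ with the critical locus of $\rho\vert_B$, the use of Lemma~\ref{lemma_positiveHessianTransversalToCriticalOrbitInComplexifiedGorbit} and Corollary~\ref{corollary_uniqueCriticalOrbitInComplexifiedGorbitIfExhaustive} on the closed complexified orbit, and the properness argument via sublevel sets (the paper's Lemma~\ref{lemma_imageOfSublevelsAreOpenInLocalSliceModel}) for the homeomorphism. But the step you yourself flag as the heart of the matter --- excluding critical points of $\rho$ on non-closed complexified orbits --- contains a genuine gap, and it is exactly the point to which the paper devotes most of Section~\ref{subsection_localHomeomorphism}. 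Your argument hinges on the claim that $\mathcal{O}_p$ is a local isolated minimum locus of $\rho\vert_{\overline B}$ because ``strict plurisubharmonicity forces $\rho$ to increase in the directions of $\overline B$ transverse to $E$.'' This is false in general: a strictly plurisubharmonic function can have a saddle at a critical point, e.g. $\rho(z)=\vert z\vert^2+2\operatorname{Re}(z^2)=3x^2-y^2$ on $\C$. Critical plus strictly plurisubharmonic gives positivity of the Levi form, not of the real Hessian. (It is also not clear that $\de\rho$ vanishes on the Zariski tangent space of $\overline B$ along $\mathcal{O}_p$; membership in $\mathcal{M}$ only kills $\de\rho$ on $T_zE$.) Moreover, your mountain-pass step invokes Lemma~\ref{lemma_saddlePoint}, which is proved for manifolds using the gradient flow and deformation retracts of sublevel sets; $\overline B$ is only a (possibly singular) analytic set, so the lemma does not apply as stated.

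The paper closes precisely this gap by a quite different mechanism. Using the Hilbert Lemma it reduces a putative critical point on a non-closed complexified orbit to the special smooth model $Z=\gtube\timescat{L}\Delta$ with $L\cong S^1$ acting nontrivially on the transverse disk. The $S^1$-isotropy in the transverse direction $W$ is what rescues the Hessian positivity: in Lemma~\ref{lemma_isolatedAndUniqueMinimumLocusInClosedCplxfdOrbitInSpecialLocalSliceModel} the invariance forces the second-order terms into the three-parameter form $2\alpha_1(y_1^2+y_2^2)+4\alpha_2(y_1s+y_2t)+\alpha_3(s^2+t^2)$, and the Levi-form conditions $\alpha_1>0$, $\alpha_3>0$, $\alpha_1\alpha_3-2\alpha_2^2>0$ then imply positivity of the real Hessian transverse to the orbit --- this is where ``strictly psh $\Rightarrow$ local minimum'' actually gets proved, and only under that symmetry. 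The saddle-point argument is then run on the smooth model $Z$ (Lemma~\ref{lemma_uniqueCriticalOrbitInSpecialLocalSliceModel}), and the contradiction is transported back to the general local slice model by the perturbation Lemmas~\ref{lemma_producingLocalMinimumOnLiftedFunction} and~\ref{lemma_liftingGeneralSliceModelToSpecialLocalSliceModel}, which add a small multiple of an exhaustion on $Z$ to $\varphi^*\rho$ to manufacture an isolated minimum on the open complexified orbit. Without some substitute for this $S^1$-symmetric Hessian analysis and for the transfer to a smooth model, your induction on $\dim B$ cannot get started.
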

The first step will be to 
analyse the following special case of a local slice model. 
So in the sequel, the special local slice model
$Z=\gtube\timescat{L}\Delta$ is considered where
$L\subset G$ is isomorphic to 
$S^1$ and $\Delta$ is a bounded
connected open subset of the origin in $\C$ on which $L$ acts non-trivially as the restriction of a complex
linear representation.
Let $z_0=[e,0]\in Z$ then $B (z_0)=
\gtube\timescat{L}\{0\}$ is a closed complexified
$G$-orbit in $Z$ and let
$z_1\not\in B (z_0)$ then
$B (z_1)=\gtube\timescat{L}(\Delta\backslash\{0\})$ is the complement and henceforth an open
complexified $G$-orbit. Thus
this model is special as
on the one hand side $Z$ is a manifold and 
on the other hand side 
$Z$ splits into just two complexified $G$-orbits, the closed submanifold $B (z_0)$ and 
its open complement $B(z_1)$.
We will show in the next two lemmata that
a $G$-exhaustive strictly plurisubharmonic function has a unique
critical $G$-orbit in $B(z_0)$ and no critical point in the open set $B(z_1)$.
\begin{lemma}
Let $\rho$ be a $G$-exhaustive
strictly plurisubharmonic function on the local
slice model $Z=\gtube\timescat{L}\Delta$ introduced above.
Then there is a unique critical
orbit $G\cdot p\subset {B(z_0)}$ for
$\rho\vert_{B(z_0)}$
and for each local submanifold $\Sigma_Z$ through $p$
transversal to the $G$-orbit $G\cdot p$ the Hessian of $\rho\vert_{\Sigma_Z}$
is strictly positive and the orbit is a local isolated minimum locus of $\rho$ in $Z$.\label{lemma_isolatedAndUniqueMinimumLocusInClosedCplxfdOrbitInSpecialLocalSliceModel}
\end{lemma}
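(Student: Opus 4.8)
The plan is to split $T_pZ$ into the directions tangent to the closed orbit $B(z_0)$ and the normal $\Delta$-direction, to read off the minimum and Hessian statements along the first block from the results already available for complexified $G$-orbits, to read them off along the second block from the circle isotropy, and finally to combine the two pieces of positivity using strict plurisubharmonicity.

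First I would restrict $\rho$ to $B(z_0)$. Since $B(z_0)=\gtube\timescat{L}\{0\}$ is a closed $G$-stable complex submanifold of $Z$, the function $\rho\vert_{B(z_0)}$ is strictly plurisubharmonic, and being the restriction of a $G$-exhaustion to a closed $G$-stable subset it is again $G$-exhaustive. As $B(z_0)$ is itself a complexified $G$-orbit, $\rho\vert_{B(z_0)}$ attains a minimum, hence has a critical $G$-orbit, which by Corollary~\ref{corollary_uniqueCriticalOrbitInComplexifiedGorbitIfExhaustive} is the unique $G$-orbit $G\cdot p$ on which $\rho\vert_{B(z_0)}$ is critical or minimal. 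Applying Lemma~\ref{lemma_positiveHessianTransversalToCriticalOrbitInComplexifiedGorbit} to $\rho\vert_{B(z_0)}$ shows in addition that $G\cdot p$ is totally real and of maximal dimension in $B(z_0)$ — so the isotropy group $G_p$ is compact of positive dimension (in fact conjugate to $L$) — that the Hessian of $\rho$ restricted to a maximal transversal of $G\cdot p$ inside $B(z_0)$ is positive definite at $p$, and that $G\cdot p$ is a local isolated minimum locus of $\rho\vert_{B(z_0)}$.

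Second I would analyse the normal direction. The assignment $[\gamma,w]\mapsto[\gamma]_L$ defines a $G$-equivariant holomorphic submersion $q\colon Z\to B(z_0)$ (induced by the projection $\gtube\times\Delta\to\gtube$ followed by $\gtube\to\gtube\modmod L$), exhibiting $Z$ as a holomorphic disc bundle over $B(z_0)$ with zero section $B(z_0)$. Its fibre $W=q^{-1}(p)$ is a holomorphic disc through $p$, transversal to $B(z_0)$ and hence to $G\cdot p$. As $q$ is $G$-equivariant, $W$ is $G_p$-stable, and $G_p$ acts holomorphically on $W\cong\Delta$ fixing its centre $p$, hence through a rotation; this rotation is non-trivial, for otherwise $Z$ would carry infinitely many complexified $G$-orbits near $p$, contradicting $Z=B(z_0)\cup B(z_1)$. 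Since $\rho$ is $G$-invariant, $\rho\vert_W$ is $G_p$-invariant, i.e.\ of the form $h(|w|^2)$ in the disc coordinate $w$ (with $p$ at $w=0$), and strict plurisubharmonicity of $\rho$ forces $\rho\vert_W$ to be strictly subharmonic. The latter says precisely that $t\mapsto t\,h'(t)$ is strictly increasing with $h'(0)=(\rho\vert_W)_{w\bar w}(0)>0$; consequently $h$ is strictly increasing, $p$ is the unique critical point of $\rho\vert_W$ and a strict minimum, and the Hessian of $\rho\vert_W$ at $p$ equals $2h'(0)\cdot\mathrm{Id}$, which is positive definite.

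Finally I would glue the two pieces. From the first step $\de\rho_p$ vanishes on $T_pB(z_0)$ and from the second on $T_pW$, so it vanishes on $T_pZ=T_pB(z_0)\oplus T_pW$: thus $p$ is a critical point of $\rho$ on $Z$, and $\Hess_p\rho$ is a well-defined symmetric bilinear form on $T_pZ$. By $G$-invariance it annihilates $T_p(G\cdot p)$ and descends to $V:=T_pZ/T_p(G\cdot p)$; on the image of $JT_p(G\cdot p)$ the identity $J\tilde v\bigl(J\tilde v(\rho)\bigr)_p=\omega_p(\tilde v,J\tilde v)>0$ from the proof of Lemma~\ref{lemma_positiveHessianTransversalToCriticalOrbitInComplexifiedGorbit} makes it positive definite, and on the image of $T_pW$ it is positive definite by the second step. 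The remaining — and, I expect, most delicate — point is to see that the descended form is positive definite on all of $V$, equivalently that $\Hess_p\rho$ is positive semidefinite with kernel exactly $T_p(G\cdot p)$. Here one must combine the $G_p$-equivariance of $\Hess_p\rho$ — which, because $G_p$ acts on the normal line $T_pW$ with a non-zero rotation weight, leaves at most one mixed block, between $T_pW$ and the $G_p$-isotypic summand of $JT_p(G\cdot p)$ of the same weight — with the inequality $\Hess_p\rho(\xi,\xi)+\Hess_p\rho(J\xi,J\xi)>0$ for $\xi\ne0$ coming from strict plurisubharmonicity: applying $J$ sends the $JT_p(G\cdot p)$-part of a putative null vector back into the kernel $T_p(G\cdot p)$ while keeping its normal-line part in $T_pW$, and this is what rules out null vectors outside $T_p(G\cdot p)$. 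Once $\Hess_p\rho\ge0$ with kernel $T_p(G\cdot p)$ is known, its restriction to $T_p\Sigma_Z$ is positive definite for every local submanifold $\Sigma_Z$ through $p$ transversal to $G\cdot p$, and Taylor's formula then yields that $G\cdot p$ is a local isolated minimum locus of $\rho$ in $Z$. The hardest part is thus exactly this last gluing, where the positivity obtained on the two complementary subspaces must be shown to assemble into positive definiteness of the whole transversal Hessian.
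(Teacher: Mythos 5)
Your architecture is essentially the paper's: restrict to $B(z_0)$ and get the unique critical orbit from Lemma~\ref{lemma_positiveHessianTransversalToCriticalOrbitInComplexifiedGorbit} and Corollary~\ref{corollary_uniqueCriticalOrbitInComplexifiedGorbitIfExhaustive}; use the non-trivial circle isotropy on the normal disc to see that $p$ is critical for $\rho$ on all of $Z$ and that $\rho$ is radial on the fibre; then observe that everything reduces to the mixed block of the second-order form between $J T_p(G\cdot p)$ and the normal line, confined by $H$-invariance to a single two-dimensional isotypic piece. Up to that point your steps are correct and you have located the crux exactly. But the mechanism you propose for the crux does not close, and this is a genuine gap.

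Concretely, write $\Theta$ for the second-order form at $p$, with $T=T_p(G\cdot p)\subset\ker\Theta$, $V=JT$, $W$ the normal line, and let the surviving mixed block live on a two-dimensional $H$-isotypic piece $V_W\subset V$ with $T_W=JV_W$. $H$-invariance forces, in suitable real coordinates $y_1,y_2$ on $V_W$ and $w=s+it$ on $W$, the normal form $\Theta\vert_{T_W\oplus V_W\oplus W}=2\alpha_1(y_1^2+y_2^2)+4\alpha_2(y_1s+y_2t)+\alpha_3(s^2+t^2)$ (equation~\eqref{equation_mixedTerm} in the paper), and positive definiteness of the transversal Hessian is precisely $\alpha_1\alpha_3>2\alpha_2^2$. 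Your argument applies $\Hess(\xi,\xi)+\Hess(J\xi,J\xi)>0$ to a transversal vector $\xi=v+\lambda w$ and uses $Jv\in T\subset\ker\Theta$; this yields $4\alpha_1|v|^2+8\lambda\alpha_2\langle v,w\rangle+4\lambda^2\alpha_3|w|^2>0$, hence after optimizing only $\alpha_1\alpha_3>\alpha_2^2$, which is strictly weaker than what is needed (and a putative null vector $v+w$ with $w\neq 0$ is only excluded when $\alpha_1\alpha_3>2\alpha_2^2$; moreover ruling out null vectors does not by itself give definiteness, since you have not established $\Theta\geq 0$ beforehand). The missing factor of $2$ is recovered only by testing the Levi positivity on vectors with non-zero components in the kernel directions $T_W$: for instance $\xi=-\tfrac{\alpha_2}{\alpha_1}\partial_{y_1}-\tfrac{\alpha_2}{\alpha_1}\partial_{x_2}+\partial_s$ gives $\Hess(\xi,\xi)+\Hess(J\xi,J\xi)=4\alpha_3-8\alpha_2^2/\alpha_1>0$, i.e.\ exactly $\alpha_1\alpha_3>2\alpha_2^2$. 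The paper obtains this by computing the full $3\times 3$ Levi matrix, whose two off-diagonal entries $-i\alpha_2$ and $\alpha_2$ produce the determinant condition $\alpha_1(\alpha_1\alpha_3-2\alpha_2^2)>0$, and checking that this coincides with the determinant condition of the real $4\times 4$ transversal Hessian. That coincidence is a computation depending on the explicit $H$-invariant form of the mixed term; it is not a formal consequence of ``Levi positivity plus $J(JT)\subset\ker\Theta$'', so your final gluing step must be replaced by this (or an equivalent) explicit calculation.
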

\begin{proof}
The subset ${B(z_0)}$ is a closed
submanifold which in turn is
a complexified $G$-orbit as it is $G$-equivariantly biholomorphic to $\gtube\modmod L$. The function $\rho$ is assumed to be a $G$-exhaustion, so is the function $\rho\vert_{{B(z_0)}}$, and hence there is a point
$p\in {B(z_0)}$ with respect to which $\rho\vert_{{B(z_0)}}$ is critical.
By Lemma~\ref{lemma_positiveHessianTransversalToCriticalOrbitInComplexifiedGorbit}, the $G$-orbit $G\cdot p$ is a local isolated
minimum locus of $\rho\vert_{{B(z_0)}}$.
Corollary~\ref{corollary_uniqueCriticalOrbitInComplexifiedGorbitIfExhaustive}
shows that this orbit $G\cdot p$ is the only critical orbit of $\rho\vert_{{B(z_0)}}$
providing the uniqueness assertion claimed in the lemma.
This local minimum locus is
of maximal dimension, i.e.
\begin{equation*}
\dim_\R G\cdot p=\dim_\C\gtube\modmod L=\dim_\C\gtube-1=\dim_\R G-1
\end{equation*}
So this action has a
1-dimensional isotropy group at $p$. Denote $H$ its identity component. The group $H$ is compact and therefore isomorphic to $S^1$.\\
We claim
that $p$ is a critical point of $\rho:Z\to\R$.
In order to see this we first note that $G\cdot p$
is $H$-stable. 
Thus the subspace $T=T_p(G\cdot p)$ is $H$-stable as
well as $V=JT$. By construction
$T\oplus V=T_pB(z_0)$.
Further we may choose an $H$-stable complement
$W\cong \C$ to $T\oplus V$ in $T_pZ$.
Keep in mind that the $H$-representation on $W$
is non trivial.
On the one hand side $p$ is selected to be
a critical point of $\rho\vert_{B(z_0)}$, so
$\de\rho$ vanishes on $T\oplus V$. On the other hand $\de\rho$ is $H$-invariant on $W\cong\C$ hence vanishes on $W$ as well. This shows the claim.\\
In order to simplify the argumentation we will linearize
the setting by pulling it up to a neighborhood
of the origin in $T_pZ$ in the next step.
There is an $H$-equivariant biholomorphism
from an open neighborhood of the origin
in $T_pZ$ to an open neighborhood
of $p$ in $Z$ mapping the origin to $p$
such that the derivative
$T_0T_pZ\to T_pZ$ becomes the identity
after the 
natural identification $T_0T_pZ\cong T_pZ$.
In the sequel we will argue in $T_pZ$ and denote $\tilde\rho$ the pullback of $\rho$ to an open neighborhood of the origin there.\\
Our purpose is now to show that $\tilde\rho$ restricted to $V\oplus W$ has strictly positive Hessian and the $G$-orbit is therefore a local isolated minimum locus of $\rho$. For this, we
will analyse the second order terms of $\tilde\rho$ at $0$, namely we set
\begin{equation*}
 \Theta(v_1,v_2)=v_1(v_2(\tilde\rho))_{0}\qquad\text{for}\ v_1,v_2\in T_0T_pZ\cong T_{p}Z
\end{equation*}
and have to show that the symmetric bilinear
form $\Theta$
has $T$ as its kernel and that the restriction
$\Theta\vert_{V\oplus W}$ is positive.\\
We denote by $\Sym^2(E)$ the space of
(real) symmetric bilinear forms on a vector
space $E$ and
by $(E_1^*\symtensor E_2^*)$ the space generated by 
elements of the form $e_1^*\symtensor e_2^*=
\frac{1}{2}(e_1^*\otimes e_2^*+e_2^*\otimes e_1^*)\in\Sym^2(E_1\oplus E_2)$ where $e_j\in E_j^*$. Note that the decomposition
$E=\bigoplus_{j=1}^N E_j$ induces a decomposition
\begin{equation}
 \Sym^2(E)=\bigoplus_{j=1}^N\Sym^2(E_j)\oplus\bigoplus_{{\substack{j,k=1\\ j<k}}}^N\left(E_j^*\symtensor E_k^*\right)\label{equation_decompositionSymmetricBilinearForm}
\end{equation}
If additionally $E$ is an $H$-representation, we denote the set of invariant elements in the respective sets by the exponent $H$, i.e. $\Sym^2(E)^H$. If $E=\bigoplus E_j$
is an $H$-invariant decomposition, then
the space of invariant symmetric bilinear forms
is the sum of invariant elements with respect to the decomposition~\eqref{equation_decompositionSymmetricBilinearForm}.\\
We note that $\Theta$ is $H$-invariant. The aim in the sequel will be to find a decomposition $V=V_W\oplus V_N$ into $H$-stable subspaces and consequently 
$T_W=J\cdot V_W, T_N=J\cdot V_N$
such that 
with respect to the decomposition
\begin{equation*}
T_{p}Z=T_W\oplus T_N\oplus V_W\oplus V_N\oplus W
\end{equation*}
the form $\Theta$ decomposes as
$\Theta=\Theta_W+\Theta_N$ with
\begin{equation*}
\Theta_W\in\Sym^2(T_W\oplus V_W\oplus W)^H
\quad\text{and}\quad
\Theta_N\in
\Sym ^2(T_N\oplus V_N)^H
\end{equation*}
and further all the components
$\Sym^2(T)^H$, $(T^*\symtensor V^*)^H$,
$(T^*\symtensor W^*)^H$ 
of $\Theta$ and hence $\Theta_W$ vanish while
for each of the remaining spaces
\begin{equation*}
\Sym^2(V_W)^H, \Sym^2(W)^H\ \text{and}\ (V_W^*\symtensor W^*)^H
\end{equation*}
of the $H$-invariant elements
we will be able to find a particular form of the
corresponding components of $\Theta_W$ 
such that
finally we are reduced to an ansatz for $\Theta_W$
with $3$ parameters
for which we establish the positivity of the
Hessian.\\
Let us start to see that $\Theta$ does vanish
on $T$, i.e. that all components of $\Theta$ in
$\Sym^2(T)^H$, $(T^*\symtensor V^*)^H$ and
$(T^*\symtensor W^*)^H$ vanish. For a given vector
$v_2\in T$ there is a fundmental vector field $\tilde v$
such that $\tilde v_p=v_2\in T$. But as local $G$-invariance reads $\tilde v(\rho)= 0$ the linear map
$\Theta(\,.\,,v_2)$ vanishes.\\ 
Now, define the following $H$-stable linear subspace.
\begin{equation*}
V_N=\{v\in V\ \vert \ \Theta(v,w)=0
\ \text{for all}\ w\in W\}
\end{equation*}
In the easy case where $V_N=V$ we may write $\Theta
=\Theta_N+\Theta_W$ with $\Theta_N\in\Sym^2(V)^H$ and
$\Theta_W\in\Sym^2(W)^H$ where the first has positive
Hessian according to Lemma~\ref{lemma_positiveHessianTransversalToCriticalOrbitInComplexifiedGorbit} and $\Theta_W$ is, due to the $H$-invariance, just a positive multiple
of the squared modulus on $W\cong\C$
which has positive Hessian.\\
Now we turn to the case where $V_N$ is a proper subspace of $V$. Since $W$ is of real dimension~2, the space $V_N$ has at most codimension~2
in $V$.
Restricted to $V$ the 
bilinear form $\Theta\vert_V$ is strictly positive
(Lemma~\ref{lemma_positiveHessianTransversalToCriticalOrbitInComplexifiedGorbit}).
Let $V_W$ be the orthogonal
complement of $V_N$ with respect to $\Theta\vert_V$
so that $V=V_W\oplus V_N$.
Thus there are $H$-invariant bilinear forms
$\Theta_W\in\Sym^2(V_W\oplus W)^H$ and
$\Theta_N\in\Sym^2(V_N)^H$ 
such that $\Theta=\Theta_W+\Theta_N$.\\
In order to analyse $\Theta_W$ observe that there 
is an element $\varepsilon\in H$ whose induced endomorphism on $W$ coincides with
multiplication by $i$.
We choose a non-vanishing linear map $w^*\in W^*$.
Since $w^*, \varepsilon^*w^*$ form a (real) basis of $W^*$, there
are functionals $v^*,\tilde v^*\in V^*$
such that the component $\Theta_M$ of $\Theta_W$ in
$(V_W^*\symtensor W^*)$ equals
$\Theta_M=v^*\symtensor w^*+ \tilde v^*\symtensor \varepsilon^*w^*$.
We think of this expression as the ``mixed term''
of $\Theta$.
The element $\varepsilon$ was chosen such that
$\varepsilon^*\varepsilon^*w^*=-w^*$
and the equation $\varepsilon^*\Theta_M=\Theta_M$
holds due
to $H$-invariance of $\Theta_M$. We calculate
\begin{equation*}
\begin{array}{rrcccc}
 &\varepsilon^*\Theta_M&=&
 \varepsilon^*\tilde v^*\symtensor\varepsilon^*\varepsilon^*w^*&+&
 \varepsilon^*v^*\symtensor\varepsilon^*w^*\\
 &&=&
 -\varepsilon^*\tilde v^*\symtensor w^*&+&
 \varepsilon^*v^*\symtensor\varepsilon^*w^*\\
 =&\Theta_M&=&v^*\symtensor w^*&+& \tilde v^*\symtensor \varepsilon^*w^*
\end{array}
\end{equation*}
and conclude $\tilde v^*=\varepsilon^*v^*$.
This shows
\begin{equation}
\Theta_M=v^*\symtensor w^*+ \varepsilon^* v^*\symtensor \varepsilon^*w^*\ .
\label{equation_mixedTerm}
\end{equation}
Again using the invariance
$\varepsilon^*\Theta_M=\Theta_M$ assuming linear dependence over $\R$ and $\Theta_M\not=0$,
i.e. $\varepsilon^*v^*=\lambda\cdot v^*$, leads to the
contradiction $\lambda^2=-1$, thus
$V_W$ is 2-dimensional.
In summary, we developped a splitting of
$T\oplus V\oplus W$ into subspaces
such that the only term of $\Theta$ 
involving both $V$ and $W$
is a multiple of $\Theta_M$, a bilinear form on $V_W\oplus W$.\\
We now go back to the function $\tilde\rho$
having induced the bilinear form $\Theta$.
The second order terms of $\tilde\rho$ with respect to the linear structure on $T_pZ$ around the origin define a function $\theta:T_pZ\to\R$
which is $H$-invariant strictly plurisubharmonic
and induces the same bilinear form. With $T_W=JV_W$
the subspace $T_W\oplus V_W\oplus W$ is a
3-dimensional complex vector space,
$T_W\oplus V_W\cong\C^2$ and $W\cong\C$, and we
define the $H$-invariant strictly plurisubharmonic
function
$\theta_W=\theta\vert_{T_W\oplus V_W\oplus W}$.
We introduce a complex linear coordinate
$w=s+i\cdot t$ and (real) linear coordinates $x_1,x_2$
on $T_W$ inducing complex linear coordinates
$z_j=x_j+i\cdot y_j$ on $T_W\oplus V_W$.
With this the function $\theta_W$ splits into a sum of three functions, the first depending only on
the variables $x_1,x_2,y_1,y_2$, the third
depending only on $s,t$ and the second $\theta_M$ depending on the remaining ``mixed terms''.
In view of equation~\eqref{equation_mixedTerm}
the coordinates can be chosen such that there
is a $\alpha_2\in\R$ such that
\begin{equation*}
\theta_M=4\alpha_2\cdot\left(y_1s+y_2t\right)\ .
\end{equation*}
Further we established that $\theta_W$ does not depend on $x_1$ and $x_2$. So $H$-invariance forces this term to be a multiple of $y_1^2+y_2^2$. 
Finally the last term depending only on $s$ and $t$ 
has to be a multiple of $s^2+t^2$ again due to
$H$-invariance. In summary, there are
$\alpha_1,\alpha_2,\alpha_3\in\R$ such that
\begin{equation*}
 \theta_W=2\alpha_1(y_1^2+y_2^2)+4\alpha_2(y_1s+y_2t)
 +\alpha_3(s^2+t^2)
\end{equation*}
As $\theta_W$ is strictly plurisubharmonic on $T_W\oplus V_W\oplus W$ so are the restriction to both $T_W\oplus V_W$ and $W$, so $\alpha_1$ and $\alpha_3$ are positive. We calculate the Levi matrix with respect to the
coordinates $z_1,z_2,w$ as
\begin{equation*}
 \Levi\theta_W=\begin{pmatrix}
           \alpha_1&0&-i\cdot\alpha_2\\
           0&\alpha_1&\alpha_2\\
           i\cdot\alpha_2&\alpha_2&\alpha_3
          \end{pmatrix}
\end{equation*}
with determinant $\alpha_1(\alpha_1\alpha_3-2\alpha_2^2)$ which has to be positive by assumption that $\rho$ and
hence $\theta_W$ are strictly plurisubharmonic. In summary, we know
\begin{equation}
 \alpha_1>0,\quad\alpha_3>0\quad\text{and}\quad
 \alpha_1\alpha_3-2\alpha_2^2>0\label{equation_matrixPositivityConditions}
\end{equation}
Now, we calculate the Hessian retricted to the coordinates
$y_1,s,y_2,t$
\begin{equation*}
\Hess\theta_W=\begin{pmatrix}
                             4\alpha_1&4\alpha_2&0&0\\
                             4\alpha_2&2\alpha_3&0&0\\
                             0&0&4\alpha_1&4\alpha_2\\
                             0&0&4\alpha_2&2\alpha_3\\                             
                            \end{pmatrix}
\end{equation*}
This matrix is strictly positive if the positivity conditions~\eqref{equation_matrixPositivityConditions}
derived from the Levi matrix are fulfilled.\\
The Hessian of the other part $\theta_N=\theta\vert_{V_N}$ is strictly positive by Lemma~\ref{lemma_positiveHessianTransversalToCriticalOrbitInComplexifiedGorbit}, since the $G$-orbit is a local isolated minimum locus in ${B(z_0)}$. 
Finally, the constructed decomposition ensures
$\theta=\theta_W+\theta_N$, so
restricted to a local submanifold
transversal to the $G$-orbit the function $\theta$ has in fact a strictly positive Hessian.
Note that this holds for the easy case $V=V_N$
as well since in this case $\Theta_W=\alpha_3(s^2+t^2)$ with $\alpha_3>0$.
Therefore the $G$-orbit through $p$ is a local isolated minimum locus of $\rho$. 
\end{proof}
Now, we can use the particular structure of this special local slice model $Z=\gtube\timescat{L}\Delta$, namely that it is a manifold and
consists of just two complexified $G$-orbits, a closed one ${B(z_0)}$ and an open
one ${B(z_1)}$. 
\begin{lemma}
Let $G\cdot p_0$ be the critical orbit
in the closed complexified $G$-orbit $B(z_0)$ in the
special local slice
model $Z=\gtube\timescat{L}\Delta$
established in the previous Lemma~\ref{lemma_isolatedAndUniqueMinimumLocusInClosedCplxfdOrbitInSpecialLocalSliceModel}. This orbit
is the only critical and minimal locus
and hence a global minimum locus.
\label{lemma_uniqueCriticalOrbitInSpecialLocalSliceModel}
\end{lemma}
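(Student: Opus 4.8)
The plan is to combine three facts already established about complexified $G$-orbits -- Corollary~\ref{corollary_uniqueCriticalOrbitInComplexifiedGorbitIfExhaustive}, Lemma~\ref{lemma_positiveHessianTransversalToCriticalOrbitInComplexifiedGorbit} and Lemma~\ref{lemma_isolatedAndUniqueMinimumLocusInClosedCplxfdOrbitInSpecialLocalSliceModel} -- with the saddle-point Lemma~\ref{lemma_saddlePoint} applied to the whole model $Z$, exploiting that $Z$ is the union of exactly the two complexified $G$-orbits $B(z_0)$ (closed) and $B(z_1)$ (open).

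First I would record two observations about an arbitrary critical point $q$ of $\rho$ on $Z$. If $q\in B(z_0)$, then in fact $q\in G\cdot p_0$: since $B(z_0)$ is a closed complex submanifold of $Z$, the restriction $\rho\vert_{B(z_0)}$ is again a $G$-exhaustive strictly plurisubharmonic function, $q$ is a critical point of it, $p_0$ is a critical point of it by Lemma~\ref{lemma_isolatedAndUniqueMinimumLocusInClosedCplxfdOrbitInSpecialLocalSliceModel}, and Corollary~\ref{corollary_uniqueCriticalOrbitInComplexifiedGorbitIfExhaustive} applied to the complexified $G$-orbit $B(z_0)$ forces $q\in G\cdot p_0$. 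If instead $q\in B(z_1)$, then $G\cdot q$ is a local isolated minimum locus of $\rho$ in $Z$: Lemma~\ref{lemma_positiveHessianTransversalToCriticalOrbitInComplexifiedGorbit} applied to the complexified $G$-orbit $B=B(z_1)$ produces a transversal slice on which the Hessian of $\rho$ is strictly positive, and since $B(z_1)$ is open in $Z$ this slice is already transversal to $G\cdot q$ in $Z$, so the positivity persists and the orbit is a local isolated minimum locus of $\rho$ in $Z$. Recall also from Lemma~\ref{lemma_isolatedAndUniqueMinimumLocusInClosedCplxfdOrbitInSpecialLocalSliceModel} that $G\cdot p_0$ is itself a local isolated minimum locus of $\rho$ in $Z$.

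Then I would argue by contradiction. Suppose $\rho$ has a critical point $q\notin G\cdot p_0$; by the first observation $q\in B(z_1)$, so by the second observation $G\cdot p_0$ and $G\cdot q$ are two distinct local isolated minimum loci of $\rho$ in $Z$. Now $\rho$ is a $G$-exhaustive (strictly plurisubharmonic) potential and $Z/G$ is connected -- being a continuous image of the connected set $\Sigma\times\Delta$ since $\gtube=G\cdot\Sigma$ -- so Lemma~\ref{lemma_saddlePoint} produces a critical point $q'\notin G\cdot p_0\cup G\cdot q$ of $\rho$ that is not a local minimum. But $Z=B(z_0)\cup B(z_1)$: if $q'\in B(z_0)$ then the first observation gives $q'\in G\cdot p_0$, while if $q'\in B(z_1)$ then the second observation makes $q'$ a local minimum -- a contradiction in either case. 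Hence $G\cdot p_0$ is the only critical orbit of $\rho$ on $Z$; since $\rho$ is $G$-exhaustive it attains a global minimum, necessarily on a critical orbit, so $G\cdot p_0$ is simultaneously the unique critical locus, the unique minimal locus, and a global minimum locus.

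The step I expect to need the most care is the bookkeeping that moves the transversal-Hessian statements between the open orbit $B(z_1)$, the closed orbit $B(z_0)$ and the ambient $Z$ -- confirming that a local isolated minimum locus found inside the open orbit really stays one in $Z$, that restriction to the closed orbit preserves strict plurisubharmonicity and $G$-exhaustiveness, and that $Z/G$ is connected so that Lemma~\ref{lemma_saddlePoint} applies. The genuinely structural input -- that $Z$ is exhausted by only the two complexified orbits $B(z_0)$ and $B(z_1)$ -- then does the rest.
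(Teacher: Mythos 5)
Your argument is correct and is essentially the paper's own proof: assume a critical point off $G\cdot p_0$, place it in $B(z_1)$ by applying Corollary~\ref{corollary_uniqueCriticalOrbitInComplexifiedGorbitIfExhaustive} to $B(z_0)$, upgrade it to a local isolated minimum locus via Lemma~\ref{lemma_positiveHessianTransversalToCriticalOrbitInComplexifiedGorbit} (using that $B(z_1)$ is open), and derive a contradiction from the non-minimal critical point produced by Lemma~\ref{lemma_saddlePoint}, ruling it out in both complexified orbits. Your explicit bookkeeping (connectedness of $Z/G$, that a transversal minimum inside the open orbit $B(z_1)$ remains one in $Z$, and that restriction to the closed orbit preserves $G$-exhaustiveness) only makes precise steps the paper leaves implicit.
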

\begin{proof}
Let us assume there is a point $p_1\not\in
G\cdot p_0$
at which $\rho$ becomes critical.
As $G\cdot p_0$ is the only critical locus of
$\rho\vert_{B(z_0)}$ according to Corollary~\ref{corollary_uniqueCriticalOrbitInComplexifiedGorbitIfExhaustive} we know
$p_1\in B(z_1)$.
Lemma~\ref{lemma_saddlePoint}
shows that there is a further critical point
$q$ which is not a local minimum.
Again $q\not\in B(z_0)$ by Corollary~\ref{corollary_uniqueCriticalOrbitInComplexifiedGorbitIfExhaustive}. So the point $q$ lies in the open
complexified $G$-orbit $B(z_1)$. Lemma~\ref{lemma_positiveHessianTransversalToCriticalOrbitInComplexifiedGorbit} shows that 
restricted to a complexified
$G$-orbit $\rho$ becomes critical only if it becomes locally minimal. This contradicts the existence of $q$.
\end{proof}
After analysing the behaviour of
the function $\rho$ on a special local
slice model
the following lemma provides a first decisive link
of this special local slice model to the general
local slice
model.
\begin{lemma}
Let $\rho:\gtube\timescat{K}S\to\R$
be a $G$-exhaustive strictly plurisubharmonic smooth
function and $Z=\gtube\timescat{L}\Delta$ as in Lemma~\ref{lemma_isolatedAndUniqueMinimumLocusInClosedCplxfdOrbitInSpecialLocalSliceModel}
with $B(z_1)\subset Z$ the unique open complexified $G$-orbit.
If $\varphi:\gtube\timescat{L}\Delta\to
\gtube\timescat{K}S$ is a $G$-equivariant holomorphic map and if 
$q\in\varphi(Z)$ is a local minimum
for $\rho\vert_{B(q)}$, then
$q\not\in\varphi(B(z_1))$.
\label{lemma_liftingGeneralSliceModelToSpecialLocalSliceModel}
\end{lemma}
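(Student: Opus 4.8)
The plan is to argue by contradiction, transporting the hypothesis on $q$ to the special model $Z=\gtube\timescat{L}\Delta$ and then invoking the uniqueness results already proved for $Z$. Suppose $q=\varphi(\zeta)$ for some $\zeta\in B(z_1)$. Since $\varphi$ is $\tau_G$-continuous (Lemma~\ref{lemma_topologyCompatibleEquivariantHolomorphism}) and $B(z_1)$ is $\tau_G$-connected, $\varphi(B(z_1))$ lies in a single complexified $G$-orbit of $\gtube\timescat{K}S$, which must be $B(q)$. Hence for $w$ near $\zeta$ in the open subset $B(z_1)\subset Z$ the point $\varphi(w)$ lies in $B(q)$ near $q$, so $h(w):=\rho(\varphi(w))\ge\rho(q)=h(\zeta)$; thus the $G$-invariant plurisubharmonic function $h=\varphi^*\rho$ has a local minimum at $\zeta\in B(z_1)$, and in particular $\de h_\zeta=0$. (Equivalently: a local minimum of $\rho|_{B(q)}$ forces $\mu^v(q)=-\imath_{\tilde v}\dc\rho|_q=0$ for all $v\in\lie{G}$, i.e.\ $q\in\mathcal M$, and equivariance of $\varphi$ gives $\imath_{\tilde v}\dc h|_\zeta=0$ for all $v$, which with $G$-invariance of $h$ and $\dim_\R G\cdot\zeta=\dim_\C Z$ yields $\de h_\zeta=0$.) Moreover Lemma~\ref{lemma_positiveHessianTransversalToCriticalOrbitInComplexifiedGorbit}, applied to $\rho|_{B(q)}$, shows $G\cdot q$ is a local isolated minimum locus of $\rho|_{B(q)}$ with positive definite transverse Hessian; pulling this back, $h\ge\rho(q)$ on a $G$-stable neighbourhood of $G\cdot\zeta$ in $Z$, with $h=\rho(q)$ exactly on $\varphi^{-1}(G\cdot q)$, and the Hessian of $h$ at $\zeta$ is positive definite transverse to $G\cdot\zeta$ provided $G_q$ is finite (here one uses that $G\cdot q$ is totally real, which holds since $q\in\mathcal M$).

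The contradiction is then obtained as in the proof of Lemma~\ref{lemma_uniqueCriticalOrbitInSpecialLocalSliceModel}. Fix a $G$-exhaustive strictly plurisubharmonic $\nu$ on $Z$ (Lemma~\ref{lemma_existenceOfExhaustivePotentialOnSliceModel}), bounded below. For every $t>0$ the function $h+t\nu$ is strictly plurisubharmonic and $G$-exhaustive (it dominates a multiple of $\nu$), so by Lemmas~\ref{lemma_isolatedAndUniqueMinimumLocusInClosedCplxfdOrbitInSpecialLocalSliceModel} and \ref{lemma_uniqueCriticalOrbitInSpecialLocalSliceModel} it has a unique critical orbit, a local isolated minimum locus contained in the closed complexified orbit $B(z_0)$. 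On the other hand, $h$ is constant along $G\cdot\zeta$ with positive definite Hessian transverse to it, and $t\nu$ is a small $G$-invariant perturbation whose differential vanishes on $T_\zeta(G\cdot\zeta)$ by invariance; hence for $t$ small enough $h+t\nu$ acquires a local isolated minimum orbit $G\cdot\zeta_t$ with $\zeta_t\in B(z_1)$ near $\zeta$. Since $B(z_1)$ is open in $Z$ this is a genuine local isolated minimum locus of $h+t\nu$ on $Z$, distinct from the one in $B(z_0)$. Now Lemma~\ref{lemma_saddlePoint} produces a critical point $s$ of $h+t\nu$ which is not a local minimum; by Corollary~\ref{corollary_uniqueCriticalOrbitInComplexifiedGorbitIfExhaustive}, applied to the closed orbit $B(z_0)$ on which $h+t\nu$ restricts to a $G$-exhaustive strictly plurisubharmonic function, $s\notin B(z_0)$, so $s\in B(z_1)$; but $B(z_1)$ is open and $h+t\nu$ is strictly plurisubharmonic, so Lemma~\ref{lemma_positiveHessianTransversalToCriticalOrbitInComplexifiedGorbit} forces $s$ to be a local minimum of $h+t\nu$ — a contradiction. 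Therefore $q\notin\varphi(B(z_1))$.

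The step needing most care, and the one I expect to be the main obstacle, is the perturbation argument, because $\varphi$ need not be an immersion: then $h=\varphi^*\rho$ is only plurisubharmonic, and its Hessian at $\zeta$ is positive definite transverse to $G\cdot\zeta$ exactly when $G_q$ is finite. In general $\varphi^{-1}(G\cdot q)$ is a positive-dimensional closed $G$-stable submanifold (closed since $G\cdot q$ is closed and the $G$-action is proper) on which $h\equiv\rho(q)$, with $h$ growing quadratically transverse to it, and one has to locate a local isolated minimum of $h+t\nu$ inside $B(z_1)$ by exploiting that $\nu$ restricted to the complex submanifold $\varphi^{-1}(q)$ is strictly plurisubharmonic and $G_q$-exhaustive, so that $h+t\nu$ curves upward along $\varphi^{-1}(G\cdot q)$ as well. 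Making this precise — in particular that $h+t\nu$ attains a local isolated minimum orbit in $B(z_1)$ rather than only approaching its infimum along the noncompact locus $\varphi^{-1}(G\cdot q)$ — is the technical heart of the proof; once it is in place, the second paragraph applies verbatim.
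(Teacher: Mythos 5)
Your overall strategy is the same as the paper's: pull $\rho$ back to $Z$, add a small multiple of a $G$-exhaustive strictly plurisubharmonic function on $Z$ to make the sum strictly plurisubharmonic and $G$-exhaustive with a critical orbit inside $B(z_1)$, and contradict Lemma~\ref{lemma_uniqueCriticalOrbitInSpecialLocalSliceModel}. The first paragraph and the reduction to "produce a local minimum orbit of $h+t\nu$ in $B(z_1)$" are sound. But the step you yourself flag as the technical heart is a genuine gap, and it is not a side case: when $\dim G_q>0$ the set $\varphi^{-1}(G\cdot q)$ is positive-dimensional transverse to $G\cdot\zeta$, $h=\varphi^*\rho$ is constant on it, and your perturbation argument anchored at an \emph{arbitrary} $\zeta\in\varphi^{-1}(q)\cap B(z_1)$ breaks down: nothing prevents $\nu$ from being strictly decreasing along $\varphi^{-1}(G\cdot q)$ through $\zeta$, in which case $h+t\nu$ has no critical point near $\zeta$ for any $t>0$ (the would-be minimum drifts along the noncompact degenerate locus, exactly the failure mode you name but do not exclude). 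Observing that $\de\nu$ vanishes on $T_\zeta(G\cdot\zeta)$, or that $\nu|_{\varphi^{-1}(q)}$ is strictly plurisubharmonic, does not by itself localize the minimum.

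The paper closes this gap by changing the anchor point rather than by a Hessian argument. Since $\varphi^{-1}(G\cdot q)$ is closed and $G$-stable and $\eta$ is $G$-exhaustive, $\eta|_{\varphi^{-1}(G\cdot q)}$ attains a minimum at some $p$ (moved into $\varphi^{-1}(q)$ by invariance); $\eta$ is then modified by a bump so that $G\cdot p$ becomes a local \emph{isolated} minimum locus of $\eta|_{\varphi^{-1}(G\cdot q)}$ while staying strictly plurisubharmonic and $G$-exhaustive. With the two hypotheses "$G\cdot q$ is a local isolated minimum locus of $\rho$ on the image" and "$G\cdot p$ is a local isolated minimum locus of $\eta$ on the fiber over $G\cdot q$" in hand, a compactness argument on compatible slices (Lemmas~\ref{lemma_slicesRelativeAlongSubmersion} and \ref{lemma_producingLocalMinimumOnLiftedFunction}: $\theta^*\rho_0+t\eta_0$ is positive on the boundary of a relatively compact slice neighborhood for small $t$, and vanishes at $p$) forces $\varphi^*\rho+t\eta$ to attain a local minimum orbit inside a prescribed $G$-stable neighborhood in $B(z_1)$. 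This is the missing construction; your proposal correctly identifies where it is needed and sketches the right ingredients, but as written the proof is incomplete precisely there, and the fix requires relocating to a minimum of the perturbing function on $\varphi^{-1}(G\cdot q)$, not working near the given $\zeta$.
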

The proof is postponed. It consists in pulling
back $\rho$ by $\varphi$
and modifying this pulled back function with another
$G$-exhaustive strictly plurisubharmonic function $\eta$ on $Z$.
Assuming that $\rho$ admits a local minimum on a complexified $G$-orbit through a point
in $\varphi({B(z_1)})$ the
modified function on $Z$ will have a local isolated minimum
on ${B(z_1)}$. For this we need the
following two lemmata.\\
Recall that a proper $G$-action on a manifold $Z$
admits to a given point $z\in Z$
a (local) slice $\Sigma_z$ which can be characterized as follows. First the slice
is a local
submanifold of $Z$ containing $z$ 
and which is $G_z$-stable where $G_z$ denotes the isotropy group of
the $G$-action at $z$. Second the tangent space
$T_zZ$ splits
into $T_z(G\cdot z)$ and $T_z\Sigma_z$.
When chosen $\Sigma_z$ sufficiently small, this induces
a $G$-equivariant open embedding
\begin{equation*}
\begin{array}{ccc}
G\times^{G_z}\Sigma_z&\to&Z\\
{}[g,\sigma]&\mapsto&g\cdot\sigma\\
\end{array}
\end{equation*}
Here $G\times^{G_z}\Sigma_z$ is the quotient of $G\times \Sigma_z$
with respect to the following (diagonal) $G_z$-action.
\begin{equation*}
\begin{array}{rcccl}
\alpha:&G_z \times G\times\Sigma_z&\to&G\times\Sigma_z\\
&(h,(g,\sigma))&\mapsto&
(g\cdot h^{-1},h\cdot\sigma)
\end{array}
\end{equation*}
Finally, we recall that $\Sigma_z$ can be chosen
$G_z$-equivariantly diffeomorphic to a
$G_z$-stable open neighborhood of the origin
of $T_z\Sigma_z$ with $z$ mapped to the origin.
\begin{lemma}
Let $M$ and $N$ be manifolds, $G$ act properly
on $N$ and let
$\psi:M\to N$ be a $G$-equivariant submersion 
with $\psi(x)=y$. Then there are
respective slices $\Sigma_x$ and $\Sigma_y$
such that the restriction of $\psi$ becomes a
$G_x$-equivariant submersion
$\theta:\Sigma_x\to\Sigma_y$.
\label{lemma_slicesRelativeAlongSubmersion}
\end{lemma}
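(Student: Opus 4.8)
The plan is to build $\Sigma_x$ as a slice sitting inside the preimage $\psi^{-1}(\Sigma_y)$, with the compact isotropy group $G_y$ playing the role of an intermediate group. Two preliminary remarks are useful. First, $G$-equivariance of $\psi$ gives $G_x\subseteq G_y$ (if $gx=x$ then $gy=g\psi(x)=\psi(gx)=y$). Second, the $G$-action on $M$ is itself proper, since for compact $K,K'\subseteq M$ the set $\{g\in G:gK\cap K'\neq\emptyset\}$ is closed in $G$ and contained in $\{g:g\psi(K)\cap\psi(K')\neq\emptyset\}$, which is compact by properness on $N$; in particular $G_x$ and $G_y$ are compact. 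Finally, applying $d\psi_x$ to the orbit maps and comparing $G\to G\cdot x$ with $G\to G\cdot y$, and $G_y\to G_y\cdot x$ with $G_y\to G_y\cdot y$, yields $d\psi_x(T_x(G\cdot x))=T_y(G\cdot y)$ and $d\psi_x(T_x(G_y\cdot x))=T_y(G_y\cdot y)$.

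Next I would fix a slice $\Sigma_y$ at $y$ for the $G$-action on $N$, say $G_y$-equivariantly diffeomorphic to a $G_y$-stable neighbourhood of the origin in a $G_y$-invariant complement of $T_y(G\cdot y)$. Since $\psi$ is a submersion it is transverse to $\Sigma_y$, so $W:=\psi^{-1}(\Sigma_y)$ is a submanifold of $M$ with $T_wW=(d\psi_w)^{-1}(T_{\psi(w)}\Sigma_y)$; it contains $x$, and it is $G_y$-stable. The compact group $G_y$ acts on $W$ with isotropy $G_x$ at $x$, so the classical slice theorem for compact group actions provides a $G_x$-stable local submanifold $\Sigma_x\subseteq W$ through $x$ with $T_xW=T_x(G_y\cdot x)\oplus T_x\Sigma_x$, which moreover may be taken $G_x$-equivariantly diffeomorphic to a neighbourhood of the origin in $T_x\Sigma_x$.

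It then remains to verify that $\Sigma_x$, possibly after a further $G_x$-stable shrinking, is a slice for the $G$-action on $M$ at $x$, and that $\theta:=\psi|_{\Sigma_x}:\Sigma_x\to\Sigma_y$ is a $G_x$-equivariant submersion. For the first point, surjectivity of $d\psi_x$ together with $d\psi_x(T_x(G\cdot x))=T_y(G\cdot y)$ gives $T_xM=T_x(G\cdot x)+T_xW$, hence $T_xM=T_x(G\cdot x)+T_x\Sigma_x$ using $T_x(G_y\cdot x)\subseteq T_x(G\cdot x)$; a dimension count ($\dim\Sigma_x=\dim W-\dim(G_y\cdot x)=\dim M-\dim(G\cdot x)$) shows the sum is direct, so $\Sigma_x$ is a $G_x$-stable submanifold transversal to the orbit, hence a slice after shrinking (by the standard slice theorem, cf. Lemma~\ref{lem_diffeoAroundOrbit} applied to $G\times^{G_x}\Sigma_x\to M$). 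For the second point, $\Sigma_x\subseteq W=\psi^{-1}(\Sigma_y)$ makes $\theta$ well defined, and its $G_x$-equivariance is inherited from $\psi$. Letting $P:T_yN\to T_y\Sigma_y$ be the projection along $T_y(G\cdot y)$, we have $P\circ d\psi_x=0$ on $T_x(G_y\cdot x)$, because $d\psi_x(T_x(G_y\cdot x))=T_y(G_y\cdot y)\subseteq T_y(G\cdot y)=\ker P$; since $d\psi_x(T_xW)=T_y\Sigma_y$ and $P$ restricts to the identity on $T_y\Sigma_y$, it follows that $d\psi_x(T_x\Sigma_x)=T_y\Sigma_y$, i.e. $d\theta_x$ is surjective. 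As being a submersion is an open and $G_x$-invariant condition, one more $G_x$-stable shrinking of $\Sigma_x$ makes $\theta$ a submersion everywhere, and every such shrinking preserves the slice property already obtained.

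The only genuine work is the bookkeeping around the two tangent-space identities and checking that the various $G_x$-stable shrinkings (of $\Sigma_x$, and if needed of $\Sigma_y$) can be carried out compatibly; everything else reduces to the slice theorems for proper and for compact group actions.
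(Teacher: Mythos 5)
Your proof is correct. It realizes at the manifold level what the paper does purely at the tangent-space level: the paper fixes a $G_y$-stable complement $W$ of $T_y(G\cdot y)$, takes $E=\ker\bigl(\psi_*\vert_{T_x(G\cdot x)}\bigr)$ and a $G_x$-stable complement $V$ of $E$ inside $(\psi_*)^{-1}(W)$, and declares $V,W$ to be the tangent spaces of the slices. Your $E$ is exactly $T_x(G_y\cdot x)$ and your $T_x\Sigma_x$ is exactly such a $V$, so the underlying decomposition is the same; the difference is that you build the actual submanifold $\Sigma_x$ inside $\psi^{-1}(\Sigma_y)$ as a slice for the compact group $G_y$ acting on that preimage. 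This buys you something the paper's write-up leaves implicit: with only the linear data $V,W$ one still has to arrange that the slice $\Sigma_x$ tangent to $V$ satisfies $\psi(\Sigma_x)\subseteq\Sigma_y$, whereas your construction guarantees this by fiat, and the surjectivity of $d\theta_x$ then follows (even more directly than your projection argument, since $G_y\cdot y=\{y\}$ forces $d\psi_x(T_x(G_y\cdot x))=0$, so $d\psi_x(T_x\Sigma_x)=d\psi_x(T_xW)=T_y\Sigma_y$). The preliminary observations (properness of the action on $M$, $G_x\subseteq G_y$, $d\psi_x(T_x(G\cdot x))=T_y(G\cdot y)$) and the dimension count making the sum $T_x(G\cdot x)+T_x\Sigma_x$ direct are all sound.
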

\begin{proof}
Observe that $G$ acts properly on $M$ as well.
We have to show that there are linear
subspaces $V\subset T_xM$ and $W\subset T_yN$
such that $V$ is complementray to $T_x(G\cdot x)$
and $W$ to $T_y(G\cdot y)$
and the restriction of $\psi_*:T_xM\to T_yN$
to $V$ becomes a $G_x$-equivariant submersion
$\theta:V\to W$. In order to construct this, we 
observe that $\psi(G\cdot x)=G\cdot y$. Hence the $G_x$-equivariant map
$\psi_*$ restricts to a $G_x$-equivariant
submersion $\psi_*\vert_{T_x(G\cdot x)}:T_x(G\cdot x)\to T_y(G\cdot y)$. 
Denote by $E$ the kernel
of $\psi_*\vert_{T_x(G\cdot x)}$ which is $G_x$-stable.\\
Now choose
a $G_y$-stable linear
subspace $W\subset T_yN$ complementary to
$T_y(G\cdot y)$. The $G_x$-stable preimage
$\left(\psi_*\right)^{-1}(W)$ contains $E$
as a $G_x$-stable linear subspace. Denote by $V$ a
$G_x$-stable complement to $E$ in 
$\left(\psi_*\right)^{-1}(W)$. The induced
restricted map $\theta:V\to W$ is a $G_x$-equivariant
submersion. Further $V$ is complementary to $T_x(G\cdot x)$ what finishes the proof.
\end{proof}
The next lemma is needed for
the proof of 
Lemma~\ref{lemma_liftingGeneralSliceModelToSpecialLocalSliceModel} deals with modifying a function
``in fiber direction'' when only having an isolated
minimum transversal to the fibers.
\begin{lemma}
Let the Lie group $G$ act properly on the manifolds
$M$ and $N$ and $\varphi:M\to N$ be a smooth, 
$G$-equivariant submersion with $\varphi(p)=q$.
Furthermore let $\eta:M\to\R$ and $\rho:N\to\R$ be
$G$-invariant smooth functions
such that $G\cdot q$ is a local isolated minimum locus of $\rho$ and $G\cdot p$ is a local isolated minimum locus of the restricted function
$\eta\vert_{\varphi^{-1}(G\cdot q)}$. Then for each
$G$-stable neighborhood $U$ of $p$ there is a
$G$-stable open neighborhood $V\subset U$ and a
$\delta>0$
such that for every $t\in(0,\delta)$ the function $\rho_t=\varphi^*\rho+t\cdot\eta$ attains a local isolated minimum on a $G$-orbit
in $V$.
\label{lemma_producingLocalMinimumOnLiftedFunction} 
\end{lemma}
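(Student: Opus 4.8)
The plan is to localise everything to slices and then run a perturbation argument. By Lemma~\ref{lemma_slicesRelativeAlongSubmersion}, applied to $\varphi:M\to N$ with $\varphi(p)=q$, there are slices $\Sigma_p\subset M$, $\Sigma_q\subset N$ and a $G_p$-equivariant submersion $\theta=\varphi\vert_{\Sigma_p}:\Sigma_p\to\Sigma_q$; set $K=G_p$, compact by properness. Shrinking $\Sigma_q$, the function $r:=\rho\vert_{\Sigma_q}$ has a strict minimum at $q$ (using that $G\cdot q$ is a local isolated minimum locus of $\rho$ and that a slice meets its orbit only in its centre). Since $K=G_p\subseteq G_q$ fixes the centre $q$ of $\Sigma_q$, the fibre $F:=\theta^{-1}(q)$ is a $K$-stable submanifold of $\Sigma_p$, and one checks $\varphi^{-1}(G\cdot q)\cap\Sigma_p=F$; so, after shrinking $\Sigma_p$, the hypothesis on $\eta$ becomes: $e:=\eta\vert_{\Sigma_p}$ satisfies $e(\sigma)>e(p)$ for all $\sigma\in F\setminus\{p\}$ (note $K\cdot p=\{p\}$). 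Since $M$ near $G\cdot p$ is $G$-equivariantly the slice model $G\times^{K}\Sigma_p$, on which $\rho_t$ restricts to the $K$-invariant function $g_t:=\theta^*r+t\,e$ on $\Sigma_p$, it suffices to produce, for all small $t>0$, a point $\sigma_t$ in one fixed neighbourhood of $p$ whose orbit $K\cdot\sigma_t$ is a local isolated minimum locus of $g_t$ on $\Sigma_p$.

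The mechanism is the inequality $\theta^*r\ge r(q)$ on $\Sigma_p$, with equality exactly along $F$, together with $g_t\vert_F=r(q)+t\,e\vert_F$ being strictly minimal at $p$. I fix a small closed $K$-invariant ball $\bar B\subset\Sigma_p$ about $p$ with $\theta(\bar B)\subset\Sigma_q$ and estimate $g_t$ on the boundary sphere $\partial B$: on the compact set $\partial B\cap F$ one has $\theta^*r-r(q)=0$ but $e-e(p)\ge c_1>0$, and on a neighbourhood of it in $\partial B$ still $e-e(p)>c_1/2$ while $\theta^*r-r(q)\ge0$; on the complementary compact set, which avoids $F$, one has $\theta^*r-r(q)\ge c_2>0$ and $e-e(p)\ge-M$. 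Hence $g_t>g_t(p)$ on all of $\partial B$ once $0<t<\delta:=c_2/M$, so for such $t$ the minimum of $g_t$ over $\bar B$ is below $\min_{\partial B}g_t$ and is attained at an interior point $\sigma_t\in B$ --- a genuine local minimum of $g_t$. Taking $\bar B$ small enough that $V:=G\cdot B\subset U$ gives the required neighbourhood, with $V$ and $\delta$ independent of $t$ and $G\cdot\sigma_t\subset V$. Comparing again with $g_t(p)$ forces $\theta^*r(\sigma_t)-r(q)\to0$ and indeed $\sigma_t\to p$ as $t\to0^+$, since an accumulation point of $(\sigma_t)$ must lie on $F$ and, if different from $p$, would have $e>e(p)$ there, contradicting $g_t(\sigma_t)\le g_t(p)$.

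The delicate step --- and the one I expect to be the main obstacle --- is promoting ``$\sigma_t$ is a local minimum'' to ``$G\cdot\sigma_t$ is a local \emph{isolated} minimum locus'' in the sense of Definition~\ref{definition_isolatedMinimum}, i.e. excluding that $g_t$ attains the value $g_t(\sigma_t)$ on more than the orbit $K\cdot\sigma_t$ near $\sigma_t$. The plan is to note that the minimising set $C_t=\{\sigma\in\bar B\ \vert\ g_t(\sigma)=\min_{\bar B}g_t\}$ lies in $\{e\le e(p)\}\cap\bar B$, a compact set meeting $F$ only in $p$, that $C_t$ is squeezed towards $p$ as $t\to0$ because $\theta^*r-r(q)\to0$ uniformly on it, and that on the fibre of $\theta$ through any point of $C_t$ the function $g_t$ is a constant plus $t\,e$, whose $e$-part already has $p$ as an isolated minimum along $F$; together these should force $C_t=K\cdot\sigma_t$ for small $t$. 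In the only application of this lemma, the proof of Lemma~\ref{lemma_liftingGeneralSliceModelToSpecialLocalSliceModel}, even a non-isolated interior minimum would do, since all that is used there is that it is a critical point of $g_t$ lying in the open complexified $G$-orbit. The remaining ingredients are routine compactness and continuity arguments; the load-bearing parts are the slice reduction via Lemma~\ref{lemma_slicesRelativeAlongSubmersion} and the boundary comparison above.
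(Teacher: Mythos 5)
Your proof is essentially the paper's: reduce to slices via Lemma~\ref{lemma_slicesRelativeAlongSubmersion}, normalise, and compare $\theta^*\rho+t\,\eta$ with its value at $p$ on the boundary of a relatively compact neighbourhood in $\Sigma_p$, splitting that boundary into the part meeting $\theta^{-1}(q)$ (where $\eta$ dominates by a positive constant and $\theta^*\rho\geq\rho(q)$) and the complementary compact part (where $\theta^*\rho-\rho(q)$ is bounded below by a positive constant), so that for small $t>0$ the function exceeds its value at $p$ on the whole boundary and must attain an interior minimum. Your concern about upgrading this to a local \emph{isolated} minimum locus is well placed but immaterial here: the paper's own proof likewise only concludes ``a local minimum on some $G$-orbit'', and in the sole application (the proof of Lemma~\ref{lemma_liftingGeneralSliceModelToSpecialLocalSliceModel}) isolatedness is restored afterwards by a further small perturbation of $\rho_{t_0}$, exactly as you observe.
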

\begin{proof}
By the
previous Lemma~\ref{lemma_slicesRelativeAlongSubmersion}
there are
slices
$\Sigma_p$ and $\Sigma_q$ for the $G$-action
at $p$ and $q$ respectively and a
$G_p$-equivariant submersion
$\theta:\Sigma_p\to\Sigma_q$.
In this setting 
$\rho_0=\rho\vert_{\Sigma_q}$ has a local isolated
minimum at $q$.
Let 
$\eta_0=\eta\vert_{\Sigma_p}$. The restricted function $\eta_0\vert_{\varphi^{-1}(q)}$ has a local isolated minimum at $p$. For simplicity we assume the local minima to be global after further shrinking of
$\Sigma_p$ and $\Sigma_q$
and
$\rho_0(q)=\eta_0(p)=0$. We choose an open neighborhood
$\Omega\subset \Sigma_p$ which is relatively compact in $\Sigma_p$ and $\partial\Omega$ its boundary in $\Sigma_p$.
The function $\theta^*\rho_0$ is positive on
$\partial\Omega\backslash\theta^{-1}(q)$ while $\eta_0$
is positive on $\partial\Omega\cap\theta^{-1}(q)$.
Due to continuity of $\theta^*\rho_0$ and $\eta_0$
the set $\partial\Omega$ can be covered
by open sets $W_\alpha$ such that on each set
$W_\alpha$ at least one of the functions
$\theta^*\rho_0$ and $\eta_0$ is positive. 
Hence for each $W_\alpha$ there is a
$\delta_\alpha>0$ such that for
each $t\in(0,\delta_\alpha)$
the function
$\hat\rho_t=\rho_0\circ\theta+t\cdot\eta_0$
is positive. As $\partial\Omega$ is compact, we may
pass to a finite subcover $W_{\alpha_i}$ of $\partial\Omega$ and set $\delta=\min_i\delta_{\alpha_i}>0$. Then $\hat\rho_t$
is positive for all $t\in(0,\delta)$ on
$\partial\Omega$.
But as $\rho_t(p)=0$ and 
$\min_{\partial \Omega} \rho_t>0$ the minimum is attained in $\Omega$.
So for each $G$-stable open neighborhood of $G\cdot p$
there is a $\delta>0$ such that
$\rho_t=\varphi^*\rho+t\cdot\eta$ attains a local minimum on some $G$-orbit in this neighborhood.
\end{proof}
\begin{proof}[Proof of Lemma~\ref{lemma_liftingGeneralSliceModelToSpecialLocalSliceModel}]
Recall that first there is a special local slice model
$Z=\gtube\timescat{L}\Delta$
given (compare Lemma~\ref{lemma_isolatedAndUniqueMinimumLocusInClosedCplxfdOrbitInSpecialLocalSliceModel}). There are points $z_0,z_1\in Z$ such that $Z$ decomposes into a closed complexified $G$-orbit ${B(z_0)}=\gtube\timescat{L}\{0\}$
and a complementary open complexified $G$-orbit ${B(z_1)}$.
Second there is a local slice model $Y=\gtube\timescat{K}S$ and a $G$-equivariant holomorphic map
\begin{equation*}
\varphi:\gtube\timescat{L}\Delta
\to \gtube\timescat{K}S\ .
\end{equation*}
Note that $\varphi\vert_{B(z_1)}$ is automatically submersive to its image due to $G$-equivariance.
Thus $\varphi(B(z_1))$ is a
local submanifold of $Y$.
By assumption
there is a $G$-exhaustive strictly plurisubharmonic function $\rho$ on $Y$ attaining a
local minimum at $q\in Y$. We aim to show that the point $q$ is not contained in $\varphi({B(z_1)})$ and therefore assume the contrary $q\in\varphi({B(z_1)})$. Lemma~\ref{lemma_existenceOfExhaustivePotentialOnSliceModel}
provides the existence of a
$G$-exhaustive strictly plurisubharmonic function on
$Z$ which we denote $\eta$. The $G$-stable set
$W=\varphi^{-1}(G\cdot q)\subset {Y}$
is closed, such that
$\eta$ attains a minimum at some point $p$. Without restriction we may choose $p\in\varphi^{-1}(q)$.
Further 
without restriction we may add to $\eta$
a $G$-invariant smooth function $\chi$
having $G\cdot p$ as local isolated
minimum locus
such that the modified $G$-invariant function
stays strictly plurisubharmonic and $G$-exhaustive. For simplicity
we denote this modified function again by $\eta$.
Lemma~\ref{lemma_producingLocalMinimumOnLiftedFunction} shows that for a $G$-stable neighborhood
of $p$ in ${B(z_1)}$
there is a $\delta>0$ such that 
for each $t\in(0,\delta)$ the function
$\rho_t=\varphi^*\rho+t\cdot\eta$ has a $G$-orbit as a local minimum locus in that neighborhood.
Additionally, this function is 
strictly plurisubharmonic as a sum of
a plurisubharmonic and a strictly plurisubharmonic function and as $\eta$ is $G$-exhaustive, so is $\rho_t$. Choose $t_0\in(0,\delta)$ and let $G\cdot p_0$ be this
local minimum locus in ${B(z_1)}$ for the function $\rho_{t_0}$. 
As above we may modify $\rho_{t_0}$
slightly such that the modified function is
strictly plurisubharmonic
and $G$-exhaustive and additionally 
has the orbit $G\cdot p_0$
as a local isolated minimum locus.
Since $p_0\in {B(z_1)}$, this contradicts
Lemma~\ref{lemma_uniqueCriticalOrbitInSpecialLocalSliceModel} and shows
that $q\not\in\varphi({B(z_1)})$.
\end{proof}
\begin{lemma}
Let $Y=\gtube\timescat{K}S$ be a local slice model
and $\rho$ a $G$-exhaustive
strictly plurisubharmonic function with corresponding momentum map $\mu$. Let $\pi:Y\to Y\modmod G$ be the analytic Hilbert quotient.
Then for all $c\in\R$
we have for $U_c=\{y\in X\ \vert \ \rho<c\}$
\begin{enumerate}
\item $\pi(U_c)$ is open
\item $\pi(U_c)=\pi(U_c\cap\mathcal{M})$
where $\mathcal{M}=\mu^{-1}(0)$ and
\item $\begin{array}[t]{lcl}
       \pi^{-1}(\pi(U_c))&=&\{y\in Y\ \vert \ B(y)
       \ \text{intersects}\ U_c\}\\
       &=&\{y\in Y\ \vert \ \overline{B(y)}
       \ \text{intersects}\ U_c\cap\mathcal{M}\}
      \end{array}$
\end{enumerate}
holds.\label{lemma_imageOfSublevelsAreOpenInLocalSliceModel}
\end{lemma}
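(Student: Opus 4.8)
The plan is to reduce all three assertions to one fact: inside every fiber of $\pi$ the function $\rho$ attains its minimum precisely on the single $G$-orbit contained in $\mathcal{M}$, and this orbit lies in the unique closed complexified $G$-orbit of the fiber. First I would record the pointwise description of $\mathcal{M}$: from $\mu^v=-\imath_{\tilde v}\dc\rho$ and the identity $\imath_{\tilde v}\dc\rho=-\tfrac12(J\tilde v)(\rho)$ one gets $\mu^v=\tfrac12(J\tilde v)(\rho)$, so, using $\tilde v(\rho)=0$ by $G$-invariance, $\mathcal{M}$ consists exactly of the points $y$ at which the differential of $\rho$ at $y$ annihilates $T_yB(y)=\operatorname{span}_\R\{\tilde v_y,\,J\tilde v_y:v\in\lie G\}$, i.e.\ of the critical points of $\rho$ restricted to their own complexified $G$-orbit. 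Now fix a fiber $F=\pi^{-1}(p)$. By Proposition~\ref{proposition_fiberStructureInLSM} it contains a unique closed complexified $G$-orbit $E=E_Y(p)$ with $E\subset\overline{B(y)}\subset F$ for all $y\in F$ (the inclusion $\overline{B(y)}\subset F$ because $B(y)\subset F$ and the fibers of $\pi$ are closed). Since $E$ is closed in $Y$ and $\rho$ is $G$-exhaustive, $\rho|_E$ is a $G$-exhaustive strictly plurisubharmonic function on the complexified $G$-orbit $E$, so Corollary~\ref{corollary_uniqueCriticalOrbitInComplexifiedGorbitIfExhaustive} gives it a unique critical $G$-orbit $G\cdot p_E$, which is its global minimum locus; by the description above, $G\cdot p_E=E\cap\mathcal{M}$.

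The key claim is that $\mathcal{M}$ meets only closed complexified $G$-orbits. Suppose $q\in\mathcal{M}$ lay on a non-closed orbit $B(q)$; then $q$ is a critical point of $\rho|_{B(q)}$, hence by Lemma~\ref{lemma_positiveHessianTransversalToCriticalOrbitInComplexifiedGorbit} a local isolated minimum of $\rho|_{B(q)}$. Writing $q=[e,s_0]$ in $Y=\gtube\timescat{K}S$, the orbit $B(q)$ corresponds to the non-closed complexified $K$-orbit $B(q)\cap S$ of $s_0$. Using the complexification $S\subset S^\C$ with its holomorphic $K^\C$-action (Theorem~\ref{theorem_complexificationOfAction}, Proposition~\ref{proposition_complexifiedOrbitIsIntersectionWithKCorbitInComplexifiedSpace}) and the Hilbert--Mumford criterion for the reductive group $K^\C$, I would produce a one-parameter degeneration of $s_0$ toward the closed complexified orbit of its fiber and repackage it as a special local slice model $Z=\gtube\timescat{L}\Delta$ ($L\cong S^1\subset K$, $\Delta$ a disk in $\C$ on which $L$ acts linearly and nontrivially) together with a $G$-equivariant holomorphic map $\varphi:Z\to Y$ with $q\in\varphi(B(z_1))$, $B(z_1)$ being the open complexified $G$-orbit of $Z$. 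Then Lemma~\ref{lemma_liftingGeneralSliceModelToSpecialLocalSliceModel} forbids $q$ from being a local minimum of $\rho|_{B(q)}$, a contradiction. Producing $Z$ and $\varphi$ is the step I expect to be the main obstacle; everything else is bookkeeping.

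Granting the claim, the fiberwise fact follows. Since $\rho$ is $G$-exhaustive and $F$ is closed, $\rho|_F$ attains its minimum at some $q\in F$; being a local minimum of $\rho|_{B(q)}$, this $q$ lies in $\mathcal{M}$, so $B(q)$ is closed, hence $B(q)=E$ and $\min_F\rho=\rho(p_E)$. The same reasoning gives $F\cap\mathcal{M}=G\cdot p_E$ (any point of $F\cap\mathcal{M}$ is critical on its complexified orbit, which must then be closed, hence equal to $E$, whose only critical orbit is $G\cdot p_E$); combined with $E\subset\overline{B(y)}\subset F$ this yields $\overline{B(y)}\cap\mathcal{M}=G\cdot p_E$ for every $y\in F$. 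Finally, density of $B(y)$ in $\overline{B(y)}$ together with $E\subset\overline{B(y)}\subset F$ gives $\inf_{B(y)}\rho=\inf_{\overline{B(y)}}\rho=\rho(p_E)$.

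Now I read off the three statements. For (1), $\pi$ is open: the quotient map $\gtube\times S\to Y$ is surjective and open by Proposition~\ref{prop_shrinkingUnderFreeAction}, and $\pi$ composed with it equals $\varphi_S\circ\pi_S\circ\mathrm{pr}_S$, where $\mathrm{pr}_S:\gtube\times S\to S$ is the projection, $\pi_S:S\to S\modmod K$ is open, and $\varphi_S:S\modmod K\to Y\modmod G$ is the homeomorphism of Theorem~\ref{theorem_SmodmodKIsomorphicToYmodmodG}; as this composite is open, $\pi$ is open and hence so is $\pi(U_c)$. For (3), writing $F=\pi^{-1}(\pi(y))$ and $E=E_Y(\pi(y))$, we have $y\in\pi^{-1}(\pi(U_c))$ iff $F$ meets $U_c$ iff $\min_F\rho=\rho(p_E)<c$; by the previous paragraph this is equivalent to $\inf_{B(y)}\rho<c$, i.e.\ to $B(y)\cap U_c\neq\emptyset$, and, since $\rho\equiv\rho(p_E)$ on the $G$-stable set $G\cdot p_E=\overline{B(y)}\cap\mathcal{M}$, also to $\overline{B(y)}\cap(U_c\cap\mathcal{M})\neq\emptyset$. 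For (2), $\pi(U_c)\supseteq\pi(U_c\cap\mathcal{M})$ is immediate; conversely, if $p\in\pi(U_c)$ then $\rho(p_E)<c$, so $p_E\in U_c\cap\mathcal{M}$ and $p=\pi(p_E)\in\pi(U_c\cap\mathcal{M})$.
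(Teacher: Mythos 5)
Your proposal is correct and follows essentially the same route as the paper's. The paper's own proof of this lemma is very compressed: it introduces the orbit--saturation $U_c^B=\{y\ \vert\ B(y)\cap U_c\neq\emptyset\}$, shows it is open and $\pi$-saturated, and rests everything on the identity $\inf_{B}\rho=\min_E\rho$ for each non-closed complexified orbit $B$ in a fiber with closed orbit $E$ --- which in turn depends on the fact that $\rho$ never becomes critical (hence never locally minimal) on a non-closed complexified $G$-orbit. You have correctly isolated that fact as the crux, and your proposed proof of it (Hilbert--Mumford degeneration for $K^\C$ in the complexification of $S$, repackaged as a special slice model $Z=\gtube\timescat{L}\Delta$ with a $G$-equivariant map $\varphi:Z\to Y$, then Lemma~\ref{lemma_liftingGeneralSliceModelToSpecialLocalSliceModel}) is exactly what the paper carries out in the first half of the proof of Theorem~\ref{theorem_homeomorphismInLocalSliceModel}, via the Hilbert Lemma and the normalization of $S\cap\overline{\C^*\!\cdot s_0}$ to a disk; so the step you flag as the main obstacle is resolved there, and the printed proof of the lemma in effect borrows that fact forward. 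Your pointwise identification of $\mathcal{M}$ with the fiberwise critical set, and your derivation of (2) and (3) from the fiberwise minimum picture, match the paper's.

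The one place you genuinely diverge is (1), and it is also the one small hole. You deduce openness of $\pi$ from the factorization $\pi\circ\pi_K=\varphi_S\circ\pi_S\circ\mathrm{pr}_S$, which requires $\pi_S:S\to S\modmod K$ to be an open map; but the $K$-action on $S$ fixes the origin, so it is not free, and Proposition~\ref{prop_shrinkingUnderFreeAction} (which you invoke and which assumes freeness) does not supply this. Openness of the analytic Hilbert quotient map for a non-free compact action is true but is nowhere established in the paper. The paper's route avoids this: $U_c^B$ is open because it is swept out from the open set $U_c$ by the local flows of the vector fields $\tilde v$ and $J\tilde v$, and the saturation argument shows $U_c^B=\pi^{-1}(\pi(U_c))$, so $\pi(U_c)$ is open in the quotient topology. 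Note that your own part (3) already identifies $\pi^{-1}(\pi(U_c))$ with $\{y\ \vert\ B(y)\cap U_c\neq\emptyset\}$, so you only need openness of that saturation, not openness of $\pi$ itself.
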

\begin{proof}
For a given set $U_c$ let
$U_c^B$ be the set of all points whose complexified
$G$-orbit intersets $U_c$. First we will show that
$U_c^B$ is open. In fact, $U_c^B$ is the smallest
set containing $U_c$ such that for all
open subsets $W\subset U_c^B$ such that given $\xi\in\lie{G}$ such that $\varphi_t^\xi(W)\subset Y$
implies $\varphi_t^\xi(W)\subset U_c^B$. Hence $U_c^B$
is open.\\
Now we will prove that $U_c^B$ is $\pi$-saturated.
Proposition~\ref{proposition_fiberStructureInLSM}
states that a $\pi$-fiber
$F$ is the disjoint union of a unique
closed complexified $G$-orbit $E$ and non-closed
complexified $G$-orbits $B_i$ and $E\subset\partial B_i$ for all $i$. As $\rho$ restricted to $F$ is a
$G$-exhaustion, the minimum is attained but not attained on each non-closed complexified $G$-orbit $B_i$. From this we conclude
\begin{equation}
\inf_B\rho=\min_E\rho
\end{equation}
Hence the set $B$ is contained in $U_c^B$ if and only if $E\subset U_c^B$, thus either the fiber $F$ is entirely contained in $U_c^B$ or the sets are disjoint. Hence
$U_c^B$ is $\pi$-saturated. Therefore
$\pi(U_c)$ is open, since $\pi^{-1}(\pi(U_c))=U_c^B$ is open. With the above notation for $B_i$ and $E$, we obtain
$\pi(U_c\cap B)=\pi(U_c\cap E)=\pi(U_c\cap\mathcal{M})$.
\end{proof}
\begin{proof}[Proof of Theorem~\ref{theorem_homeomorphismInLocalSliceModel}]
In the first part of the proof we show that a
$G$-ex\-haus\-tive strictly plurisubharmonic function $\rho$ cannot become critical,
and therefore also
not minimal, on a non-closed complexified $G$-orbit in a
local slice model $Y=\gtube\timescat{K}S$.
Assume the contrary, i.e. let $B\subset Y$ be a non-closed
complexified $G$-orbit in $Y$ with critical point $p\in B$
of $\rho$. Introduce the projection
$\pi_K:\gtube\times S\to Y$ and set $p=\pi_K(\gamma, s)$.
Proposition~\ref{proposition_bijectionOfOrbits} shows that there is a
non-closed complexified $K$-orbit $B_S\subset S$
such that $\pi_K^{-1}(B)=\gtube\times B_S$. 
Recall
$S$ can be realized as an open Runge subset 
of a $K$-representation $V$ constructed as
$S=\{v\in V\ \vert\ \nu_V(v)<c\}$
for some $K$-invariant
strictly plurisubharmonic exhaustion $\nu_V$ on $V$.
Since $B_S$ is
not closed the Hilbert Lemma (cf. for example \cite[III.2.4]{Kra84}) shows that 
there is an element $k\in K$ and 
subgroup $L\subset K$ isomorphic to $S^1$ with an induced group homomorphism
$\lambda^\C:\C^*\cong L^\C\to K^\C$ such
that the following holds.
The $\C^*$-orbit through $s_0=k\cdot s$ closes up in a single fixed point of the $\C^*$-action and this closure
is a 1-dimensional complex subspace of $V$, in particular
$\lim_{t\to -\infty}\lambda^\C(e^t)s_0$ exists.
Define $\Omega=S\cap\overline{\C^*\!\cdot\! s_0}$.
As the map $t\mapsto \nu_V(\lambda^\C(e^t)s_0)$ is strictly convex, $\Omega$ is a connected
1-dimensional (reduced) complex
space, admitting an $L$-action
with a single closed complexified $L$-orbit, namely a fixed point of $L$, and an open complexified $L$-orbit.
We may normalize $\Omega$ to $\Delta$ which is $L$-equivariant and so $\Delta$ is $L$-equivariant biholomorphic to the unit disk in $\C$ with $L$
acting non trivially and linearly on $\C$. The 
$L$-equivariant holomorphic map
$\Delta\to S$ induces a $G$-equivariant holomorphic
map $\varphi:\gtube\timescat{L}\Delta\to
\gtube\timescat{K}S$ with the local minimum point $p$
of $\rho$ in the image,
since $\pi_K(\gamma,s)=\pi_K(\gamma\cdot h^{-1},h\cdot s)$.
Let $z_0, z_1$ be two points in the special local slice model $Z=\gtube\timescat{L}\Delta$.
We have $Z=B(z_0)\dot\cup B(z_1)$
where $B(z_0)=\gtube\timescat{L}\{0\}$ is closed and $B(z_1)$ its open complement.
Restricted to the open
complexified $G$-orbit ${B(z_1)}$ the map $\varphi$ becomes a $G$-equivariant holomorphic 
submersion onto its image in the complexified
$G$-orbit $\gtube\timescat{K}B_S$ 
in $\gtube\timescat{K}S$, since for each $y$ in the image of
$\varphi$ the tangent space $T_y(G\cdot y)$
spans
the complex tangent space to the complexified $G$-orbit.
Lemma~\ref{lemma_liftingGeneralSliceModelToSpecialLocalSliceModel} gives a contradiction to $p$ being 
in the image of $\varphi({B(z_1)})$.
Thus we showed that $\rho$ does not become critical
on a non-closed complexified $G$-orbit.\\
On the other hand, the existence of
two distinct $G$-orbits in a closed $G$-orbit $B\subset Y$
on which $\rho$ becomes critical
is excluded by Corollary~\ref{corollary_uniqueCriticalOrbitInComplexifiedGorbitIfExhaustive}.\\
In summary for every fiber
$F$ of $\pi:Y\to Y\modmod G$
the function
$\rho\vert_F$ 
has to become minimal and critical on the
unique 
closed complexified $G$-orbit $B$. On this complexified $G$-orbit
$\rho\vert_B$ becomes critical
exactly in a single $G$-orbit.
Exactly on this set
$\rho\vert_F$
becomes minimal.
On the non-closed complexified $G$-orbits in $F$ in turn
the function $\rho$ does not become critical at all.
Thus the map
$\varphi:\mathcal{M}/G\to Y\modmod G$ induced from the inclusion $\mathcal{M}\hookrightarrow Y$ is a bijection.\\
We are left to show that $\varphi$ is a homeomorphism.
Observe that $\varphi$ is continuous by construction.
Given a convergent sequence 
$z_n\in Y\modmod G$
Lemma~\ref{lemma_imageOfSublevelsAreOpenInLocalSliceModel}
shows that there is a $c<\sup_Y\rho$ defining
the $G$-stable open set
$U_c=\{y\in Y\ \vert\ \rho(y) <c\}$ and the $G$-stable set $\mathcal{M}_c=U_c\cap \mathcal{M}$ and there
are points $y_n\in \mathcal{M}_c$
such that $\pi(y_n)=z_n$.
As $\rho$ is a $G$-exhaustion, $U_c/G$ is relatively compact in $Y/G$. Hence
$p(\mathcal{M}_c)$
is relatively compact in $\mathcal{M}/G$.
Therefore by possibly passing to a subsequence $p(y_n)$ converges
in $\mathcal{M}/G$. By construction $p(y_n)=\varphi^{-1}(z_n)$, hence $\varphi$ is open and therefore a
homeomorphism.
\end{proof}\label{subsection_localHomeomorphism}
\subsection{Local biholomorphism to local analytic Hilbert quotient}
As mentioned in the introduction we endow
the symplectic
reduction $\mathcal{M}/G$ 
with a structure sheaf $\osheaf_{\mathcal{M}/G}$
in the following way. Denote by
$p:\mathcal{M}\to\mathcal{M}/G$ 
the quotient map. Then for an open subset
$Q\subset\mathcal{M}/G$
the space of sections
$\mathcal{O}_{\mathcal{M}/G}(Q)$
consists of those maps $f:Q\to\C$ such that
$p^*f:p^{-1}(Q)\to\C$ extends to a holomorphic
function on some neighborhood of $p^{-1}(Q)\subset Y$.
So $({\mathcal{M}/G},\osheaf_{\mathcal{M}/G})$
is a ringed space.
\begin{lemma}
Let $Y=\gtube\timescat{K}S$ be a local slice model, $\rho$ a $G$-exhaustive
strictly plurisubharmonic function
inducing a K\"ahler form $\omega$, a momentum map $\mu$
and the zero level $\mathcal{M}=\mu^{-1}(0)$. 
Let $V\subset Y$ be an open subset
such that $\mathcal{M}_V=\mathcal{M}\cap V$
is $G$-stable and let $f:V\to\C$ be
a holomorphic function such that
$f\vert_{\mathcal{M}_V}$ is $G$-in\-vari\-ant. Then
there is a unique
holomorphic function $h$ on $U=\pi^{-1}(\pi(\mathcal{M}_V))$ which is constant on the $\pi$-fibers such that
$f\vert_{\mathcal{M}_V}=h\vert_{\mathcal{M}_V}$
holds.\label{lemma_holomorphicFunctionExtendsToSaturatedSet}
\end{lemma}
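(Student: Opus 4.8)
The plan is to pin down the set $U$, reduce the assertion to a $\pi$-fiberwise statement, and then deal with the one serious point, holomorphicity of the extension.

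First I would record that $U=\pi^{-1}(\pi(\mathcal{M}_V))$ is an open, $\pi$-saturated subset of $Y$. Since $\mathcal{M}_V$ is open and $G$-stable in $\mathcal{M}$, its image $p(\mathcal{M}_V)$ is open in $\mathcal{M}/G$; by Theorem~\ref{theorem_homeomorphismInLocalSliceModel} the map $\varphi:\mathcal{M}/G\to Y\modmod G$ induced by $\mathcal{M}\hookrightarrow Y$ is a homeomorphism and satisfies $\pi|_{\mathcal{M}}=\varphi\circ p$, so $\pi(\mathcal{M}_V)=\varphi(p(\mathcal{M}_V))$ is open and $U$ is open (and $\pi$-saturated by definition). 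Again by Theorem~\ref{theorem_homeomorphismInLocalSliceModel}, each fiber $F$ of $\pi$ meets $\mathcal{M}$ in a single $G$-orbit, contained in the unique closed complexified $G$-orbit of $F$; as $\mathcal{M}_V$ is $G$-stable, $F\cap\mathcal{M}_V$ is either empty or equal to that whole orbit, and it is nonempty precisely when $F\subset U$. Hence $\mathcal{M}_V$ meets every fiber of $\pi|_U$ in exactly one $G$-orbit, and $\pi|_U:U\to\pi(\mathcal{M}_V)$ restricts the analytic Hilbert quotient.

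Now define $h$: for $y\in U$ put $h(y)=f(m)$ for any $m\in\mathcal{M}_V\cap\pi^{-1}(\pi(y))$. This is independent of the choice of $m$, since any two such points lie in one $G$-orbit on which $f$ is invariant by hypothesis; so $h$ is a well-defined function on $U$, constant on $\pi$-fibers, with $h|_{\mathcal{M}_V}=f|_{\mathcal{M}_V}$, and $h=\pi^*\tilde f$ where $\tilde f$ is the continuous function on $\pi(\mathcal{M}_V)$ obtained by descending $f|_{\mathcal{M}_V}$ to $p(\mathcal{M}_V)$ and transporting by $\varphi$; in particular $h$ is continuous. Uniqueness is immediate: the difference of two holomorphic, fiberwise-constant extensions vanishes on $\mathcal{M}_V$ and is constant on each $\pi$-fiber contained in $U$, and every such fiber meets $\mathcal{M}_V$.

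The substantial point is that $h$ is holomorphic. I would prove this locally near an arbitrary point $m_0\in\mathcal{M}_V$. The orbit $G\cdot m_0$ is isotropic, hence totally real, so the Local Slice Theorem~\ref{theorem_slice} gives a local slice model $Y'=\gtube'\timescat{H}S'$ around $m_0$ with $H=G_{m_0}$ compact; by Lemma~\ref{lemma_existenceOfExhaustivePotentialOnSliceModel} I may replace $\rho$ on $Y'$ by a $G$-exhaustive strictly plurisubharmonic function agreeing with $\rho$ on a neighbourhood of $m_0$, which does not change $\mathcal{M}$ there since the momentum map depends only on the $1$-jet of the potential. Lifting to $\gtube'\times S'$ and arguing as in the proof of Theorem~\ref{theorem_localPotentialAroundM}, one identifies, after shrinking, $\mathcal{M}$ near $m_0$ with the image of $\{(\gamma,s):\gamma\in G,\ s\in\mathcal{M}_{S'}\}$, where $\mathcal{M}_{S'}\subset S'$ is a momentum zero level for the induced Hamiltonian $H$-structure on the slice. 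Restricting along $S'\hookrightarrow Y'$, the $G$-invariant function $f|_{\mathcal{M}_V}$ then becomes an $H$-invariant holomorphic function on an open $H$-stable subset of $\mathcal{M}_{S'}$, and the corresponding statement for compact groups (see \cite{Hei91,HeiKut95}, whose homeomorphism $\mathcal{M}_{S'}/H\cong S'\modmod H$ is the one reformulated in Section~\ref{subsect_CptCase}) extends it uniquely to an $H$-invariant holomorphic function on a $\pi_{S'}$-saturated subset of $S'$; inflating this back through $\gtube'\times S'$ yields a $G$-invariant holomorphic function on a $\pi$-saturated neighbourhood of $\pi^{-1}(\pi(m_0))$ inside $U$, which by the uniqueness above coincides with $h$. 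As $m_0$ runs over $\mathcal{M}_V$ these neighbourhoods cover $U$, so $h$ is holomorphic, and $h=\pi^*\tilde f$ with $\pi|_U$ an analytic Hilbert quotient shows $\tilde f$ is the required section of $\osheaf_{Y\modmod G}$.

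The hard part is the identification in the previous paragraph of the given zero level $\mathcal{M}$ near $m_0$ with the $H$-momentum zero level of the slice $S'$: one has to track the potential carefully through the local slice construction, essentially running the argument of Theorem~\ref{theorem_localPotentialAroundM} in the reverse direction, and this is where the bulk of the work lies. An alternative route is to show $\tilde f$ is holomorphic on the principal stratum directly, using that there $\mathcal{M}$ is transverse to the imaginary orbit directions $JT_m(G\cdot m)$ and that $d(f|_{\mathcal{M}})$ is $\C$-linear for the reduced complex structure, and then to propagate across the lower strata using normality of $Y\modmod G\cong S\modmod K$ (Theorem~\ref{theorem_SmodmodKIsomorphicToYmodmodG}) and the second Riemann extension theorem; but this requires knowing that the reduced complex structure on the principal stratum agrees with the one coming from $S\modmod K$, which is only established later.
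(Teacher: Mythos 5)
Your preliminary reductions (openness and $\pi$-saturation of $U$, the fact that each $\pi$-fiber in $U$ meets $\mathcal{M}_V$ in exactly one $G$-orbit, well-definedness and continuity of $h$, and uniqueness) are correct and match what the paper uses implicitly. The problem is that the actual content of the lemma --- producing a \emph{holomorphic} extension --- is exactly the part you defer. Your route rests on two unproved steps. First, the identification of $\mathcal{M}$ near $m_0$ with $G\cdot\mathcal{M}_{S'}$ for an induced Hamiltonian $H$-structure on the slice $S'$, which you yourself flag as ``the hard part\dots where the bulk of the work lies''; and second, a ``corresponding statement for compact groups'' which is precisely the present lemma in the compact case, and which is not what \cite{Hei91} or \cite{HeiKut95} provide (they give the homeomorphism $\mathcal{M}/K\to X\modmod K$, not the sheaf-level extension of a function that is merely holomorphic near $\mathcal{M}$ and invariant only on $\mathcal{M}$). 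The paper sidesteps both issues with a different idea: inside a sub-slice-model $Y_0=\gtube\timescat{K_0}S_0\subset V$ around $x\in\mathcal{M}_V$ it takes $A$ to be the \emph{smallest closed analytic subset} containing $\mathcal{M}_0=\mathcal{M}\cap Y_0$. Since $f\circ g-f$ vanishes on $\mathcal{M}_0$, it vanishes on $A$, so $f\vert_A$ is $G$-invariant with no momentum-map bookkeeping in the slice; one then restricts to the analytic subset $A_S=A\cap S_0$ of the Stein manifold $S_0$, extends holomorphically to $S_0$, averages over $K_0$, and transports through $S_0\modmod K_0\cong Y_0\modmod G$ (Theorem~\ref{theorem_SmodmodKIsomorphicToYmodmodG}). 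That replacement of $\mathcal{M}$ by the analytic set it generates is the missing idea.

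There is a second gap at the end. Even granting a local $G$-invariant holomorphic extension near each $m_0$, what you obtain is a function on a $\pi_{Y'}$-saturated subset of the small model $Y'$, i.e.\ on the intersection of $Y'$ with a $\pi$-saturated set --- not on a $\pi$-saturated neighborhood of the full fiber $\pi^{-1}(\pi(m_0))$, whose points can lie far from $\mathcal{M}_V$ and outside every such $Y'$. So the local pieces only glue to a holomorphic function on a neighborhood $V_0$ of $\mathcal{M}_V$, and the fiber-constant extension $h$ to all of $U$ still has to be shown holomorphic on $U\setminus V_0$. The paper does this by restricting to the locus $W$ where $\pi$ is submersive onto nonsingular points (the complement being a proper analytic subset), observing that the non-holomorphy locus of $h$ is a union of connected components of $\pi$-fibers each of which, for $w\in W$, meets $V_0$, and then using continuity of $h$ together with Riemann extension across $V\setminus W$. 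Your proposal asserts the conclusion of this step without an argument; your alternative route via the principal stratum is, as you note, circular at this stage of the paper.
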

\begin{proof}
For $x\in\mathcal{M}_V$ choose a local slice model $Y_0=\gtube\timescat{K_0}S_0\subset V$
around $x$.
Let $A$ be the smallest 
closed analytic subset
in $Y_0$ containing $\mathcal{M}_0=\mathcal{M}\cap~Y_0\subset V$.
This set $A$ is $G$-stable and in particular contains all closed
complexified $G$-orbits in $Y_0$ which intersect
$\mathcal{M}_0$.
The restriction
$f\vert_{A}$ is $G$-invariant and 
therefore the restriction $f\vert_{A_S}$ to the 
$K_0$-stable
intersection $A_S=A\cap S_0$ is $K_0$-invariant.
Since $S_0$ is a Stein manifold, $f\vert_{A_S}$
extends to $S_0$ holomorphically.
This extension can be made $K_0$-invariant by averaging. We may consider this function
as a holomorphic function on the quotient
$S_0\modmod K_0$. Via the identification
$Y_0\modmod G\cong S_0\modmod K_0$
the function can be pulled back to a $G$-invariant holomorphic function $h_0$ on $Y_0$.
On $A$ and hence on $\mathcal{M}_0$ the functions
$f$ and $h_0$ coincide.\\
This process can be made for every point
in $\mathcal{M}_V$. Since the extensions are unique,
these extensions glue together
to a $G$-invariant holomorphic function
on a $G$-stable neighborhood $V_0\subset U$
of $\mathcal{M}_V$, in particular this function is constant on the $\pi\vert_{V_0}$-fibers.
This function extends uniquely to
a function $h:U\to\C$ which is constant
on the $\pi$-fibers and coincides with $f$ on
$\mathcal{M}_V$.
We claim that this function is holomorphic. 
In order to see this let $W\subset V$ consists of those points $w$
which are mapped
by $\pi$ to
a non-singular point such that the derivative
$\pi_*$ at $w$ has full rank. This set 
$W$ is the union of complexified $G$-orbits
and $V\backslash W$ is a proper analytic
subset. Restricted to $W$ the map $\pi$ is
submersive to its image. 
The set of points at which $h$ is not holomorphic
is a union of connected components of 
$\pi$-fibers. But
for any point
$w\in W$ each connected component of the complexified orbit $B(w)$ intersects
$V_0$. Thus $h\vert_W$ is holomorphic
and continuous on $V$ hence holomorphic on $V$.
\end{proof}
\begin{lemma}
Let $Y=\gtube\timescat{K}S$ be a local slice model, $\rho$ a $G$-exhaustive
strictly plurisubharmonic function
inducing a K\"ahler form $\omega$, a momentum map $\mu$
and the zero level $\mathcal{M}=\mu^{-1}(0)$. Then
the homeomorphism $\varphi:\mathcal{M}/G\to Y\modmod G$
established in Theorem~\ref{theorem_homeomorphismInLocalSliceModel} induces an isomorphism of the sheaves 
$\osheaf_{\mathcal{M}/G}$ and $\osheaf_{Y\modmod G}$.\label{lemma_sheafIsomInLocalSliceModel}
\end{lemma}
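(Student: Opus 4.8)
The plan is to verify that, for every open $U\subset Y\modmod G$, the natural ``restrict to $\mathcal{M}$ and descend'' operation is a ring isomorphism $\osheaf_{Y\modmod G}(U)\to\osheaf_{\mathcal{M}/G}(\varphi^{-1}(U))$ compatible with restriction maps; this is precisely the assertion that $\varphi$ identifies the two structure sheaves. First I would set up the bookkeeping. Write $Q=\varphi^{-1}(U)$. Since the diagram of Theorem~\ref{theorem_homeomorphismInLocalSliceModel} commutes, i.e.\ $\varphi\circ p=\pi\circ i_{\mathcal{M}}$, one has $p^{-1}(Q)=\mathcal{M}\cap\pi^{-1}(U)$, which is $G$-stable (intersection of the $G$-stable sets $\mathcal{M}$ and $\pi^{-1}(U)$), and $\pi^{-1}(U)$ is a $G$-stable open neighbourhood of it in $Y$. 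Because $\varphi$ is onto, every $\pi$-fibre over a point of $U$ meets $\mathcal{M}$, hence $\pi\bigl(\mathcal{M}\cap\pi^{-1}(U)\bigr)=U$.

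For the forward map, given $f\in\osheaf_{Y\modmod G}(U)=\osheaf^G_Y(\pi^{-1}(U))$, its restriction to $p^{-1}(Q)$ is $G$-invariant, hence descends to a continuous $\bar f\colon Q\to\C$ with $p^*\bar f=f|_{p^{-1}(Q)}$; since $p^*\bar f$ extends to the holomorphic function $f$ on the neighbourhood $\pi^{-1}(U)$ of $p^{-1}(Q)$, we get $\bar f\in\osheaf_{\mathcal{M}/G}(Q)$, and $f\mapsto\bar f$ plainly commutes with restrictions. Injectivity is immediate from the fact that a $G$-invariant holomorphic function on the saturated open set $\pi^{-1}(U)$ is constant on each $\pi$-fibre (the quotient being an analytic Hilbert quotient) together with the fact that every such fibre meets $\mathcal{M}$: thus $f$ is determined by $f|_{p^{-1}(Q)}$, so $\bar f=0$ forces $f\equiv0$.

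The substantive direction is surjectivity, and here Lemma~\ref{lemma_holomorphicFunctionExtendsToSaturatedSet} carries the load. Given $\bar f\in\osheaf_{\mathcal{M}/G}(Q)$, by definition $p^*\bar f$ extends to a holomorphic function $\tilde f$ on some neighbourhood $V$ of $p^{-1}(Q)$ in $Y$; after replacing $V$ by $V\cap\pi^{-1}(U)$ we may assume $V\subset\pi^{-1}(U)$, so that $\mathcal{M}_V=\mathcal{M}\cap V=\mathcal{M}\cap\pi^{-1}(U)=p^{-1}(Q)$ is $G$-stable and $\tilde f|_{\mathcal{M}_V}=p^*\bar f$ is $G$-invariant. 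Lemma~\ref{lemma_holomorphicFunctionExtendsToSaturatedSet} then yields a unique holomorphic $h$ on $\pi^{-1}(\pi(\mathcal{M}_V))=\pi^{-1}(U)$, constant on $\pi$-fibres and hence $G$-invariant, with $h|_{\mathcal{M}_V}=p^*\bar f$. Thus $h\in\osheaf^G_Y(\pi^{-1}(U))=\osheaf_{Y\modmod G}(U)$, and its image under the forward map is the descent of $h|_{p^{-1}(Q)}=p^*\bar f$, namely $\bar f$ itself. This produces a two-sided inverse, so the forward map is a sheaf isomorphism.

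The only non-formal ingredient is the extension step, and that has already been isolated as Lemma~\ref{lemma_holomorphicFunctionExtendsToSaturatedSet}; beyond invoking it, the work is purely bookkeeping with $\pi$-saturations, $G$-stability, and the surjectivity/fibre structure of $\varphi$ from Theorem~\ref{theorem_homeomorphismInLocalSliceModel}. So I expect no serious obstacle. The one point to be careful about is that the neighbourhood $V$ supplied by the definition of $\osheaf_{\mathcal{M}/G}$ can be intersected with $\pi^{-1}(U)$ without destroying $G$-stability of $\mathcal{M}\cap V$ — which works precisely because $V$ already contains the $G$-stable set $p^{-1}(Q)$, so that the intersection collapses to $p^{-1}(Q)$ itself.
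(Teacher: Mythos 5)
Your proposal is correct and follows essentially the same route as the paper: the forward map is restriction to $\mathcal{M}$ followed by descent, and the inverse is built by applying Lemma~\ref{lemma_holomorphicFunctionExtendsToSaturatedSet} to extend a section of $\osheaf_{\mathcal{M}/G}$ to a $G$-invariant holomorphic function on the $\pi$-saturation. You merely spell out a few details the paper leaves implicit (the injectivity via constancy on fibres meeting $\mathcal{M}$, and the harmless shrinking of $V$ into $\pi^{-1}(U)$).
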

\begin{proof}
Restriction induces a sheaf morphism
$\mathcal{O}_{Y\modmod G}\to \varphi_*\mathcal{O}_{\mathcal{M}/G}$
in a trivial way.
In order to show that this is an isomorphism
we will define an inverse. Recall the notation of the quotient maps $p:\mathcal{M}\to\mathcal{M}/G$
and $\pi:Y\to Y\modmod G$.
For an open set $R\subset Y\modmod G$ we set $Q=\varphi^{-1}(R)$. An element
in $\varphi_*\mathcal{O}_{\mathcal{M}/G}(R)$
is given by an open subset $V\subset Y$ such that
$\mathcal{M}_V=\mathcal{M}\cap V$ is $G$-stable
and $p(\mathcal{M}_V)=Q$ together with a holomorphic function $f:V\to\C$ which is $G$-invariant on
$\mathcal{M}_V$. Recall that 
$R=\pi(\mathcal{M}_V)$
and denote $U=\pi^{-1}(R)$.
On this saturated open set
there is
a unique holomorphic function $h:U\to\C$
constant on the $\pi$-fibers
which coincides with $f$ on $\mathcal{M}_V$
(Lemma~\ref{lemma_holomorphicFunctionExtendsToSaturatedSet}).
This holomorphic function can be seen as an element
of $\mathcal{O}_{Y\modmod G}(R)$.
By this extension process the
sheaf morphism 
$\varphi_*\mathcal{O}_{\mathcal{M}/G}\to \mathcal{O}_{Y\modmod G}$
is defined which is inverse to the sheaf morphism
arising from restriction.
Thus with these structure sheaves the homeomorphism $\varphi$
becomes a biholomorphism.
\end{proof}
Now we will summarize the partial results to prove the
central Theorem~\ref{theorem_quotientLocallyGivenFromHilbertQuotient}.
It states that each point $x\in\mathcal{M}$
admits a $G$-stable neighborhood $U$ such that
with $\mathcal{M}_U=\mathcal{M}\cap U$
there is an induced homeomorphism
$\varphi:\mathcal{M}_U/G\to U\modmod G$
which induces an isomorphism
between the sheaves
$\mathcal{O}_{\mathcal{M}_U/G}$
and
$\osheaf_{U\modmod G}$.
\begin{proof}[Proof of Theorem~\ref{theorem_quotientLocallyGivenFromHilbertQuotient}]
For a given a point $x\in\mathcal{M}$ Theorem~\ref{theorem_slice} provides
a neighborhood $Y$ isomorphic to a local slice model with quotient map
$\pi:Y\to Y\modmod G$.
Theorem~\ref{theorem_localPotentialAroundM}
implies
the existence of a local potential $\rho$
on $Y$. 
For a given point there is
a $\pi$-saturated open neighborhood $U\subset Y$ 
of $x$
such
that $\pi(U)\subset Y\modmod G$ is relatively compact and Runge.
Then
there is a $G$-stable open 
neighborhood $V$ of $\mathcal{M}\cap U$
such that $V/G$ is relatively compact in $Y/G$.
Lemma~\ref{lemma_bendingUpPotential}
shows that there is
a strictly plurisubharmonic $G$-exhaustion
$\tilde\rho$ with $\tilde\rho\vert_V=\rho\vert_V$
inducing a momentum zero level
$\widetilde{\mathcal{M}}$ with 
$\widetilde{\mathcal{M}}\cap U=\mathcal{M}\cap U$.
The induced map 
$\tilde\varphi:\widetilde{\mathcal{M}}/G\to Y\modmod G$
is shown to be a homeomorphism (Theorem~\ref{theorem_homeomorphismInLocalSliceModel})
inducing
an isomorphism of sheaves (Lemma~\ref{lemma_sheafIsomInLocalSliceModel}).
Its restriction
\begin{equation}
\varphi:\mathcal{M}_U/G\to \varphi(\mathcal{M}_U/G)
=\pi(U)
\subset Y\modmod G 
\end{equation}
is a homeomorphism inducing an isomorphism of sheaves
making $\mathcal{M}_U/G$ a reduced normal complex space
in a natural way.
According to Lemma~\ref{lemma_holomorphicFunctionExtendsToSaturatedSet} a $G$-invariant holomorphic function on $U$
is uniquely given by its restriction
to $\mathcal{M}_U$ and in particular constant on the $\pi$-fibers. Therefore $\pi(U)=U\modmod G$, hence
\begin{equation*}
\varphi:\mathcal{M}_U/G\to U\modmod G
\end{equation*}
is a biholomorphism.
\end{proof}
Theorem~\ref{theorem_quotientComplexStructure} stating that 
$(\mathcal{M}/G,\mathcal{O}_{\mathcal{M}/G})$ is a reduced normal complex space
is now a consequence of Theorem~\ref{theorem_quotientLocallyGivenFromHilbertQuotient}.
\begin{proof}[Proof of Theorem~\ref{theorem_quotientComplexStructure}]
Theorem~\ref{theorem_quotientLocallyGivenFromHilbertQuotient} shows that
$(\mathcal{M}/G,\mathcal{O}_{\mathcal{M}/G})$ is locally biholomorphic
to a reduced normal complex space. Since
$\mathcal{M}/G$ is a Hausdorff
space due to the properness of the action, the entire space
arises from gluing reduced normal complex spaces.
\end{proof}\label{subsection_localBiholomorphism}

%%%%%%%%%%%%%%%%%%%%%%%%%%%%%%%%%%%%%%%%%%%%%
%
%   6   Kähler structure on symplectic reduction
%
%%%%%%%%%%%%%%%%%%%%%%%%%%%%%%%%%%%%%%%%%%%%%

\section{K\"ahler structure on symplectic reduction}\label{section_KaehlerStructure}
In this section we will show that the
symplectic reduction $\mathcal{M}/G$
inherits a K\"ahler structure.\\
The space $X$ and hence the
momentum zero level $\mathcal{M}$ can be given a stratification into smooth parts if we compound
all points of $\mathcal{M}$ having the same isotropy group $H\subset G$ up to conjugation in a
stratum $S_H\subset \mathcal{M}$. The sets
$S_H/G$ induce a stratification of the symplectic reduction $\mathcal{M}/G$ into smooth parts.
The restrictions $p\vert_{S_H}:S_H\to S_H/G$ are fiber bundles.
We call this the
stratification by $G$-orbit type.\\
The situation of the K\"ahler form on each stratum $S_H/G$ is rather simple. To expain this,
choose a stratum $S=S_H$ and
for some point $x\in S\subset\mathcal{M}$ restrict the bilinear form $\omega_x$
to $T_xS$. The vector space $T_xS$ splits into $T_x(G\cdot x)$ and $T_xS\cap JT_xS$,
where $J$ denotes the complex structure.
The form is 
non-degenerate on the latter (complex) linear space while $T_x(G\cdot x)$ is the kernel of the restricted form.
Thus given any local section $\sigma$ to $p_S:S\to S/G$ the pullback $\sigma^*\omega$
is closed and non-degenerate, hence symplectic. Furthermore $\sigma^*\omega=\de\dc\sigma^*\rho$, since
$\sigma^*\dc\rho=\dc\sigma^*\rho$, thus $\sigma^*\omega$ is a K\"ahler form on $S/G$.
In order to define a K\"ahler structure
on the usually singular complex space $\mathcal{M}/G$ across the strata we have the following definition.
\begin{definition}\label{definition_KaehlerStructure}
 A {\em K\"ahler structure} on a complex space is given by an open covering $\{U_\alpha\}$ and strictly plurisubharmonic functions
 $\rho_\alpha$ on $U_\alpha$ such that \mbox{$\rho_\alpha-\rho_\beta$} is pluriharmonic on $U_\alpha\cap U_\beta$, i.e.
 locally the real part of some holomorphic function.
 A plurisubharmonic function $\rho$ is said 
 to be {\em strictly} plurisubharmonic if the plurisubharmonicity is stable under
 perturbation, i.e. if for each smooth function $h$ and each relatively compact open set $U$
 there is a
 $\delta>0$ such that $\rho+t\cdot h$ is plurisubharmonic for all $t$ with
 $\vert t\vert <\delta$ on $U$.
\end{definition}
\begin{remark}
The different notions of plurisubharmonicity across singularities are known to coincide (\cite{ForNar80}).
\end{remark}
It cannot be hoped that the
induced functions $\rho_\alpha$ on $\mathcal{M}/G$ are smooth even if $\omega$ is
smooth on $X$ and $G$ is connected. Consider the simple example (\cite{HHL94})
of the $S^1$-representation
on $\C^2$, $(t,z_1,z_2)\mapsto (tz_1,t^{-1}z_2)$
with (analytic) Hilbert quotient
$\pi:\C^2\to\C^2\modmod S^1$. 
The quotient can be realized by the
biholomorphism
$\psi:\C^2\modmod S^1\to \C, [z_1,z_2]\mapsto z_1z_2$.
The function
$\rho(z_1,z_2)=\vert z_1\vert^2+\vert z_2\vert^2$
defines an $S^1$-invariant strictly plurisubharmonic function 
on $\C^2$ inducing $\omega$, $\mu$
and $\mathcal{M}=\{(z_1,z_2)\in\C^2\ \vert\ \vert z_1\vert=\vert z_2\vert\}$. Hence there is a function
$\theta:\C\to\R$ such that
$\theta\circ\psi\circ\pi\vert_\mathcal{M}
=\rho\vert_\mathcal{M}$.
But $\theta(w)=2\vert w\vert$ is not differentiable.\\
Corollary~\ref{corollary_KaehlerStructure} states the existence of a K\"ahler form in the sense of the
above Definition~\ref{definition_KaehlerStructure}. 
\begin{proof}[Proof of Corollary~\ref{corollary_KaehlerStructure}]
For every point in the momentum zero level
$\mathcal{M}$ 
there is a local slice model with a $G$-invariant potential (Theorem~\ref{theorem_slice}). 
Let $\mathcal{M}=\bigcup Y_\alpha$
be a covering 
of $\mathcal{M}$
by local slice models $Y_\alpha$ with $G$-invariant potentials $\rho_\alpha$. 
This provides a cover of $\mathcal{M}/G$ by open sets $Z_\alpha=p(Y_\alpha\cap\mathcal{M})$.
Restricted to $Y_\alpha\cap\mathcal{M}$ the function $\rho_\alpha$ is continuous and $G$-invariant, thus
pushes down to continuous functions $\rho^Z_\alpha:Z_\alpha\to\R$, i.e. 
$\rho^Z_\alpha\circ p=\rho_\alpha$ on $Y_\alpha\cap\mathcal{M}$. We claim that these functions
$\rho^Z_\alpha$ form the K\"ahler structure.\\
First we show that these functions glue together in the expected way.
For a point
$x\in\mathcal{M}\cap Y_\alpha\cap Y_\beta$
choose a $G$-stable neighborhood
$\Omega\subset Y_\alpha\cap Y_\beta$
such that
$\pi(\Omega)=\pi(\mathcal{M}\cap\Omega)$
and every complexified $G$-orbit closes up
in $\mathcal{M}$.
This is possible just by choosing
a local slice model $\Omega_0\subset Y_\alpha\cap Y_\beta$ around $x$ and setting
$\Omega=\pi^{-1}(\pi(\mathcal{M}\cap\Omega_0))$.
The difference $\delta=\rho_\alpha-\rho_\beta$
is pluriharmonic and $G$-invariant thus
$J\tilde v(\delta)$ is constant on
each $\pi$-fiber for all $v\in\lie{G}$.
By construction
$J\tilde v(\delta)\vert_{\mathcal{M}\cap \Omega}=0$ and each fiber intersects
$\mathcal{M}\cap \Omega$, thus the derivatives
$J\tilde v(\delta)$ vanishes and hence
$\delta$ is constant on each $\pi$-fiber.
Locally $\delta$ is the real part of some holomorphic function $f$ which has to be constant
on the $\pi$-fibers as well. This function is the 
pullback of a holomorphic function $g$ on an open
subset around $\pi(x)$. Here $\rho^Z_\alpha-\rho^Z_\beta$ is the real part of $g$.
Thus the functions $\rho^Z_\alpha$ glue together in the desired way.\\
It remains to show that 
$\rho^Z_\alpha$ defines a strictly
plurisubharmonic function on $Z_\alpha$.
In the sequel we will drop the index.
The stratification by $G$-orbit type induces a stratification
on $\mathcal{M}/G$.
As explained above in this section
on each stratum the functions $\rho^Z$
are known to be strictly plurisubharmonic.
We claim that their extensions 
across the strata are strictly plurisubharmonic.
We may check plurisubharmonicity
by pulling back the functions via a
holomorphic map from the unit disk (\cite{ForNar80}).
There, they extend to subharmonic functions.
This result is shown in a much more general setting in \cite{GraRem56}.\\
Finally in order to see
that $\rho$ is strictly plurisubharmonic choose for some point $y\in \mathcal{M}\cap Y$ a relatively compact
open subset $U_Y\subset Y$. Its image $U_Z=\pi(U_Y)$ is open and relatively compact.
Let $h$ be a
smooth function on $Z$.
There is some $\delta>0$ such
that $\rho_t=\rho+t\cdot\pi^*h$ is strictly
plurisubharmonic on $U_Y$ for all $\vert t\vert <\delta$ since
$\rho$ is strictly
plurisubharmonic. Moreover, as these functions $\rho_t$ are $G$-invariant
and strictly plurisubharmonic, they define
momentum maps $\mu_t$ on $Y$
and $\mu_t^{-1}(0)=\mathcal{M}\cap Y$ holds.
Therefore the pushed down functions equal $\rho^Z+t\cdot h$. By the same arguments as above,
they are plurisubharmonic. Hence $\rho^Z$ is
strictly plurisubharmonic on $Z$ which finishes the proof.
\end{proof}

%%%%%%%%%%%%%%%%%%%%%%%%%%%%%%%%%%%%%%%%%%%%%
%
%   7   Examples without invariant potential
%
%%%%%%%%%%%%%%%%%%%%%%%%%%%%%%%%%%%%%%%%%%%%%

\section{Examples without invariant potential}\label{sect_noPotential}
Finally we give examples where there is no $G$-invariant potential. We found cases
for a solvable group and semisimple groups.
\subsection{Examples with large isotropy group}
Recall that given a (real) manifold $M$ with 
a proper action of $G$, a closed $G$-invariant 2-form $\tau$ with momentum map $\nu$,
this setting can be complexified to a
``Stein extension'' in the following way.
There is a Stein $G$-manifold $X$, a $G$-invariant K\"ahler form $\omega$ and a momentum map $\mu$
on $X$ and a totally real embedding $i:M\to X$ with
$\dim_\R M=\dim_\C X$ such that
$\tau=i^*\omega$ and
$\nu=i^*\mu$ (\cite{Str01}).
The idea is that given a $G$-invariant potential $\rho$ the restriction of the 1-form $\dc\rho$ to $M$ is a $G$-invariant 1-form $\beta$ with
$\de\beta=\tau$. But if the isotropy group is ``too large'', there is no non-vanishing invariant 1-form.
\begin{proposition}
Let $K$ be a compact subgroup of $G$, $\tau$ a
non-vanishing $G$-invariant 2-form with momentum map $\nu$ on $M=G/K$.
The group $K$ acts on the cotangent space $T^*_{[e]}M$ where $e$ is the neutral element of $G$ and $[e]\in G/K$ its image
under the projection. If the only $K$-fixed point in $T^*_{[e]}M$ is $0$, then
$M$ admits no $G$-invariant 1-form $\beta$ with $\de\beta=\tau$
and therefore the complexification of $M$ admits no $G$-invariant potential.\label{prop_largeIsotropy}
\end{proposition}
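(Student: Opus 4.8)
The plan is to reduce the claim about invariant potentials to the non-existence of an invariant primitive of $\tau$ on $M$, exactly as indicated just before the statement, and then to rule out such a primitive by a direct homogeneous-space argument.

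First I would invoke the Stein extension of \cite{Str01}: there is a Stein $G$-manifold $X$ carrying a $G$-invariant K\"ahler form $\omega$ and momentum map $\mu$, together with a $G$-equivariant totally real embedding $i:M\hookrightarrow X$ with $\dim_\R M=\dim_\C X$, $\tau=i^*\omega$ and $\nu=i^*\mu$. Suppose, toward a contradiction, that this $X$ admits a $G$-invariant potential $\rho$; in particular $\rho$ is a smooth $G$-invariant function with $\omega=\de\dc\rho$. Put $\beta=i^*\dc\rho$. Since $G$ acts on $X$ by holomorphic transformations, $\dc$ commutes with $g^*$ for every $g\in G$, so $\dc\rho$ is $G$-invariant; and as $i$ is $G$-equivariant, $\beta$ is a $G$-invariant $1$-form on $M$. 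Moreover $\de\beta=i^*(\de\dc\rho)=i^*\omega=\tau$. Hence it suffices to show that $M=G/K$ carries no $G$-invariant $1$-form $\beta$ with $\de\beta=\tau$.

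For this I would use that a $G$-invariant $1$-form on $G/K$ is determined by its value at the base point $[e]$, by transitivity of the $G$-action, and that this value must be fixed by the isotropy action of $K$ on $T^*_{[e]}M$ (each $k\in K$ fixes $[e]$ and $L_k^*\beta=\beta$, so $\beta_{[e]}$ is fixed by the induced action on $T^*_{[e]}M$). By hypothesis the only $K$-fixed covector in $T^*_{[e]}M$ is $0$, so $\beta_{[e]}=0$; since the differential of each translation $L_g$ at $[e]$ is an isomorphism, $\beta_{g\cdot[e]}=0$ for all $g\in G$, whence $\beta\equiv 0$ and $\de\beta=0$. As $\tau$ is non-vanishing this contradicts $\de\beta=\tau$, and the proposition follows.

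I do not expect a serious obstacle here: the only substantive external input is the existence of the Stein extension of \cite{Str01}, and the rest is the naturality of $\dc$ under holomorphic maps together with the standard correspondence between $G$-invariant tensors on $G/K$ and $K$-fixed tensors at $[e]$. The only points requiring a little care are keeping track of which $K$-representation appears — here the hypothesis is phrased directly on $T^*_{[e]}M$, so no dualizing is needed — and noting that "non-vanishing" is used solely to guarantee $\tau\not\equiv 0$.
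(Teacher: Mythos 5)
Your proof is correct and follows essentially the same route as the paper: the discussion preceding the proposition reduces the claim to the non-existence of a $G$-invariant primitive $\beta=i^*\dc\rho$ of $\tau$, and the paper's proof is exactly your homogeneous-space argument that $\beta_{[e]}$ is a $K$-fixed covector, hence zero, hence $\beta\equiv 0$ by transitivity, contradicting $\de\beta=\tau\not\equiv 0$. No further comment is needed.
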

\begin{proof}
 Given a $G$-invariant 1-form $\beta$ then $\beta_e=0$ in $T^*_{[e]}M$ and
 by $G$-invariance $\beta=0$. Since $\tau$ is not vanishing entirely,
 $\de\beta\not=\tau$.
\end{proof}
From this observation we obtain explicit examples.
\subsection{Case of semisimple groups}
%What is correct typesetting of SU(n)?
Let $G=SU(n,1)$ act by holomorphic transformations transitively on the ball $B=\{x\in\C^n\ \vert\ \lVert x\rVert< 1\}$ with isotropy $K=SU(n)$.
The K\"ahler form $\tau$ corresponding to the Bergman metric is kept invariant by $G$ and is exact. By construction
of the metric one can see that $\tau$ admits a momentum map $\nu$. The only fixed point of the $K$-action on
$T_0B$ is $0$. Now consider $B$
as a real manifold and let $X$ be a Stein extension.
All conditions of Proposition~\ref{prop_largeIsotropy} are satisfied, so there is no $G$-invariant potential on the complexified
spaces.
\subsection{Case of solvable groups}
Let $G$ be the group of isometries of $\R^2$
with euclidean metric, $G=S^1 \ltimes \R^2$, and 
extend the action to $\C^2$ such that $G$ acts by holomorphic transformations.
Within the standard coordinates on $\C^2$, $z_i=x_i+iy_i$,
the K\"ahler form
$$\omega=\de x_1\wedge\de y_1 + \de x_2\wedge\de y_2$$
is $G$-invariant. We fix a basis for the Lie algebra $\lie{G}$
$$\derv{x_1},\derv{x_2},\derv{\varphi}=x_2\derv{x_1}-x_1\derv{x_2}+y_2\derv{y_1}-y_1\derv{y_2}\ .$$
The Lie algebra structure with respect to this basis is given by
\begin{align*}
\left[\derv{\varphi},\derv{x_1}\right]&=\derv{x_2}\\
\left[\derv{\varphi},\derv{x_2}\right]&=-\derv{x_1}\\
\left[\derv{x_1},\derv{x_2}\right]&=0
\end{align*}
We have
\begin{align*}
 \imath_\derv{x_i}\omega&=\de y_i\\
 \imath_\derv{\varphi}\omega&=x_2\de y_1-y_2\de x_1-x_1\de y_2+y_1\de x_2\\
& =\de(-x_1y_2+x_2y_1)
\end{align*}
If $\mu$ is a momentum map, then
\begin{align*}
 \mu^\derv{x_i}&=y_i+c_i\\
 \mu^\derv{\varphi}&=x_2y_1-x_1y_2+c
 \end{align*}
for some constants $c_1,c_2,c\in\R$.
Equivariance of $\mu$ gives
$\imath_{\tilde v}\de\mu^w=\mu^{[v,w]}$ 
for all $v,w\in\lie{G}$.
This implies
$$\imath_\derv{\varphi}\de\mu^\derv{x_1}=y_2=y_2+c_2$$
and therefore $c_2=0$ and
analogously $c_1=0$.
This shows that
for each $c\in\R$ there is a momentum
map $\mu_c$.\\
The function
$$\rho_0=\frac{1}{2}(y_1^2+y_2^2)$$
is a $G$-invariant potential for $\mu_0$.
Suppose there is a potential $\rho_c$ for $\mu_c$.
Then $\delta=\rho_c-\rho_0$ is a $G$-invariant pluriharmonic function. Since $\delta$ is $\R^2$-invariant
and pluriharmonic, there are constants $a, a_1,a_2$ such that
$\delta=a_1y_1+a_2y_2+a$. The $S^1$-invariance applied to $\delta$ shows that
$\de\delta(0)=0$, thus $\delta$ is constant.\\
This shows that for $\mu_c$, $c\not=0$,
there is no invariant potential.

\bibliographystyle{alpha}

{\footnotesize}
%\textsc{Department of Mathematics, Ruhr-Universit\"at Bochum, 44780 Bochum, Germany}
\textsc{Department of Mathematics\\
Ruhr-Universit\"at Bochum\\
44780 Bochum\\
Germany}

\medskip
P.~Heinzner\\
\textit{E-mail address} \texttt{peter.heinzner@ruhr-universitaet-bochum.de}

\smallskip
B.~Stratmann\\
\textit{E-mail address} \texttt{bernd.x.stratmann@ruhr-universitaet-bochum.de}
\end{document}